\documentclass[11pt,a4paper,oneside,reqno]{amsart}

\usepackage{amsmath, amsthm, amsopn, amssymb}
\usepackage{mathtools, thmtools, thm-restate}
\usepackage[tworuled,vlined]{algorithm2e}
\usepackage{bbm}
\usepackage{enumerate}
\usepackage{enumitem}
\usepackage{hyperref}
\usepackage{stmaryrd} 
\usepackage{xcolor}

\usepackage{tikz}
\usetikzlibrary{topaths,calc}
\usepackage{subcaption}

\usepackage{geometry}
\geometry{
  a4paper,
  height={23cm}
}

\linespread{1.1}

\declaretheorem[name=Theorem,within=section]{thm}
\newtheorem{lemma}[thm]{Lemma}
\newtheorem{prop}[thm]{Proposition}
\newtheorem{cor}[thm]{Corollary}
\newtheorem{fact}[thm]{Fact}
\newtheorem{problem}[thm]{Problem}
\newtheorem{claim}[thm]{Claim}
\newtheorem{obs}[thm]{Observation}

\newtheorem{step}{Step}

\theoremstyle{definition}
\newtheorem*{remark}{Remark}
\newtheorem{dfn}[thm]{Definition}


\newcommand{\ZZ}{\mathbb{Z}}

\newcommand{\cA}{\mathcal{A}}
\newcommand{\cB}{\mathcal{B}}
\newcommand{\cC}{\mathcal{C}}
\newcommand{\cF}{\mathcal{F}}
\newcommand{\cG}{\mathcal{G}}
\newcommand{\cH}{\mathcal{H}}
\newcommand{\cL}{\mathcal{L}}
\newcommand{\cM}{\mathcal{M}}
\newcommand{\cN}{\mathcal{N}}
\newcommand{\cP}{\mathcal{P}}
\newcommand{\cR}{\mathcal{R}}
\newcommand{\cS}{\mathcal{S}}
\newcommand{\cT}{\mathcal{T}}
\newcommand{\cU}{\mathcal{U}}
\newcommand{\cX}{\mathcal{X}}
\newcommand{\cY}{\mathcal{Y}}
\newcommand{\cZ}{\mathcal{Z}}

\newcommand{\Col}{\mathrm{Col}}

\newcommand{\Z}{\mathbb{Z}}

\newcommand{\vdW}{\mathcal{W}}

\newcommand{\bB}{\mathbb{B}}

\newcommand{\bA}{\mathbf{A}}
\newcommand{\bt}{\mathbf{t}}

\renewcommand{\Pr}{\mathbb{P}}
\newcommand{\Ex}{\mathbb{E}}
\newcommand{\Var}{\mathrm{Var}}
\newcommand{\pVar}{\Var'}
\newcommand{\1}{\mathbbm{1}} 

\newcommand{\phat}{\hat{p}}

\newcommand{\eps}{\varepsilon}
\newcommand{\pH}{p_{\cH}}
\newcommand{\Aut}{\mathrm{Aut}}
\newcommand{\Hom}{\mathrm{Hom}}
\newcommand{\cBZ}{\cB_Z}


\newcommand{\kAP}{k-\mathrm{AP}}

\newcommand{\MS}{\cM_{\star}} 
\newcommand{\RS}{\cR_{\star}} 
\newcommand{\RC}{\cR_{\star}^{\star}} 

\newcommand{\betas}{\beta_{s}}

\newcommand{\Seq}{\mathfrak{S}}

\newcommand{\betac}{\beta_{c}}

\newcommand{\con}{\mathrm{con}}

\newcommand{\supp}{\mathrm{supp}}

\newcommand{\br}[1]{\llbracket{#1}\rrbracket}


\renewcommand{\le}{\leqslant}
\renewcommand{\ge}{\geqslant}

\newcommand{\by}[2]{\overset{\mbox{\tiny{#1}}}{#2}}


%
%

\title{Sharp thresholds for Ramsey properties}

\author{Ehud Friedgut}
\address{Faculty of Mathematics and Computer Science, Weizmann Institute of Science, Rehovot 7610001, Israel}
\email{ehud.friedgut@weizmann.ac.il}

\author{Eden Kuperwasser}
\address{School of Mathematical Sciences, Tel Aviv University, Tel Aviv 6997801, Israel}
\email{kuperwasser@mail.tau.ac.il}

\author{Wojciech Samotij}
\address{School of Mathematical Sciences, Tel Aviv University, Tel Aviv 6997801, Israel}
\email{samotij@tauex.tau.ac.il}

\author{Mathias Schacht}
\address{Fachbereich Mathematik, Universit\"at Hamburg, Hamburg, Germany}
\email{schacht@math.uni-hamburg.de}

%
%

\thanks{This research was supported by the grant I-1358-304.6/2016 from the German--Israeli Foundation for Scientific Research and Development (GIF)}

\begin{document}

\begin{abstract}
In this work, we develop a unified framework for establishing sharp threshold results for various Ramsey properties.  To achieve this, we view such properties as non-colourability of auxiliary hypergraphs.  Our main technical result gives sufficient conditions on a sequence of such hypergraphs that guarantee that this non-colourability property has a sharp threshold in subhypergraphs induced by random subsets of the vertices. 

Furthermore, we verify these conditions in several cases of interest.  In the classical setting of Ramsey theory for graphs, we show that the property of being Ramsey for a graph $H$ in $r$ colours has a sharp threshold in $G_{n,p}$, for all $r \ge 2$ and all $H$ in a class of graphs that includes all cliques and cycles.  In the arithmetic setting, we establish sharpness of thresholds for the properties corresponding to van der Waerden's theorem and Schur's theorem, also in any number of colours.
\end{abstract}

\maketitle

\setcounter{tocdepth}{1}
\tableofcontents

\section{Introduction}

A typical result in Ramsey theory states that, given a structure $A$ and an integer $r \ge 2$, every colouring of the elements of any sufficiently `rich' set $V$ with $r$ colours must contain a monochromatic copy of $A$.  The most prominent example is Ramsey's theorem~\cite{Ram30}, which states that, for any graph $H$ and any integer $r \ge 2$, every $r$-colouring of the edges of a sufficiently large complete graph must yield a monochromatic copy of~$H$.  Two other famous instances, which actually predate~\cite{Ram30}, include van der Waerden's theorem~\cite{vdW27} on arithmetic progressions and Schur's theorem~\cite{Sch17} on additive triples.

In the 1980s, researchers have turned to studying Ramsey properties of random sets while trying to better understand what `richness' assumptions a set $V$ needs to satisfy so that it contains a monochromatic copy of a given structure $A$ in every $r$-colouring.  The seminal work of Frankl and R\"odl~\cite{FraRod86} proves the existence of a $K_4$-free graph whose every $2$-colouring contains a monochromatic triangle by considering the binomial random graph $G_{n,p}$ for an appropriately chosen edge density $p$.  Soon afterwards, {\L}uczak, Ruci\'nski, and Voigt~\cite{LucRucVoi92} initiated the systematic study of Ramsey properties of random graphs, which has quickly become one of the central topics in probabilistic combinatorics.

Given a finite set $V$ and a $p \in [0,1]$, we will write $V_p$ to denote the random subset of $V$ obtained by independently retaining each of its elements with probability $p$.  While investigating, for a sequence of sets $V$ whose sizes grow to infinity, the probability that $V_p$ has a given property $\cP$, one naturally encounters threshold phenomena.  We say that a sequence of probabilities $\hat{p}$ is a \emph{threshold} for a property $\cP$ if the following two statements hold:  On the one hand, for any $p \ll \hat{p}$, the probability that $V_p \in \cP$ tends to zero; on the other hand, for any $p \gg \hat{p}$, the same probability tends to one.  These two statements are aptly termed the $0$-statement and the $1$-statement, respectively.  Thresholds have been a central theme in probabilistic combinatorics since its very inception and date back to the seminal paper of Erd\H{o}s and R\'enyi~\cite{ErdRen60} which initiated the systematic study of random graphs.

The celebrated theorem of Bollob\'as and Thomason~\cite{BolTho87} asserts the existence of a threshold for any property of sets that is monotone and nontrivial; this includes all Ramsey properties.  However, this general theorem provides little clue regarding the location of this threshold.  As a result, the main focus of the vast majority of the many works on Ramsey properties of random sets was locating the corresponding threshold.  In particular, the locations of the thresholds for all of the aforementioned Ramsey properties were discovered in a series of papers by Graham, R\"odl, and Ruci\'nski~\cite{GraRodRuc96}, R\"odl and Ruci\'nski~\cite{RodRuc93,RodRuc94,RodRuc95,RodRuc97}, and Friedgut, R\"odl, and Schacht~\cite{FriRodSch10}.  Actually, these papers went one step further and showed that the $0$-statement and the $1$-statement hold already when $p \le c_0 \cdot \hat{p}$ and $p \ge c_1 \cdot \hat{p}$, respectively, for some sequence $\hat{p}$ and positive constants $c_0$ and $c_1$.

It is very natural to ask whether this gap can be reduced even further.  A property is said to have a \emph{sharp threshold} if, for some threshold $\hat{p}$ and every positive $\eps$, the $0$-statement holds for $p \le (1 - \eps) \hat{p}$ whereas the $1$-statement holds for $p \ge (1 + \eps) \hat{p}$;  otherwise, we say that the property has a \emph{coarse threshold}.  The notion of sharpness is closely reminiscent of the physical phenomenon of phase transition, where certain types of matter undergo a profound change in behaviour when their temperature crosses a certain point.  Sharpness of thresholds has been established for several natural graph properties, such as connectivity, the existence of a perfect matching, and Hamiltonicity.  On the other hand, many properties have been shown to have only a coarse threshold.

The presence of a sharp threshold, or a lack thereof, was demystified in the work of Friedgut~\cite{Fri99}.  Roughly speaking, the main result of~\cite{Fri99} states that a property has a coarse threshold if in only if it is `local' in the sense that it correlates with the property of containing a subset of a bounded size.  Friedgut's criterion, and Bourgain's formulation~\cite[Appendix]{Fri99} that extends it to a more general setting, have been an instrumental tool in proving that various properties have a sharp threshold.

Even though more than twenty years have passed since the work of Friedgut was published, only a handful of Ramsey properties have been shown to (or not to) have a sharp threshold:  First, Friedgut and Krivelevich~\cite{FriKri00} showed that for any tree $T$ (bar stars) and for any number of colours $r$ (except for $r=2$ in the case where $T$ is the path of length three), the property that any $r$-colouring of the edges of $G_{n,p}$ contains a monochromatic copy of $T$ has a sharp threshold.  Next, Friedgut, R\"odl, Ruci\'nski, and Tetali~\cite{FriRodRucTet06} established sharpness of the threshold for the corresponding property of the triangle, but only in the case where the number of colours $r$ is equal to two.  Much later,  Friedgut, H\`an, Person, and Schacht~\cite{FriHanPerSch16} proved sharpness of the threshold in the context of van der Waerden's theorem, again only in the two-colour case.  Building on ideas from~\cite{FriHanPerSch16}, Schacht and Schulenburg~\cite{SchSch18} returned to the setting of Ramsey's theorem and managed to extend the result of~\cite{FriRodRucTet06} from triangles to all nearly-bipartite graphs (see below) whereas Schulenburg~\cite{Sch16PhD} showed sharpness of the threshold in the context of Schur's theorem; both these results apply only to the two-colour case.

The main result of this paper is a common generalisation of all the above works, save for~\cite{FriKri00}.  We view Ramsey properties of random subsets as statements about non-$r$-colourability of subhypergraphs that these random subsets induce in the hypergraph $\cH$ that represents copies of a given structure $A$ in the ground set $V$.  Our main result supplies sufficient conditions on a sequence of uniform hypergraphs that guarantee that non-$r$-colourability of the random induced subhypergraph $\cH[V_p]$, and thus the corresponding Ramsey property of random sets, has a sharp threshold.  We postpone the exact statement of our theorem to Section~\ref{sec:main-result} and, in the remainder of this section, present several interesting corollaries of this general result.

\subsection{Graph properties}
\label{sec:graph-properties}

Given graphs $G$ and $H$ and an integer $r \ge 2$, we write $G \to (H)_r$ if every $r$-colouring of the edges of $G$ contains a monochromatic copy of $H$.  For the vast majority of pairs $H$ and $r$, the location of the threshold for the property $G_{n,p} \to (H)_r$ is determined by a simple parameter of $H$, called the \emph{$2$-density}, defined by
\[
  m_2(H) \coloneqq \max \left\{\frac{e_F-1}{v_F-2} : \emptyset \neq F \subseteq H\right\} \cup \left\{\frac{1}{2}\right\}.
\]
The following statement was proved in a series of papers of R\"odl and Ruci\'nski~\cite{RodRuc93, RodRuc94, RodRuc95}.  (The necessity for the special treatment of paths of length three in the case $r = 2$, originally missed by R\"odl and Ruci\'nski, was noticed by Friedgut and Krivelevich~\cite{FriKri00}.)

\begin{thm}[\cite{RodRuc95}]
  \label{thm:RodRuc}
  Let $r \ge 2$ be an integer and suppose that $H$ is a nonempty graph whose at least one component is not a star or (in the case $r=2$) a path of length three.  There exist positive constants $c_0$ and $c_1$ such that
  \[
    \lim_{n \to \infty} \Pr\big(G_{n,p} \to (H)_r\big) =
    \begin{cases}
      1 & \text{if $p \ge c_1 \cdot n^{-1/m_2(H)}$}, \\
      0 & \text{if $p \le c_0 \cdot n^{-1/m_2(H)}$}. \\
    \end{cases}
  \]
\end{thm}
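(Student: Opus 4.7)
The plan is to prove the two halves by different methods, as in the original R\"odl--Ruci\'nski sequence of papers. Throughout, let $F \subseteq H$ be a subgraph with at least $2$ edges attaining $m_2(F) = m_2(H)$; the hypothesis excluding components that are stars (and, when $r = 2$, the path on four vertices) is precisely what ensures $m_2(H) > 1$ and hence that such an $F$ with $e_F \ge 2$ exists, so that $n^{-1/m_2(H)}$ is a meaningful threshold.

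For the \emph{$0$-statement}, I would use a first-moment deletion argument. At $p = c_0 \cdot n^{-1/m_2(H)}$, a direct calculation gives $\Ex[X_F] / \Ex[e(G_{n,p})] = \Theta(c_0^{e_F - 1})$, where $X_F$ counts copies of $F$ in $G_{n,p}$. For $c_0$ sufficiently small, Markov's inequality together with concentration of the edge count yields, with high probability, a subgraph $G' \subseteq G_{n,p}$ that is $F$-free (hence $H$-free) and contains all but a vanishing fraction $\eta$ of the edges. I would then colour $G'$ with colour $1$ and carefully distribute the remaining $\eta \cdot e(G_{n,p})$ edges among colours $2, \ldots, r$, using either a uniformly random colouring of the residual edges combined with the Lov\'asz local lemma, or a greedy argument that exploits the sparsity of the residual set, to ensure that no colour class contains a copy of $H$.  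The result is an $r$-colouring witnessing $G_{n,p} \not\to (H)_r$ with high probability.

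For the \emph{$1$-statement}, the cleanest modern route is via the hypergraph container method. Form the $e_H$-uniform hypergraph $\cH$ on vertex set $E(K_n)$ whose hyperedges are the edge sets of copies of $H$; note that $G_{n,p} \to (H)_r$ is equivalent to non-$r$-colourability of $\cH[E(G_{n,p})]$. The codegree profile of $\cH$ is controlled precisely by $m_2(H)$, so an application of the container lemma produces a family $\cC$ of containers, each an $H$-free-approximate subset of $E(K_n)$, such that every $H$-free subgraph of $K_n$ lies in some $C \in \cC$, $|\cC| \le \exp(C n^{2 - 1/m_2(H)})$, and each $C \in \cC$ has at most $(1 - \delta) \binom{n}{2}$ edges (invoking a supersaturation lemma to bound the Tur\'an density of $H$ away from $1$). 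Any bad $r$-colouring of $E(G_{n,p})$ selects a list of $r$ containers covering $E(G_{n,p})$; there are at most $|\cC|^r = \exp(O(n^{2 - 1/m_2(H)}))$ such lists, and for each fixed list, the probability that $E(G_{n,p})$ is contained in the union is at most $(1 - \delta / r)^{\Omega(n^2 p)}$. Choosing $c_1$ large makes the latter exponent dominate the former, and a union bound finishes.

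The principal obstacle lies in the $1$-statement, and within it in the two external inputs: obtaining the container lemma with the sharp exponent $n^{2 - 1/m_2(H)}$ in $\log|\cC|$, and coupling this with a supersaturation statement strong enough to force each container to have extremal density strictly below $1$ (morally, a sparse version of the Erd\H{o}s--Simonovits removal theorem, historically the content of the KLR conjecture). Both are substantial inputs, and neither was available to R\"odl and Ruci\'nski at the time; their original route proceeds instead by a delicate deletion-plus-sparse-regularity analysis tailored to specific families of $H$. In contrast, the $0$-statement reduces to a standard first-moment and alteration computation, and the only technical care required is in making the final colouring $r$-valid rather than merely $H$-free.
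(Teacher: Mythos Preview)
First, note that the paper does not prove this theorem: it is quoted from \cite{RodRuc95} as background. The only part the paper supplies is Proposition~\ref{prop:robust-non-col-1-statement} (Appendix~\ref{apx:1-statement}), which gives the $1$-statement via containers. So there is nothing to compare on the $0$-statement side.

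Your proposal has genuine gaps in both directions.

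\textbf{The $0$-statement.} The deletion argument does not close. Granting that one can delete an $O(c_0^{e_F-1})$-fraction of the edges to obtain an $H$-free subgraph $G'$, you still have to $(r-1)$-colour the residual set $R$ without a monochromatic $H$. For $r=2$ this means $R$ itself must be $H$-free, but $R$ is determined by which edge you chose to delete from each copy of $F$; nothing prevents these deleted edges from assembling into fresh copies of $H$, and neither a random colouring with the local lemma nor a greedy argument helps, because $R$ is not a random object---it may be highly clustered. The actual R\"odl--Ruci\'nski $0$-statement proceeds by a structural analysis showing that, for small $c_0$, the hypergraph of copies of $H$ in $G_{n,p}$ is a.a.s.\ tree-like in a precise sense that permits an explicit $2$-colouring. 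No first-moment-plus-alteration argument is known to work here. (Incidentally, your claim that the hypothesis forces $m_2(H)>1$ is false: every tree with at least three vertices has $m_2=1$, and the theorem covers many trees.)

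\textbf{The $1$-statement.} Your container argument is essentially the Nenadov--Steger route and is what the paper does in Appendix~\ref{apx:1-statement}, but you invoke the wrong supersaturation. From ``each container has at most $(1-\delta)\binom{n}{2}$ edges'' you cannot conclude that a union of $r$ containers misses $\Omega(n^2)$ edges: two such containers can already cover $K_n$. What is needed is \emph{Ramsey} supersaturation (robust non-$r$-colourability): every $r$-colouring of $K_n$ yields $\Omega(n^{v_H})$ monochromatic copies of $H$. Combined with the fact that each container induces only $\eps n^{v_H}$ copies of $H$, this forces $C_1\cup\dotsb\cup C_r$ to miss $\Omega(n^2)$ edges (see the Claim in Appendix~\ref{apx:1-statement}), and then your union bound goes through. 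Your remark about the KLR conjecture is a red herring here; no sparse regularity or counting lemma is required.
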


In other words, Theorem~\ref{thm:RodRuc} states that, for most pairs $H$ and $r$, the function $n^{-1/m_2(H)}$ is a threshold for the property $G_{n,p} \to (H)_r$.  In the case where $H$ is a tree, Friedgut and Krivelevich~\cite{FriKri00} gave a complete characterisation of those pairs for which the corresponding threshold is coarse (when $H$ is a star or when $r=2$ and $H$ is a path of length three) or sharp (all other pairs $H$ and $r$).  Deciding the sharpness of the threshold for the property $G_{n,p} \to (H)_r$ turned out to be much harder in the case where $H$ contains a cycle.  Here, our knowledge is only fragmentary.  The monumental work of Friedgut, R\"odl, Ruci\'nski, and Tetali~\cite{FriRodRucTet06} established sharpness of the threshold in the case where $H$ is the triangle and $r = 2$ using a very elaborate, long, and technical argument.  The authors of~\cite{FriRodRucTet06} speculated that the threshold is sharp whenever $H$ contains a cycle, for any number of colours, but so far this has been confirmed only when $H$ is nearly-bipartite\footnote{A graph $H$ is \emph{nearly-bipartite} if $\chi(H \setminus e) \le 2$ for some edge $e$ of $H$.} and strictly-$2$-balanced\footnote{A graph $H$ is \emph{strictly $2$-balanced} if $m_2(F) < m_2(H)$ for every strict, nonempty subgraph $F \subseteq H$.} and $r = 2$ in the recent work of Schacht and Schulenburg~\cite{SchSch18}.

We prove that the threshold is sharp for a much broader family of graphs that includes all cliques and for any number of colours.  We call a graph $H$ \emph{collapsible} if, for every edge $e$ of $H$ and every endpoint $a$ of $e$, there is an edge $f$ of $H$ and a homomorphism from $H \setminus f$ to $H \setminus e$ that maps both endpoints of $f$ to $a$.  It is is not difficult to verify (see Section~\ref{sec:graphs}) that every graph that is either complete or nearly-bipartite is collapsible.  Unfortunately, not every graph is collapsible; for example, the Petersen graph is not collapsible (see Appendix~\ref{apx:graph_application}).

\begin{thm}
  \label{thm:main-Ramsey}
  Suppose that $H$ is a strictly $2$-balanced, collapsible graph that is not a forest and $r \ge 2$ is an integer.  There exist positive constants $c_0$ and $c_1$ and a function $c(n)$ satisfying $c_0 \le c(n) \le c_1$ such that, for every positive $\eps$,
  \[
    \lim_{n \to \infty} \Pr\big(G_{n,p} \to (H)_r\big) =
    \begin{cases}
      1 & \text{if $p \ge (1+\eps) c(n) \cdot n^{-1/m_2(H)}$}, \\
      0 & \text{if $p \le (1-\eps) c(n) \cdot n^{-1/m_2(H)}$}. \\
    \end{cases}
  \]
\end{thm}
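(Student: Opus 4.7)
The plan is to translate $G_{n,p} \to (H)_r$ into non-$r$-colourability of an auxiliary hypergraph and then apply the paper's main technical theorem from Section~\ref{sec:main-result}. Set $V_n = E(K_n)$ and let $\cH_n$ be the $|E(H)|$-uniform hypergraph on $V_n$ whose hyperedges are the edge sets of copies of $H$ in $K_n$. Since $E(G_{n,p})$ is distributed as $(V_n)_p$, the property $G_{n,p} \to (H)_r$ holds precisely when $\cH_n[(V_n)_p]$ is not $r$-colourable. Theorem~\ref{thm:RodRuc} already locates a coarse threshold at scale $\hat{p}_n = n^{-1/m_2(H)}$, so the task reduces to verifying the hypotheses of the main theorem for the sequence $(\cH_n)$ at this scale; the conclusion with some $c(n) \in [c_0,c_1]$ then follows.

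The density-type hypotheses will follow from $H$ being strictly $2$-balanced. This condition is exactly what guarantees that, at scale $\hat{p}_n$, the expected number of copies of $H$ containing a fixed edge of $K_n$ tends to a positive constant, while for every proper subgraph $F \subsetneq H$ the corresponding expected count involving $F$ is of strictly smaller order. Hence $\cH_n$ has the near-regular, near-linear structure (hyperedges are sparse and pairwise almost disjoint) required by the abstract framework, with the second-moment and concentration estimates being standard extensions of the calculations in~\cite{RodRuc95}.

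The decisive remaining hypothesis should be a structural condition that lets one locally reroute hyperedges of $\cH_n$, thereby ruling out the `boosters' whose existence, via the Friedgut--Bourgain criterion, would witness a coarse threshold. Collapsibility of $H$ is designed precisely to supply such moves. Given a potential copy of $H$ with one edge $e=\{u,v\}$ missing --- which models an almost-Ramsey configuration incident to the vertex $u$ of $K_n$ --- the hypothesis furnishes an edge $f$ of $H$ together with a homomorphism $H\setminus f \to H\setminus e$ sending both endpoints of $f$ to $u$. In hypergraph language this fold exchanges a rare `pending' hyperedge localised at a specific vertex with an abundant copy sharing the same local neighbourhood, so any would-be booster admits many preimages in a typical $G_{n,p}$ and therefore cannot be concentrated enough to produce a jump. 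This is the symmetry hypothesis I expect the main theorem to ask for.

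The principal obstacle, and the reason the case $r \ge 3$ was beyond previous techniques, is packaging collapsibility into the precise abstract form the main theorem demands and making this work uniformly in $r$. The two-colour arguments of~\cite{FriHanPerSch16,SchSch18,Sch16PhD} exploit features specific to $r=2$; here one really needs that for \emph{every} choice of missing edge $e$ and \emph{every} endpoint $a$ of $e$ a homomorphic fold exists, providing a symmetric stock of local moves that remains valid under arbitrary $r$-colourings of neighbouring copies. The verification that cliques, cycles, and all nearly-bipartite graphs are collapsible is a short combinatorial check deferred to Section~\ref{sec:graphs}; the actual work is the hypergraph-level analysis, which I expect to combine a Bourgain-style first-moment analysis of candidate boosters with a rerouting argument driven by the collapsibility homomorphism.
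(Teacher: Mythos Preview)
Your high-level plan is correct, but you have misidentified what collapsibility is for and have entirely missed one of the five hypotheses of Theorem~\ref{thm:main}.

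Collapsibility is \emph{not} a ``rerouting'' tool for neutralising boosters in the Friedgut--Bourgain sense; that part of the argument is handled inside the proof of Theorem~\ref{thm:main} and does not depend on $H$ at all beyond~\ref{item:ass-symmetry}--\ref{item:ass-choosability}. In the paper, collapsibility is used solely to verify assumption~\ref{item:ass-star-constellation}, the rainbow star-constellation property: one shows (Proposition~\ref{prop:graph_rainbow_sc} and Corollary~\ref{cor:H-collapsible-rainbow-sc}) that $\cH_H$ has this property if and only if every rainbow star $S$ of $H$ admits a rainbow constellation $C$ with a homomorphism $C\to S$, and collapsibility is exactly the condition that lets one build such a $C$. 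This is a supersaturation statement (many rainbow stars force many rainbow constellations), not a local-move statement about boosters.

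More seriously, you never address assumption~\ref{item:ass-choosability}: that a.a.s.\ no bounded-size subset of $G_{n,p}$ induces a hypergraph that fails to be $2$-choosable from lists in $\br{r}$. This is not a density or second-moment fact and does not follow from strict $2$-balancedness alone. The paper handles it via Proposition~\ref{prop:graph_choosability}: every graph $G$ with $m(G)\le m_2(H)$ is $2$-choosable with respect to $H$. The proof is a genuine discharging argument (with separate ad hoc treatments of $K_3$, $K_4$, $C_4$) and is where the non-forest hypothesis is actually used. Without this ingredient your verification of the hypotheses of Theorem~\ref{thm:main} is incomplete, and indeed~\ref{item:ass-choosability} is the assumption that replaces the two-colour-specific machinery of~\cite{FriHanPerSch16,SchSch18,Sch16PhD} and makes the argument go through for all $r\ge 2$.
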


\begin{remark}
  In fact, when $r = 2$, we may replace the assumption that $H$ is collapsible with a seemingly weaker assumption that $H$ is semi-collapsible (see Defintion~\ref{dfn:semi-collapsible}).  However, we did not find an example of a graph that is semi-collapsible and not collapsible.
\end{remark}

It would be extremely interesting to extend Theorem~\ref{thm:main-Ramsey} to a broader class of graphs as well as to verify whether or not the function $c$ from the statement of the theorem has a limit as $n \to \infty$.

\subsection{Arithmetic properties}
\label{sec:arithm-prop}

We say that a set $Y$ of elements of some ambient additive group is \emph{$r$-Schur}, for some integer $r \ge 2$, and write that $Y \in \cS_r$ if every $r$-colouring of the elements of $Y$ admits a monochromatic sum, by which we mean three \emph{distinct} elements $a,b,c \in Y$ such that $a+ b= c$, all coloured the same way.  Schur's theorem~\cite{Sch17} states that, for any fixed $r$, the set $\br{N} \coloneqq \{1, \dotsc, N\}$ is $r$-Schur whenever $N$ is sufficiently large.  Similarly, given integers $k \ge 3$ and $r \ge 2$ and a set $Y$ of elements of some additive group, we say that $Y$ is \emph{$(k, r)$-van der Waerden} and write $Y \in \vdW(k,r)$ if every $r$-colouring of the elements of $Y$ admits a monochromatic $k$-term arithmetic progression.  The well-known theorem of van der Waerden~\cite{vdW27} states that, for all $k$ and $r$, the set $\br{N}$ is $(k, r)$-van der Waerden provided that $N$ is sufficiently large (as a function of $k$ and $r$).

R\"odl and Ruci\'nski~\cite{RodRuc95} proved that, for any $k \ge 3$ and any number of colours $r \ge 2$, the function $N^{-1/(k-1)}$ is a threshold for the property $\vdW(k,r)$ in the set $\br{N}_p$.  Soon afterwards, Graham, R\"odl, and Ruci\'nski~\cite{GraRodRuc96} showed that the function $N^{-1/2}$ is a threshold for the property $\cS_r$, for any $r \ge 2$.  Friedgut, H\`an, Person, and Schacht~\cite{FriHanPerSch16} showed that the former threshold is sharp whereas Schulenburg~\cite{Sch16PhD} proved the analogous statement for the latter threshold.  Both of these results are valid only for random subsets of the cyclic group $\Z_N$ and, crucially, only in the case $r=2$.  Even though the results of~\cite{GraRodRuc96, RodRuc95} are established for random subsets of $\br{N}$, their proofs can be easily adapted to yield analogus statements for subsets of $\Z_N$.  (In fact, the $1$-statements in the non-modular setting imply the $1$-statements in the modular setting.  As for the $0$-statements, the results presented in Section~\ref{sec:list-ramsey-problems} below generalise and strengthen both these results.)

We establish sharpness of the thresholds for $\vdW(k,r)$ and $\cS_r$ in random subsets of $\Z_N$ (in the case of Schur's theorem, we additionally require $N$ to be prime) for all $k \ge 3$ and all $r \ge 2$.  As in~\cite{SchSch18,Sch16PhD}, the reason for replacing $\br{N}$ with $\Z_N$ is that our approach requires the ground set to have a transitive group of symmetries that preserves the structure defining the property (Schur triples or $k$-APs).

\begin{thm}
  \label{thm:main-vdW}
  For all integers $k \ge 3$ and $r \ge 2$, there are constants $c_0 \le c_1$ and a function $c_0 \le c(N) \le c_1$ such that for all $\eps > 0$,
  \[
    \lim_{N \rightarrow \infty} \Pr \big( (\mathbb{Z}_N)_p \in \vdW(k,r) \big) =
    \begin{cases}
      1, & p \ge (1 + \eps) c(N) \cdot N^{-1/(k-1)}, \\
      0, & p \le (1 - \eps) c(N) \cdot N^{-1/(k-1)}.
    \end{cases}
  \]
\end{thm}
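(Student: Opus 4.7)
The plan is to derive Theorem~\ref{thm:main-vdW} as a direct application of the general sharp-threshold result stated in Section~\ref{sec:main-result} to a suitable sequence of auxiliary hypergraphs encoding $k$-APs in $\mathbb{Z}_N$. For each $N$, define the $k$-uniform hypergraph $\cH_N$ whose vertex set is $\mathbb{Z}_N$ and whose edges are precisely the (unordered) $k$-term arithmetic progressions with nonzero common difference. By construction, $(\mathbb{Z}_N)_p \in \vdW(k,r)$ if and only if the induced subhypergraph $\cH_N[(\mathbb{Z}_N)_p]$ admits no proper $r$-colouring, so both the sharp threshold and the function $c(N)$ promised by the theorem would arise directly from the abstract statement once $(\cH_N)_N$ is shown to meet its hypotheses.

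Next, I would verify these hypotheses in turn. Vertex-transitivity is immediate: the translations of $\mathbb{Z}_N$ act transitively on $\mathbb{Z}_N$ and permute the $k$-APs, so $\cH_N$ is vertex-transitive. This is precisely why one must work in $\mathbb{Z}_N$ rather than $\br{N}$, mirroring the reason for passing to $\mathbb{Z}_N$ in~\cite{Sch16PhD,SchSch18}. The density-type hypotheses (balancedness at the right exponent) reduce to arithmetic counting: $\mathbb{Z}_N$ contains $\Theta(N^2)$ $k$-APs, any subprogression of size $j$ fixes $j-1$ of the two free parameters (base point and common difference), and the appropriate analogue of $m_2$ is maximised by the full AP, yielding the exponent $1/(k-1)$. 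These computations, together with the existence of a coarse threshold at $N^{-1/(k-1)}$ from~\cite{RodRuc95}, supply the constants $c_0$ and $c_1$ as two-sided bounds on the threshold.

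The main obstacle, in analogy with the collapsibility hypothesis required for Theorem~\ref{thm:main-Ramsey}, will be verifying the combinatorial collapsibility-type condition demanded in Section~\ref{sec:main-result}. In the graph Ramsey application, this condition is the collapsibility of $H$; in the arithmetic setting, I expect it to amount to the following: for each $k$-AP $P$ and each distinguished element $a \in P$, one should exhibit another $k$-AP $P'$ and a homomorphism of the AP-hypergraph sending $P' \setminus \{v\}$ into $P \setminus \{w\}$ (for some chosen $v \in P'$, $w \in P$) and collapsing a prescribed pair of elements onto $a$. A natural source of such maps is the family of affine maps $x \mapsto \lambda(x-a)+a$, $\lambda \in \mathbb{Z}_N$, which send $k$-APs to $k$-APs and can be tuned to identify any two prescribed elements of $P$; working in $\mathbb{Z}_N$ is essential since dilation then does not leave the ambient set. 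The delicate part is to ensure that these maps do not produce degenerate (i.e.\ constant) progressions, do not collapse unintended pairs, and match verbatim whatever precise form of the hypothesis is used in Section~\ref{sec:main-result}.

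Assuming all hypotheses are verified, the sharp-threshold theorem of Section~\ref{sec:main-result} immediately yields Theorem~\ref{thm:main-vdW}; the function $c(N)$ emerges as a bounded threshold for the non-$r$-colourability of $\cH_N[(\mathbb{Z}_N)_p]$, with $c_0 \le c(N) \le c_1$ being the coarse bounds from~\cite{RodRuc95}. I expect the most delicate point of the whole argument to be the explicit construction and verification of the AP-collapsing homomorphisms, especially making them work uniformly for every $r \ge 2$, since the earlier sharp-threshold argument for van der Waerden's theorem in~\cite{FriHanPerSch16} was confined to the two-colour case precisely because of the analogous combinatorial step.
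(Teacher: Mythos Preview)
Your overall plan—apply Theorem~\ref{thm:main} to the hypergraph of $k$-APs in $\mathbb{Z}_N$, verify its five hypotheses, and read off the sharp threshold—is exactly what the paper does, and your treatment of~\ref{item:ass-symmetry}, \ref{item:ass-non-clusteredness}, and~\ref{item:ass-weak-threshold} is essentially correct.

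However, you have misread what the remaining hypotheses actually are. Collapsibility is not a hypothesis of Theorem~\ref{thm:main}; it is a graph-specific sufficient condition for~\ref{item:ass-star-constellation} (the rainbow star-constellation property), introduced only in Section~\ref{sec:graphs}. For arithmetic progressions, \ref{item:ass-star-constellation} is nearly trivial and is dispatched in a few lines: by Szemer\'edi's theorem combined with Varnavides's averaging argument, every subset of $\mathbb{Z}_N$ of size $\Omega(N)$ contains $\Omega(N^2)$ many $k$-APs, so the set of centres of rainbow stars already induces enough edges to supply the required rainbow constellations directly. No homomorphism or affine-map construction is needed, and the argument is uniform in $r$ with no additional difficulty beyond $r=2$. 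The paper explicitly flags this: \ref{item:ass-star-constellation} ``holds trivially when every set of $\Omega(v(\cH))$ vertices induces $\Omega(e(\cH))$ edges, which is the case in the context of van der Waerden's theorem.''

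The hypothesis you have omitted entirely is~\ref{item:ass-choosability}: that typical bounded-size subsets of $(\mathbb{Z}_N)_p$ induce subhypergraphs that are $2$-choosable from $\br{r}$. This is where the paper actually invests nontrivial work in the arithmetic setting, proving the general Theorem~\ref{thm:choosability-linear-hypergraphs} (that $\cH[V_p]$ is a.a.s.\ $2$-choosable whenever $\cH$ is $s$-uniform with $\Delta_2(\cH)=O(1)$ and $p \le c\, v(\cH)^{-1/(s-1)}$); this result simultaneously supplies the $0$-statement for~\ref{item:ass-weak-threshold} in $\mathbb{Z}_N$ via Theorem~\ref{thm:list-vdW}. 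Your proposal does not address~\ref{item:ass-choosability} at all, and your identification of the ``most delicate point'' as a collapsibility verification is off target: the obstacle you anticipate does not exist here, while the one that does is absent from your outline.
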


\begin{thm}
  \label{thm:main-Schur}
  For any integer $r \ge 2$, there are constants $c_0 \le c_1$ and a function $c_0 \le c(N) \le c_1$ such that for all $\eps > 0$,
  \[
    \lim_{\substack{N \rightarrow \infty \\ \text{is prime}}} \Pr \big( (\mathbb{Z}_N)_p \in \cS_r \big) =
    \begin{cases}
      1, & p \ge (1 + \eps) c(N) \cdot N^{-1/2}, \\
      0, & p \le (1 - \eps) c(N) \cdot N^{-1/2}.
    \end{cases}
  \]
\end{thm}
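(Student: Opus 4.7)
The plan is to derive Theorem~\ref{thm:main-Schur} as a direct specialisation of the main technical result announced in Section~\ref{sec:main-result}, by realising the $r$-Schur property as non-$r$-colourability of a naturally defined auxiliary hypergraph. Concretely, for each prime $N$, let $\cH_N$ be the $3$-uniform hypergraph on vertex set $\Z_N$ whose hyperedges are the Schur triples $\{a, b, c\}$ with $a, b, c$ pairwise distinct and $a + b = c$. By construction, a set $Y \subseteq \Z_N$ lies in $\cS_r$ precisely when $\cH_N[Y]$ is not $r$-colourable. Thus it suffices to verify that the sequence $(\cH_N)_{N \text{ prime}}$ satisfies the hypotheses of the main theorem.

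The first step is the symmetry requirement. Since $N$ is prime, the multiplicative group $\Z_N^\times$ acts on $\Z_N$ by $x \mapsto \lambda x$, and this action preserves Schur triples, because $\lambda a + \lambda b = \lambda c$ whenever $a + b = c$. The action is transitive on $\Z_N \setminus \{0\}$; the single fixed point $0$ is contained in only $O(N)$ hyperedges out of $\Theta(N^2)$ in total, so it can either be deleted or absorbed as a negligible exceptional orbit. This is precisely why the statement of the theorem requires $N$ prime and works in $\Z_N$ rather than in $\br{N}$---ordinary translations fail to preserve $a+b=c$, so primality is needed to secure a transitive group of symmetries.

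The second step is to verify the density and balance hypotheses. Balancing $\Ex[|(\Z_N)_p|] = Np$ against the expected number of Schur triples in $(\Z_N)_p$, which is $\Theta(N^2 p^3)$, gives a threshold at $p = \Theta(N^{-1/2})$, matching the prediction. Because a Schur triple is itself a minimal witness---there is no proper non-trivial sub-structure competing for the threshold---the required strict-balance condition is automatic. The analogue of the collapsibility hypothesis is verified by an explicit affine construction: given a Schur triple $\{a,b,c\}$ and a chosen vertex $v$, one uses the joint additive and multiplicative structure of $\Z_N$ to produce another Schur triple together with a hypergraph homomorphism into the appropriate link that collapses two designated vertices onto $v$.

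The main obstacle is absorbed into the general theorem itself rather than into its application here; the work of Schulenburg~\cite{Sch16PhD}, in the two-colour case, already illustrates what such a verification looks like in detail, and the conceptual novelty is that our framework promotes it to arbitrary $r$. Once the general theorem is invoked, Theorem~\ref{thm:main-Schur} follows: the constants $c_0 \le c_1$ and the function $c(N) \in [c_0, c_1]$ are exactly those produced by the theorem, and their location within the correct interval is anchored by the coarse-threshold result of Graham--R\"odl--Ruci\'nski~\cite{GraRodRuc96}.
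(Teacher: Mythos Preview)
Your overall strategy---verify assumptions \ref{item:ass-symmetry}--\ref{item:ass-star-constellation} for the Schur hypergraph on $\Z_N$ and invoke Theorem~\ref{thm:main}---is exactly what the paper does, and your treatment of symmetry (pass to $\cH' = \cH - \{0\}$ so that the multiplicative action of $\Z_N^\times$ becomes transitive) matches the paper's. But two of the five hypotheses are not actually handled in your write-up.

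First, assumption~\ref{item:ass-choosability} (choosability of typical bounded-sized subsets) is simply absent. This is not something the general theorem absorbs; it is an input. The paper establishes it by proving the stronger Theorem~\ref{thm:list-Schur} (a $0$-statement for the list-Schur property), which in turn is a special case of Theorem~\ref{thm:choosability-linear-hypergraphs} on $2$-choosability of random subhypergraphs of almost-linear hypergraphs. This same theorem also supplies the $0$-statement in~\ref{item:ass-weak-threshold}, which does \emph{not} transfer from~\cite{GraRodRuc96} directly, since $\Z_N$ contains more Schur triples than $\br{N}$.

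Second, your verification of~\ref{item:ass-star-constellation} is not a proof. ``Collapsibility'' is a graph-theoretic notion (Definition in Section~\ref{sec:graph-properties}) whose equivalence with the rainbow star-constellation property is proved via graph homomorphisms and blow-ups (Proposition~\ref{prop:graph_rainbow_sc}); there is no such characterisation for the Schur hypergraph, and your sketched ``affine construction'' does not yield one. The paper's argument (Proposition~\ref{prop:rainbow-sc-Schur}) is genuinely different: it introduces $Y$-prestars and $Y$-preconstellations, fixes a popular sign pattern by pigeonhole, and then runs two successive Cauchy--Schwarz/Jensen steps on the indicator $f(x,a)$ to pass from $\Omega(N^{t+1})$ prestars to $\Omega(N^{3t+2})$ preconstellations. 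This supersaturation step is the substantive content of the Schur application and cannot be replaced by an appeal to collapsibility.
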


\subsection{List Ramsey problems}
\label{sec:list-ramsey-problems}

A main new theme in our analysis that paves the way to proving sharp threshold results in the case where the number of colours is larger than two is a list-colouring generalisation of the Ramsey problem.  The main result of this work views Ramsey results, such as Ramsey's theorem, van der Waerden's theorem, or Schur's theorem mentioned above, as statements about non-$r$-colourability of certain hypergraphs.  In the proof of this result, however, we encounter the more general problem of \emph{list colouring} a hypergraph from a given assignment of lists (of size two) to its vertices, which can be viewed as a list Ramsey problem.  Let us mention that a list colouring variant of Ramsey's theorem that is closely related to the one considered here was recently introduced by Alon, Buci\'c, Kalvari, Kuperwasser, and Szab\'o~\cite{AloBucKalKupSza21} and subsequently studied by Fox, He, Luo, and Xu~\cite{FoxHeLuoXu}.

In this section, we consider threshold phenomena associated with such list Ramsey problems in the context of van der Waerden's and Schur's theorems.  We say that a set $Y$ of elements of an additive group is \emph{list-Schur} if there exists an assignment of two-element lists to the elements of $Y$ such that every colouring of the elements of $Y$ with colours from their lists must admit a monochromatic sum.  We define the notion of a \emph{list-$k$-van der Waerden} sets analogously.  Note that every $2$-Schur (resp.\ $(2,k)$-van der Waerden) set is also list-Schur (resp.\ list-van der Waerden), but the converse is not necessarily true.  Our arguments yield, with very little extra work, the following strengthenings of the $0$-statements of the aforementioned results of~\cite{GraRodRuc96,RodRuc95} that establish the location of the threshold for van der Waerden's and Schur's theorems in random sets of integers.

\begin{thm}
  \label{thm:list-vdW}
  For every integer $k \ge 3$, there is a constant $c$ such that, for every sequence $X$ of sets of elements of an additive group such that $|X| \to \infty$ and every $p \le c \cdot |X|^{-1/(k-1)}$,
  \[
    \Pr\big(X_p \text{ is list-$k$-van der Waerden}\big) \to 0.
  \]
\end{thm}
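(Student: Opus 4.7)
The plan is to view list-$k$-van der Waerden-ness of $X_p$ as failure of $2$-choosability of an auxiliary hypergraph and then prove a first-moment bound on the natural obstruction. Let $\cH(X)$ denote the $k$-uniform hypergraph on vertex set $X$ whose edges are the $k$-term arithmetic progressions contained in $X$. Then $X_p$ is list-$k$-van der Waerden precisely when $\cH(X)[X_p]$ fails to be $2$-choosable, so it suffices to show that this induced sub-hypergraph is $2$-choosable with probability tending to~$1$.

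A convenient sufficient condition for $2$-choosability is that the \emph{$2$-core} of the hypergraph---obtained by iteratively deleting any edge that contains a vertex of degree $1$---is empty. Reversing the peeling gives a greedy list colouring: when a vertex $v$ is re-introduced together with its witnessing edge $e$, the remaining $k-1$ vertices of $e$ are already coloured, and since the list at $v$ has two distinct colours, at least one of them avoids making $e$ monochromatic. Our task thus reduces to showing that with probability tending to~$1$, $\cH(X)[X_p]$ contains no nonempty sub-hypergraph $\cF$ with minimum degree at least $2$.

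I would upper-bound the expected number of such $\cF$ by enumerating its edges $A_1, \ldots, A_e$ in a BFS-like order. The first edge contributes $O(|X|^2)$ embedding choices, since a $k$-AP is specified by its initial term and common difference. Each subsequent edge $A_i$ shares at least one vertex with $A_1 \cup \ldots \cup A_{i-1}$: if it shares two or more, the AP is determined up to an absolute constant depending only on $k$ by any two of its elements (contributing $O(v^2)$ choices); if it shares exactly one, the common difference is free, contributing $O(|X|)$ choices. Multiplying by $p^v$ with $p \le c \cdot |X|^{-1/(k-1)}$ and invoking the minimum-degree constraint $ke \ge 2v$, the contribution of $\cF$ to the expected count takes the form $c^{\Theta(v)} \cdot |X|^{\alpha(\cF)}$, where $\alpha(\cF) \le 0$ is computed from the BFS bookkeeping. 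Summing over all combinatorial shapes of $\cF$ then gives $o(1)$ for $c = c(k)$ sufficiently small.

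The main obstacle is controlling the boundary configurations for which $\alpha(\cF) = 0$, which can arise for small min-degree-$2$ sub-hypergraphs when $k$ is large. These require a finer analysis---either by observing that the ``free common difference'' count is overcounted (since the entire AP must lie inside $X$), or by arguing that such rigid overlap patterns of APs cannot actually be realised within an additive group, or by absorbing the residual $O(1)$ contribution into the choice of $c$. These considerations follow the blueprint of the classical $0$-statement arguments in~\cite{GraRodRuc96, RodRuc95}, which are extended here with essentially no extra work beyond replacing $2$-colourability by $2$-choosability in the target property.
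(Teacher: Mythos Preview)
Your reduction to $2$-choosability of $\cH(X)[X_p]$ is correct, and the implication ``empty $2$-core $\Rightarrow$ $2$-choosable'' does hold (though the greedy should really be phrased as an induction on edges: remove an edge $e$ containing a degree-$1$ vertex $v$, colour the rest, then recolour $v$ if $e$ came out monochromatic). The genuine gap is that passing to the $2$-core gives away too much. Take $k=3$ and $X=\ZZ_5^n$. For any nonzero $v$ and any $w$, every $3$-subset of $\{w,w+v,w+2v,w+3v\}$ is a $3$-AP (for instance $2\cdot 3v=v$ makes $3v$ the midpoint of $0$ and $v$, and $2v+3v=0$ makes $0$ the midpoint of $2v$ and $3v$), so this $4$-set supports a sub-hypergraph of minimum degree $3$. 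There are $\Theta(N^2)$ such sets and each lies in $X_p$ with probability $p^4$, so the expected count is $\Theta(c^4)$; a routine second-moment computation shows that at least one appears with probability bounded away from $0$ as $N\to\infty$. Hence no first-moment bound on minimum-degree-$2$ sub-hypergraphs can give $o(1)$, and none of your proposed remedies helps here: the overlap pattern \emph{is} realised in the group, the count is tight, and shrinking $c$ makes the limiting probability small but not vanishing.

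The paper avoids this by never invoking the $2$-core. It deduces the theorem from a general statement (Theorem~\ref{thm:choosability-linear-hypergraphs}) valid for any $s$-uniform $\cH$ with $\Delta_2(\cH)=O(1)$, whose proof exploits non-$2$-choosability directly: a minimally non-$2$-choosable set must contain either at least $s-1$ ``degenerate'' vertices (those added, in a BFS-type exploration, through edges meeting the already-explored set in $\ge 2$ points) or a \emph{clot}, a $(2s{-}3)$-element nucleus each of whose $(s{-}1)$-subsets extends to two distinct edges via vertices outside the nucleus. The $4$-set above is $2$-choosable and hence simply irrelevant to this analysis; the clot is the real obstruction, and for $s=3$ it has at least five vertices, so its expected count is $O(N^2 p^5)=O(c^5 N^{-1/2})\to 0$.
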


\begin{thm}
  \label{thm:list-Schur}
  There is a constant $c$ such that, for every sequence $X$ of sets of elements of an additive group such that $|X| \to \infty$ and every $p \le c \cdot |X|^{-1/2}$,
  \[
    \Pr\big(X_p \text{ is list-Schur}\big) \to 0.
  \]
\end{thm}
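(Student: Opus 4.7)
The plan is to recast the list-Schur property as failure of $2$-list-colourability of the Schur hypergraph and to apply the first moment method to a combinatorial obstruction.  Let $\cH = \cH(X)$ denote the $3$-uniform hypergraph on vertex set $X$ whose edges are the Schur triples $\{a,b,c\}\subseteq X$ with $a+b=c$.  By definition, $X_p$ is list-Schur if and only if $\cH[X_p]$ is not $2$-list-colourable.

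A peeling argument provides a convenient sufficient condition for $2$-list-colourability: if every non-empty sub-hypergraph of a $3$-uniform hypergraph $\cG$ contains a vertex of degree at most one, then $\cG$ is $2$-list-colourable.  Indeed, peel off such vertices iteratively to produce an ordering and colour in reverse order; when a vertex $v$ is coloured, at most one triple through $v$ has both its other vertices already coloured, so at most one of $v$'s two allowed colours is forbidden.  Taking the contrapositive, if $X_p$ is list-Schur then $\cH[X_p]$ contains a non-empty \emph{$2$-core}, that is, a subset $Z\subseteq X_p$ in which every element of $Z$ is incident to at least two Schur triples supported on $Z$.  A double count shows that such a $Z$ of size $z$ contains at least $\lceil 2z/3\rceil$ Schur triples.

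By Markov's inequality, it suffices to prove $\sum_{z\ge z_0} N(z)\cdot p^{z}\to 0$, where $N(z)$ is the number of $z$-element $2$-cores in $\cH(X)$ and $z_0$ is the minimum possible size of a $2$-core.  The key algebraic input is that in any abelian group a Schur triple is determined by any two of its three elements, so $m$ Schur triples supported on $Z$ impose $m$ linear equations on the elements of $Z$ and confine $Z$ to an affine subvariety of dimension at most $z-m\le \lfloor z/3\rfloor$.  Combining this rigidity with a count of abstract $2$-regular $3$-uniform hypergraph structures on $z$ labelled vertices (via, say, the configuration model) yields $N(z) \le f(z)\cdot |X|^{z/3}$ with $f(z) = z^{O(z)}$.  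Substituting $p \le c\,|X|^{-1/2}$ gives
\[
 N(z)\cdot p^z \le (C'cz^2)^{z}\cdot |X|^{-z/6},
\]
which tends to $0$ as $|X|\to\infty$ for each fixed $z\ge z_0$, and the sum over $z\le |X|^{1/12}$ is $o(1)$ for $c$ sufficiently small.

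The main obstacle lies in the intermediate regime $|X|^{1/12} \le z \le 2c\,|X|^{1/2}$, where the abstract-hypergraph factor $z^{O(z)}$ overwhelms the embedding savings $|X|^{z/3}$ and the naive first-moment estimate becomes ineffective.  Handling it will likely require a supplementary argument: one natural route is to restrict attention to \emph{minimum} $2$-cores (for which the edge count equals $\lceil 2z/3\rceil$ exactly) and to traverse such a minimum $2$-core along a spanning edge-sequence, using that each step sharing two already-placed vertices adds only one new vertex at $O(1)$ cost, while each step sharing only one already-placed vertex adds two new vertices at $O(|X|)$ cost; the min-degree-$2$ constraint forces a substantial fraction of steps to be of the cheap two-share type.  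Finally, the regime $z\ge 2c\,|X|^{1/2}$ is vacuous because $|X_p| \le 2c\,|X|^{1/2}$ holds with high probability.
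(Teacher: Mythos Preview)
Your reduction to $2$-cores is too lossy, and this is where the argument breaks, not merely in the ``intermediate regime'' you flag.  Consider the six-element configuration $a,b,c,d,e,f$ with Schur triples $\{a,b,c\}$, $\{c,d,e\}$, $\{a,e,f\}$, $\{b,d,f\}$ (so $c=a+b$, $e=c+d$, $f=b+d$, and then $a+f=e$ is forced).  This is a $2$-regular $2$-core with $z=6$ and $m=4$, but the four linear equations have rank only~$3$: the configuration is determined by the three free parameters $a,b,d$, hence has $\Theta(|X|^{3})$ embeddings when, say, $X=\ZZ_N$.  At $p=c|X|^{-1/2}$ the expected count is $\Theta(c^{6})$, which is bounded but does not tend to~$0$.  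So your claimed bound $N(z)\le f(z)\,|X|^{z/3}$ is false (the equations need not be independent), and the first moment over $2$-cores is $\Omega(1)$ rather than $o(1)$.  The point is that this particular $2$-core \emph{is} $2$-choosable, so it does not actually witness list-Schur failure; your criterion counts it anyway.

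The paper avoids this by working not with $2$-cores but with \emph{minimally non-$2$-choosable} sets, and proving a structural dichotomy: any such set, when explored by a layered edge-revealing process, must either accumulate at least $s-1=2$ ``degenerate'' vertices (added via edges that share two or more vertices with what has already been revealed) or contain a \emph{clot}, a specific overdetermined local pattern in which some $(2s-3)$-set has every $(s-1)$-subset extending to two distinct edges.  Both alternatives are genuinely rare at $p\asymp |X|^{-1/(s-1)}$ when $\Delta_2(\cH)=O(1)$, and the exploration-based count (which is the rigorous version of the traversal heuristic you sketch) then gives $o(1)$ uniformly over all sizes.  The extra mileage comes precisely from using non-$2$-choosability rather than the $2$-core condition: it is this that forces the additional rigidity your first-moment bound is missing.
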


In fact, both Theorems~\ref{thm:list-vdW} and~\ref{thm:list-Schur} are straightforward consequences of the following more general statement, whose short (two and a half pages) proof is given in Section~\ref{sec:choosability-almost-linear}.

\begin{restatable}{thm}{choosability}
  \label{thm:choosability-linear-hypergraphs}
  Suppose that $s \ge 3$ and that a sequence of $s$-uniform hypergraphs $\cH$ satisfies $\Delta_2(\cH) = O(1)$.  There is a positive $c$ such that, for every $p \le  c \cdot v(\cH)^{-1/(s-1)}$,
  \[
    \Pr\big(\cH[V_p] \text{ is $2$-choosable}\big) \to 1.
  \]
\end{restatable}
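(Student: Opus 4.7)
The plan is to establish 2-choosability of $\cH[V_p]$ via a peeling reduction. The key observation is the following peeling lemma: if $\cH'$ has a vertex $v$ of degree at most $1$ and $\cH' - v$ is 2-choosable, then so is $\cH'$. Indeed, given any list assignment $L$, 2-choose a colouring of $\cH' - v$ from the restriction of $L$ and then extend to $v$ by selecting a colour from $L(v)$; since $v$ lies in at most one edge of $\cH'$ and $|L(v)| = 2$, at most one of the two colours is forbidden by the ``not monochromatic'' constraint on that unique edge, so a valid choice exists. Iterating peels $\cH'$ down to its \emph{$2$-core} $\cC$---the maximal subhypergraph of minimum degree $\ge 2$---and shows that $\cH'$ is 2-choosable whenever $\cC$ is. It thus suffices to show that, with high probability, the 2-core of $\cH[V_p]$ is 2-choosable.

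I would then observe that there is a constant $K = K(s)$ such that every $s$-uniform hypergraph on at most $K$ vertices with minimum degree $\ge 2$ is 2-choosable. Such a $K$ exists by a direct case analysis: for any fixed $s$, there are only finitely many isomorphism types of $s$-uniform hypergraphs on $\le K$ vertices, and each small 2-core can be checked individually---using, for instance, that non-2-colourable $s$-uniform hypergraphs require at least $2^{s-1}$ edges by the classical bound of Erd\H{o}s and Lov\'asz. With $K$ fixed, the task reduces to showing that, for $c$ sufficiently small, with high probability every connected component of the 2-core of $\cH[V_p]$ has at most $K$ vertices.

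The main step is a first-moment bound: for each $v > K$ and each connected minimum-degree-$\ge 2$ isomorphism type $F$ on $v$ vertices, I control the expected number of copies of $F$ appearing in $\cH[V_p]$, which is (number of copies of $F$ in $\cH$) times $p^v$. Exploiting $\Delta_2(\cH) = O(1)$, I bound the number of copies of $F$ in $\cH$ by fixing an ordering $u_1, \ldots, u_v$ of $V(F)$ and placing the images one by one: each ``free'' vertex (whose insertion closes no edge of $F$) contributes at most $v(\cH)$ choices, while each ``closing'' vertex contributes only $O(1)$ choices, because completing an edge whose other $s-1 \ge 2$ vertices are already placed yields at most $\Delta_{s-1}(\cH) \le \Delta_2(\cH) = O(1)$ options. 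Multiplying by $p^v \le c^v \cdot v(\cH)^{-v/(s-1)}$ and summing over $v > K$ and isomorphism types, the total is $o(1)$ once the number of free vertices is sufficiently large relative to $v/(s-1)$.

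The main obstacle is a combinatorial lemma asserting that for every connected minimum-degree-$\ge 2$ hypergraph $F$, one can find a vertex ordering whose number of free vertices is comparable to $v(F)/(s-1)$ or smaller. The naive bound $\max(v-e,2)$ is insufficient: even for the Fano plane one verifies by direct analysis that any ordering has at least three free vertices rather than the naive two, because inserting any vertex into a pair forces either a non-edge or a non-closure at the third position. A hypergraph ear-decomposition, exploiting both connectedness and the minimum-degree-$\ge 2$ condition, should produce the desired linear lower bound on free vertices and thus enable the union bound to absorb the $2^{O(v^s)}$ isomorphism types for $c$ small enough.
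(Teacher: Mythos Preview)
Your peeling reduction to the $2$-core is correct and natural: a minimally non-$2$-choosable set must indeed have minimum degree at least~$2$. But both remaining steps break down. First, Step~2 is either vacuous or false. For $s=3$, the complete $3$-graph $K_5^{(3)}$ has minimum degree~$6$ yet is not even $2$-colourable (any bipartition of five vertices has a part of size $\ge 3$); the Fano plane is another example. The Erd\H{o}s property-B bound only gives a lower bound of $2^{s-1}$ on the number of edges, which does not prevent small $2$-cores from being dense and non-$2$-choosable. Second, Step~3 attempts to prove something too strong. You want to show that a.a.s.\ every $2$-core component has at most~$K$ vertices, but that is not true in general: for $s=3$, the $2$-regular $3$-graph on six vertices with edges $\{1,2,3\},\{1,4,5\},\{2,4,6\},\{3,5,6\}$ has exactly three free vertices in \emph{every} ordering (once the first edge is built, no remaining edge shares two of its vertices), so your first-moment count is of order $N^{3}p^{6}=\Theta(1)$. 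This $2$-core is $2$-choosable and can sit inside $\cH_p$ with probability bounded away from zero, so you cannot forbid all $2$-core components---only the non-$2$-choosable ones. Separately, a sum over the $2^{\Theta(v^{s})}$ isomorphism types on $v$ vertices cannot be absorbed by any factor $c^{v}$; and note that you need the number of free vertices to be \emph{small} relative to $v/(s-1)$, not large.

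The paper avoids all three problems by proving a structural dichotomy for minimally non-$2$-choosable sets via a canonical layered exploration of any connected $S$: either the exploration exposes at least $s-1$ \emph{degenerate} vertices (vertices absorbed through an edge meeting the already-explored part in at least two points), or $S$ contains a \emph{clot}---a $(2s-3)$-vertex nucleus each of whose $(s-1)$-subsets extends to two distinct edges of~$\cH$. Clots are then excluded from $\cH_p$ by a short direct count using only $\Delta_2(\cH)=O(1)$. In the degenerate case one truncates the exploration after the $(s-1)$st degenerate vertex; the resulting witness has at most $(|S|-1)/(s-1)$ non-degenerate layers and a \emph{bounded} number of degenerate steps, so its exploration trace can be encoded with at most $2^{O(|S|)}\cdot N^{(|S|-1)/(s-1)}$ choices. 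Summing over $|S|$ (not over isomorphism types) gives a single-exponential prefactor killed by $c^{|S|}$. This simultaneously supplies the missing ``free-vertex'' bound and targets exactly the non-$2$-choosable obstructions rather than all $2$-cores.
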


\subsection{Organisation}
\label{sec:organisation}

The rest of the paper is organised as follows.  In Section~\ref{sec:main-result}, we introduce our general theorem, which gives sufficient conditions (which we discuss in detail) on a sequence of hypergraphs that guarantee a sharp threshold for the property of $r$-colourability.  Section~\ref{sec:outline} offers an outline of the proof of this general theorem.  At the end of that section, we formulate two key statements that imply our result.

The bulk of the work is spent proving these statements.  Section~\ref{sec:tools} provides some external tools, such as the sharp threshold criterion and the hypergraph container lemma, which we then spend some time honing to our needs.  Subsequently, we prove both statements in Sections~\ref{sec:containers} and~\ref{sec:rainb-stars-const}.

Finally, having wrapped up the proof of the main result, Section~\ref{sec:applications} turns to applying the theorem in the various settings we mentioned previously: for graphs, arithmetic progressions, and Schur triples.

\section{The main result}
\label{sec:main-result}

As we have mentioned above, we will view Ramsey properties of random sets as statements about non-$r$-colourability of random hypergraphs.  Given a hypergraph $\cH$ with vertex set $V$ and a real $p \in [0,1]$, we will denote by $\cH_p$ the subhypergraph of $\cH$ induced by the random set $V_p$.  If the edges of $\cH$ are all copies of a structure $A$ in a set $V$ (for example, the edge sets of all copies of a graph $H$ in $K_n$), then non-$r$-colourability of $\cH_p$ is equivalent to the random set $V_p$ having the corresponding $r$-colour Ramsey property with respect to $A$ (in our example, the property $G_{n,p} \to (H)_r$).  Since we are interested in threshold phenomena, we will almost always consider infinite sequences of hypergraphs whose sizes tend to infinity.

Our main result supplies a sufficient condition on a sequence $\cH$ of uniform hypergraphs that guarantees that the property that $\cH_p$ is not $r$-colourable has a sharp threshold.  This sufficient condition is a conjunction of five assumptions.  We first give a brief overview of these five assumptions and state our result and return to discussing them in detail in the remainder of this section.

\subsection{Overview}
\label{sec:overview}

Suppose that $\cH$ is a sequence of $s$-uniform hypergraphs and let $r \ge 2$ be an integer.  The following function takes centre stage in our considerations:
\[
  \pH \coloneqq \left(\frac{v(\cH)}{e(\cH)}\right)^{1/(s-1)}.
\]
In order to phrase the five assumptions on the sequence $\cH$ that guarantee that non-$r$-colourability has a sharp threshold in $\cH_p$, we need to introduce three simple notions.  A \emph{star} in $\cH$ is a collection of $r-1$ edges that pairwise intersect in a single vertex called the centre of the star.  A \emph{constellation} is a collection of $s$ disjoint stars whose centres form an edge of $\cH$.  A star formed by edges $A_1, \dotsc, A_{r-1}$ and centred at $v$ is \emph{rainbow} if there are distinct colours $i_1, \dotsc, i_{r-1} \in \br{r}$ such that, for each $j$, all vertices of $A_j \setminus \{v\}$ are coloured $i_j$.  A constellation is rainbow if its $s$ constituent stars are rainbow and have the same colour pattern (the set $\{i_1, \dotsc, i_{r-1}\}$). The conjunction of the following five assumptions implies that non-$r$-colourability has a sharp threshold in $\cH_p$:

\begin{enumerate}[label=(A\arabic*)]
\item
  \label{item:ass-symmetry}
  \emph{Symmetry.}
  The hypergraph $\cH$ is \emph{symmetric} in the sense that its group of automorphisms $\Aut(\cH)$ acts transitively on the vertex set of $\cH$.
\item
  \label{item:ass-non-clusteredness}
  \emph{Non-clusteredness.}
  The hypergraph $\cH$ is \emph{non-clustered}, which means (roughly speaking) that $\cH$ satisfies the assumptions of the hypergraph container lemma with density parameter $\pH$. (See Section~\ref{sec:non-clusterdness}.)
\item
  \label{item:ass-weak-threshold}
  \emph{Weak threshold.}
  The function $\pH$ is a threshold for the property that $\cH_p$ is not $r$-colourable.
\item
  \label{item:ass-choosability}
  \emph{Choosability of typical bounded-sized subsets.}
  The random set $V(\cH)_{\pH}$ a.a.s.\ does not contain any set $W$ with $O(1)$ vertices for which $\cH[W]$ is not choosable from $2$-element lists of colours in $\br{r}$. (See Section~\ref{sec:choos-typic-subs}.)
\item
  \label{item:ass-star-constellation}
  \emph{The rainbow star-constellation property.}
  Every partial $r$-colouring of the vertices of $\cH$ that makes a constant proportion of its stars rainbow must make a constant proportion of its constellations rainbow as well. (See Section~\ref{sec:rainb-star-const}.)
\end{enumerate}

\begin{thm}
  \label{thm:main}
  Let $s \ge 3$ and $r \ge 2$ be integers and and let $\cH$ be a sequence of $s$-uniform hypergraphs.  It $\cH$ satisfies assumptions~\ref{item:ass-symmetry}--\ref{item:ass-star-constellation}, then there exists a function $\hat{p} = \Theta(\pH)$ such that the following holds for every positive $\eps$:
  \[
    \Pr\left(\text{$\cH_p$ is $r$-colourable}\right) \to
    \begin{cases}
      1 & \text{if $p \le (1-\eps)\hat{p}$}, \\
      0 & \text{if $p \ge (1+\eps)\hat{p}$}.
    \end{cases}
  \]
\end{thm}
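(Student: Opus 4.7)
The plan is to argue by contradiction using Bourgain's refinement of Friedgut's sharp threshold criterion, which applies because of the symmetry assumption~\ref{item:ass-symmetry}. If the threshold fails to be sharp, then there exist $p^* = \Theta(\pH)$, a constant $\alpha > 0$, and a fixed hypergraph $G$ of bounded size such that, with probability at least $\alpha$, $\cH_{p^*}$ is $r$-colourable while adding a uniformly random copy $G^{\mathrm{rand}}$ of $G$ destroys every $r$-colouring. The task is then to show this is incompatible with assumptions~\ref{item:ass-symmetry}--\ref{item:ass-star-constellation}, since a bounded booster $G^{\mathrm{rand}}$ should not be able to flip such a global property of a sparse random hypergraph.

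First, I would apply the hypergraph container method, justified by~\ref{item:ass-non-clusteredness}, to the $r$-colourability property of $\cH_{p^*}$. This produces a subexponential-size family of container tuples $(C_1, \ldots, C_r)$, each a partition of $V(\cH)$ such that the colour classes of every $r$-colouring of $\cH_{p^*}$ are contained in some tuple, and each container carries comparatively few edges of $\cH$. I expect this to be one of the two key technical statements (the content of Section~\ref{sec:containers}) and to reduce the analysis to fixing such a tuple and conditioning on a random colouring $\chi$ that respects it.

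Second, I fix such a typical $\chi$ and study the effect of adding $G^{\mathrm{rand}}$. Since, by~\ref{item:ass-choosability}, every bounded-size sub-hypergraph of $\cH_{p^*}$ is $2$-choosable, the colouring $\chi$ cannot be destroyed by a purely local, list-type obstruction living inside $G$; any obstruction must involve a rainbow star at some vertex $v$, whose $r-1$ edges lie mostly in $\cH_{p^*}$ with at most a constant number supplied by $G^{\mathrm{rand}}$. A careful counting argument, averaging over the transitive action from~\ref{item:ass-symmetry} and over the choice of anchoring $G^{\mathrm{rand}}$ in $V(\cH)$, would show that on a positive-probability event a constant fraction of the vertices of $\cH$ are simultaneously centres of rainbow stars in $\cH_{p^*}\cup G^{\mathrm{rand}}$ under $\chi$; this is what Section~\ref{sec:rainb-stars-const} is set up to deliver.

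Finally, I invoke~\ref{item:ass-star-constellation} to convert this constant proportion of rainbow stars into a constant proportion of rainbow constellations. Each rainbow constellation forces its central edge $e \in \cH$ to be monochromatic, since each of its $s$ centres must take the unique colour missing from the common rainbow pattern; with positive probability such an $e$ further lies in $\cH_{p^*}$, contradicting the hypothesis that $\chi$ properly $r$-colours $\cH_{p^*}\cup G^{\mathrm{rand}}$. The main obstacle, and the new ingredient genuinely needed for $r \ge 3$ colours, is the rainbow-star production step above: ensuring that the small booster $G^{\mathrm{rand}}$ simultaneously provokes \emph{many} rainbow stars rather than a single isolated failure, which is exactly why the list-colouring assumption~\ref{item:ass-choosability} (for lists of size two) must enter the argument at the heart of the proof.
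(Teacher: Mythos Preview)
Your outline has the right ingredients but assembles them incorrectly in two places that are fatal.

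First, the contradiction you aim for is the wrong one. The Friedgut--Bourgain dichotomy (Proposition~\ref{prop:Friedgut-Bourgain}) gives you not only a booster $B_0$ with many active copies, but also the statement that $Z \cup V_{\eps p}$ remains $r$-colourable with probability at least $1/2$; the paper's contradiction is obtained by showing this second assertion fails. You instead try to find a forced monochromatic edge $e$ inside $Z$ itself, but this is circular: the family of forced edges is determined by the colouring $\chi$, which is determined by $Z$, so you cannot treat ``$e \subseteq Z$'' as an independent random event. The sprinkling set $V_{\eps p}$ is precisely what breaks this circularity---it is independent of $Z$, so Janson's inequality applies cleanly (Lemma~\ref{lemma:extending-fixed-colouring}).

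Second, your container step is applied to the wrong hypergraph and yields the wrong count. Applying containers directly to $\cH$ gives roughly $\exp\big(o(v(\cH))\big)$ container tuples, but the union bound must beat $\exp\big(-\Omega(p\,v(\cH))\big)$, so you need only $\exp\big(o(|Z|)\big)$ partial colourings. The paper achieves this by building an auxiliary hypergraph $\cT$ on $Z \times \br{r}$ whose edges are small ``witnesses'' (transversals of the interfaces $I(B,Z)$) that a restricted colouring is consistent with some proper colouring of an active booster; proper colourings of $Z$ are then independent sets of $\cT$, and the degree estimates of Lemmas~\ref{lemma:sum-deg-cT-squared}--\ref{lemma:sum-codeg-cT-squared} are exactly what force the container family down to size $\exp\big(o(|Z|)\big)$. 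This construction---not a direct container call on $\cH$---is the content of Section~\ref{sec:containers}, and it is also where the rainbow stars are produced (each container's determined colouring forces a vertex in most boosters, via the $2$-choosability assumption~\ref{item:ass-choosability} and Lemma~\ref{lemma:activated-booster}). Section~\ref{sec:rainb-stars-const} then performs the sparse transfer from stars to constellations, which requires a second, separate container argument applied to the hypergraph $\RC$ of rainbow constellations.
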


Verifying assumptions~\ref{item:ass-symmetry} and~\ref{item:ass-non-clusteredness} for our applications of the theorem will be completely straightforward.  Assumption~\ref{item:ass-weak-threshold} is not at all easy to check, but, for the three applications of the main theorem we consider in this work, it had been established by earlier works.  Moreover, it is now standard to derive the $1$-statement in~\ref{item:ass-weak-threshold} from assumption~\ref{item:ass-non-clusteredness} and a property we term \emph{robust non-colourability}, see Section~\ref{sec:weak-threshold}.  Verifying assumption~\ref{item:ass-choosability}, which is closely related to establishing the $0$-statement in~\ref{item:ass-weak-threshold}, takes the most effort.  Assumption~\ref{item:ass-star-constellation} holds trivially when every set of $\Omega(v(\cH))$ vertices induces $\Omega(e(\cH))$ edges, which is the case in the context of van der Waerden's theorem and Ramsey's theorem for bipartite graphs.  In the two remaining applications of the theorem discussed here---Schur's theorem and Ramsey's theorem for nonbipartite, collapsible graphs---establishing this assumption requires a nontrivial argument.  Finally, let us mention that there are natural sequences of hypergraphs, for which one would expect non-$r$-colourability to have a sharp threshold, that satisfy assumptions~\ref{item:ass-symmetry}--\ref{item:ass-choosability}, but fail to satisfy~\ref{item:ass-star-constellation}, see Appendix~\ref{apx:graph_application}.

\subsection{Non-clusteredness}
\label{sec:non-clusterdness}

Given a hypergraph $\cH$ and a set $T \subseteq V(\cH)$, we will denote by $\deg_\cH(T)$ the degree of $T$ in $\cH$, that is,
\[
  \deg_\cH(T) \coloneqq |\{A \in \cH : T \subseteq A\}|.
\]
Further, for an integer $t \ge 1$, we let $\Delta_t(\cH)$ be the maximum degree of a $t$-element set of vertices, defined by
\[
  \Delta_t(\cH) \coloneqq \max\{ \deg_\cH(T) : T \subseteq V(\cH) \text{ and } |T| =t\}.
\]
We are now ready to define the notion of non-clusteredness from assumption~\ref{item:ass-non-clusteredness}.

\begin{dfn}
  A sequence of nonempty, $s$-uniform hypergraphs $\cH$ is called \emph{non-clustered} if
  \[
    \Delta_1(\cH) =  O\left(\frac{e(\cH)}{v(\cH)}\right) \qquad \text{and} \qquad \Delta_t(\cH) \ll \pH^{t-1} \cdot \frac{e(\cH)}{v(\cH)} \quad \text{for $t \in \{2, \dotsc, s-1\}$}.
  \]
\end{dfn}

\begin{fact}
  \label{fact:non-clustered-properties}
  Suppose that $\cH$ is a non-clustered sequence of $s$-uniform hypergraphs.
  \begin{enumerate}[label=(\roman*)]
  \item
    We have
    \[
      \Delta_s(\cH) = 1 = \pH^{s-1} \cdot \frac{e(\cH)}{v(\cH)}.
    \]
  \item
    If $s \ge 3$, then $\pH \to 0$.
  \end{enumerate}
\end{fact}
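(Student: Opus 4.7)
The plan is to handle the two parts in order, using only the defining identity $\pH^{s-1} = v(\cH)/e(\cH)$ and the non-clusteredness inequalities for $t \in \{2,\dotsc,s-1\}$.

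For part (i), since $\cH$ is $s$-uniform and its edges are distinct subsets of $V(\cH)$ of size $s$, any $s$-element set $T \subseteq V(\cH)$ either is an edge (in which case $\deg_{\cH}(T) = 1$) or is not an edge (in which case $\deg_{\cH}(T) = 0$). Hence $\Delta_s(\cH) \le 1$, and as $\cH$ is nonempty some edge exists, so $\Delta_s(\cH) = 1$. The second equality is immediate from the definition of $\pH$:
\[
  \pH^{s-1} \cdot \frac{e(\cH)}{v(\cH)} = \frac{v(\cH)}{e(\cH)} \cdot \frac{e(\cH)}{v(\cH)} = 1.
\]

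For part (ii), I would combine part (i) with the $t=2$ instance of non-clusteredness, which is available precisely because $s \ge 3$. First, for any edge $A \in \cH$ and any pair $T \subseteq A$, we have $T \subseteq A$, so $\deg_{\cH}(T) \ge 1$; since $\cH$ is nonempty, $\Delta_2(\cH) \ge 1$. Non-clusteredness then gives
\[
  1 \le \Delta_2(\cH) \ll \pH \cdot \frac{e(\cH)}{v(\cH)} = \pH \cdot \pH^{-(s-1)} = \pH^{-(s-2)},
\]
where the middle equality uses part~(i). Rearranging, $\pH^{s-2} \ll 1$, and since $s-2 \ge 1$ this forces $\pH \to 0$.

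There is no genuine obstacle here; the only thing to be careful about is pointing out that $\Delta_2(\cH) \ge 1$ (so that the $t=2$ hypothesis of non-clusteredness actually constrains $\pH$ from above), and noting that the range $t \in \{2,\dotsc,s-1\}$ is nonempty exactly when $s \ge 3$, which is why the hypothesis $s \ge 3$ enters in part~(ii).
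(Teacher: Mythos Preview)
Your proof is correct. The paper states this result as a Fact without supplying a proof, treating both parts as immediate consequences of the definitions; your argument is exactly the routine verification one would write out, and there is nothing to add or correct.
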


\begin{fact}
  \label{fact:non-clustered-examples}
  The following sequences of hypergraphs are non-clustered:
  \begin{itemize}
  \item
    The hypergraph of (the edge sets of) copies of a strictly $2$-balanced graph in $K_n$.
  \item
    The hypergraph of $k$-term arithmetic progressions in the cyclic group $\ZZ_N$.
  \item
    The hypergraph of Schur triples in any Abelian group.
  \end{itemize}  
\end{fact}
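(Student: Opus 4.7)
The plan is to handle the three items separately. In each case I would compute $v(\cH)$, $e(\cH)$, $\pH$, and $e(\cH)/v(\cH)$, and then check that $\Delta_1(\cH) = O(e(\cH)/v(\cH))$ and that $\Delta_t(\cH) \ll \pH^{t-1}\cdot e(\cH)/v(\cH)$ for $2 \le t \le s-1$.

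For the $k$-AP hypergraph in $\ZZ_N$, the first step is to note that $v(\cH) = N$ and $e(\cH) = \Theta(N^2)$, since a $k$-AP is determined (up to reversal) by its first term and common difference. Thus $s = k$, $\pH = \Theta(N^{-1/(k-1)})$, and $e(\cH)/v(\cH) = \Theta(N)$. An easy count gives $\Delta_1(\cH) = \Theta(N)$. For $2 \le t \le k-1$, two distinct elements, together with a choice of their positions within a $k$-AP (at most $k^2$ options) and the resulting common difference (at most $\gcd(j-i, N)$ solutions), determine the AP, so $\Delta_t(\cH) = O(1)$. Since $\pH^{t-1}\cdot e(\cH)/v(\cH) = \Theta\bigl(N^{(k-t)/(k-1)}\bigr) \to \infty$ for $t \le k-1$, this item is finished. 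The Schur-triple hypergraph is handled identically: again $v(\cH) = N$, $e(\cH) = \Theta(N^2)$, $\Delta_1(\cH) = \Theta(N)$, and $\Delta_2(\cH) \le 3$, since a pair $\{x, y\}$ extends to a Schur triple $\{x, y, z\}$ only through $z \in \{x+y,\, y-x,\, x-y\}$.

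The graph case is the substantive one. For a strictly $2$-balanced graph $H$ with $v_H$ vertices and $e_H$ edges, the hypergraph $\cH$ of copies of $H$ in $K_n$ is $e_H$-uniform with $v(\cH) = \Theta(n^2)$ and $e(\cH) = \Theta(n^{v_H})$. Writing $m_2(H) = (e_H-1)/(v_H-2)$, we get $\pH = \Theta(n^{-1/m_2(H)})$ and $e(\cH)/v(\cH) = \Theta(n^{v_H-2})$; the bound $\Delta_1(\cH) = \Theta(e(\cH)/v(\cH))$ is then a standard extension count. For $2 \le t \le e_H - 1$, a fixed set $T$ of $t$ edges of $K_n$ forms a subgraph $F$ on $v_F \ge 3$ vertices, and the number of extensions of $F$ to a copy of $H$ is $O(n^{v_H - v_F})$. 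The key step is the algebraic reduction of $n^{v_H - v_F} \ll \pH^{t-1}\cdot e(\cH)/v(\cH)$ to
\[
  (v_F - 2)(e_H - 1) > (t - 1)(v_H - 2),
\]
which is precisely $m_2(F) < m_2(H)$ and is guaranteed by the strictly $2$-balanced hypothesis applied to the subgraph $F \subsetneq H$.

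The main obstacle, if any, is carrying out this algebraic reduction correctly in the graph case and recognising that the strictly $2$-balanced inequality is exactly what is needed; the other verifications amount to straightforward counting estimates.
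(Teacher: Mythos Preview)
Your proposal is correct and is exactly the routine verification the paper has in mind; the statement is presented as a Fact without proof, and your computations supply that proof. One cosmetic point: the inequality $(v_F-2)(e_H-1) > (t-1)(v_H-2)$ is not \emph{precisely} $m_2(F) < m_2(H)$ but rather $(e_F-1)/(v_F-2) < m_2(H)$, which follows from it since $(e_F-1)/(v_F-2) \le m_2(F)$; also remember to note that if the $t$-edge graph $F$ is not isomorphic to a subgraph of $H$ then $\deg_{\cH}(T)=0$ and there is nothing to check.
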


\subsection{Choosability of typical bounded-sized subsets}
\label{sec:choos-typic-subs}

Recall that a hypergraph $\cG$ is \emph{$2$-choosable} from a set $C$ of colours if, for every assignment $L \colon V(\cG) \to \binom{C}{2}$ of $2$-element lists of colours to the vertices of $\cG$, there exists a proper colouring of $\cG$ that assigns to each vertex $v \in V(\cG)$ a colour from its list $L_v$.  Given a hypergraph $\cH$ and integers $k \ge 1$ and $r \ge 2$, define
\[
  \cN_k(\cH) \coloneqq \big\{W \subseteq V(\cH) : \text{$|W| \le k$ and $\cH[W]$ is not $2$-choosable from $\br{r}$}\big\}.
\]
The precise statement of assumption~\ref{item:ass-choosability} is that, for every $k \ge 1$,
\begin{equation}
  \label{eq:ass-choosability}
  \Pr\big(V(\cH)_{\pH} \supseteq W \text{ for some $W \in \cN_k(\cH)$}\big) \to 0.
\end{equation}
In fact, our argument may require that~\eqref{eq:ass-choosability} holds also when we replace $\pH$ with some $p = \Theta(\pH)$.  Fortunately, these two statements are completely equivalent, see Lemma~\ref{lemma:local-coarseness}.

Finally, it is worth pointing out that the assumption on choosability of typical bounded-sized subsets is necessary for non-$2$-colourability of $\cH_p$ to have a sharp threshold at some $\hat{p} = \Theta(\pH)$.  Indeed, if~\eqref{eq:ass-choosability} fails for some constant $k$, then the probability that $V(\cH)_p$ contains some $W \in \cN_k(\cH)$, which clearly makes $\cH_p$ not $2$-colourable, is bounded away from zero for every $p = \Omega(\pH)$.

\subsection{The rainbow star-constellation property}
\label{sec:rainb-star-const}

Our final assumption~\ref{item:ass-star-constellation} has a much less obvious connection to the problem at hand, but it conveniently fits into our framework.

\begin{dfn}
  A collection $A_1, \dotsc, A_k$ of edges of a hypergraph $\cH$ is called a \emph{$k$-star} (or simply a \emph{star}) if there exists a vertex $v$ of $\cH$ such that $A_i \cap A_j = \{v\}$ for every pair of distinct $i, j \in \br{k}$; the vertex $v$ is called the \emph{centre} of the star and $A_1 \cup \dotsb \cup A_k$ is called the \emph{support} of the star.
\end{dfn}

\begin{dfn}
  A collection of stars with pairwise-disjoint supports whose centres form an edge of $\cH$ is called a \emph{constellation}.  The edge induced by the centres of the stars forming a constellation is called the \emph{base} of the constellation.
\end{dfn}

\begin{dfn}
  Suppose that some vertices of a hypergraph $\cH$ are coloured with the elements of $\br{r}$, for some integer $r \ge 2$, and let $i \in \br{r}$ be an arbitrary colour.  We say that an $(r-1)$-star $\{A_j\}_{j \in \br{r} \setminus \{i\}}$ centred at $v$ is \emph{$i$-rainbow} if, for every $j \in \br{r} \setminus \{i\}$, all vertices of $A_j \setminus \{v\}$ are coloured $j$.  A constellation is \emph{$i$-rainbow} if all stars comprising it are $i$-rainbow.  Finally, a star/constellation is \emph{rainbow} if it is $i$-rainbow for some $i \in \br{r}$.
\end{dfn}

A fairly straightforward calculation (Lemma~\ref{lemma:RS-RC-edges}) shows that every non-clustered sequence of $s$-uniform hypergraphs $\cH$ contains $\Theta\big(e(\cH)^{r-1} / v(\cH)^{r-2}\big)$ many $(r-1)$-stars and $\Theta\big(e(\cH)^{s(r-1)+1} / v(\cH)^{s(r-1)}\big)$ constellations of $(r-1)$-stars.  We will say that such a sequence $\cH$ has the rainbow star-constellation property for $r$ colours if every partial $r$-colouring of the vertices of $\cH$ that makes a constant proportion of all its $(r-1)$-stars rainbow also makes a constant proportion of all its constellations rainbow.

\begin{dfn}
  \label{dfn:rainbow-stars-constellations}
  Given an integer $r \ge 2$ and a sequence of $s$-uniform hypergraphs $\cH$, we say that $\cH$ has the \emph{rainbow star-constellation property} for $r$ colours if every partial colouring of $V(\cH)$ with elements of $\br{r}$ that induces $\Omega\big(e(\cH)^{r-1} / v(\cH)^{r-2}\big)$ rainbow stars must also induce $\Omega\big(e(\cH)^{s(r-1)+1} / v(\cH)^{s(r-1)}\big)$ rainbow constellations.
\end{dfn}

\begin{figure}
\centering
\begin{subfigure}[b]{0.6\textwidth}
\centering
\begin{tikzpicture}
    \node (v1) at (2,1) {};
    \node (v2) at (1,0.5) {};
    \node (u1) at (2.2,0) {};
    \node (u2) at (1.1,0) {};
    \node (w1) at (2,-1) {};
    \node (w2) at (1,-0.5) {};
    \node (x) at (0,0) {};

	\foreach \t in {0} {
	
    \begin{scope}[fill opacity=0.3]
    \filldraw[rounded corners=30, fill=gray!70] 
    ($(v1)+(0.9,\t + 0.45)$) 
        -- ($(v2) + (-0.3,\t + 0.3)$) 
        -- ($(x) + (-0.9,\t -0.45)$)
        -- ($(v2) + (0.3,\t -0.3)$)
        -- cycle;
     \filldraw[rounded corners=30, fill=gray!70] 
    ($(u1)+(1,\t + 0)$) 
        -- ($(u2) + (0,\t + 0.4)$) 
        -- ($(x) + (-1,\t + 0)$)
        -- ($(u2) + (0,\t -0.4)$)
        -- cycle;
     \filldraw[rounded corners=30, fill=gray!70] 
    ($(w1)+(0.9,\t -0.45)$) 
        -- ($(w2) + (0.3,\t + 0.3)$) 
        -- ($(x) + (-0.9,\t + 0.45)$)
        -- ($(w2) + (-0.3,\t -0.3)$)
        -- cycle;
    \end{scope}

    \foreach \v in {1,2} {
        \fill[fill=black] ($(\t,0) + (v\v)$) circle (0.1);
        \fill[fill=black] ($(\t,0) + (u\v)$) circle (0.1);
        \fill[fill=black] ($(\t,0)+ (w\v)$) circle (0.1);
    }
    \fill ($(x) + (\t,0)$) circle (0.1);
    }
\end{tikzpicture}
\caption{A star}
\end{subfigure}
\begin{subfigure}[b]{0.6\textwidth}
\centering
\begin{tikzpicture}
    \node (v1) at (1,2) {};
    \node (v2) at (0.5,1) {};
    \node (u1) at (0,2.2) {};
    \node (u2) at (0,1.1) {};
    \node (w1) at (-1,2) {};
    \node (w2) at (-0.5,1) {};
    \node (x) at (0,0) {};

	\foreach \t in {0,3,6} {
	
    \begin{scope}[fill opacity=0.3]
    \filldraw[rounded corners=30, fill=gray!70] 
    ($(v1)+(\t + 0.45,0.9)$) 
        -- ($(v2) + (\t + 0.3,-0.3)$) 
        -- ($(x) + (\t -0.45,-0.9)$)
        -- ($(v2) + (\t -0.3,0.3)$)
        -- cycle;
     \filldraw[rounded corners=30, fill=gray!70] 
    ($(u1)+(\t + 0,1)$) 
        -- ($(u2) + (\t + 0.4,0)$) 
        -- ($(x) + (\t + 0,-1)$)
        -- ($(u2) + (\t -0.4,0)$)
        -- cycle;
     \filldraw[rounded corners=30, fill=gray!70] 
    ($(w1)+(\t -0.45,0.9)$) 
        -- ($(w2) + (\t + 0.3,0.3)$) 
        -- ($(x) + (\t + 0.45,-0.9)$)
        -- ($(w2) + (\t -0.3,-0.3)$)
        -- cycle;
    \end{scope}

    \foreach \v in {1,2} {
        \fill[fill=teal] ($(\t,0) + (v\v)$) circle (0.1);
        \fill[fill=red] ($(\t,0) + (u\v)$) circle (0.1);
        \fill[fill=orange] ($(\t,0)+ (w\v)$) circle (0.1);
    }
    \fill ($(x) + (\t,0)$) circle (0.1);
    }
    
    \begin{scope}[fill opacity=0.3]
    \filldraw[rounded corners=50, fill=gray!70] 
    ($(x)+(-1.4,0)$) 
        -- ($(x) + (3,0.5)$) 
        -- ($(x) + (6 + 1.4,0)$)
        -- ($(x) + (3,-0.5)$)
        -- cycle;
    \end{scope}

\end{tikzpicture}
\caption{A rainbow constellation}
\end{subfigure}

\end{figure}

\subsection{The weak threshold assumption}
\label{sec:weak-threshold}

We conclude this section with a short discussion on how assumption~\ref{item:ass-weak-threshold} might possibly be derived from~\ref{item:ass-non-clusteredness} and \ref{item:ass-choosability} and yet another `supersaturation' assumption on $\cH$ that we term robust non-$r$-colourability.

\begin{dfn}
  Given an integer $r \ge 2$, we say that a sequence of hypergraphs $\cH$ is \emph{robustly non-$r$-colourable} if every $r$-colouring of the vertices of $\cH$ makes a constant proportion of the edges of $\cH$ monochromatic, that is, if every $c \colon V(\cH) \to \br{r}$ satisfies
  \[
    \sum_{i=1}^r e\big(\cH[c^{-1}(i)]\big) = \Omega\big(e(\cH)\big).
  \]
\end{dfn}

The following facts can be derived from the corresponding Ramsey statements using simple averaging arguments and are thus considered folklore.

\begin{fact}
  \label{fact:robustly-noncolourable-examples}
  The following sequences of hypergraphs are robustly non-$r$-colourable:
  \begin{itemize}
  \item
    The hypergraph of (the edge sets of) copies of a fixed nonempty graph in $K_n$.
  \item
    The hypergraph of $k$-term arithmetic progressions in the cyclic group $\ZZ_N$.
  \item
    The hypergraph of Schur triples in any Abelian group.
  \end{itemize}  
\end{fact}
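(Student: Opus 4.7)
The plan is to handle each of the three hypergraph families by combining the relevant Ramsey theorem with a double-counting argument. For the graph case, I would fix $N_0 = R(H, r)$ and count pairs $(S, H')$ with $S \in \binom{V(K_n)}{N_0}$ and $H'$ a monochromatic copy of $H$ inside $K_n[S]$. Ramsey's theorem produces at least one such $H'$ per $S$, while each copy of $H$ in $K_n$ lies in exactly $\binom{n-v(H)}{N_0-v(H)}$ choices of $S$. Dividing yields at least $\binom{n}{N_0}/\binom{n-v(H)}{N_0-v(H)} = \Theta(n^{v(H)})$ monochromatic copies, which is a constant fraction of $e(\cH)$.

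For the $k$-AP case in $\ZZ_N$, I would fix $M = W(k, r)$ and count pairs $(P, Q)$ with $P$ an $M$-term AP in $\ZZ_N$ and $Q \subseteq P$ a monochromatic $k$-AP. There are $\Theta(N^2)$ choices for $P$, van der Waerden's theorem guarantees at least one such $Q$ inside each $P$, and a direct parameter count shows that each $k$-AP lies inside $O(M^2) = O(1)$ distinct $M$-APs (pick a divisor of the common difference, then a shift of the starting point). Dividing gives $\Omega(N^2) = \Omega(e(\cH))$ monochromatic $k$-APs.

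The Schur case in an abelian group $G$ of order $n$ is where the main obstacle lies. The natural windows are the homomorphic images $\{g, 2g, \dotsc, Mg\} \subseteq G$ for $g \in G$ of order at least $M$, since group homomorphisms are precisely the maps that preserve the relation $a+b=c$; translations do \emph{not} preserve this relation. Because this family has only one effective parameter (the generator $g$), the double-counting above yields only $\Omega(n)$ monochromatic Schur triples, whereas we need $\Omega(n^2) = \Omega(e(\cH))$. To close this $n$-factor gap, I would invoke the Frankl--Graham--R\"odl supersaturation theorem for Schur's equation, which gives $\Omega(n^2)$ monochromatic Schur triples in any $r$-colouring of $G$. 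This can be derived from Schur's theorem via an arithmetic removal lemma---if a colouring had only $o(n^2)$ monochromatic Schur triples, one could remove $o(n)$ elements from each colour class to destroy all of them, contradicting Schur's theorem on the set that remains---or, in the case $G = \ZZ_N$, by a direct Fourier-analytic computation combined with a convexity bound on $\sum_i |A_i|^3$.
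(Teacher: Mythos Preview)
The paper does not actually prove this fact; it labels it folklore, derivable from the relevant Ramsey statements by ``simple averaging arguments''.  Your arguments for copies of $H$ in $K_n$ and for $k$-APs in $\ZZ_N$ are exactly those standard averagings and are correct (the bound in the AP case is more honestly $O(M^3)$ rather than $O(M^2)$, but this is immaterial).

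For the Schur case you are right that averaging over the one-parameter family of dilations $i\mapsto ig$ only yields $\Omega(n)$ monochromatic triples, and your proposed remedies via the arithmetic removal lemma or the Frankl--Graham--R\"odl supersaturation theorem are valid.  They are, however, heavier than what one would call ``simple averaging''.  The elementary route---presumably what the paper has in mind---is the classical reduction of Schur to Ramsey: fix an arbitrary linear order $<$ on $G$, colour the edge $\{i,j\}$ of the complete graph on vertex set $G$ (with $i<j$) by the colour of $j-i$, and apply the supersaturated Ramsey theorem for triangles (which is itself the averaging you used in the graph case, with $H=K_3$) to obtain $\Omega(|G|^3)$ monochromatic triangles.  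Each monochromatic triangle $i<j<k$ yields a monochromatic Schur triple $\{j-i,\,k-j,\,k-i\}$, since $(j-i)+(k-j)=k-i$ in $G$; conversely, each Schur triple arises from $O(|G|)$ triangles (the free choice of the smallest vertex $i$).  Dividing gives $\Omega(|G|^2)$ monochromatic Schur triples in any abelian group, with no removal lemma in sight.

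A small caveat on your Fourier suggestion: for $r=2$ in $\ZZ_N$ the non-principal Fourier coefficients of the two colour classes are negatives of each other, and one gets exactly $T(A_1)+T(A_2)=\frac{1}{N}(|A_1|^3+|A_2|^3)\ge N^2/4$ by convexity.  For $r\ge 3$ this cancellation no longer occurs, and the main-term-plus-convexity estimate is not by itself enough to dominate the error, so that sketch would need additional input.
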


If a sequence of hypergraph is non-clustered and robustly non-$r$-colourable, then $O(\pH)$ is an upper bound on any threshold function of non-$r$-colourability.  This fact can be shown by a straightforward adaptation of the argument of Nenadov and Steger~\cite{NenSte16}, who showed that robust $(r+1)$-colourability of the sequence $\cH$ of hypergraphs representing copies of a given graph $H$ in $K_n$ implies non-$r$-colourability of a typical $\cH_p$ for all $p \gg n^{-1/m_2(H)}$. (See also~\cite[Section~8]{BalSam20} for a slightly different version of this argument that shows the exact statement of the proposition below.)  For the sake of completeness, we recreate this argument in Appendix~\ref{apx:1-statement}.

\begin{prop}
  \label{prop:robust-non-col-1-statement}
  Let $r \ge 2$ and $s \ge 2$ be integers. For every non-clustered, robustly non-$r$-colourable sequence $\cH$ of $s$-uniform hypergraphs, there exists a constant $C$ such that, for every $p \ge C\pH$,
  \[
    \Pr\big(\text{$\cH_p$ is $r$-colourable}\big) \le \exp\left(-\Omega\big(p \cdot v(\cH)\big)\right).
  \]
\end{prop}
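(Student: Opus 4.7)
The plan is to follow the container-based strategy of Nenadov and Steger. First, I would invoke a hypergraph container lemma tailored to non-clustered sequences (such as the one developed in Section~\ref{sec:tools}) to produce, for any prescribed constant $\delta > 0$, a family $\cF$ of subsets of $V(\cH)$ with the following three properties: (i) every set independent in $\cH$ lies in some $C \in \cF$; (ii) every $C \in \cF$ satisfies $e(\cH[C]) \le \delta \cdot e(\cH)$; and (iii) $\log |\cF| = O(v(\cH) \cdot \pH)$. The conditions $\Delta_t(\cH) \ll \pH^{t-1} \cdot e(\cH)/v(\cH)$ that come with non-clusteredness are precisely what is needed to run the container machinery with balancing parameter of order $\pH$.

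The bridge to the random model is straightforward: if $\cH_p$ is $r$-colourable, then $V_p = I_1 \sqcup \dotsb \sqcup I_r$ where each $I_j$ is independent in $\cH$, so picking $C_j \in \cF$ with $I_j \subseteq C_j$ produces a tuple $(C_1, \dotsc, C_r) \in \cF^r$ with $V_p \subseteq C_1 \cup \dotsb \cup C_r$. For a fixed such tuple, let $W \coloneqq V(\cH) \setminus \bigcup_j C_j$. I would show $|W| = \Omega(v(\cH))$ using robust non-$r$-colourability. Define $c \colon V(\cH) \to \br{r}$ by $c(v) \coloneqq \min\{j : v \in C_j\}$ on $V(\cH) \setminus W$, and extend $c$ arbitrarily to $W$. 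The hypothesis furnishes a constant $\alpha > 0$ such that $\sum_{j=1}^r e(\cH[c^{-1}(j)]) \ge \alpha \cdot e(\cH)$; on the other hand, $c^{-1}(j) \subseteq C_j \cup W$ gives
\[
  e(\cH[c^{-1}(j)]) \le e(\cH[C_j]) + |W| \cdot \Delta_1(\cH) \le \delta \cdot e(\cH) + |W| \cdot \Delta_1(\cH).
\]
Summing over $j$, choosing $\delta < \alpha/(2r)$, and invoking $\Delta_1(\cH) = O(e(\cH)/v(\cH))$ from non-clusteredness, one concludes $|W| = \Omega(v(\cH))$.

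The proof is then sealed by a union bound. The probability that $\cH_p$ is $r$-colourable is at most
\[
  \sum_{(C_1, \dotsc, C_r) \in \cF^r} (1-p)^{|W|} \le |\cF|^r \cdot \exp\big(-\Omega(p \cdot v(\cH))\big) \le \exp\big(O(v(\cH) \cdot \pH) - \Omega(p \cdot v(\cH))\big).
\]
For $p \ge C \pH$ with $C$ sufficiently large, the negative term dominates and the required bound $\exp(-\Omega(p \cdot v(\cH)))$ drops out.

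The main obstacle I anticipate is securing the log-free container count~(iii): the textbook Saxton--Thomason / Balogh--Morris--Samotij bounds introduce an extra $\log(1/\pH)$ factor in the exponent, which would force $p \gg \pH \log(1/\pH)$ rather than the sharper $p \ge C \pH$. Removing this factor---either by an iterated application of the container lemma with geometrically decreasing parameters or by appealing to a refined version tuned to non-clustered hypergraphs as in Section~\ref{sec:tools}---is the delicate point. Once that is in place, the colouring-extension trick in the second paragraph and the ensuing union bound are both entirely routine.
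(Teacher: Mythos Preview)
Your overall architecture --- containers for independent sets, robust non-$r$-colourability forcing a linear-sized uncovered set $W$, then a union bound --- matches the paper's proof in Appendix~\ref{apx:1-statement} exactly, including the colouring-extension trick you describe in the second paragraph (the paper's Claim proves $|W| \ge \alpha v(\cH)$ by precisely the same computation).

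The one genuine divergence is in the union bound, and you have correctly flagged it as the delicate point, but your two proposed remedies (iterated containers, or a refined lemma from Section~\ref{sec:tools}) are not what the paper does. The paper never bounds $|\cF|$ at all. Instead it sums over \emph{fingerprints}: the $rt$ sets $T^i_j \subseteq I_i \subseteq V_p$ of size at most $c_1 \pH v(\cH)$ that the container lemma hands back. Because each $T^i_j$ is a subset of $V_p$, the event $\bigcup_{i,j} T^i_j \subseteq V_p$ occurs with probability $p^k$ where $k = |\bigcup T^i_j|$, and this probability is independent of the event $V_p \cap W = \emptyset$. The union bound therefore reads
\[
  \sum_{k \le c_2 \pH v(\cH)} \binom{v(\cH)}{k} \cdot 2^{rtk} \cdot p^{k} \cdot (1-p)^{\alpha v(\cH)},
\]
and the factor $p^k$ converts $\binom{v(\cH)}{k}$ into roughly $(v(\cH)p/k)^k$. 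Evaluated at $k = c_2 \pH v(\cH)$, the base is $\Theta(p/\pH)$, so the whole prefactor is $\exp\big(O(\pH v(\cH) \log(p/\pH))\big)$; since $x^{-1}\log x \to 0$, for $p \ge C\pH$ with $C$ large this is beaten by $\exp(-\alpha p v(\cH))$. This is the standard Nenadov--Steger device you cite at the outset, so you may simply have elided it; but as written, your displayed union bound $|\cF|^r \cdot (1-p)^{|W|}$ does carry the spurious $\log(1/\pH)$, and neither of your suggested fixes is the one that actually works here.
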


\begin{remark}
  If $s \ge 3$, then Fact~\ref{fact:non-clustered-properties} implies that $p \cdot v(\cH) \to \infty$ when $p = \Omega(\pH)$.
\end{remark}

It would be worth looking into the following problem, motivated by~\cite[Lemma~6]{NenSte16} and~\cite[Meta-Theorem]{NenPerSkoSte17}.

\begin{problem}
  Does assumption~\ref{item:ass-choosability}, with $r=2$, imply that $\Omega(\pH)$ is a lower bound on the threshold for every non-clustered sequence of $s$-uniform hypergraphs, provided that $s=3$?
\end{problem}

Perhaps one could show this after strengthening the assumption of being non-clustered by further assuming that, for each $t \in \{2, \dotsc, s-1\}$, the inequality $\Delta_t(\cH) \ll \pH^{t-1} \cdot \frac{e(\cH)}{v(\cH)}$ hides some polynomial (in $v(\cH)$) factor.  We remark that the three families of sequences of hypergraphs from the statement of Fact~\ref{fact:non-clustered-examples} all enjoy such strengthened non-clusteredness property.

\section{An outline of the proof}
\label{sec:outline}

Assume that $r \ge 2$ and $s \ge 3$ and suppose that $\cH$ is a sequence of $s$-uniform hypergraphs that satisfies assumptions~\ref{item:ass-symmetry}--\ref{item:ass-star-constellation}.  For the sake of brevity, we will denote the vertex set of $\cH$ by $V$, its cardinality by $N$, and write that a set $W \subseteq V$ is $r$-colourable if and only if the induced subhypergraph $\cH[W]$ is.  Finally, assume to the contrary that the threshold for $V_p$ not being $r$-colourable is coarse.

\subsection{Boosters and dichotomy}
\label{sec:boosters-dichotomy}

Our point of departure will be Friedgut's criterion, in Bourgain's formulation, which tells us that there is a positive constant $c$, a sequence $p$ satisfying
\begin{equation}
  \label{eq:Vp-colourable-bounded-away}
  c \le \Pr\left(V_p \text{ in not $r$-colourable}\right) \le 1- c,
\end{equation}
and a family $\bB$ of constant-sized subsets of $V$ such that
\begin{equation}
  \label{eq:booster-positive-prob}
  \Pr \left( \exists B \in \bB \text{ s.t.\ } B \subseteq V_p\right) > c
\end{equation}
and every $B \in \bB$ is a \emph{booster}.

\begin{dfn}
  Given $\delta > 0$ and $p \in [0,1]$, a set $B \subseteq X$ is called a \emph{$(p,\delta)$-booster} if
  \[
    \Pr \left(V_p \text{ is not $r$-colourable} \mid B \subseteq V_p \right) > \Pr\left(V_p \text{ is not $r$-colourable} \right) + \delta.
  \]
\end{dfn}

Observe that assumption~\ref{item:ass-weak-threshold} and~\eqref{eq:Vp-colourable-bounded-away} imply that $p = \Theta(\pH)$.  Using the symmetry assumption \ref{item:ass-symmetry}, we will expand on Friedgut's criterion and show that a typical sample $Z \sim V_p$ exemplifies a sort of dichotomy, in the following precise sense.

\begin{step}
  \label{step:dichotomy}
  There are constants $\alpha, \eps > 0$, an integer $K$, and $p = \Theta(\pH)$ such that, for any family $\cF \subseteq \binom{\cH}{\le K}$ with
  \begin{equation}
    \label{eq:boosters-typical}
    \Pr \left( \exists B \in \cF \text{ s.t.\ } B\subseteq V_p \right) < \alpha,
  \end{equation}
  there is a set $B_0 \in \binom{V(\cH)}{\le K} \setminus \cF$ such that the following holds.
  For infinitely many values~$N$, the set $Z \sim V_p$ satisfies the following with probability larger than $\alpha$:
  \[
    \Pr \left(Z \cup h(B_0) \text{ is not $r$-colourable} \mid Z \right) > \alpha,
  \]
  where $h$ is taken u.a.r.\ from the set of symmetries of $\cH$, and
  \[
    \Pr \left(Z \cup V_{\eps p} \text{ is not $r$-colourable} \mid Z\right) \le 1/2.
  \]
\end{step}

\begin{remark}
  Note that the second part of the dichotomy, stating that the probability that $Z \cup V_{\eps p}$ is not $r$-colourable is strictly less than one, implies that $Z$ must be $r$-colourable.
\end{remark}

In other words, we are guaranteed the existence of two sets, $B_0$ and $Z$, with two properties that seem at odds. While a positive proportion of the symmetric copies of the constant-sized $B_0$ \emph{interact} with $Z$---that is, $Z$ ceases to be $r$-colourable once we add them---the probability that the random set $V_{\eps p}$ interacts with $Z$ is bounded away from one.  We will call these interacting symmetric copies of $B_0$ \emph{activated} boosters.

Furthermore, we are allowed to trim some undesirable properties from both $B_0$ and $Z$.  In the case of $B_0$, this can be done by requiring that $B_0 \notin \cF$ whereas in the case of $Z$, this can be done as $Z \sim V_p$ satisfies the assertion with probability bounded away from zero.   We should remark at this point that the family $\cF$ we are going to choose will be \emph{symmetric}, i.e., if $B \in \cF$, then $h(B) \in \cF$ for every $h \in \Aut(\cH)$. Therefore, if $B_0 \notin \cF$, then the same is true for every other symmetric copy of it.

Our aim is to use these two statements to get a contradiction.  Specifically, we will show that, with a suitable choice of properties for $B_0$ and $Z$ that exploit assumptions~\ref{item:ass-non-clusteredness}--\ref{item:ass-star-constellation}, the existence of many activated boosters implies that $Z \cup V_{\eps p}$ is not $r$-colourable with probability arbitrarily close to one.  The methodology of the argument will be very much in tune with the previous works of Friedgut, H\'an, Person, and Schacht~\cite{FriHanPerSch16}, Schacht and Schulenburg~\cite{SchSch18}, and Schulenburg~\cite{Sch16PhD}, who established the existence of sharp thresholds for various Ramsey properties in the case where there are only two colours. However, in order to argue for sharpness in three or more colours, we require several novel ideas.

We will present our proof in two rounds. The first round will be a (spoiler alert) failed attempt, which will still show in essence how to utilise the assumption about choosability of typical subsets of bounded size (which we will enforce on $B_0$ via an appropriate choice of $\cF$) to gain structural information on proper $r$-colourings of $Z$.  The second round will address the breaking point of that approach, a very large union bound over $r$-colourings of $Z$, and remedy it using the Hypergraph Container Lemma of Saxton and Thomason~\cite{SaxTho15} and also of Balogh, Morris, and Samotij~\cite{BalMorSam15}. This approach will, in turn, require us to strengthen one of the claims made in the first round.

\subsection{First attempt}
\label{sec:failed-attempt}

We start with Step~\ref{step:dichotomy} and get $Z$ and a family of interacting boosters, all of which are symmetric copies of $B_0$.  Define the hypergraph $\cB$ on the vertex set $V$ whose edges are all symmetric copies of $B_0$, that is, all $h(B_0)$ with $h \in \Aut(\cH)$.  Since $\cH$ is symmetric, $\cB$ is regular and, therefore,
\[
  \Delta_1(\cB) = |B_0| \cdot \frac{e(\cB)}{v(\cH)} \le K\cdot N^{-1} \cdot e(\cB).
\]

We will impose some structural assumptions on $B_0$.  It is natural to require that $B_0$ is $r$-colourable, since otherwise $Z \cup h(B_0)$ would be not $r$-colourable with probability one, which would in turn suggest that the threshold is actually coarse.  (This was the only assumption on $B_0$ imposed in previous works~\cite{FriHanPerSch16, SchSch18, Sch16PhD}.)  We will go one step further.  Instead of ensuring that $B_0$ is only $r$-colourable, we will make use of assumption~\ref{item:ass-choosability} and require that it is $2$-choosable from lists in $\br{r}$.  We may do so as~\ref{item:ass-choosability} implies that the family $\cF$ comprising all non-$2$-choosable subsets of $V$ with at most $K$ vertices satisfies the condition~\eqref{eq:boosters-typical} in Step~\ref{step:dichotomy}.  Note that this property is symmetric, so requiring it from $B_0$ guarantees that it is fulfilled by all $B \in \cB$.

Let $\cBZ \subseteq \cB$ comprise only the copies of $B_0$ that interact with $Z$, that is,
\[
  \cBZ \coloneqq \big\{ B \in \cB : Z \cup B \text{ is not $r$-colourable}\big\}.
\]
The first assertion of Step~\ref{step:dichotomy} translates to $e(\cBZ) > \alpha \cdot e(\cB)$. One of the desirable properties of $Z$ would allow us to find a subfamily $\cBZ' \subseteq \cBZ$ of our activated boosters---satisfying $e(\cBZ') \ge \alpha/2 \cdot e(\cB)$---whose members interact with $Z$ in a very well-behaved manner.  First, if $B \in \cBZ'$, then $B$ and $Z$ are disjoint.  Further, suppose that some activated booster $B \in \cBZ$ does not intersect $Z$.  Since both $Z$ and $B$ are $r$-colourable, the fact that $Z \cup B$ is not means that there is an edge of $\cH[Z \cup B]$ that intersects both $B$ and~$Z$.  Call the set of all such edges the \emph{interface} between $B$ and $Z$.  The second desirable property, which we may impose using assumption~\ref{item:ass-non-clusteredness}, is that, if $B \in \cBZ'$, then each edge in the interface between $B$ and $Z$ has exactly one vertex in $B$ (and the remaining vertices in~$Z$).

\begin{dfn}
  Given $U \subseteq V$, let $\Col(U)$ denote the set of all proper $r$-colourings of $U$, i.e., colourings that do not admit a monochromatic edge.  
\end{dfn}

\begin{dfn}
  A set $U \subseteq V$ \emph{threatens} a vertex $v \in V$ if there are $u_1, \dots, u_{s-1} \in U$ such that $\{u_1, \dotsc, u_{s-1}, v\}$ is an edge of $\cH$.
\end{dfn}

Let $\psi \in \Col(Z)$ be a proper colouring of $Z$.  We say that $\psi$ \emph{forces} a vertex $v \in V$ to the colour $i \in \br{r}$ if $v$ is threatened by $\psi^{-1}(j)$ for every $j \neq i$.  (Equivalently, $\psi$ forces $v$ to the colour $i$ if and only if $v$ is the centre of an $i$-rainbow star.)  The motivation behind the definition is the following fact: If $\psi$ forces $v$ to the colour $i$, then every proper colouring of $Z \cup \{v\}$ that extends $\psi$ must assign the colour $i$ to $v$.  Let $F_i(\psi)$ denote the set of vertices that $\psi$ forced to the colour $i$ and let $F(\psi) \coloneqq F_1(\psi) \cup \dotsb \cup F_r(\psi)$ be the set of forced vertices.

\begin{step}[first attempt]
  \label{step:weak-forcing}
  There is some $\lambda > 0$ such that $|F(\psi)| \ge \lambda N$ for all $\psi \in \Col(Z)$.
\end{step}

We prove this statement in two steps.  First, we show that in order to force a constant fraction of the vertices, it is enough to force at least one vertex in a constant fraction of the activated boosters.  Second, we show that every colouring $\psi \in \Col(Z)$ forces a vertex inside every activated booster.

\begin{lemma}[Many forced boosters $\to$ many forced vertices]
  \label{lem:many_forced_boosters}
  If $F$ is a set which intersects a constant fraction of the activated boosters, then $|F| = \Omega(N)$.
\end{lemma}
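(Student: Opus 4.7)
The plan is to prove this lemma by a direct double-counting argument, exploiting the regularity of the booster hypergraph $\cB$ that comes from the symmetry assumption \ref{item:ass-symmetry}.

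First I would recall the setup: $\cB$ is the hypergraph on $V$ whose edges are all symmetric copies $h(B_0)$ of $B_0$ with $h \in \Aut(\cH)$. Since $\Aut(\cH)$ acts transitively on $V$, the hypergraph $\cB$ is vertex-regular, so each vertex has degree exactly $|B_0| \cdot e(\cB) / N$. Because $|B_0| \le K$, this gives the bound
\[
\Delta_1(\cB) \le \frac{K \cdot e(\cB)}{N}.
\]

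Next, let $\cBZ^F \coloneqq \{B \in \cBZ : B \cap F \neq \emptyset\}$ denote the set of activated boosters meeting $F$. By hypothesis, there is a constant $\gamma > 0$ such that $|\cBZ^F| \ge \gamma \cdot e(\cBZ)$, and by the conclusion of Step~\ref{step:dichotomy} (i.e., $e(\cBZ) \ge \alpha \cdot e(\cB)$) we deduce
\[
|\cBZ^F| \ge \gamma \alpha \cdot e(\cB).
\]
Now double-count pairs $(v, B)$ with $v \in F \cap B$ and $B \in \cBZ^F \subseteq \cB$: every $B \in \cBZ^F$ contributes at least one such pair (by definition of $\cBZ^F$), while any particular $v \in F$ contributes at most $\deg_\cB(v) \le \Delta_1(\cB)$. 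Combining,
\[
\gamma \alpha \cdot e(\cB) \le |\cBZ^F| \le \sum_{v \in F} \deg_\cB(v) \le |F| \cdot \frac{K \cdot e(\cB)}{N},
\]
which rearranges to $|F| \ge \frac{\gamma \alpha}{K} \cdot N = \Omega(N)$, as required.

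No step here looks hard; the only subtlety is to invoke the transitivity of $\Aut(\cH)$ cleanly in order to bound $\Delta_1(\cB)$ by $K \cdot e(\cB)/N$. Every other inequality is a straightforward incidence count.
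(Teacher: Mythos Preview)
Your proof is correct and essentially identical to the paper's: both bound $\Delta_1(\cB)$ using the regularity of $\cB$ (from symmetry) and then double-count incidences between $F$ and the activated boosters. The only cosmetic difference is that the paper phrases the count in terms of the trimmed subfamily $\cBZ'$ (with $e(\cBZ') \ge (\alpha/2)\cdot e(\cB)$) rather than $\cBZ$, which does not affect the argument.
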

\begin{proof}
  Let $\beta > 0$ be some constant such that $F$ intersects $\beta \cdot e(\cB'_Z)$ activated boosters.  Since each vertex belongs to at most $\Delta_1(\cB)$ boosters, we have
  \[
    \beta \cdot \alpha/2 \cdot e(\cB) \le \beta \cdot e(\cBZ') \le |F| \cdot \Delta_1(\cB) \le |F| \cdot K \cdot N^{-1} \cdot e(\cB),
  \]
  which implies that $F$ has $\Omega(N)$ elements.
\end{proof}

\begin{lemma}[Activated booster $\to$ forced booster]
  \label{lemma:activated-booster}
  Every $\psi \in \Col(Z)$ forces at least one vertex in every booster.
\end{lemma}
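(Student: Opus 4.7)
The plan is to argue by contradiction: fix a (well-behaved activated) booster $B \in \cBZ$ and a proper colouring $\psi \in \Col(Z)$, and suppose that $\psi$ forces no vertex of $B$. To each $v \in B$ I will attach the list
\[
L_v = \{i \in \br{r} : v \text{ is not threatened by } \psi^{-1}(i)\},
\]
and observe that ``$v$ is not forced by $\psi$'' is, by the definition of forcing, equivalent to $|L_v| \ge 2$. So under the contradiction hypothesis, every vertex of $B$ comes with a list of at least two admissible colours drawn from $\br{r}$.

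Next I will invoke the structural property that was baked into $B_0$ via the symmetric family $\cF$ in the dichotomy of Step~\ref{step:dichotomy}: since assumption~\ref{item:ass-choosability} allows us to include in $\cF$ all (bounded-size) copies of non-$2$-choosable subsets, every symmetric copy of $B_0$, and in particular $\cH[B]$ itself, must be $2$-choosable from lists in $\br{r}$. Passing to any two-element sublist of each $L_v$ and applying $2$-choosability then produces a proper colouring $\phi \colon B \to \br{r}$ of $\cH[B]$ with $\phi(v) \in L_v$ for every $v \in B$.

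The final step is to glue $\psi$ and $\phi$ into a proper $r$-colouring of $\cH[Z \cup B]$, which contradicts the fact that $B$ is activated. Here the interface property of well-behaved activated boosters enters decisively: every edge of $\cH$ meeting both $Z$ and $B$ has exactly one vertex $v \in B$ together with $s-1$ vertices $u_1, \dots, u_{s-1} \in Z$. If such an edge were monochromatic of colour $c$ under $\psi \cup \phi$, then $\{u_1, \dots, u_{s-1}\} \subseteq \psi^{-1}(c)$ would witness that $v$ is threatened by $\psi^{-1}(c)$, forcing $c \notin L_v$ and contradicting $\phi(v) = c$; monochromatic edges entirely inside $Z$ or entirely inside $B$ are ruled out directly by $\psi \in \Col(Z)$ and the properness of $\phi$. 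There is no real obstacle in the argument itself: the lemma is essentially a clean consequence of the two structural guarantees---$2$-choosability of $\cH[B]$ and the one-vertex-in-$B$ interface---that were arranged for us in advance by the careful choices of $B_0$ and $Z$ inside the dichotomy.
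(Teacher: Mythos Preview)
Your proposal is correct and follows essentially the same approach as the paper's proof: argue by contradiction, use the fact that each unforced vertex has at least two non-threatening colours to invoke $2$-choosability of $\cH[B]$, and then observe that the resulting $\psi \cup \phi$ cannot make any interface edge monochromatic because each such edge has a single vertex in $B$. The paper's argument is slightly terser but logically identical.
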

\begin{proof}
  Suppose that this is not true and there is a booster $B \in \cB'_Z$ whose every vertex is not threatened by at least two colour classes of $\psi$.  Since $B$ is $2$-choosable, we would be able to find a colouring $\phi \in \Col(B)$ using only these non-threatening colours.  However, since $Z \cup B$ is not $r$-colourable, there must be an edge $\{u_1, \dotsc, u_{s-1}, b\} \in \cH$, where $u_1, \dots u_{s-1} \in Z$ and $b \in B$, such that $\psi(u_1) = \dots = \psi(u_{s-1}) = \phi(b)$.  (Indeed, since $B \in \cBZ'$, every edge in the interface between $B$ and $Z$ has exactly one vertex in $B$.) This would mean that, contrary to our assumption, $\psi^{-1}(\phi(b))$ threatens $b$.
\end{proof}

By the pigeonhole principle, one of the forced sets, say $F_i(\psi)$, has $\Omega(N)$ vertices.  We would like to show that this set induces $\Omega(e(\cH))$ edges.  (This is essentially equivalent to $\Omega(e(\cH))$ edges being the base edge of an $i$-rainbow constellation.)  Such a claim is always true when $\cH$ describes a `degenerate' structure; e.g., if $\cH$ is the hypergraph of copies of a bipartite graph or when $\cH$ is the hypergraph of arithmetic progressions.  Unfortunately, some hypergraphs of interest (e.g., the hypergraph of copies of any non-bipartite graph) contain independent sets of cardinality $\Omega(N)$.  However, using the assumption that $\cH$ has the rainbow star-constellation property, we will be able to argue that a.a.s.\ $Z \sim V_p$ has the property that, for every $\psi \in \Col(Z)$, every large set $F_i(\psi)$ must induce many edges.  With foresight, we state a stronger version of this property that extends also to \emph{partial} $r$-colourings of $Z$.

\begin{step}
  \label{step:sparse-rsc}
  For any $\beta > 0$, there exists a $\gamma > 0$ such that a.a.s.\ $Z \sim V_p$ has the following property:  For any partial $r$-colouring $\psi$ of $Z$ and any $i \in \br{r}$, if $|F_i(\psi)| \ge \beta N$, then $F_i(\psi)$ induces at least $\gamma \cdot e(\cH)$ edges.
\end{step}

The proof of this statement will appear in Section~\ref{sec:rainb-stars-const}.  We will just mention that it employs the Hypergraph Container Lemma to transfer the supersaturation statement given by the rainbow star-constellation assumption into the sparse regime.

Note now that if $e \subseteq F_i(\psi)$ is an edge, then $\psi$ cannot be extended to $e$ while staying proper.  This is because the elements of $F_i(\psi)$ must all be coloured $i$, but this makes $e$ monochromatic.  The fact that $F_i(\psi)$ contains $\Omega(e(\cH))$ edges, together with the assumption that the hypergraph $\cH$ is not clustered, makes it extremely unlikely that such an edge will fail to appear in $V_{\eps p}$.  The following estimate follows from Janson's inequality (Theorem~\ref{thm:Janson}).

\begin{lemma}
  \label{lemma:extending-fixed-colouring}
  Suppose that $A \subseteq V$ induces $\Omega(e(\cH))$ edges. The probability that $V_{\eps p}$ avoids all edges of $A$ is bounded from above by $\exp(-\Omega(pN))$.
\end{lemma}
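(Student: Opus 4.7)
The plan is a direct application of Janson's inequality (Theorem~\ref{thm:Janson}) to the random variable $X$ counting the number of edges of the induced subhypergraph $\cH[A]$ that are contained in $V_{\eps p}$. I would estimate the expectation $\mu = \Ex[X]$ and the pair-correlation sum
\[
  \Delta = \sum_{\substack{e, f \in \cH[A]\\ e \ne f,\, e \cap f \ne \emptyset}} \Pr\big(e \cup f \subseteq V_{\eps p}\big),
\]
and then invoke the standard conclusion $\Pr(X = 0) \le \exp\big(-\mu^2/(2(\mu + \Delta))\big)$.

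Recall from the outline that $p = \Theta(\pH)$ and, by definition of $\pH$, that $\pH^{s-1} e(\cH) = N$. Combined with the hypothesis $e(\cH[A]) = \Omega(e(\cH))$, this gives
\[
  \mu = e(\cH[A]) \cdot (\eps p)^s = \Omega\big(e(\cH) \cdot p^s\big) = \Omega\big(N \cdot p^s/\pH^{s-1}\big) = \Omega(pN).
\]
To handle $\Delta$, I would group the summands by the intersection size $t = |e \cap f| \in \{1, \dotsc, s-1\}$. The number of ordered pairs $(e,f)$ of distinct edges of $\cH$ with $|e \cap f| \ge t$ is bounded by $e(\cH) \binom{s}{t} \Delta_t(\cH)$, so the $t$-contribution to $\Delta$ is at most $O\big(e(\cH) \cdot \Delta_t(\cH) \cdot p^{2s-t}\big)$. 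The non-clusteredness bound $\Delta_1(\cH) = O(e(\cH)/N)$ yields a $t = 1$ contribution of order $e(\cH)^2 p^{2s-1}/N = \Theta\big(\mu^2/(pN)\big)$, while for $t \ge 2$ the stronger asymptotic $\Delta_t(\cH) \ll \pH^{t-1}e(\cH)/N$ (where the $\ll$ is built into the definition of non-clusteredness) combined with $\pH = \Theta(p)$ gives a contribution of order $o\big(\mu^2/(pN)\big)$. Summing yields $\Delta = O\big(\mu^2/(pN)\big)$.

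Finally, since $s \ge 3$ forces $\pH \to 0$ by Fact~\ref{fact:non-clustered-properties} and hence $pN \to \infty$, a short case split (on whether $\mu \ge \Delta$ or $\mu < \Delta$) combined with $\mu = \Omega(pN)$ and $\mu^2/\Delta = \Omega(pN)$ shows $\mu^2/(\mu + \Delta) = \Omega(pN)$, and Janson delivers $\Pr(X = 0) \le \exp(-\Omega(pN))$, as desired. There is no real obstacle here: the proof is a routine Janson calculation, and the only thing to watch is that every power of $p$ and $\pH$ balances correctly, something the non-clusteredness definition has been precisely calibrated to guarantee so that all pair-correlation terms are absorbed into $\mu^2/(pN)$ with room to spare.
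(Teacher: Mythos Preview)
Your proposal is correct and takes essentially the same approach as the paper: a direct application of Janson's inequality, bounding the expectation $\mu = \Omega(pN)$ via $\pH^{s-1}e(\cH) = N$ and bounding the pair-correlation sum by grouping over intersection sizes using the non-clusteredness degree bounds. The paper phrases the same computation slightly differently (working with the pseudo-variance $\pVar(\bA)$, which is your $\mu+\Delta$, and taking a minimum over $i\in\br{s}$ rather than separating the $t=1$ term), but the argument is identical in substance.
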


Thus far, we have demonstrated that the probability that a \emph{given} $\psi \in \Col(Z)$ could be extended to $Z \cup V_{\eps p}$ is bounded from above by $\exp(-\Omega(pN))$.  However, in order to get the desired contradiction to the first assertion of Step~\ref{step:dichotomy}, we would like to show that the probability that \emph{some} proper colouring of $Z$ can be extended to $V_{\eps p}$ tends to zero.  To this end, let us take the union bound over all proper colourings.  Alas, the only bound we have at our disposal is $|\Col(Z)| = \exp(O(|Z|)) = \exp(O(pN))$, which is not good enough.  Moreover, we cannot significantly improve the upper bound established by Lemma~\ref{lemma:extending-fixed-colouring}, since the set $V_{\eps p}$ is empty with probability approximately $\exp(-\eps p N)$.

\subsection{Second attempt}
\label{sec:second-attempt}

Since the breaking point of our first attempt was the union bound over all proper colourings of $Z$, we will try to make this union bound more efficient by excluding many colourings in $\Col(Z)$ at once.  To this end, note that if $\psi_0$ were a partial colouring of $Z$ that forced $\Omega(N)$ vertices to some colour, we could still apply Step~\ref{step:sparse-rsc} and Lemma~\ref{lemma:extending-fixed-colouring} to learn that the probability that $\psi_0$ can be extended to $V_{\eps p}$ is at most $\exp(-\Omega(pN))$.  Moreover, if $\psi_0$ cannot be extended to $V_{\eps p}$, then neither can any proper colouring $\psi \in \Col(Z)$ that extends $\psi_0$.  It thus suffices to find a family of $\exp(o(|Z|)) = \exp(o(pN))$ partial colourings of $Z$, each forcing a constant proportion of the vertices to some colour, such that every element of $\Col(Z)$ is an extension of some member of the family.

As was hinted before, we will employ the Container Lemma~\cite{BalMorSam15, SaxTho15} to find such a family.  To this end, we will identify every colouring $\psi \colon Z \to \br{r}$ with the set $\{ (z, \psi(z)) \colon z \in Z\}$ and define a hypergraph $\cT$ on the vertex set $Z \times \br{r}$ such that every $\psi \in \Col(Z)$ will correspond to an independent set of $\cT$.  The Container Lemma will allow us to construct a family $\cC$ of subsets of $Z \times \br{r}$, which we will call \emph{containers}, such that:
\begin{enumerate}[label=(C\arabic*)]
\item
  \label{item:T-containers-1}
  Any independent set of $\cT$, and thus every $\psi \in \Col(Z)$, is contained in some $C \in \cC$.
\item
  \label{item:T-containers-2}
  Every container $C \in \cC$ induces only a small fraction of the edges of $\cT$.
\end{enumerate}

We will say that a set $C \subseteq Z \times \br{r}$ is a \emph{restricted} colouring if every $z \in Z$ has at least one `available' colour in $C$, i.e., if $(z, i) \in C$ for some $i \in \br{r}$. Note that a set $C \subseteq Z \times \br{r}$ that is not a restricted colouring cannot contain any colouring of $Z$.  Given a restricted colouring $C \subseteq Z \times \br{r}$, we define its \emph{determined} colouring $\psi_C$ to be the maximal partial colouring agreed upon by all the colourings contained by $C$.  (That is, $\psi_C(z) = i$ if and only if $i$ is the unique colour such that $(z, i) \in C$.)  Note that~\ref{item:T-containers-1} implies that every $\psi \in \Col(Z)$ extends the determined colouring $\psi_C$ of some $C \in \cC$.  The crux of our argument is showing that one may define $\cT$ in such a way that condition~\ref{item:T-containers-2} implies that, for every $C \in \cC$, the determined colouring $\psi_C$ forces many vertices to some colour, so that we can apply Step~\ref{step:sparse-rsc}.

The precise definition of the hypergraph $\cT$ is somewhat technical, but the idea behind it is fairly straightforward.  Given a booster $B \in \cBZ'$, we will say that colourings $\psi \colon Z \to \br{r}$ and $\phi \colon B \to \br{r}$ are \emph{consistent} if no edge in the interface between $B$ and $Z$ is monochromatic under $\psi \cup \phi$.  Since each such interface edge has exactly one vertex in $B$ (by the definition of $\cBZ'$), two colourings $\psi$ and $\phi$ are consistent if and only if no vertex $b \in B$ is threatened by $\psi^{-1}(\phi(b))$.  A key observation is that the fact that two colourings $\psi$ and $\phi$ as above are consistent always has a small `witness' set in $Z \times \br{r}$.  Indeed, for a given $\phi$, one can certify that $\psi$ is consistent with $\phi$ by specifying the values of $\psi$ on vertices of $Z$ that belong to edges of the interface between $B$ and $Z$.  Such minimal `witness' sets for all $B \in \cBZ'$ and all proper colourings $\phi \in \Col(B)$ are the edges of our hypergraph $\cT$.  The fact that every $B \in \cBZ'$ is an activated booster means that no two colourings $\psi \in \Col(Z)$ and $\phi \in \Col(B)$ are consistent and, consequently, every $\psi \in \Col(Z)$ is an independent set of $\cT$.

We say that a restricted colouring $C$ \emph{inter-activates} a booster $B \in \cBZ'$ if every proper colouring $\phi \in \Col(B)$ is inconsistent with each colouring of $Z$ contained in $C$.  Our definition of $\cT$ guarantees that a restricted colouring $C$ will induce an edge in $\cT$ for every booster $B \in \cBZ'$ which it fails to inter-activate.  (This edge will be a witness to some pair $\phi \in \Col(B)$ and $\psi \subseteq C$ being consistent.)  Since every container $C \in \cC$ induces only a small proportion of all edges of $\cT$, it must therefore inter-activate all but a small fraction of all boosters in $\cBZ'$.  Finally, an argument similar to the one used in the proof of Lemma~\ref{lemma:activated-booster} shows that if $C$ inter-activates a booster $B$, then $\psi_C$ must force at least one vertex in $B$ to some colour.  (A key insight here is realising that the property that $C$ inter-activates a booster depends only on the determined colouring $\psi_C$.)  This implies, by Lemma~\ref{lem:many_forced_boosters}, that $\psi_C$ must force $\Omega(N)$ vertices.

Our final concern is that the family $\cC$ of containers will have at most $\exp(o(|Z|))$ elements.  In order to guarantee this, we will need to demonstrate some control over the edge set of $\cT$.  In fact, it will be sufficient to bound the largest size of an edge of $\cT$ by an absolute constant and to show that $\Delta_1(\cT) \cdot |Z| = O(e(\cT))$ and $\Delta_2(\cT) \cdot |Z| \ll e(\cT)$.  Luckily, this will be possible yet again by trimming further undesirable properties from~$Z$ in Step~\ref{step:dichotomy}.  For example, the expected number of interacting edges between $V_p$ and $B$ is bounded by some constant and consequently the witness sets will also be constant sized. Using this and other related properties of $Z$ and the booster family, we will be able to show that the hypergraph $\cT$ is `well-behaved', which will allow us to use the Container Lemma to derive the following.

\setcounter{step}{1}
\begin{step}[refined]
  \label{step:containers}
  There is a family $\cC$ of $\exp(o(|Z||))$ subsets of $Z \times \br{r}$ with the following properties:
  \begin{enumerate}
  \item Every proper colouring of $Z$ is an extension of the determined colouring $\psi_C$ of some container $C \in \cC$.
  \item For every container $C \in \cC$, the determined colouring $\psi_C$ forces $\Omega(N)$ vertices.
  \end{enumerate}
\end{step}
Our union bound argument is now brought back to life and the proof is finally settled.

\subsection{Finalising the argument}

Our goal for the next three sections is to tie up the loose ends of the proof, by filling in the gaps in the outline presented above.  Recall that we want to show that, assuming non-$r$-colourability does not have a sharp threshold in $\cH_p$, the following statements hold for $Z \sim V_p$ with probability bounded away from zero:
\begin{enumerate}[label=(\Roman*)]
\item
  \label{item:finalising-1}
  For some constant $\eps > 0$, the hypergraph $\cH\big[Z \cup V_{\eps p}\big]$ is $r$-colourable with probability at least $1/2$.
\item
  \label{item:finalising-2}
  Each proper $r$-colouring of $Z$ extends one of $\exp(o(|Z|))$ partial colourings, each of which forces $\Omega(N)$ elements in $V$ to some colour.
\item
  \label{item:finalising-3}
  Each of our partial colourings admits a set of $\Omega(e(\cH))$ edges whose all vertices are forced to the same colour.
\end{enumerate}
As we argued above, this would lead to a contradiction.  Indeed, each individual partial colouring from the family cannot be extended to any set that contains one of the $\Omega(e(\cH))$ `forced' edges.  By Lemma~\ref{lemma:extending-fixed-colouring}, the probability that $V_{\eps p}$ omits all these edges is at most $\exp\big(-\Omega(pN)\big)$.  A union bound over all partial colourings yields that $\cH\big[Z \cup V_{\eps p}\big]$ is not $r$-colourable with probability tending to one.

We prove statements~\ref{item:finalising-1}, \ref{item:finalising-2}, and~\ref{item:finalising-3} over the course of Sections~\ref{sec:tools}, \ref{sec:containers}, and~\ref{sec:rainb-stars-const}.  More precisely, Section~\ref{sec:tools} contains the formal statements of Friedgut's sharp threshold criterion, along with a routine corollary thereof (Step~\ref{step:dichotomy} in the proof outline), as well as the statements of the Hypergraph Container Lemma and Janson's inequality, of which Lemma~\ref{lemma:extending-fixed-colouring} is a simple corollary.

Next, in Section~\ref{sec:containers}, we show how the abundance of boosters can be used to construct the efficient family of partial colourings (the refined version of Step~\ref{step:containers} in the proof outline).  In order to formally phrase this result using the notions from assumption~\ref{item:ass-star-constellation}, we shift the terminology from forced vertices to (centres of) rainbow stars.  The result is the following theorem.

\begin{restatable}{thm}{manyrainbowstars}
  \label{thm:many-rainbow-stars}
  Suppose that $s \ge 3$ and $r \ge 2$ and let $\cH$ be a sequence of $s$-uniform hypergraphs that satisfies assumptions~\ref{item:ass-symmetry}--\ref{item:ass-choosability}. If non-$r$-colourability does not have a sharp threshold in $\cH_p$, then there exist a~constant $\eps > 0$ and a~subsequence $p = \Theta(\pH)$ such that, letting $Z \sim V(\cH)_p$, the following holds with probability at least~$\eps$:
  \begin{enumerate}[label=(\roman*)]
  \item
    \label{item:many-rainbow-stars-eps-p}
    The hypergraph $\cH\big[Z \cup V(\cH)_{\eps p}\big]$ is $r$-colourable with probability at least $1/2$.
  \item
    \label{item:many-rainbow-stars-containers}
    There exists a family $\Psi$ of $\exp\big(o(|Z|)\big)$ partial $\br{r}$-colourings of $Z$ such that
    \begin{enumerate}[label=(\alph*)]
    \item
      \label{item:rainbow-stars-cont-extending}
      every proper $\br{r}$-colouring of $\cH[Z]$ extends some partial colouring in $\Psi$ and
    \item
      \label{item:rainbow-stars-cont-forcing}
      for every colouring in $\Psi$, some $\Omega\big(v(\cH)\big)$ vertices of $\cH$ are centres of rainbow stars.
    \end{enumerate}
  \end{enumerate}
\end{restatable}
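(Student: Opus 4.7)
The plan is to execute the strategy sketched in Section~\ref{sec:outline}, combining Friedgut's sharp threshold criterion with the Hypergraph Container Lemma applied to an auxiliary hypergraph that encodes compatibility between colourings of $Z$ and colourings of the boosters. To start, I would invoke Bourgain's version of Friedgut's criterion to obtain a constant $c>0$, a subsequence of $p$ with $p=\Theta(\pH)$ on which $\Pr\bigl(V(\cH)_p \text{ is not } r\text{-colourable}\bigr)\in[c,1-c]$, and a symmetric family of bounded-size boosters whose presence in $V(\cH)_p$ has probability bounded below. Using \ref{item:ass-symmetry} to restrict to a single $\Aut(\cH)$-orbit, and enlarging the set $\cF$ of forbidden configurations so as to exclude in particular all non-$2$-choosable bounded-size subsets (permitted by~\ref{item:ass-choosability}) and all bounded-size $B$ whose expected number of $\cH$-edges meeting both $B$ and $V(\cH)_p$ in at least two vertices is not negligible (permitted by~\ref{item:ass-non-clusteredness}), I would derive Step~\ref{step:dichotomy}: on an event of probability at least $\eps$, a constant fraction $\cBZ'\subseteq\cB$ of the symmetric copies of some fixed $2$-choosable $B_0$ interacts with $Z$ through an interface consisting solely of edges with exactly one vertex in $B_0$, while $\Pr\bigl(\cH[Z\cup V(\cH)_{\eps p}] \text{ is not } r\text{-colourable}\mid Z\bigr)\le 1/2$; the latter is already conclusion~(i).

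Next, on the vertex set $Z\times\br{r}$ I would build a hypergraph $\cT$ whose edges are minimal witnesses of consistency between a colouring $\phi\in\Col(B)$ of some $B\in\cBZ'$ and a subset of $Z\times\br{r}$: since each interface edge meets $B$ in exactly one vertex, consistency of $\phi$ with a labelled subset $C$ reduces to the condition that, for every $b\in B$, no $(s-1)$-set in $Z$ completing $b$ to an $\cH$-edge is labelled monochromatically in colour $\phi(b)$ within $C$. With this definition, every proper colouring of $Z$, viewed as a subset of $Z\times\br{r}$, is independent in $\cT$, and every $B\in\cBZ'$ that a set $C\subseteq Z\times\br{r}$ fails to \emph{inter-activate} contributes at least one edge of $\cT$ inside $C$. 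Trimming $Z$ further through $\cF$ to control the maximum vertex degree and codegree of booster-interface patterns, I would verify that the edges of $\cT$ have bounded size and that $\Delta_1(\cT)\cdot|Z|=O(e(\cT))$ and $\Delta_2(\cT)\cdot|Z|\ll e(\cT)$; the Hypergraph Container Lemma then yields a family $\cC$ of at most $\exp\bigl(o(|Z|)\bigr)$ containers covering all independent sets of $\cT$ and each inducing only a small fraction of $e(\cT)$.

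Extracting from each container $C\in\cC$ the \emph{determined} colouring $\psi_C$ (defined by $\psi_C(z)=i$ iff $(z,i)$ is the unique labelled copy of $z$ in $C$) and setting $\Psi=\{\psi_C:C\in\cC\}$, property~(a) is immediate because every $\psi\in\Col(Z)$ lies in some container and therefore extends $\psi_C$. For~(b), I would mimic the proof of Lemma~\ref{lemma:activated-booster}: every $B$ that $C$ inter-activates must contain a vertex forced by $\psi_C$, for otherwise every $b\in B$ would have at least two colours of $\br{r}$ not threatened by $\psi_C$, and $2$-choosability of $B$ (inherited from $B_0$) would produce $\phi\in\Col(B)$ together with an extension $\psi\subseteq C$ of $\psi_C$ such that $(\phi,\psi)$ is consistent, contradicting inter-activation. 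Since $C$ inter-activates all but a tiny fraction of $\cBZ'$, Lemma~\ref{lem:many_forced_boosters} upgrades ``at least one forced vertex per inter-activated booster'' to $\Omega\bigl(v(\cH)\bigr)$ forced vertices, i.e.\ $\Omega\bigl(v(\cH)\bigr)$ centres of rainbow stars, which is precisely~(b).

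The main obstacle, in my view, lies in the construction of $\cT$ and the verification of its degree parameters. Since its edges are indexed by pairs $(B,\phi)$ with $B\in\cBZ'$ and $\phi\in\Col(B)$, controlling $\Delta_2(\cT)$ requires bounding, uniformly over typical $Z$, the number of such pairs whose witness set contains any prescribed pair of vertices of $Z\times\br{r}$; this count is governed by how small substructures of $\cH$ intersect $Z$, and the corresponding exceptional events must be absorbed into $\cF$ at the dichotomy step using~\ref{item:ass-non-clusteredness} together with standard concentration for $V(\cH)_p$. The sharper $\ll$ codegree bound (rather than a mere $O$) is exactly what forces the container family to have size $\exp\bigl(o(|Z|)\bigr)$ rather than $\exp\bigl(O(|Z|)\bigr)$, and is where the bulk of the technical work will sit.
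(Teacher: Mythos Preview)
Your proposal is essentially correct and follows the paper's approach closely: Friedgut--Bourgain dichotomy with a $2$-choosable booster $B_0$, an auxiliary hypergraph $\cT$ on $Z\times\br{r}$ whose independent sets contain all proper colourings, and containers yielding the family $\Psi$. One small correction of mechanism: the interface regularity conditions (disjointness from $Z$, interface edges meeting $B$ in exactly one vertex, bounded interface size) and the degree/codegree control on $\cT$ are \emph{not} enforced through the family $\cF$ of forbidden boosters---$\cF$ only excludes bounded-size configurations and is used solely to ensure $B_0$ is $2$-choosable---but rather by showing that a \emph{typical} $Z\sim V_p$ has these properties (first-moment bounds on the number of bad boosters, and $\ell^2$ bounds on the degree sequence of $\cT$ combined with a cleanup step passing to a subhypergraph $\cT'\subseteq\cT$), and then absorbing the exceptional $Z$'s into the ``probability at least $\eps$'' clause.
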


Finally, Section~\ref{sec:rainb-stars-const} provides a proof of the following sparse analogue of the rainbow star-constellation property (Step~\ref{step:sparse-rsc} in the proof outline).  We remark again that this theorem is trivial if the hypergraph $\cH$ is `degenerate' in the sense that every subset of $\Omega(N)$ vertices induces $\Omega(e(\cH))$ edges.  As a result, we are spared from a bulk of the proof when $\cH$ is the hypergraph of copies of a bipartite graph or the hypergraph of arithmetic progressions of a prescribed length.

\begin{restatable}{thm}{rainbowstarsconstellations}
  \label{thm:rainbow-stars-constellations}
  Suppose that $s \ge 3$, $r \ge 2$, and $\eps > 0$. Let $\cH$ be a sequence of $s$-uniform hypergraphs that satisfies assumptions \ref{item:ass-non-clusteredness} and~\ref{item:ass-star-constellation} and suppose that $Z \sim V(\cH)_p$ for some $p = \Theta(\pH)$. With probability at least $1-\eps$, for every partial colouring $\psi$ of $Z$ with elements of $\br{r}$, if $\Omega\big(v(\cH)\big)$ vertices of $\cH$ are centres of rainbow stars, then $\Omega\big(e(\cH)\big)$ edges of $\cH$ are bases of rainbow constellations.
\end{restatable}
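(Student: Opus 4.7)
My plan is to transfer the dense deterministic supersaturation given by assumption~\ref{item:ass-star-constellation} to the sparse random setting by means of the Hypergraph Container Lemma. First I would set up two auxiliary hypergraphs on the \emph{coloured vertex set} $V(\cH) \times \br{r}$. Each rainbow star---specified by a centre $v$, a missing colour $i \in \br{r}$, and a choice of $r-1$ edges $A_j \in \cH$ ($j \ne i$) meeting pairwise exactly at $v$---yields a \emph{witness set} $\{(u,j) : j \ne i,\ u \in A_j \setminus \{v\}\}$ of size $(r-1)(s-1)$ in $V(\cH) \times \br{r}$; let $\cS^{\star}$ be the hypergraph of these sets. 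The analogous hypergraph $\cR^{\star}$, whose edges have size $s(r-1)(s-1)$, records witness sets of rainbow constellations. Any partial colouring $\phi$ of $V(\cH)$ corresponds to the subset $C_\phi = \{(v,\phi(v))\}$, and the numbers of rainbow stars and constellations under $\phi$ equal the numbers of edges of $\cS^{\star}$ and $\cR^{\star}$ contained in $C_\phi$; assumption~\ref{item:ass-star-constellation} thereby becomes a supersaturation statement about subsets of $V(\cH) \times \br{r}$.

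Next I would verify that the non-clusteredness hypothesis~\ref{item:ass-non-clusteredness} on $\cH$ propagates to the auxiliary hypergraphs and furnishes the degree/co-degree bounds needed to apply the container lemma to $\cR^{\star}$ at the density scale $p = \Theta(\pH)$. The lemma yields a family $\cC$ of subsets of $V(\cH) \times \br{r}$ with $|\cC| \le \exp(o(p \cdot v(\cH)))$ such that every subset of $V(\cH) \times \br{r}$ with few $\cR^{\star}$-edges is contained in some $C \in \cC$, and each $C$ itself contains at most a small constant fraction of $e(\cR^{\star})$. By the contrapositive of~\ref{item:ass-star-constellation}, each container $C$ then contains at most $\alpha \cdot e(\cH)^{r-1}/v(\cH)^{r-2}$ edges of $\cS^{\star}$, where $\alpha$ can be driven arbitrarily small by tightening the container parameters.

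The probabilistic step runs as follows. For each container $C \in \cC$, I would bound the probability over $Z \sim V(\cH)_p$ that there exists a partial colouring $\psi \colon Z \to \br{r}$ with $C_\psi \subseteq C$ and more than $\beta \cdot v(\cH)$ rainbow-star centres. Each such centre yields at least one $\cS^{\star}$-edge lying in $C$ whose $(r-1)(s-1)$ non-centre vertices all lie in $Z$. Since $C$ has at most $\alpha \cdot e(\cH)^{r-1}/v(\cH)^{r-2}$ $\cS^{\star}$-edges, the expected number of them ``realised'' by $Z$ is at most $\alpha \cdot e(\cH)^{r-1}/v(\cH)^{r-2} \cdot p^{(r-1)(s-1)} = O(\alpha \cdot v(\cH))$, using $p = \Theta(\pH)$. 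Choosing $\alpha$ much smaller than $\beta$ and applying Janson's inequality (the required pair-correlation estimate comes again from~\ref{item:ass-non-clusteredness}) gives a failure probability of $\exp(-\Omega(p \cdot v(\cH)))$ per container, which a union bound over $|\cC|$ containers and over the finitely many choices of distinguished colour $i \in \br{r}$ drives to zero. To convert the resulting bound on rainbow \emph{constellations} into one on rainbow \emph{constellation bases}, I would apply non-clusteredness once more: each edge of $\cH$ is the base of at most $O((e(\cH)/v(\cH))^{s(r-1)})$ constellations, so many constellations forces many distinct bases.

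The main obstacle I anticipate is that the container lemma outputs subsets of $V(\cH) \times \br{r}$ which need not satisfy the ``at most one colour per vertex'' constraint defining a partial colouring; handling this will likely require either invoking a variant of the lemma adapted to the matching structure or a post-processing projection argument. A secondary difficulty is parameter calibration, since the constant $\alpha$ coming from~\ref{item:ass-star-constellation} and the threshold $\beta$ in the theorem must be coordinated carefully so that Janson's concentration beats the container-lemma union bound.
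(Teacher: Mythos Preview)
Your overall architecture matches the paper's: define the auxiliary hypergraphs $\cS^\star$ and $\cR^\star$ on $V(\cH)\times\br{r}$, apply containers to $\cR^\star$, use assumption~\ref{item:ass-star-constellation} to deduce that each container carries few $\cS^\star$-edges, and then run a union bound over containers. However, the probabilistic core of your argument has a genuine gap.

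You write that, for a fixed container $C$, the probability that more than $\beta\,v(\cH)$ of the $\cS^\star$-edges inside $C$ are realised in $Z$ is at most $\exp(-\Omega(p\,v(\cH)))$ ``by Janson's inequality''. But Janson's inequality (Theorem~\ref{thm:Janson}) controls only the \emph{lower} tail of such a count; the event you need is an \emph{upper}-tail event, and upper tails of subgraph-type counts are in general far too heavy to beat a $\exp(o(p\,v(\cH)))$-sized union bound. The paper confronts exactly this obstacle and resolves it with the following trick: first show by a second-moment estimate (Lemma~\ref{lemma:MS-pseudo-variance}) that the \emph{total} number of (uncoloured) stars $e(\MS[Z])$ is concentrated near its mean; then, for each container $C$, apply Janson to the \emph{lower} tail of the count of stars \emph{not} in $C$, namely $e(\MS^C[Z])$. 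Since $e(\RS[C\cap(Z\times\br{r})]) = r!\,e(\MS[Z]) - e(\MS^C[Z])$, an upper bound on the first term and a lower-tail bound on the third yield the desired upper bound on the middle term with probability $1-\exp(-\Omega(p\,v(\cH)))$. This upper-tail-to-lower-tail flip is the missing idea in your plan.

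There is a second, smaller gap in your final step. Bounding the number of constellations with a given base by $O\big((e(\cH)/v(\cH))^{s(r-1)}\big)$ is a deterministic bound on constellations in $\cH$, not on constellations whose supports lie in $Z$; dividing $\Omega(e(\cH))$ rainbow constellations by this bound only yields $\Omega\big(e(\cH)\cdot\pH^{s(r-1)(s-1)}\big)=o(e(\cH))$ bases. The paper instead bounds $\Ex\big[\sum_{A}\con(A,Z)^2\big]$ (Lemma~\ref{lemma:con-ell-2}) and uses Cauchy--Schwarz to conclude that $\Omega(e(\cH))$ rainbow constellations force $\Omega(e(\cH))$ distinct base edges. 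Finally, the obstacle you anticipated---that containers need not be partial colourings---turns out to be harmless: Lemma~\ref{lemma:RS-RC-property-edges} handles arbitrary $C\subseteq V(\cH)\times\br{r}$ via a random-colouring averaging argument.
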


Theorems~\ref{thm:many-rainbow-stars} and~\ref{thm:rainbow-stars-constellations}, supplemented with Lemma~\ref{lemma:extending-fixed-colouring}, imply Theorem~\ref{thm:main}.

\section{Preliminaries and tools}
\label{sec:tools}

\subsection{Coarse thresholds}
\label{sec:coarse-thresholds}

The first piece of machinery that we require is a characterisation of properties with coarse thresholds. In the proof outline, this was captured by Step~\ref{step:dichotomy}, which we aim to formalise and prove here.  In general, Friedgut's work~\cite{Fri99} and Bourgain's subsequent extension of it~\cite[Appendix]{Fri99} provide a criterion for the appearance of a sharp threshold. The  criterion states that every property that fails to have a sharp threshold must correlate with a `local' property, i.e., the appearance of a bounded-sized subset.  In other words, if a property $\cP$ has a coarse threshold, then there is another, local, property $\cP'$---with the same threshold as $\cP$--- such that both are positively correlated.

For our purpose, we use a reformulation of Bourgain's aforementioned result, which appears in~\cite{Fr05}. We will introduce a relevant definition and then state the theorem.

\begin{dfn}[Boosters]
  Suppose that $\cP$ is a property of subsets of a finite set $V$. Given a $p \in [0,1]$ and a positive number $\delta$, we call a set $B \subseteq V$ a \emph{$(p,\delta)$-booster} if
  \[
    \Pr(V_p \cup B \in \cP) \ge \Pr(V_p \in \cP) + \delta.
  \] 
\end{dfn}

\begin{thm}[The Sharp Threshold Criterion]
  \label{thm:Friedgut-Bourgain}
  For all positive $\alpha$ and $C$, there exist positive $\delta$, $\eta$, $p_0$, and $K$ such that the following holds. Suppose that $\cP$ is a monotone property of subsets of a finite set $V$ and, for each $p \in [0,1]$, let $\mu(p) \coloneqq \Pr(V_p \in \cP)$. If, for some $0< p \le p_0$
  \[
    \alpha \le \mu(p) \le 1-\alpha \qquad \text{and} \qquad \mu'(p) \le C/p,
  \]
  then there is a family $\cB \subseteq \binom{V}{\le K}$ satisfying
  \[
    \Pr(B \subseteq V_p \text{ for some } B \in \cB) > \eta
  \]
  such that every $B \in \cB$ is a $(p,\delta)$-booster for $\cP$.
\end{thm}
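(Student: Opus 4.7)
The plan is to deduce this theorem from Bourgain's sharp threshold criterion (the appendix to~\cite{Fri99}) via the standard reformulation due to Friedgut. The first step is to translate the derivative bound $\mu'(p) \le C/p$ into a bound on the total $p$-biased influence of the indicator $\1_{\cP}$. By the Margulis--Russo formula, for a monotone property,
\[
  \mu'(p) = \sum_{x \in V} \Pr\big(\text{$x$ is pivotal for $\cP$ in $V_p$}\big),
\]
and the right-hand side equals, up to factors of $p$ and $1-p$ that are bounded away from zero when $p \le p_0 < 1$, the total $p$-biased influence $\sum_x \mathrm{Inf}^p_x(\1_{\cP})$. Hence the hypothesis yields $\sum_x \mathrm{Inf}^p_x(\1_{\cP}) \le C'$ for some $C'$ depending only on $C$ and $p_0$.

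The second step is to invoke Bourgain's theorem, which asserts that if $f \colon \{0,1\}^V \to \{0,1\}$ is monotone with $\alpha \le \Ex_p[f] \le 1-\alpha$ and total $p$-biased influence at most $C'$, then there exist constants $\delta, \eta > 0$ and $K$ (depending only on $\alpha$, $C'$, and $p_0$) together with a family $\cB \subseteq \binom{V}{\le K}$ such that $\Pr(B \subseteq V_p \text{ for some } B \in \cB) > \eta$ and each $B \in \cB$ satisfies $\Ex_p[f \mid B \subseteq V_p] - \Ex_p[f] \ge \delta$. Applied to $f = \1_{\cP}$, this is precisely the conclusion of the theorem, so the entire argument reduces to the Margulis--Russo translation plus an invocation of Bourgain's result.

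The substantial content---and the main obstacle---lies in Bourgain's theorem itself, whose proof is Fourier-analytic. Roughly speaking, one expands $\1_{\cP} - \mu(p)$ in the $p$-biased Fourier basis, uses the bound on total influence to argue that most of its $L^2$-mass sits on low-degree characters, and then applies a hypercontractive inequality on the biased cube to extract a constant-sized `junta-like' local structure correlated with $\cP$. The only non-trivial bookkeeping on top of quoting this result is passing from Bourgain's original formulation, which typically produces a single correlated junta, to the family formulation stated here; this can be done by a greedy extraction over minimal witnesses, where one repeatedly picks a minimal $B$ that boosts $\Pr(V_p \in \cP)$ by at least $\delta$, throws away all $V_p$-samples containing $B$, and reapplies the single-witness version to what remains until the covered measure exceeds~$\eta$. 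The constants $\delta$, $\eta$, and $K$ at the end are then obtained by tracing through $\alpha$, $C$, and $p_0$ in these two reductions.
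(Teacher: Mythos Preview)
The paper does not prove this theorem at all: it is stated as a black-box external input, introduced as ``a reformulation of Bourgain's aforementioned result, which appears in~\cite{Fr05}'' (building on the appendix to~\cite{Fri99}), and is then used without further argument. So there is no proof in the paper to compare against.

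Your sketch is the standard route to this reformulation and is correct in outline: Margulis--Russo converts the hypothesis $\mu'(p)\le C/p$ into a uniform bound on the total $p$-biased influence of $\1_{\cP}$, and Bourgain's theorem then supplies bounded-size boosters. Two small remarks. First, for a monotone $f$ the Margulis--Russo identity gives $\mu'(p)=\sum_x \Pr_p(x\text{ pivotal})$, which \emph{is} the total influence in the usual definition, so no $p,1-p$ correction factors are needed there. Second, your description of Bourgain's proof (``most of the $L^2$-mass sits on low-degree characters, then hypercontractivity'') is a bit off: bounded total influence indeed forces the spectrum to concentrate at low levels, but Bourgain's argument for extracting a single constant-size booster uses a random-restriction step rather than a direct hypercontractive estimate; in any case you are right to treat that theorem itself as a black box. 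The passage from a single booster to a family with $\Pr(\exists B\in\cB:B\subseteq V_p)>\eta$ is exactly the greedy covering you describe, and this is how Friedgut obtains the formulation quoted here.
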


We will now use the criterion to derive a general result in the spirit of Step~\ref{step:dichotomy} of the proof outline.  First, it guarantees the existence of boosters with `typical' characteristics.  Second, it asserts the following dichotomy for properties $\cP$ with coarse thresholds: while there are many (constant-sized) boosters whose addition to $V_p$ lands us immediately in $\cP$, adding to $V_p$ a random set of density $\eps p$ does not increase the probability of being in $\cP$ substantially.  We again introduce a definition and move on to stating and proving the result.

\begin{dfn}
We say that a set $B \subseteq V$ is an \emph{active booster} for a set $Z \subseteq V$ if $Z \cup B \in \cP$.
\end{dfn}

\begin{prop}
  \label{prop:Friedgut-Bourgain}
  Let $\cP$ be a nontrivial, monotone property of subsets of a (sequence of) finite set(s) $V$. For each $p \in [0,1]$, let $\mu(p) \coloneqq \Pr(V_p \in \cP)$ and let $\phat \coloneqq \mu^{-1}(1/2)$. If $\cP$ does not have a sharp threshold and $\phat = o(1)$, then there exist positive constants $\delta$, $\eps$, and $K$ and an infinite subsequence $p = \Theta(\phat)$ such that:
  \begin{enumerate}[label=(\arabic*)]
  \item
    \label{item:FB-boosters}
    For every family $\cF$ of subsets of $V$ satisfying
    \[
      \Pr\big(V_p \supseteq W \text{ for some $W \in \cF$}\big) < \eps,
    \]
    there is a $(p,\delta)$-booster in $\binom{V}{\le K} \setminus \cF$.
  \item
    \label{item:FB-active-boosters}
    For any family $\cB$ of $(p,\delta)$-boosters, letting $Z \sim V_p$, the following holds with probability at least $\eps$:
    \begin{enumerate}
    \item
      $\Pr(Z \cup V_{\eps p} \in \cP \mid Z) \le 1/2$ and
    \item
      at least $\eps |\cB|$ elements of $\cB$ are active boosters for $Z$.
    \end{enumerate}
  \end{enumerate}
\end{prop}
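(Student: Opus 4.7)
The plan is to apply the Sharp Threshold Criterion (Theorem~\ref{thm:Friedgut-Bourgain}) on a carefully chosen $p = \Theta(\phat)$ and then deduce both conclusions by standard Markov-type estimates. First I would use the coarse-threshold hypothesis to find constants $\eps_0, \alpha_0 > 0$ and an infinite subsequence of $N$'s on which either $\mu((1-\eps_0)\phat) \ge \alpha_0$ or $\mu((1+\eps_0)\phat) \le 1 - \alpha_0$; the two cases are symmetric, so I would fix the former. To control both $\mu'(p)$ and a short-range increment of $\mu$ simultaneously, I would subdivide $[(1-\eps_0)\phat, \phat]$ into many equal sub-intervals and, since the total change of $\mu$ over it is at most $1/2$, pigeonhole to extract a pair of adjacent sub-intervals $J_1, J_2$ on which $\mu$ increases by at most $\gamma$, a small parameter to be chosen later in terms of $\delta$. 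Applying the mean value theorem inside $J_1$ gives a point $p \in J_1$ with $\mu'(p) \le C/p$ for some constant $C = C(\eps_0)$, while $\mu(p) \in [\alpha_0, 1/2]$ automatically; setting $\eps_1 \coloneqq (p^+ - p)/p = \Theta(\gamma \eps_0)$, where $p^+$ is the right endpoint of $J_2$, ensures that $p(1+\eps_1) \in J_1 \cup J_2$ and hence $\mu(p(1+\eps_1)) \le \mu(p) + \gamma$. Theorem~\ref{thm:Friedgut-Bourgain} then yields constants $\delta, \eta, K > 0$ and a family $\cB_0 \subseteq \binom{V}{\le K}$ of $(p,\delta)$-boosters with $\Pr(\exists B \in \cB_0 \colon B \subseteq V_p) > \eta$.

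Part~(1) now follows from a standard union-bound argument: if $\eps < \eta$ and $\Pr(\exists W \in \cF \colon W \subseteq V_p) < \eps$, then $\cB_0 \setminus \cF$ cannot be empty, so any of its members is a $(p,\delta)$-booster in $\binom{V}{\le K} \setminus \cF$. For part~(2), given an arbitrary family $\cB$ of $(p,\delta)$-boosters, I would set $a(Z) \coloneqq |\{B \in \cB : Z \cup B \in \cP\}|/|\cB|$ and $y(Z) \coloneqq \Pr(Z \cup V_{\eps_1 p} \in \cP \mid Z)$. The booster definition forces $\Ex[a(Z)] \ge \mu(p) + \delta$, and the short-range control of $\mu$ gives $\Ex[y(Z)] \le \mu(p(1+\eps_1)) \le \mu(p) + \gamma$. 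Since $\cP$ is monotone, $a(Z) = y(Z) = 1$ whenever $Z \in \cP$, so conditioning on $\{Z \notin \cP\}$, an event of probability at least $\alpha_0$, yields $\Ex[a \mid Z \notin \cP] \ge \delta$ and $\Ex[y \mid Z \notin \cP] \le \gamma/(1-\mu(p)) \le \gamma/\alpha_0$. Choosing $\gamma \coloneqq \alpha_0 \delta/32$ and applying Markov's inequality gives $\Pr(y > 1/2 \mid Z \notin \cP) \le \delta/16$, while the standard bounded-variable estimate gives $\Pr(a > \delta/2 \mid Z \notin \cP) \ge \delta/2$. Intersecting the two events via a union bound on complements shows that they hold simultaneously with conditional probability at least $\delta/2 - \delta/16$, hence with unconditional probability at least $\alpha_0 \delta/4$. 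Setting $\eps \coloneqq \min\{\eta, \eps_1, \delta/2, \alpha_0 \delta/4\}$ then validates both parts.

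The main subtle point, and the reason behind the pigeonhole step, is proving that conditions~(a) and~(b) of~(2) hold \emph{simultaneously} for many $Z$: heuristically (a) says that $Z$ is far enough from $\cP$ that uniform-random noise of rate $\eps_1 p$ keeps it outside with probability at least $1/2$, whereas (b) says that there exist structured constant-sized additions that \emph{do} push $Z$ into $\cP$, and these look like opposing requirements. The subdivision forces the random-noise effect, measured by $\gamma$, to be much smaller than the booster bonus $\delta$, so event~(a) becomes overwhelmingly likely on $\{Z \notin \cP\}$ and can be intersected with the (much weaker) event~(b) without sacrificing all the probability.
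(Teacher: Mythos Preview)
Your proposal is correct and follows essentially the same approach as the paper's proof. Both arguments first locate a point $p$ in the critical window where simultaneously $\mu'(p) \le C/p$ (to feed into Theorem~\ref{thm:Friedgut-Bourgain}) and $\mu((1+\gamma)p) - \mu(p)$ is small (to control the random-noise effect), then deduce part~(2) via Markov-type estimates; the paper uses an integral/averaging argument to find such a $p$ while you use a pigeonhole subdivision, and the paper applies Markov directly to $1-f(Z)$ and $g(Z)-\1_{Z\in\cP}$ whereas you first condition on $\{Z\notin\cP\}$, but these are cosmetic differences rather than substantive ones.
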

\begin{proof}
  Suppose that $\cP$ does not have a sharp threshold. This means that there are constants $c_1 < c_2$ and $\alpha > 0$ such that
  \[
    \alpha \le \mu(c_1 \cdot \phat) \le \mu(c_2 \cdot \phat) \le 1-\alpha
  \]
  on some infinite subsequence.  Let $C \coloneqq 4c_2/(c_2-c_1)$.

  \begin{claim}
    For every $\gamma > 0$, there is a $p \in (c_1 \cdot \phat, c_2 \cdot \phat)$ such that
    \begin{enumerate}[label=(\alph*)]
    \item
      \label{item:coarse-claim-1}
      $\mu'(p) \le C/p$ and
    \item
      \label{item:coarse-claim-2}
      $\mu(p+\gamma p) - \mu(p) \le C \cdot \gamma$
    \end{enumerate}
  \end{claim}
  \begin{proof}
    Fix an arbitrary $\gamma'$ satisfying $0 < \gamma' < (c_2-c_1)/2$. Since $0 \le \mu \le 1$, we have
    \[
      \int_{c_1}^{c_2-\gamma'} \big(\mu((x+\gamma') \cdot \phat) - \mu(x \cdot \phat)\big)\, dx \le \int_{c_2-\gamma'}^{c_2} \mu(x \cdot \phat) \, dx \le \gamma'.
    \]
    In particular, there must be a $p'$ satisfying $c_1 \cdot \phat < p' < (c_2-\gamma') \cdot \phat$ such that
    \[
      \mu(p' + \gamma' \cdot \phat) - \mu(p') \le \frac{\gamma'}{c_2-c_1-\gamma'} \le \frac{2\gamma'}{c_2-c_1}.
    \]
    Further, as $\mu$ is increasing, there must be a $p \in (p', p' + (\gamma'/2) \cdot \phat) \subseteq (c_1 \cdot \phat, c_2 \cdot \phat)$ with
    \[
      \mu'(p) \le \frac{\mu(p'+(\gamma'/2)\cdot\phat)-\mu(p')}{(\gamma'/2) \cdot \phat} \le \frac{4}{(c_2-c_1) \cdot \phat} \le \frac{4c_2}{(c_2-c_1) \cdot p} = \frac{C}{p},
    \]
    as required. Finally, we show that the value $p$ chosen above also satisfies inequality~\ref{item:coarse-claim-2}. If $\gamma \ge 1/C$, then the inequality is vacuous. Otherwise, if $\gamma < 1/C$, we let $\gamma' = 2c_2\gamma$. Since $\gamma' < (c_2-c_1)/2$, by the definition of $C$, we have
    \[
      p + \gamma p \le p + \gamma \cdot c_2 \phat = p + (\gamma'/2) \cdot \phat \le p' + \gamma' \cdot \phat.
    \]
    Since $p > p'$ and $\mu$ is increasing, we have
    \[
      \mu(p+\gamma p) - \mu(p) \le \mu(p' + \gamma' \cdot \phat) - \mu(p') \le \frac{2\gamma'}{c_2-c_1} = C\gamma,
    \]
    as desired.
  \end{proof}

  Let $\cB$ be an arbitrary nonempty subset of the family of $(p, \delta)$-boosters supplied by Theorem~\ref{thm:Friedgut-Bourgain}. Fix a small positive constant $\gamma$, let $B$ be a uniformly chosen random element of $\cB$, and, for each $Z \subseteq V$, define
  \[
    f(Z) = \Pr(Z \cup B \in \cP) \qquad \text{and} \qquad g(Z) = \Pr(Z \cup V_{\gamma p} \in \cP).
  \]
  Our goal is to show that, for some positive constant $\eps$,
  \[
    \Pr\big(f(V_p) \ge \eps \text{ and } g(V_p) \le 1/2\big) \ge \eps.
  \]
  To this end, note first that Markov's inequality and the definition of a $(p, \delta)$-booster imply
  \[
    \Pr\big(f(V_p) < \eps\big) = \Pr\big(1 - f(V_p) > 1 - \eps\big) \le \frac{1-\Ex[f(V_p)]}{1-\eps} \le \frac{1-\mu(p)-\delta}{1-\eps}.
  \]
  Since $g(Z) = 1$ when $Z \in \cP$ and since $V_p \cup V_{\gamma p}$ is stochastically dominated by $V_{p+\gamma p}$, we have, again by Markov's inequality,
  \[
    \begin{split}
      \Pr\big(g(V_p) > 1/2\big) & = \Pr(V_p \in \cP) + \Pr\big(g(V_p) - \1_{V_p \in \cP} > 1/2\big) \\
      & \le \mu(p) +  2\big(\Ex[g(V_p)] - \Ex[\1_{V_p \in \cP}]\big) \\
      & \le 2\mu(p+\gamma p) - \mu(p) \le \mu(p) + 2C\gamma.
    \end{split}
  \]
  Finally,
  \[
    \begin{split}
      \Pr\big(g(V_p) \le 1/2 \text{ and } g(V_p) \ge \eps\big) & \ge 1- \Pr\big(g(V_p) > 1/2\big) - \Pr\big(g(V_p) < \eps\big) \\
      & \ge 1-\mu(p) - \frac{1-\mu(p)-\delta}{1-\eps} - 2C\gamma \\
      & \ge \delta - \frac{\eps}{1-\eps} -2C\gamma \ge \delta/2 \ge \eps,
    \end{split}
  \]
  where the last two inequalities hold provided that $\eps$ and $\gamma$ are sufficiently small (as functions of $\delta$ and $C$ only).
\end{proof}

Finally, we will make use of the following simple lemma, which formalises the fact that all `local' properties (i.e., properties of containing a bounded-sized subset from a given family) have coarse thresholds.

\begin{lemma}[Local coarseness]
  \label{lemma:local-coarseness}
  Let $\cF$ be a family of subsets of a set $V$, all of which have at most $K$ elements, and let $\mu(p) \coloneqq \Pr(\exists B \in \cF \colon B \subseteq V_p)$.  Then, for any $c \in (0,1)$, we have $\mu(cp) \ge c^K \mu(p)$.
\end{lemma}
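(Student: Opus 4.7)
\medskip

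\noindent\textbf{Proof plan.} The natural approach is to exhibit a monotone coupling of $V_{cp}$ and $V_p$ and then, conditionally on the event $\{\exists B \in \cF : B \subseteq V_p\}$, pay a factor of $c$ for each element of (some chosen) witness $B$ to ensure it also lies in $V_{cp}$. Since every $B \in \cF$ has at most $K$ elements, this will cost at most a factor $c^K$, which is exactly the claimed loss.

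\medskip

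\noindent Concretely, let $(U_v)_{v \in V}$ be independent Uniform$[0,1]$ random variables and define $V_q \coloneqq \{v \in V : U_v \le q\}$ for every $q \in [0,1]$. This realises $V_{cp}$ and $V_p$ on a common probability space with $V_{cp} \subseteq V_p$. Fix any measurable rule that, given $V_p$, selects a set $B^\star(V_p) \in \cF$ with $B^\star \subseteq V_p$ whenever such a set exists (say, the lexicographically first one under some arbitrary ordering of $\cF$); if no such set exists, put $B^\star \coloneqq \emptyset$. Observe that $B^\star$ is a function of $V_p$ alone.

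\medskip

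\noindent The key observation is that, conditionally on $V_p$, the variables $(U_v)_{v \in V_p}$ are independent and each is uniformly distributed on $[0,p]$. Consequently, for every $v \in V_p$,
\[
  \Pr\big(U_v \le cp \mid V_p\big) = c,
\]
and these events are conditionally independent. Since $B^\star$ is $V_p$-measurable and satisfies $|B^\star| \le K$ whenever $B^\star \neq \emptyset$, we obtain
\[
  \Pr\big(B^\star \subseteq V_{cp} \mid V_p\big) = c^{|B^\star|} \cdot \mathbbm{1}[B^\star \neq \emptyset] \ge c^K \cdot \mathbbm{1}[B^\star \neq \emptyset].
\]
Taking expectations and using that $\{B^\star \neq \emptyset\} = \{\exists B \in \cF : B \subseteq V_p\}$ and that $\{B^\star \subseteq V_{cp}\}$ implies $\{\exists B \in \cF : B \subseteq V_{cp}\}$ (as $B^\star \in \cF$ when nonempty) gives
\[
  \mu(cp) \ge \Pr\big(B^\star \subseteq V_{cp}\big) \ge c^K \mu(p),
\]
as desired.

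\medskip

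\noindent There is no real obstacle here: the only mildly subtle point is ensuring that the selection rule for $B^\star$ is measurable and depends only on $V_p$ (so that we may condition on $V_p$ without disturbing the conditional uniformity of $(U_v)_{v \in V_p}$ on $[0,p]$). Both are routine once the coupling has been set up.
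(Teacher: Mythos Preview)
Your proof is correct and takes essentially the same approach as the paper: both use the two-stage coupling $V_{cp} = (V_p)_c$ and observe that, once a witness $B \in \cF$ with $B \subseteq V_p$ is fixed, it survives the $c$-subsampling with probability $c^{|B|} \ge c^K$. Your version is simply more explicit about the selection rule and the conditioning, whereas the paper compresses the argument into a single line.
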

\begin{proof}
  We couple $V_p$ and $V_{cp}$ by viewing $V_{cp}$ as the random subset $(V_p)_c \subseteq V_p$. Therefore,
  \[
    \frac{\mu(cp)}{\mu(p)} = \Pr(\exists B \in \cF \colon B \subseteq (V_p)_c \mid \exists B \in \cF \colon B \subseteq V_p) \ge c^K. \qedhere
  \]
\end{proof}

\subsection{Containers}

Our second main tool is a hypergraph container lemma for almost independent sets.  The standard versions of the container lemma, proved independently by Balogh, Morris, and Samotij~\cite{BalMorSam15} and by Saxton and Thomason~\cite{SaxTho15}, assert that every uniform hypergraph $\cG$ admits a relatively small collection of \emph{containers} for independent sets, that is, a family $\cC$ of subsets of $V(\cG)$, each containing only few edges of $\cG$, such that every independent set of $\cG$ is contained in some member of $\cC$.  Here, we will require a strengthening of this result that supplies a small collection of containers for the larger family of almost independent sets.  Such stronger version of the container lemma was proved by Saxton and Thomason.  Since the precise phrasing of this result that best fits our framework, Theorem~\ref{thm:containers} below, differs somewhat from~\cite[Corollary~3.6]{SaxTho15}, we include a short derivation of the former from the latter in Appendix~\ref{sec:omitted-proofs}.

\begin{restatable}{thm}{containers}
  \label{thm:containers}
  For every positive integer $k$ and all positive reals $\eps$ and $K$, there exist an integer $t$ and a positive real $\delta$ such that the following holds. Suppose that a nonempty $k$-uniform (multi)hypergraph $\cG$ with vertex set $V$ and a positive real $\tau$ satisfy
  \[
    \Delta_\ell(\cG) \le K \tau^{\ell-1} \cdot \frac{e(\cG)}{v(\cG)}
  \]
  for every $\ell \in \br{k}$. Then, there exists a function $f \colon \cP(V)^t \to \cP(V)$ with the following properties:
  \begin{enumerate}[label=(\roman*)]
  \item
    For every set $I \subseteq V$ satisfying $e(\cG[I]) \le \delta \tau^k e(\cG)$, there are $S_1, \dotsc, S_t \subseteq I$ with at most $\tau v(\cG)$ elements each such that $I \subseteq f(S_1, \dotsc, S_t)$.
  \item
    For every $S_1, \dotsc, S_t \subseteq V$, the set $f(S_1, \dotsc, S_t)$ induces fewer than $\eps e(\cG)$ edges in $\cG$.
  \end{enumerate}
\end{restatable}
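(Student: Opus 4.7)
The plan is to deduce Theorem~\ref{thm:containers} directly from Saxton and Thomason's hypergraph container theorem for almost-independent sets,~\cite[Corollary~3.6]{SaxTho15}, of which our statement is essentially a reformulation. In our notation, their result asserts that for each integer $k$ and each $\eps > 0$ there exist positive reals $c$, $\delta$ and a positive integer $t$ such that whenever a $k$-uniform hypergraph $\cG$ and a real $\tau > 0$ satisfy $\Delta_\ell(\cG) \le c \tau^{\ell-1} \cdot e(\cG)/v(\cG)$ for every $\ell \in \br{k}$, there is a function $f \colon \cP(V(\cG))^t \to \cP(V(\cG))$ enjoying the two conclusions of Theorem~\ref{thm:containers}. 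Our statement differs from theirs only in that the constant $K$ enters as an input parameter (with $t$ and $\delta$ allowed to depend on it), whereas in~\cite{SaxTho15} it is part of the output.

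To bridge this gap, given $k$, $\eps$, and $K$, I would apply~\cite[Corollary~3.6]{SaxTho15} with parameters $k$ and $\eps$ to obtain corresponding constants $c_{\mathrm{ST}}$, $\delta_{\mathrm{ST}}$, and $t_{\mathrm{ST}}$, and then perform a parameter rescaling $\tau \mapsto \tau' \coloneqq \kappa \tau$ for a constant $\kappa = \kappa(k, K, c_{\mathrm{ST}})$ chosen large enough that our hypothesis $\Delta_\ell(\cG) \le K \tau^{\ell-1} \cdot e(\cG)/v(\cG)$ implies the Saxton--Thomason codegree bound with $\tau'$ in place of $\tau$ (for $\ell \ge 2$). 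The fingerprints produced by their theorem, of size at most $\tau' v(\cG) = \kappa \tau v(\cG)$, can then be split into $\lceil \kappa \rceil$ pieces each of size at most $\tau v(\cG)$, meeting our fingerprint-size requirement after setting $t \coloneqq t_{\mathrm{ST}} \lceil \kappa \rceil$. The sparsity threshold $\delta_{\mathrm{ST}} (\tau')^k e(\cG)$ matches our conclusion with $\delta \coloneqq \delta_{\mathrm{ST}} \kappa^k$. The case $\ell = 1$ of the codegree bound, which is independent of $\tau$, is handled by a brief separate argument (e.g.\ by uniformly duplicating edges so as to inflate the ratio $e(\cG)/v(\cG)$ relative to $\Delta_1(\cG)$).

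The only remaining subtlety is that~\cite[Corollary~3.6]{SaxTho15} is stated for simple hypergraphs, whereas Theorem~\ref{thm:containers} permits multi-hypergraphs. This causes no essential difficulty, since the container algorithm depends only on degree statistics, which scale uniformly under edge repetition; alternatively, one may relabel repeated copies as distinct combinatorial objects, apply the theorem, and then identify the labels at the end. The derivation is thus a bookkeeping exercise rather than a combinatorial one, and the parameter rescaling outlined above is its only nontrivial step; I do not anticipate any genuine obstacle.
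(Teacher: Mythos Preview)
Your approach is essentially the paper's: apply Saxton--Thomason with a rescaled parameter $L\tau$ (your $\kappa\tau$) and then split each fingerprint into $O(L)$ pieces to recover the size bound $\tau v(\cG)$. Two small corrections. First, the codegree condition in~\cite[Corollary~3.6]{SaxTho15} (stated in the paper as Theorem~\ref{thm:containers-Saxton-Thomason}) only involves $\delta_j$ for $j \ge 2$, so there is no $\ell = 1$ case to handle; this is fortunate, because your proposed fix---uniform edge duplication---scales $\Delta_1(\cG)$ and $e(\cG)$ by the same factor and so does not change their ratio. Second, the fingerprints produced by Saxton--Thomason have size at most $s \cdot \tau' \cdot v(\cG)$ rather than $\tau' \cdot v(\cG)$ (where $s$ is their iteration parameter), so you need to split into roughly $s\kappa$ pieces, not $\kappa$; this is precisely why the paper takes $t = 2L^2 s$ rather than something linear in $L$.
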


\subsection{Janson's inequality}
\label{sec:janson-inequality}

The final auxiliary result required by our argument is the well-known concentration inequality of Janson, which gives strong upper bounds on the lower tail probabilities of random variables counting how many sets from a given collection are contained in a binomial random subset.  The version of Janson's inequality stated below differs from~\cite[Theorem~1]{Jan90} in the choice of notation, but it is otherwise equivalent.

\begin{dfn}
  \label{dfn:pseudo-variance}
  Suppose that $\bA = (A_1, \dotsc, A_k)$ is a sequence of (not necessarily distinct) events.  The \emph{pseudo-variance} of $\bA$ is
  \[
    \pVar(\bA) \coloneqq \sum_{i \sim j} \Pr(A_i \cap A_j),
  \]
  where the sum ranges over all ordered pairs $i, j \in \br{k}$ such that $A_i$ and $A_j$ are not independent.
\end{dfn}

\begin{remark}
  Note that $\Var'(A_1, \dotsc, A_k)$ is an upper bound on the variance of $\1_{A_1} + \dotsb + \1_{A_k}$, hence the name pseudo-variance.
\end{remark}

\begin{thm}[Janson's inequality~\cite{Jan90}]
  \label{thm:Janson}
  Suppose that $\Omega$ is a finite set, let $B_1, \dotsc, B_k$ be a sequence of (not necessarily distinct) subsets of $\Omega$, and let $R \sim \Omega_p$ for some $p \in [0,1]$. For each $i \in [k]$, let $X_i$ be the indicator of the event $A_i$ that $B_i \subseteq R$ and let $X \coloneqq \sum_i X_i$. Then, for any $0 \le t \le \Ex[X]$,
  \[
    \Pr\big( X\le \Ex[X]-t \big) \le \exp \left(-\frac{t^2}{2\pVar(A_1, \dotsc, A_k)}\right).
  \]
\end{thm}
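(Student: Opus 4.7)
The plan is to apply the exponential moment (Chernoff) method. For any $\lambda \in [0,1]$, Markov's inequality applied to $e^{-\lambda X}$ yields
\[
\Pr\bigl(X \le \Ex[X] - t\bigr) \le e^{\lambda(\Ex[X]-t)}\, \Ex[e^{-\lambda X}],
\]
so the task reduces to controlling the Laplace transform $\Ex[e^{-\lambda X}]$. The key intermediate claim I would prove is
\[
\Ex[e^{-\lambda X}] \le \exp\!\left(-\lambda\,\Ex[X] + \tfrac{\lambda^2}{2}\,\pVar(\bA)\right) \qquad \text{for all } \lambda \in [0,1].
\]
Given this, the Chernoff bound is optimised by choosing $\lambda := t/\pVar(\bA)$; since the diagonal pairs $(i,i)$ in Definition~\ref{dfn:pseudo-variance} already contribute $\sum_i \Pr(A_i) = \Ex[X]$ to $\pVar(\bA)$, the hypothesis $t \le \Ex[X]$ forces $\lambda \in [0,1]$, and substitution yields exactly $\exp\!\bigl(-t^2/(2\,\pVar(\bA))\bigr)$.

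To establish the Laplace-transform bound, I would use the identity $e^{-\lambda X_i} = 1 - \theta X_i$ (valid because $X_i \in \{0,1\}$), where $\theta := 1-e^{-\lambda}$, and factor
\[
\Ex[e^{-\lambda X}] = \Ex\!\left[\prod_{i=1}^k (1 - \theta X_i)\right].
\]
The inequality is then proved by a peeling induction on $k$: peel off the factor $(1-\theta X_k)$, condition on the random set $R$ restricted to coordinates outside $B_k$, and leverage the monotonicity of each $X_i$ in the product measure on $\Omega$ together with Harris's (FKG) inequality. The deficit incurred at the $k$-th step is controlled by $\theta \sum_{j\sim k}\Pr(A_j\cap A_k)$, and telescoping these contributions across all indices recovers precisely $\tfrac{\theta^2}{2}\,\pVar(\bA)$ of total correction. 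An equivalent, more analytic, route is to set $\phi(\lambda) := \log \Ex[e^{-\lambda X}]$, observe $\phi(0)=0$ and $\phi'(0)=-\Ex[X]$, and then derive $\phi'(\lambda) \ge -\Ex[X] + \lambda\,\pVar(\bA)$ via a covariance calculation in the tilted measure $dQ_\lambda \propto e^{-\lambda X}\,dP$; integrating over $[0,\lambda]$ yields the same bound.

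The main obstacle is the correlation analysis. A naive attempt via Harris/FKG gives an inequality in the \emph{wrong} direction: each $1-\theta X_i$ is a decreasing function of $R$, and FKG applied to a product of positively correlated decreasing functions yields $\Ex[\prod_i(1-\theta X_i)] \ge \prod_i \Ex[1-\theta X_i]$, which is useless for upper-bounding $\Ex[e^{-\lambda X}]$. One must therefore work with the combinatorial expansion of $\prod(1-\theta X_i)$: independent pairs $(i,j)$ cancel exactly as in the fully independent case, while the excess produced by pairs with $A_i$ dependent on $A_j$ is precisely what $\pVar(\bA)$ is designed to measure. Once the Laplace-transform estimate is in place, the final optimisation in $\lambda$ is routine, so this correlation bookkeeping is where essentially all of the difficulty resides.
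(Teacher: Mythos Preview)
The paper does not prove this statement; Theorem~\ref{thm:Janson} is quoted as an external tool with a citation to~\cite{Jan90} and is used as a black box (e.g., in Lemma~\ref{lemma:Janson-non-clustered} and Claim~\ref{claim:Janson-RSC}). There is therefore nothing in the paper to compare your proposal against.

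That said, your sketch is a faithful outline of the standard proof of Janson's inequality as it appears in~\cite{Jan90} and in textbook treatments: the exponential-moment reduction, the Laplace-transform bound $\Ex[e^{-\lambda X}] \le \exp\bigl(-\lambda\,\Ex[X] + \tfrac{\lambda^2}{2}\,\pVar(\bA)\bigr)$ for $\lambda \in [0,1]$, and the optimisation $\lambda = t/\pVar(\bA)$ (legitimate because the diagonal terms already force $\pVar(\bA) \ge \Ex[X] \ge t$) are exactly the ingredients. Your description of the peeling/FKG argument is accurate in spirit, though the actual bookkeeping in Janson's original proof is a bit more delicate than your paragraph suggests; in particular, the induction step bounds $\Ex\bigl[\prod_{i\le k}(1-\theta X_i)\bigr]$ by conditioning on $A_k$ and using Harris on the \emph{conditional} measure, which is where the factor $\sum_{j\sim k}\Pr(A_j\cap A_k)$ emerges cleanly. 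If you were to write this out in full you would recover the cited proof.
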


We finish this section by deriving the following generalisation of Lemma~\ref{lemma:extending-fixed-colouring}.

\begin{lemma}
  \label{lemma:Janson-non-clustered}
  Suppose that $s \ge 2$ and let $\cH$ be a sequence of $s$-uniform hypergraphs that satisfies assumption~\ref{item:ass-non-clusteredness}. If $p = \Theta(\pH)$, then, for every $\cH' \subseteq \cH$ with $\Omega(e(\cH))$ edges,
  \[
    \Pr\big(e(\cH'_p) = 0\big) = \exp\left(-\Omega\big(p \cdot v(\cH)\big)\right).
  \]
\end{lemma}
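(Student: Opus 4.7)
\medskip

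\noindent\textbf{Proof plan.}
The plan is a direct application of Janson's inequality (Theorem~\ref{thm:Janson}), applied to the family of edges of $\cH'$ viewed as subsets of $V(\cH)$. More precisely, I would take $\Omega := V(\cH)$, index the events by $A \in \cH'$ with $B_A := A$ and $R := V(\cH)_p$, so that $X := \sum_{A \in \cH'} \1[A \subseteq R] = e(\cH'_p)$. Then I only need to estimate $\Ex[X]$ from below and $\pVar$ from above, both of order $p \cdot v(\cH)$; applying Janson with $t = \Ex[X]$ gives the required $\exp(-\Omega(p \cdot v(\cH)))$ bound on $\Pr(X = 0)$.

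\medskip

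For the expectation, the defining identity $\pH^{s-1} = v(\cH)/e(\cH)$ (a restatement of Fact~\ref{fact:non-clustered-properties}(i)) together with $p = \Theta(\pH)$ gives
\[
\Ex[X] = e(\cH') \cdot p^s \ge \Omega\bigl(e(\cH)\bigr) \cdot p \cdot p^{s-1} = \Omega\bigl(p \cdot v(\cH)\bigr).
\]
For the pseudo-variance, I would split the sum according to $t := |A \cap A'| \in \{1, \dotsc, s\}$ (the pairs with $t = 0$ are independent, so they contribute nothing), using the trivial upper bound
\[
\sum_{\substack{A, A' \in \cH' \\ |A \cap A'| = t}} p^{2s-t} \le e(\cH) \cdot \binom{s}{t} \cdot \Delta_t(\cH) \cdot p^{2s-t}.
\]
For $t = s$ this reproduces the diagonal $\Ex[X] = O(p \cdot v(\cH))$. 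For $t = 1$, the bound $\Delta_1(\cH) = O(e(\cH)/v(\cH))$ from non-clusteredness, combined once more with $p^{s-1} = \Theta(v(\cH)/e(\cH))$, again gives $O(p \cdot v(\cH))$. For the intermediate values $t \in \{2, \dotsc, s-1\}$, the stronger bound $\Delta_t(\cH) \ll \pH^{t-1} \cdot e(\cH)/v(\cH)$ and the same identification $\pH \asymp p$ yield a contribution of $o(p \cdot v(\cH))$ each. Summing, $\pVar(\{A \subseteq R\}_{A \in \cH'}) = O(p \cdot v(\cH))$.

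\medskip

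Finally, Janson's inequality applied with $t := \Ex[X]$ produces
\[
\Pr\bigl(e(\cH'_p) = 0\bigr) \le \exp\left(-\frac{\Ex[X]^2}{2 \pVar}\right) = \exp\left(-\Omega\!\left(\frac{(p \cdot v(\cH))^2}{p \cdot v(\cH)}\right)\right) = \exp\bigl(-\Omega(p \cdot v(\cH))\bigr).
\]
There is no real obstacle here: the argument is a standard Janson-style second moment calculation, and the only mildly delicate point is the bookkeeping that translates the non-clusteredness inequalities on $\Delta_t(\cH)$ into uniform $O(p \cdot v(\cH))$ (resp.\ $o(p \cdot v(\cH))$) bounds via the identity $p^{s-1} = \Theta(v(\cH)/e(\cH))$. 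All three regimes of $t$ produce the same order of magnitude, confirming that $\pH$ is exactly the density at which Janson's tail matches the mean, as expected.
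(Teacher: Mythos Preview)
Your proposal is correct and follows essentially the same approach as the paper's proof: both apply Janson's inequality directly, split the pseudo-variance according to the intersection size $t = |A \cap A'|$, and use the non-clusteredness bounds on $\Delta_t(\cH)$ together with the identity $\pH^{s-1} = v(\cH)/e(\cH)$ to show $\mu^2/\pVar = \Omega(p \cdot v(\cH))$. The only cosmetic difference is that the paper packages the final step as $\mu^2/\pVar = \Omega\bigl(\min_{i \in \br{s}} \pH^i e(\cH)/\Delta_i(\cH)\bigr)$ rather than estimating $\mu$ and $\pVar$ separately, but this is the same computation.
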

\begin{proof}
  Write $V$ for $V(\cH)$ and $X$ for the number of edges of $\cH'_p = \cH'[V_p]$.  We bound the expectation of $X$ from below as follows:
   \[
     \mu \coloneqq \Ex[X] = p^s e(\cH') = \Omega\big(p^s e(\cH)\big).
   \]
   Further, we bound the pseudo-variance of the sequence $\bA$ of events $B \subseteq V_p$ for all edges $B \in \cH'$ from above:
   \[
     \pVar(\bA) = \sum_{i= 1}^s \sum_{\substack{B, B' \in \cH' \\ |B \cap B'| = i}} \Pr(B \cup B' \subseteq V_p)  = O\left( \sum_{i=1}^s p^{2s-i} \cdot e(\cH)  \cdot \Delta_i(\cH) \right).
   \]
   Since $p = \Theta(\pH)$ and $\cH$ is non-clustered,
   \[
     \frac{\mu^2}{\pVar(\bA)} = \Omega\left(\min_{i \in \br{s}} \frac{\pH^i \cdot  e(\cH)}{\Delta_i(\cH)}\right) = \Omega\big(\pH \cdot v(\cH)\big).
   \]
   Thus, Janson's inequality gives us that $\Pr(X = 0) = \exp\left(- \Omega\big(p \cdot v(\cH)\big) \right)$.
\end{proof}

\section{Containers for colourings}
\label{sec:containers}

In this section, we prove the following theorem, which encapsulates Steps~\ref{step:dichotomy} and~\ref{step:containers} from the proof outline.

\manyrainbowstars*

\subsection{Setup}
\label{sec:setup}

Suppose that $s$, $r$, and $\cH$ are as in the statement of the theorem.  For the sake of brevity, throughout this section, we shall write $V$ in place of $V(\cH)$.  Assume that non-$r$-colourability does not have a sharp threshold in $\cH_p$.  Since $\pH$ is a threshold function for this property, by assumption~\ref{item:ass-weak-threshold}, Proposition~\ref{prop:Friedgut-Bourgain} supplies constants $\delta$, $\eps$, and $K$ and an infinite sequence $p = \Theta(\pH)$ satisfying~\ref{item:FB-boosters} and~\ref{item:FB-active-boosters} in the proposition.  Since $\cH$ satisfies assumption~\ref{item:ass-choosability}, invoking~\ref{item:FB-boosters} in Proposition~\ref{prop:Friedgut-Bourgain} with $\cF$ being the family $\cN_K(\cH)$ defined in Section~\ref{sec:choos-typic-subs}, we obtain a $(p,\delta)$-booster $B_0 \subseteq V$ of cardinality at most $K$ such that $\cH[B_0]$ is $2$-choosable from~$\br{r}$.

Let $\cB$ be the (multi)hypergraph on $V$ whose edges are the images of $B_0$ via all automorphisms of $\cH$.  Since non-$r$-colourability is preserved under automorphisms of $\cH$, every edge of $\cB$ is also a $(p, \delta)$-booster.  Moreover, since $\cH$ is symmetric, the hypergraph $\cB$ is degree-regular and, consequently,
\[
  \Delta_1(\cB) = |B_0| \cdot \frac{e(\cB)}{v(\cH)} \le K \cdot \frac{e(\cB)}{v(\cH)}.
\]

Let $\cZ$ be the family of all subsets $Z \subseteq V$ such that $\cH[Z \cup V_{\eps p}]$ is $r$-colourable with probability at least $1/2$, but, for at least $\eps$-proportion of $B \in \cB$, the hypergraph $\cH[Z \cup B]$ is not $r$-colourable. Property~\ref{item:FB-active-boosters} in Proposition~\ref{prop:Friedgut-Bourgain} states that $\Pr(V_p \in \cZ) \ge \eps$.

Finally, we define several constants. Let $K_\cH$ be a constant satisfying
\[
  \Delta_1(\cH) \le K_\cH \cdot \frac{e(\cH)}{v(\cH)} \qquad \text{and} \qquad \frac{p}{\pH} \le K_\cH.
\]
Further, pick $\lambda = \eps^2/8$ and let $L$ be an integer satisfying
\begin{equation}
  \label{eq:lambda-L}
  \frac{\big(K \cdot K_\cH^s\big)^L}{L!} \le \lambda.
\end{equation}

\subsection{A recap of the proof outline}
\label{sec:outline-1}

Our goal now is to utilize the boosters in $\cB$ in order to construct, for a typical $Z \in \cZ$, a family $\Psi$ of $\exp\big(o(|Z|)\big)$ partial colourings of $Z$ that satisfies~\ref{item:rainbow-stars-cont-extending} and~\ref{item:rainbow-stars-cont-forcing} from the statement of the theorem.

Given a $Z \in \cZ$, let
\[
  \cBZ \coloneqq \big\{B \in \cB : \text{$\cH[Z \cup B]$ is not $r$-colourable}\big\}
\]
be the family of boosters from $\cB$ that are active for $Z$.  By definition, no proper $\br{r}$-colouring of $Z$ can be extended to $Z \cup B$, for any $B \in \cBZ$.  The assumption that $\cH[B]$ is $2$-choosable from $\br{r}$ implies that, if a proper partial $\br{r}$-colouring $\psi_0$ of $Z$ cannot be extended to a booster in $B \in \cBZ$, then at least one vertex of $B$ must be the centre of a rainbow star (equivalently, it must be forced to some colour), see Lemma~\ref{lemma:activated-booster}.  Therefore, it will be sufficient to make sure that our partial colourings do not extend to a constant proportion of the boosters in $\cBZ$.

We say that $A \in \cH$ is an \emph{interacting} edge between two sets if $A$ is contained in their union and intersects both of them.  Note that a proper partial colouring $\psi_0$ cannot be extended to $B$ if and only if, for every proper colouring $\varphi$ of $B$, some edge is monochromatic under $\psi_0 \cup \varphi$.  This monochromatic edge cannot be fully contained in either $Z$ or~$B$, because both $\psi_0$ and $\varphi$ are proper, and is therefore an interacting edge.

Define the \emph{interface} of $Z$ with $B$ to be the set\footnote{This is a minor departure from the terminology used in the proof outline, where the interface between $Z$ and $B$ was the family of all edges $A$ that are interacting between $Z$ and $B$.}
\[
  I(B,Z) \coloneqq \{ A \setminus B : A \in \cH \text{ is an interacting edge between } Z \text{ and } B  \}.
\]
We will say that a partial colouring $\psi_0$ of $Z$ and a colouring $\varphi$ of $B$ are \emph{consistent} if no edge that is interacting between $Z$ and $B$ is monochromatic under $\psi_0 \cup \varphi$.  In this language, a partial colouring $\psi_0$ of $Z$ can be extended to $B$ precisely when $\psi_0$ is consistent with some proper colouring $\varphi$ of $B$.  Note that the interface $I(B, Z)$ contains all the information about $\psi_0$ that is needed to determine whether this is the case.

We say that $C \subseteq Z \times \br{r}$ is a \emph{restricted} colouring of $Z$ if, for every $z \in Z$, there is some colour $i \in \br{r}$ for which $(z, i) \in C$.  Recall that, in this context, we identify colourings $\psi$ of $Z$ with the restricted colouring $\{(z,\psi(z)) \colon z \in Z \}$.  Given a restricted colouring $C$, we define its \emph{determined} colouring to be the partial colouring $\psi_C$, where $\psi_C(z) = i$ if every proper colouring $\psi \subseteq C$ has $\psi(z) = i$ (equivalently, if $i \in \br{r}$ is the unique colour such that $(z,i) \in C$).

As we wrote in the outline, we will first define a hypergraph $\cT$ on the vertex set $Z \times \br{r}$ in such a way that proper colourings of $Z$ will be independent in $\cT$. We will then let the family $\Psi$ of partial colourings be the determined colourings of a family of (restricted colourings that are) containers for the independent sets of $\cT$. The edges of $\cT$ will correspond to colourings of interfaces $I(B,Z)$, with $B \in \cBZ$, that are consistent with some proper colouring of $B$.  In particular, we need to exercise control over the interfaces that $Z$ has with the boosters in order to invoke the container lemma in an efficient way.

The rest of the section is organised as follows.  We first prove that $Z$ is disjoint from almost all boosters in $\cBZ$ and that the interface between $Z$ and these boosters is bounded in size and well-behaved.  Next, we will formally define the hypergraph $\cT$ whose containers provide the family of partial colourings satisfying the first two properties. Finally, we will bound the degree sequence of $\cT$ in order show that the family of containers, and therefore partial colourings, is small.

\subsection{Active boosters and interactions}
\label{sec:active-boosters}

Let $\cBZ'$ be the hypergraph obtained from $\cBZ$ by retaining only those $B \in \cBZ$ that satisfy all of the following:
\begin{enumerate}[label=(B\arabic*)]
\item
  \label{item:booster-cleanup-1}
  The sets $B$ and $Z$ are disjoint.
\item
  \label{item:booster-cleanup-2}
  Every nonempty set in $I(B,Z)$ has $s-1$ elements.
\item
  \label{item:booster-cleanup-3}
  The sets in $I(B,Z)$ are pairwise disjoint.\footnote{However, there could still be pairs of different $A, A' \in \cH$ such that $A \setminus B = A' \setminus B \in I(B,Z)$.}
\item
  \label{item:booster-cleanup-4}
  The family $I(B,Z)$ contains at most $L$ sets.
\end{enumerate}
The following two lemmas show that $\cBZ \setminus \cBZ'$ is small for a typical $Z \sim V_p$.

\begin{lemma}
  \label{lemma:booster-cleanup-123}
  If $Z \sim V_p$, then the expected number of $B \in \cB$ that fail one of~\ref{item:booster-cleanup-1}--\ref{item:booster-cleanup-3} is $o\big(e(\cB)\big)$.
\end{lemma}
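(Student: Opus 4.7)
The plan is a union bound over the three conditions, each handled separately by a first-moment estimate.

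For condition~\ref{item:booster-cleanup-1}, $\Pr(B \cap V_p \neq \emptyset) \le |B| p \le K p$, which is $o(1)$ since $p = \Theta(\pH)$ and $\pH \to 0$ by Fact~\ref{fact:non-clustered-properties}(ii).  Summing over $\cB$ yields $o(e(\cB))$.  For condition~\ref{item:booster-cleanup-2}, a failure requires an edge $A$ with $|A \cap B| = k \ge 2$ and $A \setminus B \subseteq V_p$, so a union bound on the failure probability gives
\[
  \sum_{k=2}^{s-1} \binom{K}{k} \Delta_k(\cH) \, p^{s-k}.
\]
Non-clusteredness (combined with $e(\cH)/v(\cH) = \pH^{1-s}$) yields $\Delta_k(\cH) \ll \pH^{k-s}$, so with $p = O(\pH)$ each term is $o(1)$, and the total expectation is $o(e(\cB))$.

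Condition~\ref{item:booster-cleanup-3} is the delicate case.  A failure witness is an ordered pair $(A, A')$ of edges, both interacting with $B$ and $V_p$, with $A \setminus B \ne A' \setminus B$ and $(A \setminus B) \cap (A' \setminus B) \ne \emptyset$.  By Markov, the failure probability is bounded by $\sum_{(A, A')} p^{|(A \cup A') \setminus B|}$ over all such pairs.  I would parametrise each pair by a shared vertex $v \in (A \cap A') \setminus B$ together with $b \in A \cap B$ and $b' \in A' \cap B$, at the cost of overcounting by an $O(1)$ factor.  The dominant sub-case is $|A \cap B| = |A' \cap B| = 1$ and $|A \cap A'| = 1$ (the remaining sub-cases either have strictly higher exponents of $p$ or are absorbed by the failure of condition~\ref{item:booster-cleanup-2}), in which $|(A \cup A') \setminus B| = 2s - 3$.

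The main obstacle is that the naive pair-count bound $v(\cH) \cdot K^2 \cdot \Delta_2(\cH)^2$ is too weak---already in the Ramsey hypergraph of triangles it scales as $n^{1/2}$ per booster, which does not suffice.  The fix is to swap the order of summation:
\[
  \sum_{v \notin B} \left(\sum_{b \in B} \Delta_2(\{v, b\}; \cH)\right)^{\!2} \le \Bigl(\max_v \sum_{b \in B} \Delta_2(\{v, b\}; \cH)\Bigr) \cdot \sum_{v, b} \Delta_2(\{v, b\}; \cH) \le K^2(s-1)\,\Delta_1(\cH)\,\Delta_2(\cH),
\]
using that the inner sum is at most $K \Delta_2(\cH)$ and that $\sum_v \Delta_2(\{v, b\}; \cH) \le (s-1) \Delta_1(\cH)$ for each fixed $b$.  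With non-clusteredness giving $\Delta_1(\cH) \Delta_2(\cH) = o(\pH^{3-2s})$ and $p^{2s-3} = O(\pH^{2s-3})$, the resulting bound is $o(1)$ per booster.  The sub-cases with $|A \cap A'| \ge 2$ or $b = b'$ are handled analogously using higher-order degrees $\Delta_m(\cH) \ll \pH^{m-s}$: a larger intersection $m$ lowers the exponent of $p$ by $m-1$, but this is compensated by the correspondingly stronger degree bound.  The crucial insight is trading one factor of $\Delta_2$ for a factor of $\Delta_1$ by summing over $v$ first, which is precisely what the non-clustered assumption is tailored to absorb.
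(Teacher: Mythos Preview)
Your argument is correct and essentially matches the paper's. For~\ref{item:booster-cleanup-3}, the paper enumerates pairs $(A,A')$ directly---first $A$ through some $b\in B$ (at most $K\Delta_1(\cH)$ choices), then the $r$ shared vertices in $A\setminus B$, then $A'$ through some $b'\in B$ together with those $r$ vertices (at most $K\Delta_{r+1}(\cH)$ choices)---arriving at exactly the bound $K^2\binom{s-1}{r}\,\Delta_1(\cH)\,\Delta_{r+1}(\cH)\,p^{2s-2-r}=o(1)$ per booster that your $\sum_v x_v^2\le(\max_v x_v)(\sum_v x_v)$ manoeuvre reproduces; the identity $\sum_v \deg_\cH(\{v,b\})=(s-1)\deg_\cH(b)$ you invoke is precisely what trades one $\Delta_2$ for a $\Delta_1$, just as the paper's direct enumeration does. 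One expository slip: your parenthetical ``strictly higher exponents of~$p$'' is backwards---a larger $|A\cap A'|$ \emph{lowers} the exponent---but you correctly account for this two sentences later via the stronger degree bound $\Delta_m(\cH)\ll\pH^{m-s}$.
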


\begin{lemma}
  \label{lemma:booster-cleanup-4}
  If $Z \sim V_p$, then the expected number of $B \in \cB$ that satisfy~\ref{item:booster-cleanup-2} and~\ref{item:booster-cleanup-3} but fail~\ref{item:booster-cleanup-4} is at most $\lambda e(\cB)$.
\end{lemma}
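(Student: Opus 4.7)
The plan is to run a first-moment computation over ordered $L$-tuples of edges that witness the failure of~\ref{item:booster-cleanup-4}. The underlying observation is that if $B\in\cB$ satisfies~\ref{item:booster-cleanup-2} and~\ref{item:booster-cleanup-3} but $|I(B,Z)|>L$, then $I(B,Z)$ contains at least $L$ pairwise disjoint sets of size $s-1$; for each such set $\sigma\in I(B,Z)$ one can pick an edge $A\in\cH$ with $A\setminus B=\sigma$, and since the interface sets are disjoint, the resulting edges $A_1,\dotsc,A_L$ are distinct, each satisfies $|A_i\cap B|=1$, their interfaces $A_i\setminus B$ are pairwise disjoint, and $A_i\setminus B\subseteq Z$ (because $A_i$ is an interacting edge, hence contained in $Z\cup B$).

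Accordingly, I would introduce, for each $B\in\cB$, the random variable
\[
  Z_B \coloneqq \#\big\{(A_1,\dotsc,A_L) : A_i\in\cH\text{ distinct},\ |A_i\cap B|=1,\ (A_i\setminus B)_{i=1}^L\text{ pairwise disjoint},\ A_i\setminus B\subseteq V_p\big\}.
\]
The $L$ witnessing edges can be ordered in $L!$ ways, so $Z_B\ge L!$ whenever $B$ is a booster the lemma counts. By Markov's inequality, the expected number of such boosters is therefore at most $(1/L!)\sum_B\Ex[Z_B]$, and it suffices to show that $\Ex[Z_B]\le(K\cdot K_\cH^{s})^L$ for every $B\in\cB$.

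For a fixed $B$ and a valid ordered $L$-tuple, the disjoint size-$(s-1)$ interfaces $A_i\setminus B$ jointly cover $L(s-1)$ vertices, so the probability that they all lie in $V_p$ equals $p^{L(s-1)}$. The number of edges $A\in\cH$ with $|A\cap B|=1$ is at most $|B|\cdot\Delta_1(\cH)\le K\cdot K_\cH\cdot e(\cH)/v(\cH)$, so the number of candidate ordered $L$-tuples is at most $\big(KK_\cH\cdot e(\cH)/v(\cH)\big)^L$. Combining these and using the setup inequalities $p\le K_\cH\pH$ and $\pH^{s-1}=v(\cH)/e(\cH)$ yields the claimed bound on $\Ex[Z_B]$; summing over the $e(\cB)$ boosters and invoking the choice of $L$ from~\eqref{eq:lambda-L} delivers the required $\lambda e(\cB)$.

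No step here is delicate. The only computation of substance is the telescoping of $\Delta_1(\cH)$ against $p^{s-1}$ through the definition of $\pH$, which is exactly what the constant $K_\cH$ was introduced to absorb, and the factorial in~\eqref{eq:lambda-L} was chosen precisely so that the Markov estimate beats $\lambda$.
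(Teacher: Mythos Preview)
Your proof is correct and takes essentially the same approach as the paper. The paper phrases things via an auxiliary quantity $i(B,Z)$ (the largest size of a family of edges witnessing a large interface) and bounds $\Pr(i(B,Z) > L)$ per booster, but the underlying computation is exactly your first-moment bound on ordered $L$-tuples of edges with disjoint size-$(s-1)$ interfaces, divided by $L!$, followed by the same telescoping of $\Delta_1(\cH)\cdot p^{s-1}$ into $K\cdot K_\cH^s$.
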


Finally, let
\[
  \cZ' = \big\{Z \in \cZ : e(\cBZ') \ge (\eps/2) \cdot e(\cB)\big\}.
\]
By Lemmas~\ref{lemma:booster-cleanup-123} and~\ref{lemma:booster-cleanup-4} and since $\lambda = \eps^2/8$, we have, for $Z \sim V_p$,
\[
  \begin{split}
    \Pr(Z \in \cZ') & \ge \Pr(Z \in \cZ) - \Pr\big(e(\cBZ \setminus \cBZ') \ge (\eps/2) \cdot e(\cB)\big) \\
    & \ge \eps - \frac{\Ex\big[e(\cBZ \setminus \cBZ')\big]}{(\eps/2) \cdot e(\cB)} \ge \eps - 4\lambda/\eps \ge \eps/2.
\end{split}
\]

\begin{proof}[{Proof of Lemma~\ref{lemma:booster-cleanup-123}}]
  Suppose that $Z \sim V_p$ and let $X_1$, $X_2$, and $X_3$ denote the numbers of sets $B$ that fail conditions~\ref{item:booster-cleanup-1}, \ref{item:booster-cleanup-2}, and \ref{item:booster-cleanup-3}, respectively. We have
  \[
    \Ex[X_1] = \sum_{B \in \cB} \Pr(B \cap Z \neq \emptyset) \le \sum_{B \in \cB} \Ex[|B \cap Z|] = \sum_{v \in V} p \deg_\cB(v) \le v(\cH) \cdot p\Delta_1(\cB) \ll e(\cB),
  \]
  since $p \ll 1$ and $\Delta_1(\cB) = O\big(e(\cB)/v(\cH)\big)$. Since $X_2$ is at most the number of pairs $(A,B) \in \cH \times \cB$ such that $A \setminus B \subseteq Z$ and $1 \le |A \setminus B| \le s-2$, we have
  \[
    \Ex[X_2] \le \sum_{B \in \cB} \sum_{t=1}^{s-2} \binom{|B|}{s-t} \cdot \Delta_{s-t}(\cH) \cdot p^t \le e(\cB) \cdot \sum_{r=2}^{s-1} 2^K \cdot \Delta_r(\cH) \cdot \pH^{s-r},
  \]
  where we used the fact that every edge of $\cB$ contains at most $K$ vertices. Since $\cH$ is non-clustered, we have, for every $r \in \{2, \dotsc, s-1\}$,
  \[
    \Delta_r(\cH) \cdot \pH^{s-r} \ll \pH^{s-1} \cdot \frac{e(\cH)}{v(\cH)} = 1,
  \]
  implying that $\Ex[X_2] \ll e(\cB)$. Finally, we can bound $X_3 - X_2$ by the number of triples $(A,A',B) \in \cH^2 \times \cB$ such that $A \cup A' \subseteq B \cup Z$, $1 \le |(A \cap A') \setminus B| \le s-2$, and $|A \cap B| = |A' \cap B| = 1$. We therefore get the following upper bound by  counting the options for picking $B$, then $A$, then the size of the intersection $(A \cap A') \setminus B$, and finally $A'$:
  \[
    \Ex[X_3 - X_2] \le \sum_{B \in \cB} |B| \cdot \Delta_1(\cH) \cdot \sum_{r=1}^{s-2} \binom{s-1}{r} \cdot |B| \cdot \Delta_{r+1}(\cH) \cdot \pH^{2s-2-r}.
  \]
  Using our assumptions that every edge of $\cB$ contains at most $K$ vertices, $\Delta_1(\cH) \le O\big( e(\cH) / v(\cH) \big)$, and $\Delta_{r+1}(\cH) \ll \pH^r \cdot e(\cH) / v(\cH)$ for all $r \in \{1, \dotsc, s-2\}$, we have
  \[
    \Ex[X_3 - X_2] \ll e(\cB) \cdot \pH^{2s-2} \cdot \left(\frac{e(\cH)}{v(\cH)}\right)^2 = e(\cB).
  \]
  The proof of the lemma is now complete.
\end{proof}

\begin{proof}[{Proof of Lemma~\ref{lemma:booster-cleanup-4}}]
  For $B \in \cB$ and $Z \subseteq V$, let $i(B,Z)$ be the largest integer $\ell$ such that there are $A_1, \dotsc, A_\ell \in \cH$ satisfying:
  \begin{enumerate}[label=(\roman*)]
  \item
    \label{item:iBZ-1}
    $A_1 \cup \dotsb \cup A_\ell \subseteq B \cup Z$,
  \item
    \label{item:iBZ-2}
    $|A_i \cap B| = 1$ for every $i \in \br{\ell}$,
  \item
    \label{item:iBZ-3}
    $A_1 \setminus B, \dotsc, A_\ell \setminus B$ are pairwise disjoint.
  \end{enumerate}
  Observe that, if $B$ satisfies~\ref{item:booster-cleanup-2} and~\ref{item:booster-cleanup-3}, then $|I(B,Z)| = i(B,Z)$. In particular, the assertion of the lemma will follow if we show that $\Pr\big(i(B,Z) > L\big) \le \lambda$.

  To this end, for a positive integer $\ell$, let $\cA_\ell$ denote the collection of all sequences $A_1, \dotsc, A_\ell \in \cH$ that satisfy~\ref{item:iBZ-2} and \ref{item:iBZ-3} above and note that
  \[
    |\cA_\ell| \le \big(|B| \cdot \Delta_1(\cH)\big)^\ell \le \big(K \cdot \Delta_1(\cH)\big)^\ell.
  \]
  Since, for every $(A_1, \dotsc, A_\ell) \in \cA_\ell$, the set $(A_1 \cup \dotsb \cup A_\ell) \setminus B$ has precisely $(s-1)\ell$ elements, we have
  \[
    \Pr\big(A_1 \cup \dotsb \cup A_\ell \subseteq B \cup Z\big) = p^{(s-1)\ell}.
  \]
  Let $X_\ell$ be the number of sequences in $\cA_\ell$ that satisfy~\ref{item:iBZ-1}. Since each such sequence has distinct coordinates and~\ref{item:iBZ-1}--\ref{item:iBZ-3} are invariant under any permutation of the sequence~$(A_1, \dotsc, A_\ell)$, we may conclude that
  \[
    \Pr\big(i(B, V_p) \ge \ell\big) \le \frac{\Ex[X_\ell]}{\ell!} = \frac{|\cA_\ell| \cdot p^{(s-1)\ell}}{\ell!} \le \frac{\big(K \cdot \Delta_1(\cH) \cdot p^{s-1}\big)^\ell}{\ell!}.
  \]
  The assertion of the lemma follows because our assumptions imply that
  \[
    K \cdot \Delta_1(\cH) \cdot p^{s-1} \le K \cdot K_\cH^s \cdot \frac{e(\cH)}{v(\cH)} \cdot \pH^{s-1} = K \cdot K_\cH^s,
  \]
  see~\eqref{eq:lambda-L}.
\end{proof}
  
\subsection{The hypergraph of transversals}
\label{sec:transversals}

We will now formally define the hypergraph~$\cT$.  Our goal is the following: For every restricted colouring $C$, if its determined colouring $\psi_C$ is consistent with some proper colouring of a booster $B \in \cB'_Z$, then this fact will be evidenced by an edge of $\cT$ inside $C$.

Let us discuss what that should mean.  Suppose that $\varphi$ is a proper colouring of a booster $B \in \cB'_Z$.  We will show that, for every restricted colouring $C$, its determined colouring $\psi_C$ is consistent with $\varphi$ if and only if there is a colouring $\psi_{C,\varphi} \colon \bigcup I(B,Z) \to \br{r}$ that is consistent with $\varphi$ and further satisfies $\psi_{C,\varphi} \subseteq C$.  Since $\bigcup I(B,Z)$ contains at most $(s-1) \cdot L$ vertices, see~\ref{item:booster-cleanup-2}, these colourings $\psi_{C,\varphi}$ can serve as the kind of witnesses we are looking for.  As a result, one could actually stop here, and take the edges of the hypergraph to be all such sets $\big\{(z, \psi_{C,\varphi}(z)) \colon z \in \bigcup I(B,Z)\big\}$.  However, we will carry on and find witnesses that are minimal/irredundant.

To this end, suppose that an edge $A$ interacting between $B$ and $Z$ is not monochromatic.  Obviously, this means that two of its elements are coloured differently.  However, we will use the fact that $A$ intersects $B$ in exactly one vertex (see~\ref{item:booster-cleanup-2}) to conclude that either:
\begin{enumerate}
\item
  $\psi$ coloured two of the vertices of $A \setminus B$ in different colours, or
\item
  $\psi$ coloured some vertex of $A \setminus B$ in a colour different from $\varphi(A \cap B)$.
\end{enumerate}
One might be tempted to ignore the first case, which seems superfluous. Indeed, even if two vertices are coloured in different colours, one of these colours must be different from the one in $\varphi(A \cap B)$.  However, for a given $T \in I(B,Z)$, there can be many edges $A \in \cH$ with $A \setminus B = T$ and picking a colour that is different from the one in $\varphi(A \cap B)$ for \emph{every} such $A$ might not always be possible.

We will define the edges of $\cT$ using transversals of $I(B,Z)$.  Given a proper colouring $\varphi$ of $B$,  we will pick one of two obstructions for every $T \in I(B,Z)$: either two coloured vertices $(z,c)$, $(z',c')$, where $z,z' \in T$ and $c \neq c'$, or just one coloured vertex $(z,c)$, where $z \in T$ and $c$ different from the colour in $\varphi(A \cap B)$ for \emph{every} $A \in \cH$ such that $A \setminus B = T$.

We now define the hypergraph $\cT$ formally. For every $(s-1)$-element set $T \subseteq Z$, let
\[
  N_\cH(T;B) \coloneqq \{b \in B : T \cup \{b\} \in \cH\}
\]
and note that $I(B,Z)$ comprises precisely those sets $T \subseteq Z \setminus B$ for which $|N_{\cH}(T;B)| \ge 1$.  In particular, given $\psi$ and $\varphi$, there is an element $T \in I(B,Z)$ that $\psi$ colours monochromatically from the colours in $\varphi\big(N_\cH(T;B)\big)$. 

\begin{dfn}
  \label{dfn:cT}
  We let $\cT$ be the (multi)hypergraph with vertex set $Z \times \br{r}$ whose (multi)set of edges is defined as follows. For every:
  \begin{itemize}
  \item
    active booster $B \in \cBZ'$,
  \item
    proper colouring $\varphi \colon B \to \br{r}$ of $\cH[B]$,
  \item
    set $I' \subseteq I(B,Z)$ satisfying $I' \supseteq \big\{T \in I(B,Z) : \varphi\big(N_{\cH}(T;B)\big) = \br{r}\big\}$, and
  \item
    disjoint transversals\footnote{A \emph{transversal} of a family $\cF$ of sets is a subset $X$ of $\bigcup\cF$ such that $|X \cap F| = 1$ for every $F \in \cF$.} $\{v_T : T \in I(B,Z)\}$ of $I(B,Z)$ and $\{v_T' : T \in I'\}$ of $I'$, where $v_T \in T$ for every $T \in I(B,T)$ and $v_T' \in T$ for every $T \in I'$,
  \end{itemize}
  we add to $\cT$ all edges of the form
  \[
    \big\{(v_T, c_T) : T \in I(B,Z)\big\} \cup \big\{(v_T',c_T') : T \in I'\big\},
  \]
  where:
  \begin{itemize}
  \item
    for each $T \in I(B,Z) \setminus I'$, the colour $c_T$ is an arbitrary element of $\br{r} \setminus \varphi\big(N_{\cH}(T;B)\big)$;\footnote{The choice of $I'$ guarantees that the set $\br{r} \setminus \varphi\big(N_{\cH}(T;B)\big)$ is nonempty for every $T \in I(B,Z) \setminus I'$.}
  \item
    for each $T \in I'$, the colours $c_T$ and $c_T'$ are two arbitrary, distinct elements of $\br{r}$.
  \end{itemize}
\end{dfn}

An immediate consequence of the definition should be that every proper colouring $\psi$ of $Z$, when viewed as a restricted colouring, is an independent set of $\cT$.  This is because the determined colouring of $\psi$, which is just $\psi$ itself, cannot be extended to any $B \in \cBZ'$.

\begin{lemma}
  \label{lemma:psi-independent-cT}
  Every proper colouring $\psi \colon Z \to \br{r}$ of $\cH[Z]$, viewed as a subset of $Z \times \br{r}$, is an independent set in $\cT$.
\end{lemma}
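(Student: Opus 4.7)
The plan is to proceed by contradiction. Suppose $\psi$, viewed as the set $\{(z,\psi(z)) : z \in Z\}$, contains an edge $E$ of $\cT$. Unpacking Definition~\ref{dfn:cT}, this edge arises from some booster $B \in \cBZ'$, a proper colouring $\varphi \colon B \to \br{r}$, a set $I' \subseteq I(B,Z)$ containing every $T$ with $\varphi(N_\cH(T;B)) = \br{r}$, disjoint transversals $\{v_T\}_{T \in I(B,Z)}$ and $\{v_T'\}_{T \in I'}$, and choices of colours $c_T$ and $c_T'$. My aim is to show that the combined colouring $\psi \cup \varphi$ is then a proper $r$-colouring of $\cH[Z \cup B]$, which will contradict $B \in \cBZ'\subseteq \cBZ$.

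Since $\psi$ is proper on $Z$ and $\varphi$ is proper on $B$, it suffices to rule out monochromatic edges of $\cH[Z\cup B]$ that interact between $Z$ and $B$. By property~\ref{item:booster-cleanup-2}, every such edge has exactly one vertex in $B$, hence is of the form $T \cup \{b\}$ with $T \in I(B,Z)$ and $b \in N_\cH(T;B)$. I will consider two cases depending on whether $T \in I'$ or $T \in I(B,Z) \setminus I'$.

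In the first case, $T \in I(B,Z) \setminus I'$, so Definition~\ref{dfn:cT} forces $c_T \in \br{r} \setminus \varphi(N_\cH(T;B))$. Since $(v_T,c_T) \in E \subseteq \psi$, we have $\psi(v_T) = c_T \neq \varphi(b)$, and because $v_T \in T$, the edge $T \cup \{b\}$ is not monochromatic under $\psi \cup \varphi$. In the second case, $T \in I'$, so $c_T$ and $c_T'$ are two distinct colours, and the disjointness of the two transversals implies $v_T \neq v_T'$ while both lie in $T$. Since $(v_T,c_T), (v_T',c_T') \in \psi$, the vertex $v_T$ is coloured $c_T$ and $v_T'$ is coloured $c_T' \neq c_T$, so already $T$ (and hence $T \cup \{b\}$) is bichromatic under $\psi$ alone.

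Together, the two cases show that no edge of $\cH[Z\cup B]$ can be monochromatic under $\psi \cup \varphi$, contradicting the fact that $\cH[Z\cup B]$ is not $r$-colourable. The only slightly delicate point in the write-up is book-keeping the definitions of $I'$, $N_\cH(T;B)$, and the transversals so that the case analysis cleanly covers every interacting edge; once those are set up, no further calculation is needed.
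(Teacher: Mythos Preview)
Your proof is correct and follows essentially the same approach as the paper: assume an edge of $\cT$ lies in $\psi$, unpack its defining data $(B,\varphi,I',\{v_T\},\{v_T'\})$, and deduce that $\psi\cup\varphi$ properly colours $\cH[Z\cup B]$, contradicting $B\in\cBZ'$. The only cosmetic difference is that you organise the argument as a case split on whether $T\in I'$, whereas the paper phrases it as ``for every $T$, either $\psi(v_T)\notin\varphi(N_\cH(T;B))$ or $\psi(v_T)\neq\psi(v_T')$''; you might also make explicit that $B\cap Z=\emptyset$ (property~\ref{item:booster-cleanup-1}) so that $\psi\cup\varphi$ is well defined.
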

\begin{proof}
  Suppose that some proper colouring $\psi$ of $\cH[Z]$ contains an edge of $\cT$. This means that there are an active booster $B \in \cBZ'$, a proper colouring $\varphi$ of $\cH[B]$, a set $I' \subseteq I(B,Z)$, and disjoint transversals $\{v_T : T \in I(B,Z)\}$ and $\{v_T' : T \in I'\}$ such that, for every $T \in I(B,Z)$, either $\psi(v_T) \notin \varphi\big(N_\cH(T;B)\big)$ or $\psi(v_T) \neq \psi(v_T')$. In particular, there is no $T \in I(B,Z)$ whose all elements receive the same colour from the set $\varphi\big(N_\cH(T;B)\big)$. On the other hand, since $B$ is disjoint from $Z$, see~\ref{item:booster-cleanup-1}, one may naturally define the colouring $\psi \cup \varphi \colon B \cup Z \to \br{r}$. Since $B$ is an active booster, this colouring $\psi \cup \varphi$ is not a proper colouring of $\cH[Z \cup B]$, which means that there is an $A \in \cH[Z \cup B]$ that $\psi \cup \varphi$ makes monochromatic. Since $\psi$ and $\varphi$ are proper colourings of $\cH[Z]$ and of $\cH[B]$, respectively, every such $A$ must have a nonempty intersection with both $Z$ and $B$. Our choice of $\cBZ'$, see~\ref{item:booster-cleanup-2}, implies that $|A \cap B| = 1$ and, consequently, that $A \setminus B \in I(B,Z)$. Moreover,
  \[
    (\psi\cup\varphi)(A) = \psi(A \setminus B) = \varphi(A \cap B),
  \]
  which means that all elements of $A \setminus B$ receive the same colour as the unique vertex in $A \cap B$, which belongs to $N_\cH(A \setminus B;B)$, a contradiction.
\end{proof}

More importantly, the definition implies that, for every restricted colouring $C$ that induces a small number of edges in $\cT$, its determined colouring $\psi_C$ forces a constant fraction of the vertices of $\cH$ to some colour.  Recall that we denote the set of vertices that $\psi_C$ forces to some colour by $F(\psi_C)$.

\begin{lemma}
  \label{lemma:containers-forced-vertices}
  If $C \subseteq Z \times \br{r}$ is a restricted colouring, then
  \[
    e(\cT[C]) \ge e(\cBZ') - |F(\psi_C)| \cdot \Delta_1(\cB).
  \]
\end{lemma}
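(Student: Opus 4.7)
The plan is to show that every booster $B \in \cBZ'$ with $B \cap F(\psi_C) = \emptyset$ contributes at least one edge to $\cT[C]$. Since each vertex of $V$ lies in at most $\Delta_1(\cB)$ boosters of $\cB$, at most $|F(\psi_C)| \cdot \Delta_1(\cB)$ boosters of $\cBZ'$ contain a forced vertex, so this yields the claimed bound on $e(\cT[C])$. Fix such a $B$. For every $b \in B$, since $b$ is not forced by $\psi_C$, the list $L_b := \{c \in \br{r} : b \text{ is not threatened by } \psi_C^{-1}(c)\}$ has size at least two. Because $\cH[B]$ is an automorphic image of $\cH[B_0]$ and hence $2$-choosable from $\br{r}$, we obtain a proper colouring $\varphi \in \Col(\cH[B])$ with $\varphi(b) \in L_b$ for every $b \in B$. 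A key property of $\varphi$ is that $T \not\subseteq \psi_C^{-1}(c)$ for every $T \in I(B,Z)$ and every $c \in \varphi(N_\cH(T;B))$; indeed, otherwise the edge $T \cup \{b\}$, with $b \in N_\cH(T;B)$ chosen so that $\varphi(b) = c$, would certify that $b$ is threatened by $\psi_C^{-1}(\varphi(b))$, contradicting $\varphi(b) \in L_b$.

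Given this $\varphi$, for each $T \in I(B,Z)$ we determine independently whether $T$ belongs to $I'$ and which pair $(v_T, c_T)$ (and, if $T \in I'$, also $(v_T', c_T')$) to choose. If some $v_T \in T$ and $c_T \in \br{r} \setminus \varphi(N_\cH(T;B))$ satisfy $(v_T, c_T) \in C$, we place $T$ outside $I'$ and use this pair. Otherwise, we place $T$ inside $I'$ and claim that there are distinct $v_T, v_T' \in T$ and distinct $c_T, c_T' \in \br{r}$ with $(v_T, c_T), (v_T', c_T') \in C$. Establishing this claim is the main obstacle. Consider the set $S := C \cap (T \times \br{r})$; since $C$ is restricted and $|T| = s - 1 \ge 2$, we have $|S| \ge 2$. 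If no two elements of $S$ differ in both coordinates, an elementary argument shows that all members of $S$ share either a common first coordinate or a common second coordinate. The former forces some $u \in T$ to have no colour available in $C$, contradicting the restriction property. In the latter case, $\{c : (u, c) \in C\} = \{c_0\}$ for every $u \in T$, giving $T \subseteq \psi_C^{-1}(c_0)$. But the ``otherwise'' hypothesis means $\{c : (u, c) \in C\} \subseteq \varphi(N_\cH(T;B))$ for every $u \in T$, placing $c_0$ inside $\varphi(N_\cH(T;B))$ and contradicting the key property of $\varphi$.

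Finally, we verify that the data $(B, \varphi, I', \{v_T\}_{T \in I(B,Z)}, \{v_T'\}_{T \in I'}, \{c_T\}, \{c_T'\})$ fit Definition~\ref{dfn:cT} and yield an edge of $\cT$ contained in $C$. Any $T$ with $\varphi(N_\cH(T;B)) = \br{r}$ automatically falls into the ``otherwise'' branch, so $I'$ contains all such $T$. The collections $\{v_T : T \in I(B,Z)\}$ and $\{v_T' : T \in I'\}$ form disjoint transversals: elements drawn from different $T$'s are distinct by pairwise disjointness of the sets in $I(B,Z)$ (property~\ref{item:booster-cleanup-3}), and within each $T \in I'$ we arranged $v_T \ne v_T'$. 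The resulting edge of $\cT$ sits inside $C$ by construction, completing the plan.
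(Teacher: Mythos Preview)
Your proposal is correct and follows essentially the same approach as the paper: for each $B \in \cBZ'$ avoiding $F(\psi_C)$, use $2$-choosability to produce a proper colouring $\varphi$ of $B$ from lists of non-threatening colours, then build an edge of $\cT[C]$ by processing each $T \in I(B,Z)$ separately. The only cosmetic difference is in how you organise the per-$T$ dichotomy: the paper splits first on whether $\psi_C$ is monochromatic on $T$ (and if so shows directly that this colour lies outside $\varphi(N_\cH(T;B))$), whereas you split on whether some pair $(v_T,c_T)\in C$ with $c_T\notin\varphi(N_\cH(T;B))$ already exists, and then in the complementary case run a short pigeonhole argument on $S=C\cap(T\times\br{r})$; both routes arrive at the same pair of coloured vertices for $T\in I'$.
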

\begin{proof}
  It suffices to find, for every active booster $B \in \cBZ'$ that does not contain any vertex of $F(\psi_C)$, an edge of $\cT[C]$ that corresponds to $B$, as in Definition~\ref{dfn:cT}. For every $b \in B$, let $L(b) \subseteq \br{r}$ be a list of some two colours that do not threaten $b$; there are at least two such colours since $b$ is not forced to any colour. Let $\varphi \colon B \to \br{r}$ be a proper colouring of $\cH[B]$ from these lists; such a colouring exists by our assumption that $\cH[B]$ is $2$-choosable from $\br{r}$, see~\ref{item:ass-choosability}.

  Fix an arbitrary $T \in I(B,Z)$ and let $N(T) = N_\cH(T;B)$. If $\psi_C$ colours all of $T$ with a colour $c_T$, then $c_T$ threatens every $b \in N(T)$ and hence $c_T \notin \bigcup_{b \in N(T)}L(b) \supseteq \varphi(N(T))$. In this case, we do not include $T$ in $I'$, we pick an arbitrary $v_T \in T$ and add $(v_T, c_T) \in C$ to the edge we construct. In the complementary case, either:
  \begin{enumerate}[label=(\roman*)]
  \item
    $\psi_C$ is not defined on all of $T$, which means that there is some vertex $v$ in $T$ and colours $c, c' \in \br{r}$ such that $(v, c), (v,c') \in C$ (this is because $\pi_1(C) = Z$ and hence the only reason why $\psi_C(v)$ is not defined is that $|\{v\} \times \br{r} \cap C| > 1$); or
  \item
    $\psi_C$ is defined on all of $T$ but there are distinct vertices $v, v' \in T$ such that $\psi_C(v) \neq \psi_C(v')$.
  \end{enumerate}
  Either way, since $|T| = s-1 \ge 2$, there must be distinct $v_T, v_T' \in T$ and $c_T, c_T' \in \br{r}$ such that $(v_T, c_T), (v_T', c_T') \in C$; we include $T$ in $I'$ and add $(v_T, c_T)$ and $(v_T', c_T')$ to the edge we construct.
\end{proof}

\subsection{The distribution of edges of $\cT$}
\label{sec:distr-edges-T}

The key step in the proof of Theorem~\ref{thm:many-rainbow-stars} will be to construct a small family of containers for proper $\br{r}$-colourings of $\cH[Z]$ using the information provided by the hypergraph $\cT$. In order to do this, we will first show that, for a typical choice of $Z \sim V_p$, we may find a subhypergraph $\cT' \subseteq \cT$ that contains almost as many edges as $\cT$ and satisfies
\[
  \Delta_1(\cT') = O\left(\frac{e(\cT')}{|Z|}\right) \qquad \text{and} \qquad \Delta_2(\cT') = o\left(\frac{e(\cT')}{|Z|}\right).
\]
We will then split $\cT'$ into uniform hypergraphs $\cT_1, \dotsc, \cT_{2L}$, where, for each $u \in \br{2L}$, the $u$-uniform hypergraph $\cT_u$ comprises all edges of $\cT'$ of cardinality $u$. Finally, we will apply the hypergraph container lemma, Theorem~\ref{thm:containers}, to each of the $\cT_u$ in order to build containers for independent sets of $\cT$: the container for an independent set $I$ will be the intersection of the $2L$ containers for $I$ in the hypergraphs $\cT_1, \dotsc, \cT_{2L}$.  The assumptions on $\Delta_1(\cT')$ and $\Delta_2(\cT')$ will guarantee that there are only $\exp(o(|Z|))$ different containers.  Lemma~\ref{lemma:containers-forced-vertices} provides a useful, structural description of all such containers.

In order to guarantee the existence of a $\cT' \subseteq \cT$ with the required properties, we shall estimate the $\ell^2$-norm of the sequences of vertex degrees and vertex-pair degrees of $\cT$ and invoke the following simple proposition.

\begin{prop}
  \label{prop:hypergraph-cleanup-l2-linf}
  Suppose that $\cG$ is a (multi)hypergraph with vertex set $V$. For all positive integers $t$ and $m$, there is a subhypergraph $\cG' \subseteq \cG$ with $e(\cG') \ge e(\cG) - m$ and
  \[
    \Delta_t(\cG') \le \frac{1}{m} \cdot \sum_{T \in \binom{V}{t}} \deg_\cG(T)^2.
  \]
\end{prop}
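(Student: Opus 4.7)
The plan is a straightforward greedy deletion scheme. Set $D \coloneqq \frac{1}{m} \sum_{T \in \binom{V}{t}} \deg_\cG(T)^2$ and, starting from $\cG$, iteratively remove a single edge lying in some $t$-set whose current degree exceeds $D$, halting when no such $t$-set remains. The resulting subhypergraph $\cG'$ satisfies $\Delta_t(\cG') \le D$ by construction, so the only thing to verify is that the procedure deletes fewer than $m$ edges in total.

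To bound the number of deletion steps, I would track the potential $W(\cH) \coloneqq \sum_{T \in \binom{V}{t}} \deg_\cH(T)^2$, which starts at $W(\cG) = mD$ and stays non-negative. When an edge $e$ is removed, each $t$-subset $T \subseteq e$ has its degree decrease by one, so that its contribution to $W$ drops by exactly $2\deg(T) - 1$. Retaining only the term coming from the offending $t$-set $T^*$ through which the deletion was performed (which satisfies $\deg(T^*) > D$), each step lowers $W$ by at least $2\deg(T^*) - 1$. A short case split, on whether $D \ge 1$ (using $2D - 1 \ge D$) or $D < 1$ (using integrality of $\deg(T^*) \ge 1$ to get a drop of at least $1 > D$), shows that $W$ drops by strictly more than $D$ at every step. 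Since $W$ cannot fall below zero, this caps the number of deletions at strictly fewer than $m$, as required.

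I do not foresee any substantive obstacle: the entire argument amounts to choosing the potential $W$ and carrying out a negligible case analysis. The one mild subtlety is that $\cG$ is a multihypergraph, but this poses no problem, since removing a single copy of a repeated edge still decreases $\deg(T)$ by exactly one for each $T \subseteq e$, which is all the potential argument relies on.
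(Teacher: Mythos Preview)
Your argument is correct. Both your proof and the paper's use the same greedy deletion idea, but the bookkeeping is organised differently. You fix the target degree $D$, delete until $\Delta_t \le D$, and bound the number of steps via the potential $W=\sum_T \deg(T)^2$, which needs a small case split on whether $D\ge 1$. The paper instead fixes the number of deletions at exactly $m$ (always removing an edge through a $t$-set of current maximum degree) and then bounds $\Delta_t(\cG')$ directly: each of the $m$ removed edges is witnessed by some $t$-set $T_i$ whose \emph{original} degree is at least $\Delta_t(\cG')$, and since any fixed $T$ can serve as witness for at most $\deg_\cG(T)$ removed edges, one gets $m\cdot \Delta_t(\cG') \le \sum_i \deg_\cG(T_i) \le \sum_T \deg_\cG(T)^2$ in one line, with no potential function and no case analysis. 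Your route is perfectly fine; the paper's is just a shade shorter.
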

\begin{proof}
  We may assume that $m \le e(\cG)$; indeed, if $m > e(\cG)$, we may simply take $\cG'$ to be the empty hypergraph. We obtain $\cG'$ from $\cG$ by iteratively removing $m$ edges that contain some set $T \in \binom{V}{t}$ with largest degree. Since each of the $m$ removed edges contained a $t$-element subset of degree at least $\Delta_t(\cG')$, we have
  \[
    \sum_{T \in \binom{V}{t}} \deg_\cG(T)^2 \ge m \cdot \Delta_t(\cG'),
  \]
  as claimed.
\end{proof}

The following two crucial lemmas give upper bounds on the expectations of the $\ell^2$-norms of the sequences of vertex degrees and vertex-pair degrees of $\cT$.

\begin{lemma}
  \label{lemma:sum-deg-cT-squared}
  If $Z \sim V_p$, then
  \[
    \Ex\left[\sum_{v \in V(\cT)} \deg_{\cT}(v)^2\right] \le \frac{\Gamma \cdot e(\cB)^2}{\pH \cdot v(\cH)},
  \]
  where $\Gamma$ is a constant that depends only on $K$, $K_\cH$, $L$, $r$, and $s$.
\end{lemma}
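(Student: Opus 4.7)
The plan is to bound $\deg_\cT(v)$ pointwise by the number of activated boosters that interact with the first coordinate of $v$, square and sum, and then estimate the expectation by a second-moment computation over pairs of edges of $\cH$ that exploits non-clusteredness.

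For a vertex $v = (z, c) \in Z \times \br{r}$, every edge of $\cT$ containing $v$ arises, by Definition~\ref{dfn:cT}, from a booster $B \in \cBZ'$ together with a bounded-size choice of $(\varphi, I', \text{transversals}, \text{colours})$, and the transversal constraint forces $z$ to lie in $J(B) := \bigcup I(B,Z)$. Since $|B| \le K$, $|I(B,Z)| \le L$ by~\ref{item:booster-cleanup-4}, and the palette has size $r$, the number of such edges contributed by any single $B$ is bounded by a constant $C_1 = C_1(K, L, r, s)$, giving $\deg_\cT(v) \le C_1 \cdot |\{B \in \cBZ' : z \in J(B)\}|$. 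The event $z \in J(B)$ requires an edge $A \in \cH$ with $z \in A$, $|A \cap B| = 1$ and $A \setminus B \subseteq Z$; choosing first the candidate vertex $b \in A \cap B$ and then summing over the at most $\Delta_1(\cB)$ boosters through $b$ yields
\[
  |\{B \in \cBZ' : z \in J(B)\}| \le s \cdot \Delta_1(\cB) \cdot g(z, Z), \qquad g(z, Z) := \big|\{A \in \cH : z \in A,\ |A \setminus Z| \le 1\}\big|.
\]
Summing $\deg_\cT(v)^2$ over $v$ and using the symmetry of $\cH$ (which gives $e(\cB) = \Theta(\Delta_1(\cB) v(\cH))$), it suffices to prove that $\Ex\bigl[\sum_{z \in Z} g(z, Z)^2\bigr] = O\bigl(v(\cH)/\pH\bigr)$.

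For that second-moment estimate I would expand $g(z, Z)^2$ as a double sum over pairs $(A, A')$ of edges of $\cH$ containing $z$ and split by $t := |(A \cap A') \setminus \{z\}| \in \{0, \dotsc, s-1\}$. At fixed $z$ and $t$, the number of such pairs is at most $\deg_\cH(z) \binom{s-1}{t} \Delta_{t+1}(\cH)$, while the joint probability $\Pr[z \in Z,\ |A \setminus Z| \le 1,\ |A' \setminus Z| \le 1]$ is bounded by $O(p^{\max(s-1,\,2s-t-3)})$, the exponent being the minimum possible size of the union of the ``large'' sub-edges $S_A \subseteq A$, $S_{A'} \subseteq A'$ that witness the event. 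Substituting the non-clusteredness bounds $\Delta_{t+1}(\cH) \ll \pH^t e(\cH)/v(\cH)$ for $t \le s-2$ and $\Delta_s(\cH) = 1$ for $t = s-1$, together with $p = \Theta(\pH)$ and the identity $\pH^{s-1} e(\cH)/v(\cH) = 1$, every slice collapses to $O(v(\cH)/\pH)$; the diagonal $A = A'$ case contributes only $O(v(\cH))$, which is absorbed since $\pH \to 0$ by Fact~\ref{fact:non-clustered-properties}.

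The one delicate point I anticipate is the $t = s-2$ case, where naive estimates just barely hit the correct $\pH^{-1}$ scaling, and one must use both $\Delta_{s-1}(\cH) \ll \pH^{s-2} e(\cH)/v(\cH)$ and $\pH^{s-1} = v(\cH)/e(\cH)$ with essentially no room to spare. Assembling the pieces gives $\Ex\bigl[\sum_v \deg_\cT(v)^2\bigr] = O\bigl(\Delta_1(\cB)^2 v(\cH)/\pH\bigr) = O\bigl(e(\cB)^2/(\pH v(\cH))\bigr)$, with the constant depending only on $K$, $K_\cH$, $L$, $r$, and $s$, as required.
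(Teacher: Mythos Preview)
Your proposal is correct and follows essentially the same approach as the paper: bound $\deg_\cT(v,i)$ by a constant times $\Delta_1(\cB)$ times a local count of edges through $v$ that are almost entirely in $Z$, then do a second-moment computation over pairs $(A,A')$ split by intersection size. The only cosmetic differences are that the paper's counter $\gamma(v) = |\{A \ni v : |(A\setminus\{v\})\cap Z| = s-2\}|$ is defined with equality (exploiting $B\cap Z=\emptyset$) rather than your $|A\setminus Z|\le 1$, and the paper separates the indicator $\1_{v\in Z}$ from $\gamma(v)$ via independence rather than folding it into the probability estimate; neither affects the argument or the final constant.
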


\begin{lemma}
  \label{lemma:sum-codeg-cT-squared}
  If $Z \sim V_p$, then
  \[
    \Ex\left[\sum_{T \in \binom{V(\cT)}{2}} \deg_{\cT}(T)^2\right] \le \frac{\sigma \cdot  e(\cB)^2}{\pH \cdot v(\cH)}
  \]
  for some $\sigma = o(1)$.
\end{lemma}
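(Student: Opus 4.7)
My plan parallels the proof of Lemma~\ref{lemma:sum-deg-cT-squared} with the strengthened requirement of \emph{two} common vertices between edges of $\cT$.  Writing $U(B, Z) \coloneqq \bigcup I(B, Z)$ for brevity, every edge of $\cT$ is indexed by a booster $B \in \cBZ'$ together with a bounded amount of auxiliary data (a proper colouring of $\cH[B]$, a subset $I' \subseteq I(B, Z)$, transversals, and colour choices), so only a constant $M_0 = M_0(K, L, r, s)$ many edges of $\cT$ coming from a fixed $B$ can contain any pair $\{(z_1, c_1), (z_2, c_2)\}$, and such an edge exists only when $z_1, z_2 \in U(B, Z)$.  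Expanding the square, this yields
\[
  \sum_{T \in \binom{V(\cT)}{2}} \deg_{\cT}(T)^2 \le M_0^2 r^2 \sum_{B_1, B_2 \in \cBZ'} \binom{|U(B_1, Z) \cap U(B_2, Z)|}{2}.
\]

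For the diagonal $B_1 = B_2$, properties~\ref{item:booster-cleanup-2} and~\ref{item:booster-cleanup-4} give $|U(B, Z)| \le (s-1)L = O(1)$, for a total contribution of $O(e(\cB))$. Since $\cB$ is regular we have $e(\cB) \ge v(\cH)/|B_0| = \Omega(v(\cH))$, and Fact~\ref{fact:non-clustered-properties} gives $\pH \to 0$; hence $O(e(\cB)) = o\bigl(e(\cB)^2/(\pH \cdot v(\cH))\bigr)$, which is swallowed by $\sigma = o(1)$.

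The heart of the proof is bounding the expected off-diagonal contribution.  For each tuple $(B_1, B_2, z_1, z_2)$ with $B_1 \ne B_2$ and $z_1 \ne z_2$ satisfying $z_1, z_2 \in U(B_1, Z) \cap U(B_2, Z)$, for every $i, j \in \{1, 2\}$ there is a witness edge $A_j^{(i)} \in \cH$ containing $z_j$ with $A_j^{(i)} \cap B_i \ne \emptyset$ and $A_j^{(i)} \setminus B_i \subseteq V_p$.  The plan is to take the union bound over all choices of witness edges, weighted by $p^{|\bigcup_{i,j}(A_j^{(i)} \setminus B_i)|}$, and case-analyse on the coincidence pattern of the four $A_j^{(i)}$.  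In the generic case---all four distinct---counting in the order $B_1, A_1^{(1)}, A_2^{(1)}, z_1, z_2, A_1^{(2)}, B_2, A_2^{(2)}$, where $A_2^{(2)}$ is forced to pass through both $z_2$ and a vertex of $B_2$, gives $O\bigl(e(\cB) \cdot \Delta_1(\cH)^3 \Delta_2(\cH) \Delta_1(\cB)\bigr)$ tuples with probability $p^{4s - 6}$.  Substituting $\Delta_1(\cH) = O(\pH^{-(s-1)})$, $\Delta_2(\cH) = o(\pH^{-(s-2)})$, $\Delta_1(\cB) = O(e(\cB)/v(\cH))$, and $p = O(\pH)$, the product collapses to $o\bigl(e(\cB)^2/(\pH \cdot v(\cH))\bigr)$, the $o(1)$ factor coming from the strict non-clusteredness bound on $\Delta_2$.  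At the opposite extreme, when all four witness edges coincide with a single $A \in \cH$ hitting both $B_1$ and $B_2$, the count drops to $O(e(\cH) \Delta_1(\cB)^2)$ but the probability is only $p^s$, giving $O(\pH \cdot e(\cB)^2/v(\cH)) = \pH^2 \cdot e(\cB)^2/(\pH \cdot v(\cH))$, with $\pH = o(1)$ supplying the factor.  Intermediate patterns (e.g.\ $A_1^{(1)} = A_2^{(1)}$ but $A_1^{(2)} \ne A_2^{(2)}$, or $A_1^{(1)} = A_1^{(2)}$ with the other two distinct) are handled by the same recipe and each yields $o(1) \cdot e(\cB)^2/(\pH \cdot v(\cH))$.

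The main obstacle I expect is the combinatorial bookkeeping in the case analysis: every partition of the four witness edges into equivalence classes must be enumerated, and in each case the counting order---and the choice of which edge to pin via a $\Delta_2(\cH)$ estimate versus a $\Delta_1(\cH)$ estimate---has to be tuned so that the degree bounds align with the probability exponent to produce the $o(\pH^{-1})$ bound on the kernel.  This is the same spirit as the proof of Lemma~\ref{lemma:sum-deg-cT-squared}, but the co-degree version has roughly twice as many configurations and crucially exploits the strict inequality in the $\Delta_2(\cH)$ bound to extract the $\sigma = o(1)$ improvement.
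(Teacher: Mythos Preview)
Your approach is essentially correct and proceeds in the same spirit as the paper, but the paper organises the computation differently in a way that substantially shrinks the case analysis.  Rather than summing over pairs $(B_1,B_2)$ of boosters, the paper introduces, for each pair of distinct $v,w\in V$, the quantity $\gamma_2(v,w)$ counting triples $(A_v,A_w,B)\in\cH^2\times\cB$ with $v\in A_v\setminus B$, $w\in A_w\setminus B$, $|A_v\cap B|=|A_w\cap B|=1$, and $(A_v\cup A_w)\setminus(B\cup\{v,w\})\subseteq Z$; then $\deg_\cT\{(v,i),(w,j)\}\le O(1)\cdot\gamma_2(v,w)$ and the codegree sum is controlled by $\sum_{v,w\in Z}\gamma_2(v,w)^2$.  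The heart of the proof (Claim~\ref{claim:Ex-gamma2-squared}) bounds $\Ex[\gamma_2(v,w)^2]$ by fixing one triple and partitioning the second by the pair $(t_v,t_w)$ of intersection sizes of $A_v',A_w'$ with the already-revealed vertices; this conditional-expectation factorisation means one only has to analyse two witness edges against a fixed revealed set, not all four-edge, two-booster patterns at once.

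One place your plan needs sharpening: the relevant case distinction is not merely which of the four $A_j^{(i)}$ coincide, but the \emph{sizes} of the overlaps among distinct edges.  For example, if $A_1^{(1)}\neq A_1^{(2)}$ yet they share a vertex other than $z_1$, the probability exponent drops below $4s-6$ and you must compensate by choosing $A_1^{(2)}$ via $\Delta_2(\cH)$ (or higher) rather than $\Delta_1(\cH)$.  Correspondingly, the $o(1)$ saving is not always extracted at $t=2$: the paper observes that for every $(t_v,t_w)\in\{0,\dotsc,s-2\}^2$ one of $\max\{t_v,t_w\}+1$ or $\min\{t_v,t_w\}+2$ lies in $\{2,\dotsc,s-1\}$, so the strict bound $\Delta_t(\cH)\ll\pH^{t-1}e(\cH)/v(\cH)$ can be invoked for that $t$.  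Your route will work once this finer stratification is carried through, but the $\gamma_2$ factorisation buys a noticeably shorter enumeration.
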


\begin{proof}[{Proof of Lemma~\ref{lemma:sum-deg-cT-squared}}]
  For every $v \in V$, define
  \[
    \gamma(v) \coloneqq \left|\big\{A \in \cH :  \text{$v \in A$ and $|(A \setminus \{v\}) \cap Z| = s-2$}\big\}\right|.
  \]
  We claim that, for all $v \in Z$ and $i \in \br{r}$,
  \begin{equation}
    \label{eq:deg-cTL-gamma}
    \deg_{\cT}(v,i) \le \gamma(v) \cdot \Delta_1(\cB) \cdot (rs)^{2L+K},
  \end{equation}
  where $\deg_{\cT}$ counts edges with multiplicities. Indeed, if $v \in Z$, then $\gamma(v) \cdot \Delta_1(\cB)$ is an upper bound on the number of $B \in \cBZ'$ such that $v \in T$ for some $T \in I(B,Z)$. Every such $B$ gives rise to at most
  \[
    r^K \cdot \big((s-1)r\big)^L \cdot \sum_{i=0}^L \big((s-2)r\big)^{i} \le (rs)^{2L+K}
  \]
  edges of $\cT$, counting multiplicities; indeed, there are at most $r^K$ proper $\br{r}$-colourings of $B$, at most $(s-1)r$ choices for the pair $(v_T,c_T)$ for every $T \in I(B,Z)$, and at most $(s-2)r$ further choices for the pair $(v_T', c_T')$ for every $T \in I' \subseteq I(B,Z)$, see Definition~\ref{dfn:cT}. Moreover, all edges of $\cT$ that contain $(v,i)$ must come from one such set $B$.

  Since $V(\cT) = Z \times \br{r}$, inequality~\eqref{eq:deg-cTL-gamma} implies that
  \begin{equation}
    \label{eq:sum-deg-cT-gamma}
    \begin{split}
      \sum_{v \in V(\cT)} \deg_\cT(v)^2 & \le r \cdot \sum_{v \in Z} \gamma(v)^2 \cdot \Delta_1(\cB)^2 \cdot (rs)^{4L+2K} \\
      & \le \frac{K \cdot e(\cB)^2}{v(\cH)^2} \cdot (rs)^{4L+2K+1} \cdot \sum_{v \in Z} \gamma(v)^2.
    \end{split}
  \end{equation}
  In the remainder of the proof, we will bound the expected value of the sum in the right-hand side of~\eqref{eq:sum-deg-cT-gamma}.

  Fix an arbitrary $v \in V$ and observe that
  \[
    \Ex[\gamma(v)^2] = \sum_{\substack{A, A' \in \cH \\ v \in A \cap A'}} \underbrace{\Pr\big(|A \cap (Z \setminus \{v\})| = s-2 \text{ and } |A' \cap (Z \setminus \{v\})| = s-2\big)}_{P(A,A')};
  \]
  note that $P(A,A) \le (s-1) \cdot p^{s-2}$ and that, if $A \neq A'$, then
  \[
    P(A,A') \le (s-1)^2 \cdot p^{|A \cup A'| - 3} = (s-1)^2 \cdot p^{2s-3-|A \cap A'|}.
  \]
  Consequently,
  \[
    \Ex[\gamma(v)^2] = \deg_{\cH}(v) \cdot (s-1) \cdot p^{s-2} \cdot \left(1 + \sum_{t=1}^{s-1} \binom{s-2}{t-1} \cdot (s-1)  \cdot \Delta_t(\cH)\cdot p^{s-1-t} \right).
  \]
  Our assumption that $\cH$ is non-clustered implies that, for every $t \in \br{s-1}$,
  \[
    \Delta_t(\cH) \cdot p^{s-1-t} \le K_\cH^{s-1} \cdot \Delta_t(\cH) \cdot \pH^{s-1-t} \le K_\cH^s \cdot \frac{e(\cH)}{v(\cH)} \cdot \pH^{s-2} = \frac{K_\cH^s}{\pH}.
  \]
  and thus
  \[
    \Ex[\gamma(v)^2] \le \Gamma \cdot \deg_\cH(v) \cdot \pH^{s-3}
  \]
  for some constant $\Gamma$ that depends only on $K_\cH$ and $s$.  Since the variable $\gamma(v)$ is independent of the event $v \in Z$, we have
  \[
    \begin{split}
      \Ex\left[\sum_{v \in Z} \gamma(v)^2\right] & = \sum_{v \in V} p \cdot \Ex[\gamma(v)^2] \le \Gamma \cdot K_\cH \cdot \pH^{s-2} \cdot \sum_{v \in V} \deg_\cH(v) \\
      & = \Gamma \cdot K_\cH \cdot \pH^{s-2} \cdot s \cdot e(\cH) = \frac{\Gamma \cdot K_\cH \cdot s \cdot v(\cH)}{\pH}.
    \end{split}
  \]
  Taking expectations of both sides of~\eqref{eq:sum-deg-cT-gamma} and substituting the above inequality yields the assertion of the lemma.
\end{proof}

\begin{proof}[{Proof of Lemma~\ref{lemma:sum-codeg-cT-squared}}]
  For every pair of distinct $v, w \in V$, let $\gamma_2(v, w)$ be the number of triples $(A_v, A_w, B) \in \cH^2 \times \cB$ satisfying the following:
\begin{enumerate}[label=(\roman*)]
\item
  \label{item:gamma2-1}
  $v \in A_v \setminus B$ and $w \in A_w \setminus B$,
\item
  \label{item:gamma2-2}
  $|A_v \cap B| = |A_w \cap B| = 1$,
\item
  \label{item:gamma2-3}
  $A_v \cap A_w \subseteq B$ or $A_v \setminus B = A_w \setminus B$,
\item
  \label{item:gamma2-4}
  $(A_v \cup A_w) \setminus (B \cup \{v, w\}) \subseteq Z$.
\end{enumerate}
We claim that, for every pair of distinct $v, w \in Z$ and all $i, j \in \br{r}$,
\begin{equation}
  \label{eq:deg2-cTL-gamma2}
  \deg_{\cT}\{(v,i), (w,j)\} \le \gamma_2(v,w) \cdot (rs)^{2L+K}.
\end{equation}
Indeed, if  $v,w \in Z$, then $\gamma_2(v,w)$ is an upper bound on the number of $B \in \cBZ'$ such that $v \in T_v$ and $w \in T_w$ for some $T_v, T_w \in I(B,Z)$. Every such $B$ gives rise to at most $(rs)^{2L+K}$ edges of $\cT$, counting multiplicities, and all edges of $\cT$ that contain both $(v,i)$ and $(w,j)$ must come from one such set $B$.

Since $V(\cT) = Z \times \br{r}$ and no edge of $\cT$ contains a pair of vertices $\{(v,i), (v,j)\}$ with $i \neq j$, inequality~\eqref{eq:deg2-cTL-gamma2} implies that
\begin{equation}
  \label{eq:sum-codeg-cT-gamma2}
  \sum_{T \in \binom{V(\cT)}{2}} \deg_\cT(T)^2 \le r^2 \cdot \sum_{v,w \in Z} \gamma_2(v,w)^2 \cdot (rs)^{4L+2K}.
\end{equation}
In the remainder of the proof, we will bound the expected value of the sum in the right-hand side of~\eqref{eq:sum-codeg-cT-gamma2}.

\begin{claim}
  \label{claim:Ex-gamma2-squared}
  For every pair of distinct vertices $v, w \in V$,
  \[
    \Ex[\gamma_2(v, w)^2] \ll \frac{e(\cB)}{\pH \cdot v(\cH)} \cdot \Ex[\gamma_2(v,w)].
  \]
\end{claim}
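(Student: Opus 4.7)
The plan is to expand the second moment as
\[
  \Ex[\gamma_2(v, w)^2] = \sum_{T = (A_v, A_w, B)} \Pr(V(T) \subseteq Z) \cdot \Ex[\gamma_2(v, w) \mid V(T) \subseteq Z],
\]
where the sum ranges over triples $(A_v, A_w, B) \in \cH^2 \times \cB$ satisfying conditions~(i)--(iii), and $V(T) \coloneqq (A_v \cup A_w) \setminus (B \cup \{v, w\})$. Combined with the identity $\Ex[\gamma_2(v, w)] = \sum_T \Pr(V(T) \subseteq Z)$, this expresses the ratio $\Ex[\gamma_2(v,w)^2]/\Ex[\gamma_2(v,w)]$ as a weighted average of conditional expectations, so the claim reduces to showing that, uniformly over such $T$,
\[
  \Ex[\gamma_2(v, w) \mid V(T) \subseteq Z] = \sum_{T' = (A_v', A_w', B')} p^{|V(T') \setminus V(T)|} \ll \frac{e(\cB)}{\pH v(\cH)}.
\]

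For fixed $T = (A_v, A_w, B)$, I would partition the sum over $T'$ first by $k = |V(T') \cap V(T)|$ and then, within each $k$, by how the $k$ shared vertices distribute across $A_v' \setminus B'$ and $A_w' \setminus B'$, with respective counts $k_v$ and $k_w$ satisfying $k_v + k_w \ge k$. For each such configuration, the number of admissible $T'$ is bounded by the product of $\Delta_{k_v + 2}(\cH)$ (choices for $A_v'$ containing $v$, the $k_v$ shared vertices, and one vertex of $B'$), $\Delta_{k_w + 2}(\cH)$ (analogously for $A_w'$), and the co-degree of the chosen pair $\{b_v', b_w'\}$ in $\cB$, while the contributed weight is $p^{|V(T')| - k}$. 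Applying the non-clusteredness bounds $\Delta_t(\cH) \ll \pH^{t-1} e(\cH)/v(\cH)$ for $t \in \{2, \dotsc, s-1\}$, the identity $\Delta_s(\cH) = 1 = \pH^{s-1} e(\cH)/v(\cH)$, and $p = \Theta(\pH)$ collapses most of the product to a constant multiple of $e(\cB)/(\pH v(\cH))$; the strict little-$o$ in the $\Delta_t$ bound yields the desired $o(1)$ factor whenever both $k_v, k_w \le s - 3$.

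The main obstacle is the tight configurations where $k_v = s - 2$ or $k_w = s - 2$: there, one of the edges $A_v'$ or $A_w'$ is essentially determined by $V(T)$ and the choice of booster vertex, and the exact equality $\Delta_s(\cH) = 1$ provides no asymptotic savings. The required $o(1)$ factor must then be recovered either from the companion edge (which still involves some $\Delta_t$ with $t \le s - 1$) or from the symmetry of $\cB$: a double-counting argument over pairs of vertices of $\cH$ shows that the averaged co-degree of $\cB$ is $O(K^2 e(\cB)/v(\cH)^2)$, which is smaller by a factor of $K/v(\cH) = o(\pH)$ than the worst-case bound $\Delta_1(\cB) = O(K e(\cB)/v(\cH))$, supplying the missing savings once one sums over admissible $T'$. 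A parallel but shorter argument handles Case~(iii.b) for $T'$ (where $A_v' \setminus B' = A_w' \setminus B'$ and $|V(T')| = s - 3$): this case contributes strictly less than Case~(iii.a) by a factor of $p^{s - 1}$ and is absorbed into the same bound.
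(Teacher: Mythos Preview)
Your reduction to bounding the conditional expectation
\[
  \Ex[\gamma_2(v,w) \mid V(T) \subseteq Z]
\]
uniformly over admissible $T$ is correct and is exactly how the paper proceeds. The gap is in the enumeration of the inner sum.

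Your proposed bound $\Delta_{k_v+2}(\cH)\cdot\Delta_{k_w+2}(\cH)\cdot\deg_{\cB}(\{b_v',b_w'\})$ is circular: the factor $\Delta_{k_v+2}$ for $A_v'$ presupposes that the booster vertex $b_v'\in B'$ is already known, yet $b_v'$ is only determined once $A_v'$ and $B'$ are chosen. Any coherent ordering that genuinely uses two booster-vertex constraints on $\cH$ forces you to bound the number of $B'$ via $\Delta_2(\cB)$, and nothing in the hypotheses controls $\Delta_2(\cB)$; since $\cB$ is merely the $\Aut(\cH)$-orbit of a fixed set $B_0$, its pair-degrees can be as large as $\Delta_1(\cB)$. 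Your attempted rescue via the \emph{average} pair-degree of $\cB$ does not apply: the claim is for a fixed pair $v,w$ and a fixed $T$, and in the tight case $k_v=k_w=s-2$ the pairs $(b_v',b_w')$ you sum over are a set of size at most $\Delta_{s-1}(\cH)^2$, not all of $\binom{V}{2}$, so averaging over all pairs is illegitimate.

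The paper avoids any use of $\Delta_2(\cB)$ by counting \emph{asymmetrically}: first choose one edge, say $A_v'$, through $v$ and its $t_v$ shared vertices ($\Delta_{t_v+1}(\cH)$ choices); then choose $B'$ through one designated vertex of $A_v'$ ($O(\Delta_1(\cB))$ choices); finally choose $A_w'$ through $w$, its $t_w$ shared vertices, and a vertex of $B'$ ($O(\Delta_{t_w+2}(\cH))$ choices). This yields the bound
\[
  |\cX_{t_v,t_w}| \le O(1)\cdot \min\big\{\Delta_{t_v+1}(\cH)\,\Delta_{t_w+2}(\cH),\ \Delta_{t_w+1}(\cH)\,\Delta_{t_v+2}(\cH)\big\}\cdot\Delta_1(\cB),
\]
and the key combinatorial observation is that for every $(t_v,t_w)\in\{0,\dotsc,s-2\}^2$, at least one of $\max\{t_v,t_w\}+1$ and $\min\{t_v,t_w\}+2$ lies in $\{2,\dotsc,s-1\}$, so the strict $\ll$ from non-clusteredness is always available for \emph{one} of the two factors. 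In particular, the case $t_v=t_w=s-2$ that defeats your symmetric count is handled by $\Delta_{s-1}(\cH)\cdot\Delta_s(\cH)\cdot\Delta_1(\cB)\ll \pH^{-1}\Delta_1(\cB)$, exactly as needed.
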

\begin{proof}
  Let $\cX$ denote the family of all triples $(A_v, A_w, B)$ that satisfy~\ref{item:gamma2-1}--\ref{item:gamma2-3} above and observe that
  \[
    \gamma_2(v,w) = \sum_{(A_v, A_w, B) \in \cX} \1_{(A_v \cup A_w) \setminus (B \cup \{v,w\}) \subseteq Z},
  \]
  see~\ref{item:gamma2-4}. Therefore, it suffices to show that, for each $(A_v, A_w, B) \in \cX$,
  \begin{equation}
    \label{eq:gamma2-conditional}
    \Ex\big[\gamma_2(v,w) \mid (A_v \cup A_w) \setminus (B \cup \{v,w\}) \subseteq Z\big] \ll \frac{\Delta_1(\cB)}{\pH}.
  \end{equation}
  We partition the family $\cX$ according to the intersection pattern with our chosen triple $(A_v, A_w, B)$. First, for every $t \in \{0, \dotsc, s-3\}$, denote by $\cX_t$ the family of all triples $(A_v', A_w', B')$ such that $A_v' \setminus B = A_w' \setminus B$ and $A_v'$ intersects $(A_v \cup A_w) \setminus (B \cup \{v,w\})$ in exactly $t$ elements. Second, for every $(t_v, t_w) \in \{0, \dotsc, s-2\}^2$, denote by $\cX_{t_v, t_w}$ the family of all triples $(A_v', A_w', B')$ such that $A_v' \cap A_w' \subseteq B$ and, for $u \in \{v,w\}$, the set $A_u'$ intersects $(A_v \cup A_w) \setminus (B \cup \{v,w\})$ in exactly $t_u$ elements. Observe that
  \[
    \cX = \bigcup_{t=0}^{s-3} \cX_t \cup\bigcup_{t_v, t_w = 0}^{s-2} \cX_{t_v,t_w}.
  \]
  Further, note that, for every $t \in \{0, \dotsc, s-3\}$,
  \[
    |\cX_t| \le \Delta_{t+2}(\cH) \cdot \Delta_1(\cB) \cdot \max_{B \in \cB} |B| \ll \pH^{t+1} \cdot \frac{e(\cH)}{v(\cH)} \cdot \Delta_1(\cB) = \pH^{t+2-s} \cdot \Delta_1(\cB).
  \]
  Crucially, we claim that, for every $t_v, t_w \in \{0, \dotsc, s-2\}$,
  \[
    \begin{split}
      |\cX_{t_v, t_w}| & \le \min\big\{ \Delta_{t_v+1}(\cH) \cdot \Delta_{t_w+2}(\cH), \Delta_{t_w+1}(\cH) \cdot \Delta_{t_v+2}(\cH) \big\} \cdot \Delta_1(\cB) \\
      & \ll \pH^{t_v+t_w+1} \cdot \left(\frac{e(\cH)}{v(\cH)}\right)^2 \cdot \Delta_1(\cB) = \pH^{t_v+t_w-2s+3} \cdot \Delta_1(\cB).
    \end{split}
  \]
  Indeed, $\Delta_t(\cH) \le K \pH^{t-1} \cdot e(\cH) / v(\cH)$ for all $t \in \br{s}$ and $\Delta_t(\cH) \ll \pH^{t-1} \cdot e(\cH) / v(\cH)$ when $2 \le t \le s-1$ and one of $\max\{t_v, t_w\} + 1$ or $\min\{t_v, t_2\} + 2$ must belong to $\{2, \dotsc, s-1\}$. Since $p = O(\pH)$, estimate~\eqref{eq:gamma2-conditional} follows after noting that, first, for every $t \in \{0, \dotsc, s-3\}$ and every $(A_v', A_w', B') \in \cX_t$,
  \[
    \Pr\big((A_v' \cup A_w') \setminus (B' \cup \{v,w\}) \subseteq Z \mid (A_v \cup A_w) \setminus (B \cup \{v,w\}) \subseteq Z\big) = p^{s-3-t},
  \]
  and, second, for all $(t_v, t_w) \in \{0, \dotsc, s-2\}^2$ and every $(A_v', A_w', B') \in \cX_{t_v, t_w}$,
  \[
    \Pr\big((A_v' \cup A_w') \setminus (B' \cup \{v,w\}) \subseteq Z \mid (A_v \cup A_w) \setminus (B \cup \{v,w\}) \subseteq Z\big) = p^{2s-4-t_v-t_w}.
  \]
  Since $p \le K_\cH \cdot \pH$ and $\Delta_1(\cB) \le K \cdot e(\cB) / v(\cH)$, this concludes the proof of the claim.
\end{proof}

\begin{claim}
  \label{claim:sum-Ex-gamma2}
  We have
  \[
    \Ex\left[\sum_{v,w \in Z} \gamma_2(v,w) \right] = O\big(e(\cB)\big).
  \]
\end{claim}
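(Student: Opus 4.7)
The plan is to compute $\Ex\bigl[\sum_{v,w\in Z}\gamma_2(v,w)\bigr]$ directly, exploiting a crucial independence: for distinct $v,w\in V$, the variable $\gamma_2(v,w)$ is determined by $Z\setminus\{v,w\}$ (since its definition involves the indicator $\1_{(A_v\cup A_w)\setminus(B\cup\{v,w\})\subseteq Z}$), so it is independent of the event $\{v,w\}\subseteq Z$. Combined with the observation that $\{v,w\}\subseteq (A_v\cup A_w)\setminus B$ for every $(A_v,A_w,B)\in\cX_{v,w}$ (by conditions (i) and (ii)), linearity of expectation gives
\[
\Ex\left[\sum_{v,w\in Z}\gamma_2(v,w)\right]
= \sum_{v\neq w\in V}\;\sum_{(A_v,A_w,B)\in\cX_{v,w}} p^{\,|(A_v\cup A_w)\setminus B|},
\]
where the factor $p^2$ from $\1_{\{v,w\}\subseteq Z}$ is absorbed into the $p$-power by raising the size of $(A_v\cup A_w)\setminus(B\cup\{v,w\})$ back up to $(A_v\cup A_w)\setminus B$.

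I would then split this sum according to condition (iii). In \emph{Case (a)}, when $A_v\setminus B=A_w\setminus B$, the set $(A_v\cup A_w)\setminus B$ has exactly $s-1$ elements, contributing $p^{s-1}$; in \emph{Case (b)}, when $A_v\cap A_w\subseteq B$ (forcing $A_v\neq A_w$ and $(A_v\setminus B)\cap(A_w\setminus B)=\emptyset$), it has $2(s-1)$ elements, contributing $p^{2(s-1)}$. Case (a) is further split into (a1) $A_v=A_w$ and (a2) $A_v\neq A_w$. In every subcase I enumerate tuples by choosing an element of $\cB$ or $\cH$ and extending step by step, using the degree bounds $\Delta_1(\cH)\le K_\cH\cdot e(\cH)/v(\cH)=K_\cH\cdot\pH^{-(s-1)}$ (from non-clusteredness) and $\Delta_1(\cB)\le K\cdot e(\cB)/v(\cH)$ (from symmetry of $\cH$ and $|B_0|\le K$). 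The central identity, derived from $p=\Theta(\pH)$, is
\[
p^{s-1}\cdot\Delta_1(\cH)=O(1).
\]
Concretely, in (a1) I enumerate $(A_v,b_v,B,v,w)$ with $\{b_v\}=A_v\cap B$, giving a tuple bound of $O(e(\cH)\cdot\Delta_1(\cB))=O(\pH^{-(s-1)}\cdot e(\cB))$, which after the factor $p^{s-1}$ becomes $O(e(\cB))$; and in (b) I enumerate $(B,b_v,b_w,A_v,A_w,v,w)$ to get $O(e(\cB)\cdot\Delta_1(\cH)^2)$, which after the factor $p^{2(s-1)}$ again becomes $O(e(\cB))$ by applying the identity twice.

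The delicate subcase is (a2), and I expect it to be the main obstacle. A naive enumeration that fixes $T:=A_v\setminus B$ first and then extends it to $A_w$ by choosing $b_w\in B$ with $T\cup\{b_w\}\in\cH$ would cost a factor $\Delta_{s-1}(\cH)$, which under non-clusteredness is only known to be $o(\pH^{-1})$ and thus blows the bound up by $\pH^{-1}$. The remedy is to enumerate instead $(B,b_v,b_w,A_v,v,w)$ with distinct $b_v,b_w\in B$ and $A_v\ni b_v$ satisfying $A_v\cap B=\{b_v\}$, observing that $A_w$ is then \emph{determined} as $(A_v\setminus\{b_v\})\cup\{b_w\}$ and simply discarding the tuple if this set is not in $\cH$. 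This yields a tuple bound of $O(e(\cB)\cdot\Delta_1(\cH))$, and multiplying by $p^{s-1}$ collapses to $O(e(\cB))$ via the key identity. Summing the three cases establishes the claim.
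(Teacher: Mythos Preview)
Your proof is correct and follows essentially the same strategy as the paper: exploit the independence of $\gamma_2(v,w)$ from the event $\{v,w\}\subseteq Z$, swap the order of summation to count $5$-tuples $(v,w,A_v,A_w,B)$ weighted by $p^{|(A_v\cup A_w)\setminus B|}$, split according to condition~(iii), and bound each piece using $\Delta_1(\cH)=O(e(\cH)/v(\cH))$, $\Delta_1(\cB)=O(e(\cB)/v(\cH))$, and the identity $p^{s-1}\cdot e(\cH)/v(\cH)=O(1)$.

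One minor remark on your case (a2). You worry that a naive enumeration would incur a factor of $\Delta_{s-1}(\cH)$, but in fact it need not: once $A_v$ and $B$ (hence $T=A_v\setminus B$) are fixed, the second base point $b_w$ is constrained to lie in $B$, so there are at most $|B|\le K$ choices for it regardless of $\Delta_{s-1}(\cH)$. Your alternative enumeration is of course also valid. Incidentally, the paper's displayed identity for $\sum_{v,w\in Z}\gamma_2(v,w)$ seems not to account for case (a2) at all (the families $\cX_1$ and $\cX_2$ there correspond to your (a1) and (b) respectively), so your treatment is, if anything, more complete; the omission is harmless since, as you show, the (a2) contribution is also $O(e(\cB))$.
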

\begin{proof}
  Let $\cX_1$ denote the family of all pairs $(A,B) \in \cH \times \cB$ such that $|A \cap B| = 1$ and let $\cX_2$ denote the family of all triples $(A', A'', B') \in \cH^2 \times \cB$ such that $|A' \cap B'| = |A'' \cap B'| = 1$ and $A' \cap A'' \subseteq B'$. Note that
  \begin{multline}
    \label{eq:sum-gamma2}
    \sum_{v,w \in Z} \gamma_2(v,w) = \binom{s-1}{2} \cdot \left|\big\{(A, B) \in \cX_1 : A \setminus B \subseteq Z \big\}\right| \\
    + (s-1)^2 \cdot \left|\big\{(A', A'', B') \in \cX : (A' \cup A'') \setminus B' \subseteq Z \big\}\right|.
  \end{multline}
  The claim now follows as
  \begin{align*}
    |\cX_1| & \le \sum_{B \in \cB} |B| \cdot \Delta_1(\cH) = O\left(e(\cB) \cdot \frac{e(\cH)}{v(\cH)}\right), \\
    |\cX_2| & \le \sum_{B \in \cB} |B|^2 \cdot \Delta_1(\cH)^2 = O\left( e(\cB) \cdot \frac{e(\cH)^2}{v(\cH)^2}\right),
  \end{align*}
  and, for all $(A,B) \in \cX_1$ and $(A', A'', B') \in \cX_2$,
  \begin{align*}
    \Pr\big(A \setminus B \subseteq Z\big) & = p^{s-1} = O\big(\pH^{s-1}\big), \\
    \Pr\big((A' \cup A'') \setminus B' \subseteq Z\big) & = p^{2s-2} = O\big(\pH^{2s-2}\big).
  \end{align*}
  Indeed, since $\pH^{s-1} \cdot e(\cH) = v(\cH)$, taking expectations of both sides of~\eqref{eq:sum-gamma2} and substituting the above estimates yields the claimed estimate.
\end{proof}

Since the variable $\gamma_2(v,w)$ is independent of the event $\{v,w\} \subseteq Z$, we have
  \[
    \begin{split}
      \Ex\left[\sum_{v,w \in Z} \gamma_2(v,w)^2\right] & = p^2 \cdot \sum_{v,w \in V} \Ex[\gamma_2(v,w)^2] \ll p^2 \cdot \frac{e(\cB)}{\pH \cdot v(\cH)}\cdot \sum_{v,w \in V} \Ex[\gamma_2(v,w)] \\
      & = \frac{e(\cB)}{\pH \cdot v(\cH)}  \cdot \Ex\left[\sum_{v,w \in Z} \gamma_2(v,w)\right] = O\left(\frac{e(\cB)^2}{\pH \cdot v(\cH)}\right),
    \end{split}
  \]
  where the first inequality follows from Claim~\ref{claim:Ex-gamma2-squared} and the second inequality follows from Claim~\ref{claim:sum-Ex-gamma2}. The assertion of the lemma follows by taking expectations of both sides of~\eqref{eq:sum-codeg-cT-gamma2} and substituting the above inequality.
\end{proof}

\subsection{Containers for colourings}
\label{sec:cont-colo}

Let $\sigma$ be the sequence from the statement of Lemma~\ref{lemma:sum-codeg-cT-squared} and let $\tau = \sigma^{1/(2L+1)} = o(1)$. Let $\cZ''$ be the collection of all $Z \in \cZ'$ such that
\[
  \sum_{v \in V(\cT)} \deg_\cT(v)^2 \le \Gamma_{\cH} \cdot \frac{e(\cB)^2}{|Z|}
  \qquad
  \text{and}
  \qquad
  \sum_{T \in \binom{V(\cT)}{2}} \deg_{\cT}(T)^2 \le \tau^{2L} \cdot \frac{e(\cB)^2}{|Z|}.
\]

Since $\cZ'' \subseteq \cZ$, the set $Z$ satisfies~\ref{item:many-rainbow-stars-eps-p} in the statement of Theorem~\ref{thm:many-rainbow-stars}.

\begin{lemma}
  \label{lemma:many-rainbow-stars-containers}
  Every $Z \in \cZ''$ satisfies~\ref{item:many-rainbow-stars-containers} in the statement of Theorem~\ref{thm:many-rainbow-stars}.
\end{lemma}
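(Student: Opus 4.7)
The plan is to realize the container construction sketched at the end of Section~\ref{sec:second-attempt}. For each $Z \in \cZ''$, I will exhibit a family $\cC$ of subsets of $Z \times \br{r}$ such that every proper $\br{r}$-colouring $\psi$ of $\cH[Z]$ (identified with $\{(z,\psi(z)) : z \in Z\}$) is contained in some $C \in \cC$, and such that for each such $C$ the determined colouring $\psi_C$ satisfies $|F(\psi_C)| = \Omega(v(\cH))$. Setting $\Psi \coloneqq \{\psi_C : C \in \cC \text{ is a restricted colouring}\}$ then yields~\ref{item:many-rainbow-stars-containers}, since a vertex lies in $F(\psi_C)$ precisely when it is the centre of a rainbow star under $\psi_C$.

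The preparation step applies Proposition~\ref{prop:hypergraph-cleanup-l2-linf} twice on $\cT$: first with $t = 1$ and $m_1 = \Theta(e(\cBZ'))$, then with $t = 2$ and $m_2 = \Theta(\tau^{2L-1} e(\cBZ'))$. A quick count of the parameters in Definition~\ref{dfn:cT} shows that each $B \in \cBZ'$ contributes between one and $O(1)$ edges to $\cT$, so $e(\cT) = \Theta(e(\cBZ'))$; combined with the $\ell^2$-bounds built into the definition of $\cZ''$, this yields a subhypergraph $\cT^\star \subseteq \cT$ with $\Delta_1(\cT^\star) = O(e(\cT)/|Z|)$ and $\Delta_2(\cT^\star) = O(\tau \cdot e(\cT)/|Z|)$, while $e(\cT) - e(\cT^\star) \le m_1 + m_2$ is a small fraction of $e(\cBZ')$.

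Since Theorem~\ref{thm:containers} requires a uniform hypergraph, I next partition $\cT^\star$ by edge cardinality into $\cT^\star_1, \dotsc, \cT^\star_{2L}$, apply the container lemma to each size class contributing more than a small fraction of $e(\cT^\star)$, and discard the rest (their edges are charged to the final error term). Since $\Delta_\ell \le \Delta_2$ for every $\ell \ge 2$, all higher-codegree hypotheses of the lemma reduce to the single $\Delta_2$-bound, at the mild cost of replacing $\tau$ by $\tau^{1/(2L-1)}$. By Lemma~\ref{lemma:psi-independent-cT}, every proper colouring $\psi$ is independent in each $\cT^\star_u$, so the lemma produces, in each retained uniformity, fingerprints $S_{u,1}, \dotsc, S_{u,t_u} \subseteq \psi$ of total size $O(\tau|Z|)$. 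Intersecting the corresponding $f_u$-outputs yields a single container $C \supseteq \psi$, and the number of distinct fingerprint tuples — and hence $|\cC|$ — is bounded by $\exp(o(|Z|))$.

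Finally, each container satisfies $e(\cT^\star_u[C]) < \eta \cdot e(\cT^\star_u)$ for every retained $u$; summing over $u$, adding the trivial bound for the discarded size classes, and adding back the edges removed during the cleanup gives $e(\cT[C]) \le e(\cBZ')/2$, provided the container-lemma parameter $\eta$ is chosen small relative to the absolute constant $C_0$ satisfying $e(\cT) \le C_0 \cdot e(\cBZ')$. Lemma~\ref{lemma:containers-forced-vertices} then forces $|F(\psi_C)| \ge e(\cBZ')/(2\Delta_1(\cB)) = \Omega(v(\cH))$, completing the argument. The main obstacle is the non-uniformity of $\cT$ (its edges range in size from $1$ up to $2L$): the cleanup via Proposition~\ref{prop:hypergraph-cleanup-l2-linf}, combined with the ability to discard the sparse size classes, is what enables the parallel application of the container lemma across all uniformities simultaneously.
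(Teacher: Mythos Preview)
Your proposal is correct and follows essentially the same approach as the paper's own proof: apply Proposition~\ref{prop:hypergraph-cleanup-l2-linf} twice to trim $\cT$ to a subhypergraph with controlled $\Delta_1$ and $\Delta_2$, split by uniformity, apply Theorem~\ref{thm:containers} to each sufficiently dense size class, intersect the resulting containers, and conclude via Lemma~\ref{lemma:containers-forced-vertices}. The only differences are cosmetic parameter choices: the paper removes $(\eps/16)\,e(\cB)$ edges in \emph{both} cleanup steps, which already yields $\Delta_2(\cT') \le \tau^{2L-1}\,e(\cB)/|Z|$ and lets one use $\tau$ itself as the container parameter, whereas your choice $m_2 = \Theta(\tau^{2L-1}e(\cBZ'))$ gives only $\Delta_2 = O(\tau\,e(\cB)/|Z|)$ and forces the substitution $\tau \mapsto \tau^{1/(2L-1)}$---either way the container count is $\exp(o(|Z|))$.
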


Before we prove the lemma, let us point out that it implies the assertion of the theorem.  Indeed, it follows from Lemmas~\ref{lemma:sum-deg-cT-squared} and~\ref{lemma:sum-codeg-cT-squared}, Markov's inequality, and standard estimates for the tails of the binomial distribution that, when $Z \sim V_p$,
\[
  \begin{split}
    \Pr(Z \in \cZ'') & \ge \Pr(Z \in \cZ') - \Pr\big(|Z| \ge 2\pH v(\cH)\big)-2\Gamma/\Gamma_\cH - 2\sigma \cdot \tau^{-2L} \\
    & \ge \eps/2 - o(1) - 2\Gamma/\Gamma_\cH - 2\tau \ge \eps/4.
  \end{split}
\]

\begin{proof}[{Proof of Lemma~\ref{lemma:many-rainbow-stars-containers}}]
  Suppose that $Z \in \cZ''$. We will construct the desired family of partial colourings of $Z$ by applying the container lemma, Theorem~\ref{thm:containers}, to a collection of uniform subhypergraphs of the hypergraph $\cT$.
  
  To this end, note first that Proposition~\ref{prop:hypergraph-cleanup-l2-linf}, invoked twice, implies that $\cT$ contains a subhypergraph $\cT'$ with at least $e(\cT) - 2 \cdot (\eps/16) \cdot e(\cB)$ edges that satisfies
  \[
    \Delta_1(\cT') \le \frac{16 \Gamma_\cH}{\eps} \cdot \frac{e(\cB)}{|Z|}
    \qquad
    \text{and}
    \qquad
    \Delta_2(\cT') \le \frac{16 \tau^{2L}}{\eps} \cdot \frac{e(\cB)}{|Z|} \le \tau^{2L-1} \cdot \frac{e(\cB)}{|Z|}.
  \]
  Fix one such hypergraph $\cT'$ and note that
  \begin{equation}
    \label{eq:ecT'-upper}
    e(\cT') \le r \cdot |Z| \cdot \Delta_1(\cT') \le \frac{16 \Gamma_\cH r}{\eps} \cdot e(\cB).
  \end{equation}
  
  Let $\cT_1, \dotsc, \cT_{2L}$ be the subhypergraphs of $\cT'$ that comprise all edges of $\cT'$ of cardinalities $1, \dotsc, 2L$, respectively, and note that $\cT' = \cT_1 \cup \dotsb \cup \cT_{2L}$, since every edge of $\cT$ has cardinality at most
  \[
    2 \cdot \max_{B \in \cBZ'} |I(B,Z)| \le 2L,
  \]
  see Definition~\ref{dfn:cT}. Define
  \[
    U \coloneqq \big\{u \in \br{2L} : e(\cT_u) \ge \eps/(16L) \cdot e(\cB)\big\}
  \]
  and fix an arbitrary $u \in U$. Since
  \[
    \Delta_1(\cT_u) \le \Delta_1(\cT') \le \frac{16\Gamma_\cH}{\eps} \cdot \frac{e(\cB)}{|Z|} \le \frac{256\Gamma_\cH L r}{\eps^2} \cdot \frac{e(\cT_u)}{v(\cT_u)}
  \]
  and, for every $\ell \in \{2, \dotsc, u\}$,
  \[
    \Delta_\ell(\cT_u) \le \Delta_2(\cT') \le \tau^{2L-1} \cdot \frac{e(\cB)}{|Z|} \le \frac{16Lr}{\eps} \cdot \tau^{\ell-1} \cdot \frac{e(\cT_u)}{v(\cT_u)},
  \]
  we may apply Theorem~\ref{thm:containers}, with $k_{\ref{thm:containers}} \coloneqq u$,
  \[
    K_{\ref{thm:containers}} \coloneqq \max\left\{\frac{256\Gamma_\cH L r}{\eps^2}, \frac{16Lr}{\eps}\right\},
    \quad
    \text{and}
    \quad
    \eps_{\ref{thm:containers}} \coloneqq \frac{\eps^2}{256\Gamma_\cH Lr} \by{\eqref{eq:ecT'-upper}}{\le} \frac{\eps/(16L) \cdot e(\cB)}{e(\cT')}
  \]
  to get an integer $t$ (that depends only on $\eps$, $\Gamma_\cH$, $r$, and $L$) and a collection $\cC_u$ of at most
  \[
    \left(\sum_{i=0}^{\tau r |Z|} \binom{r|Z|}{i}\right)^t \le \left(\frac{er|Z|}{\tau r |Z|}\right)^{t \tau r |Z|} = \exp\big(o(|Z|)\big)
  \]
  subsets of $Z \times \br{r}$ with the following properties:
  \begin{enumerate}[label=(\roman*)]
  \item
    \label{item:cT-containers-i}
    Every proper colouring $\psi \colon Z \to \br{r}$ of $\cH[Z]$, viewed as a subset of $Z \times \br{r}$, is contained in a member of $\cC_u$.
  \item
    \label{item:cT-containers-ii}
    Every member of $\cC_u$ induces fewer than $\eps/(16L) \cdot e(\cB)$ edges in $\cT_u$.
  \end{enumerate}

  Finally, we let $\Psi$ to be the collection of all partial colourings $\psi_C$, where $C$ is a set of the form
  \[
    C = \bigcap_{u \in U} C_u,
  \]
  where, for each $u \in U$, the set $C_u$ is a member of $\cC_u$, such that $\pi_1(C) = Z$; note that
  \[
    |\Psi| \le \prod_{u \in U} |\cC_u| = \exp\big(o(|Z|)\big).
  \]
  It follows from~\ref{item:cT-containers-i} that, for each $u \in U$, every proper colouring $\psi \colon Z \to \br{r}$ of $\cH[Z]$ is contained in some set $C_u \in \cC_u$ and, consequently, in some set $C$ of the above form; in particular $\psi$ extends the partial colouring $\psi_C \in \Psi$. We now show that, for every partial colouring $\psi_C \in \Psi$, we have $|F(\psi_C)| = \Omega\big(v(\cH)\big)$; this will conclude the proof of the lemma, as every vertex in $F(\psi_C)$ is the centre of a rainbow star.

  To this end, choose an arbitrary set $C$ of the above form. Since $e(\cT_u[C]) \le e(\cT_u[C_u]) \le \eps/(16L) \cdot e(\cB)$ when $u \in U$, see~\ref{item:cT-containers-ii} above, and  $e(\cT_u[C]) \le e(\cT_u) < \eps/(16L) \cdot e(\cB)$ when $u \notin U$, we have
  \[
    e(\cT'[C]) = \sum_{u = 1}^{2L} e(\cT_u[C]) \le 2L \cdot \eps/(16L) \cdot e(\cB) = (\eps/8) \cdot e(\cB).
  \]
  Consequently,
  \[
    e(\cT[C]) \le e(\cT'[C]) + e(\cT) - e(\cT') \le (\eps/4) \cdot e(\cB)
  \]
  and thus Lemma~\ref{lemma:containers-forced-vertices} implies that
  \[
    |F(\psi_C)| \ge \frac{e(\cBZ') - e(\cT[C])}{\Delta_1(\cB)} \ge \frac{\eps \cdot e(\cB)}{4 \cdot \Delta_1(\cB)} \ge \frac{\eps}{4K} \cdot v(\cH),
  \]
  since $Z \in \cZ'$ implies that $e(\cBZ') \ge (\eps/2) \cdot e(\cB)$.
\end{proof}

\section{Rainbow stars and constellations}
\label{sec:rainb-stars-const}

In this section, we prove Theorem~\ref{thm:rainbow-stars-constellations}, which encapsulates Step~\ref{step:sparse-rsc} from the proof outline.  We restate the theorem here for the reader's convenience.

\rainbowstarsconstellations*

The heart of the proof will be transferring the rainbow star-constellation property from $\cH$ to $\cH_p$.  We will make use of the container lemma (Theorem~\ref{thm:containers}) to prove the following sparse random analogue of the rainbow star-constellation property. 

\begin{thm}
  \label{thm:sparse-rainbow-star-constellation}
  Suppose that $r \ge 2$, $s \ge 3$, and $\cH$ is a sequence of $s$-uniform hypergraphs that satisfies assumptions~\ref{item:ass-non-clusteredness} and~\ref{item:ass-star-constellation}.  For every positive $\betas$, there exists a positive $\betac$ such that the following holds: Suppose that $p = \Omega(\pH)$ and let $Z \sim V(\cH)_p$. With probability $1-o(1)$, every partial $\br{r}$-colouring of $Z$ that admits at least $\betas \cdot (p/\pH)^{(r-1)(s-1)} v(\cH)$ rainbow stars must also admit at least $\betac \cdot (p/\pH)^{(r-1)(s-1)s} e(\cH)$ rainbow constellations.
\end{thm}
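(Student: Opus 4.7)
The plan is to apply the hypergraph container lemma (Theorem~\ref{thm:containers}) to transfer the dense supersaturation assertion~\ref{item:ass-star-constellation} to the random setting.  I would introduce an auxiliary $k$-uniform hypergraph $\cG$, with $k \coloneqq s(r-1)(s-1)$, on vertex set $V(\cG) \coloneqq V(\cH) \times \br{r}$, whose edges encode rainbow constellations:  each $i$-rainbow constellation with base $\{v_1, \dotsc, v_s\}$ and petal edges $A_j^{(\kappa)}$ (for $\kappa \in \br{s}$ and $j \in \br{r} \setminus \{i\}$) contributes the $k$-element edge
\[
  \bigcup_{\kappa \in \br{s}} \bigcup_{j \in \br{r}\setminus\{i\}} \big\{(u, j) : u \in A_j^{(\kappa)} \setminus \{v_\kappa\}\big\}.
\]
Identifying subsets $S \subseteq V(\cG)$ with the generalised multi-colourings they describe, the rainbow constellations of $S$ are precisely the edges of $\cG$ contained in $S$; in particular, the rainbow constellations of a partial colouring $\psi$ of $V(\cH)$ are the edges of $\cG$ contained in $\{(v, \psi(v)) : \psi(v) \text{ is defined}\}$.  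A count via Lemma~\ref{lemma:RS-RC-edges} gives $e(\cG) = \Theta\big(e(\cH)^{s(r-1)+1}/v(\cH)^{s(r-1)}\big)$.

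The second step is to verify the degree condition $\Delta_\ell(\cG) \le K \tau^{\ell-1} \cdot e(\cG)/v(\cG)$ for every $\ell \in \br{k}$ with $\tau = \Theta(\pH)$.  Bounding $\Delta_\ell(\cG)$ reduces to counting rainbow constellations containing $\ell$ prescribed coloured-vertex constraints, which proceeds by case analysis on how these constraints are distributed among the $s(r-1)$ petals of a constellation;  each case yields a product of quantities $\Delta_t(\cH)$ for appropriate $t \le s$, which are controlled by non-clusteredness~\ref{item:ass-non-clusteredness}.  Since $\pH \to 0$ by Fact~\ref{fact:non-clustered-properties}, Theorem~\ref{thm:containers} supplies a family $\cC$ of at most $\exp\big(O(\tau v(\cG) \log(1/\tau))\big) = \exp\big(o(v(\cH))\big)$ subsets of $V(\cG)$ such that every $S \subseteq V(\cG)$ with $e(\cG[S]) \le \delta \tau^k e(\cG)$ is contained in some $C \in \cC$, and each $C \in \cC$ satisfies $e(\cG[C]) < \eps \cdot e(\cG)$, where $\eps > 0$ will be chosen small in terms of $\betas$.

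The critical structural claim is that every container $C \in \cC$ satisfies $N_\star(C) \le \beta \cdot e(\cH)^{r-1}/v(\cH)^{r-2}$, where $N_\star(C)$ counts the rainbow-star constraints compatible with $C$, namely triples consisting of a centre $v$, an absent colour $i$, and a choice of $r-1$ petal edges at $v$ such that every required non-centre pair $(u, j)$ lies in $C$; here $\beta$ can be made arbitrarily small by choosing $\eps$ small.  The passage from $e(\cG[C])$ to $N_\star(C)$ goes through assumption~\ref{item:ass-star-constellation} via an averaging argument:  consider the random partial colouring $\psi \subseteq C$ obtained by picking, independently for every $v \in V(\cH)$, one colour uniformly at random from $\{j : (v, j) \in C\}$ (and leaving $v$ uncoloured if this set is empty).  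Every rainbow star compatible with $C$ is realised in $\psi$ with probability at least $r^{-(r-1)(s-1)}$, so by linearity of expectation some realisation of $\psi$ admits at least $r^{-(r-1)(s-1)} N_\star(C)$ rainbow stars while admitting at most $e(\cG[C]) < \eps \cdot e(\cG)$ rainbow constellations.  Applying the contrapositive of~\ref{item:ass-star-constellation} to this $\psi$ yields the desired bound on $N_\star(C)$.

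Finally, for each $C \in \cC$, the number of rainbow stars of any partial colouring $\psi \subseteq C$ supported on $Z$ is at most the number of rainbow-star constraints compatible with $C$ whose $(r-1)(s-1)$ non-centre vertices all lie in $Z$.  This random quantity has expectation at most $p^{(r-1)(s-1)} N_\star(C) \le \beta \cdot (p/\pH)^{(r-1)(s-1)} v(\cH)$, and a suitable upper-tail concentration bound (for instance a Chernoff-type estimate for sums of weakly dependent Bernoulli indicators indexed by the stars) gives deviation probability of order $\exp\big(-\Omega\big((p/\pH)^{(r-1)(s-1)} v(\cH)\big)\big) = \exp\big(-\Omega(v(\cH))\big)$ whenever $p = \Omega(\pH)$.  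Since $\log|\cC| = o(v(\cH))$, a union bound over $\cC$ closes the argument:  any partial colouring of $Z$ with too few rainbow constellations is almost independent in $\cG$, hence contained in some $C \in \cC$, and therefore cannot admit too many rainbow stars.  The main obstacle lies in the degree-bound case analysis of the second step and in choosing the parameters $\eps$, $\tau$, and $\beta$ compatibly with $\betas$ and the target constant $\betac$.
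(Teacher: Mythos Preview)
Your overall plan coincides with the paper's: apply the container lemma to the $(r-1)(s-1)s$-uniform hypergraph of rainbow constellations on $V(\cH)\times\br{r}$, verify its degree conditions via non-clusteredness, and use the star--constellation property (through exactly the random-projection averaging you describe, which is the content of Lemma~\ref{lemma:RS-RC-property-edges}) to conclude that every container admits only $o\!\big(e(\cH)^{r-1}/v(\cH)^{r-2}\big)$ rainbow stars.

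The gap is in your final step.  Your claimed upper-tail bound $\exp\!\big(-\Omega(v(\cH))\big)$ cannot hold: the entire randomness is carried by $Z$, which has only $\Theta\!\big(p\,v(\cH)\big)$ random bits, so no event depending on $Z$ can have probability below $\exp\!\big(-\Theta(p\,v(\cH))\big)$.  Even aiming for that weaker rate, a ``Chernoff-type'' estimate is not available here: the count of rainbow stars compatible with a fixed container is a degree-$(r-1)(s-1)$ polynomial in the indicators $\1_{v\in Z}$, and the upper tail of such subgraph-count variables is notoriously heavy (this is the ``infamous upper tail'' obstruction).  The authors flag this explicitly in Section~\ref{sec:proof-sparse-rainbow-star-constellation}: a direct upper-tail bound strong enough to survive the union bound is not expected to exist.

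The paper's remedy is to avoid the upper tail altogether.  Write the number of rainbow stars in $C\cap(Z\times\br{r})$ as the total number of uncoloured stars in $Z$ (times $r!$) minus the number of rainbow stars of $Z$ \emph{not} in $C$.  The total is shown to be concentrated by a second-moment calculation (Lemma~\ref{lemma:MS-pseudo-variance}), while the second term has a \emph{lower}-tail bound of the form $\exp\!\big(-\Theta(p\,v(\cH))\big)$ via Janson's inequality (Claim~\ref{claim:Janson-RSC}).  The union bound is then taken not over the container family $\cC$ but over the fingerprint sequences $\cS=(S_1,\dotsc,S_t)$, and it is \emph{weighted} by the probability $\Pr(\pi_1(\cS)\subseteq Z)=p^{|\cS|}$; with $\tau=p/T$ for a large constant $T$, this weighted sum is only $\exp\!\big(\tfrac{\beta^2}{4\Gamma}\,p\,v(\cH)\big)$, which is beaten by the Janson rate.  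Both ingredients---the complement trick to convert upper to lower tails, and the weighted union bound exploiting that fingerprints must land in $Z$---are essential and are missing from your proposal.
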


The above theorem will be combined with the following technical lemma, which implies that, in a typical set $Z \sim V(\cH)_p$, a family of $\Omega\big((p/\pH)^{(r-1)(s-1)s} e(\cH)\big)$ rainbow constellations has to determine $\Omega(e(\cH))$ different base edges.  We will say that a set $Z \subseteq V(\cH)$ \emph{admits} a (non-coloured) star $\{A_1, \dotsc, A_k\}$ with centre $v$ if $(A_1 \cup \dotsb \cup A_k) \setminus \{v\} \subseteq Z$; a set $Z$ admits a constellation if it admits each of the $s$ stars that comprise it.  Given a set $Z \subseteq V(\cH)$ and an edge $A \in \cH$, we denote by $\con(A,Z)$ the number of constellations with base edge $A$ that are admitted by $Z$.

\begin{lemma}
  \label{lemma:con-ell-2}
  Suppose that $r \ge 2$, $s \ge 3$, and $\cH$ is a sequence of $s$-uniform hypergraphs that satisfies assumption~\ref{item:ass-non-clusteredness}.  For every $c > 0$, there exists a constant $\Gamma$ such that the following holds:  If $p \ge c \cdot \pH$ and $Z \sim V(\cH)_p$, then, for every $A \in \cH$,
  \[
    \Ex\left[\sum_{A \in \cH} \con(A,Z)^2 \right] \le \Gamma \cdot \left(\frac{p}{\pH}\right)^{2(r-1)(s-1)s} \cdot e(\cH).
  \]
\end{lemma}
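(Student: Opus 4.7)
The plan is to expand the expectation as a sum over ordered pairs of constellations sharing a common base, partition these pairs according to their off-base overlap pattern, and estimate each resulting term using the non-clusteredness assumption \ref{item:ass-non-clusteredness}. Since the ``generic'' term (no off-base overlap) already matches the target order, the remaining work is to show that every other overlap pattern contributes at most a comparable amount.

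To set this up, write $\con(A,Z)=\sum_C\mathbbm{1}[S^*(C)\subseteq Z]$, where $C$ ranges over constellations with base $A$ and $S^*(C)$ is the $(r-1)(s-1)s$-vertex set obtained from the support of $C$ by removing $A$. Then
\[
  \Ex\!\Bigl[\sum_{A\in\cH}\con(A,Z)^2\Bigr] = \sum_{A\in\cH}\sum_{(C,C')}p^{|S^*(C)\cup S^*(C')|},
\]
with the inner sum over ordered pairs of constellations on the base $A$. The generic pairs, satisfying $S^*(C)\cap S^*(C')=\emptyset$, contribute
\[
  \sum_{A\in\cH}T(A)^2\cdot p^{2(r-1)(s-1)s},
\]
where $T(A)$, the number of constellations with base $A$, is at most $\Delta_1(\cH)^{(r-1)s}$ (pick an $(r-1)$-star at each of the $s$ base vertices). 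Since non-clusteredness gives $\Delta_1(\cH)=O(\pH^{-(s-1)})$, this generic contribution is $O\bigl(e(\cH)\cdot(p/\pH)^{2(r-1)(s-1)s}\bigr)$, which is exactly the target bound.

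Next, I would classify the non-generic pairs by their \emph{overlap type}, i.e., the combinatorial pattern of coincidences among the $2(r-1)s$ edges of $C\cup C'$ off the base. Since $r$ and $s$ are fixed, only finitely many types arise. The key counting principle is: each coincidence in which an edge $B$ of $C$ and an edge $B'$ of $C'$ share $t\ge 1$ off-base vertices changes the generic count $\Delta_1(\cH)^2$ for the pair $(B,B')$ into at most $\Delta_1(\cH)\cdot\binom{s-1}{t}\cdot\Delta_{t+1}(\cH)$, a reduction by a factor $O(\Delta_{t+1}(\cH)/\Delta_1(\cH))$, while the probability that $Z$ contains the union of off-base vertices of the two edges gains a factor $p^{-t}$. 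Non-clusteredness gives $\Delta_{t+1}(\cH)\ll\pH^t\cdot e(\cH)/v(\cH)$ for $t\in\{1,\dots,s-2\}$ and $\Delta_s(\cH)=1$, so $\Delta_{t+1}(\cH)/\Delta_1(\cH)=O(\pH^t)$. Combined with $p\ge c\pH$, each coincidence contributes a factor $O((\pH/p)^t)=O(1)$; summing over the finitely many overlap types yields the stated bound with $\Gamma$ depending only on $c$, $r$, $s$, and the non-clusteredness constants.

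The hard part will be the combinatorial bookkeeping: a single pair $(C,C')$ can involve several coincidences that share edges or vertices with one another, and one must verify that the per-coincidence accounting composes into an aggregate bound without over-counting, and that edge pairs where $B=B'$ (the degenerate $t=s-1$ case) do not break the estimate. Nevertheless, because the number of overlap types is finite (depending only on $r$ and $s$) and each contributes at most a constant multiple of the generic bound, this reduces to a tedious but finite case analysis, yielding the lemma with a single absolute constant $\Gamma$.
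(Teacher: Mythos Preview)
Your proposal is correct and follows essentially the same route as the paper: expand the square as a sum over ordered pairs of constellations with the same base, stratify by overlap, and use non-clusteredness to show that every stratum contributes at most a constant multiple of the generic (disjoint) term.

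One small remark: the bookkeeping concern you flag---that distinct pairwise edge--edge coincidences might interact---is sidestepped in the paper by a cleaner parametrization. Rather than tracking which edge $B$ of $C$ a given edge $B'$ of $C'$ overlaps, the paper fixes $C$ and, for each of the $(r-1)s$ edges of $C'$, records only the integer $t_{i,j} = |B' \cap (\supp(C)\setminus A)|$. Since every such edge already contains its star's centre in $A$, the number of choices for that edge is at most $\Delta_{t_{i,j}+1}(\cH)$, and the product over all $(i,j)$ gives the count directly; the choice of which $t_{i,j}$ vertices of the fixed set $\supp(C)\setminus A$ are hit is absorbed into a constant depending only on $r$ and $s$. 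This turns your ``tedious but finite case analysis'' into a single product bound.
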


We postpone the proofs of Theorem~\ref{thm:sparse-rainbow-star-constellation} and Lemma~\ref{lemma:con-ell-2} to later subsections and first show how they imply Theorem~\ref{thm:rainbow-stars-constellations}.

\begin{proof}[Proof of Theorem~\ref{thm:rainbow-stars-constellations}]
  Suppose that $s \ge 3$, $r \ge 2$, and $\eps > 0$ and let $\cH$ be a sequence of non-clustered, $s$-uniform hypergraphs that satisfies the star-constellation property with $r$ colours.  Assume that $p = \Theta(\pH)$ and $Z \sim V(\cH)_p$.  It follows from Lemma~\ref{lemma:con-ell-2} and Markov's inequality that, for some constant $C$,
  \[
    \sum_{A \in \cH} \con(A,Z)^2 \le C e(\cH)
  \]
  with probability at least $1 - \eps/2$. By Theorem~\ref{thm:sparse-rainbow-star-constellation}, as $p/\pH = \Theta(1)$, with probability at least $1-\eps/2$, every partial $\br{r}$-colouring of $Z$ that admits $\Omega\big(v(\cH)\big)$ rainbow stars must also admit $\Omega\big(e(\cH)\big)$ rainbow constellations.  Assume that the random set $Z$ has both these properties, which happens with probability at least $1-\eps$.  Fix any colouring $\psi$ with $\Omega\big(v(\cH)\big)$ rainbow stars and let $\cH_\psi$ comprise all edges of $\cH$ that are the base of at least one rainbow constellation.  On the one hand, since every $A \in \cH_\psi$ is the base of at most $\con(A,Z)$ rainbow constellations, our assumption on $Z$ implies that
  \[
    \sum_{A \in \cH_\psi} \con(A,Z) \ge \Omega\big(e(\cH)\big).
  \]
  On the other hand, the Cauchy--Schwarz inequality yields
  \[
    \left(\sum_{A \in \cH_\psi} \con(A,Z)\right)^2 \le e(\cH_\psi) \cdot \sum_{A \in \cH_\psi} \con(A,Z)^2 \le e(\cH_\psi) \cdot  \sum_{A \in \cH} \con(A,Z)^2.
  \]
  Combining the three displayed inequalities gives $e(\cH_\psi) = \Omega\big(e(\cH)\big)$, as desired.
\end{proof}

\subsection{The hypergraphs of stars and constellations}
\label{sec:hypergr-stars-const}

Assume from now on that $s \ge 3$ and $r \ge 2$ are integers and that $\cH$ is a sequence of $s$-uniform hypergraphs that satisfies assumptions~\ref{item:ass-non-clusteredness} and~\ref{item:ass-star-constellation}.  In order to use the container lemma in the context of Theorem~\ref{thm:sparse-rainbow-star-constellation}, let us define two (multi)hypergraphs with vertex set $V(\cH) \times \br{r}$:
\begin{itemize}
\item
  the $(r-1)(s-1)$-uniform hypergraph $\RS$ of rainbow stars,
\item
  the $(r-1)(s-1)s$-uniform hypergraph $\RC$ of rainbow constellations.
\end{itemize}
The edges of $\RS$ are all sets of the form
\[
  E_i\left((A_j)_{j \in \br{r} \setminus \{i\}}\right) = \bigcup_{j \in \br{r} \setminus \{i\}} \big(A_j \setminus \{v\}\big) \times \{j\},
\]
where $i \in \br{r}$ and $\{A_j : j \in \br{r} \setminus \{i\}\}$ is an $(r-1)$-star with centre $v$.  We add such sets to the multihypergraph $\RS$ with their proper multiplicities, that is, $E_i(A_1, \dotsc, A_{r-1})$ is added to $\RS$ once for every ordering of the edges of each $(r-1)$-star $\{A_1, \dotsc, A_{r-1}\}$.\footnote{It is possible that $E_i(A_1, \dotsc, A_{r-1}) = E_i(A_1', \dotsc, A_{r-1}')$ when $\{A_1, \dotsc, A_{r-1}\}$ and $\{A_1', \dotsc, A_{r-1}'\}$ are two different stars that differ only in their centres.}  The edges of $\RC$ are all sets of the form
\[
  E_i(\bA_1) \cup \dotsb \cup E_i(\bA_s),
\]
where $\bA_1, \dotsc, \bA_s$ are arbitrary orderings of the edge sets of some $s$ stars that form a~constellation. As in the case of $\RS$, every edge of $\RC$ appears with its proper multiplicity, that is, once for every choice of a base edge formed by the centres of the stars in the~constellation.

Our next two technical lemmas provide lower bounds on the numbers of edges in the hypergraphs $\RS$ and $\RC$ and upper bounds on the sequence of maximum degrees of the latter hypergraph.  These estimates will allow us to apply the container lemma (Theorem~\ref{thm:containers}) to the $(r-1)(s-1)s$-uniform hypergraph $\RC$ and construct a family of only $\exp\big(o(p \cdot v(\cH))\big)$ containers that cover the family of all partial colourings of $V(\cH)$ that admit many fewer than $p^{(r-1)(s-1)s} \cdot e(\RC)$
rainbow constellations.

\begin{lemma}
  \label{lemma:RS-RC-edges}
  There is a positive constant $c$ that depends only on $r$ and $s$ such that
  \[
    e(\RS) \ge  c \cdot \frac{e(\cH)^{r-1}}{v(\cH)^{r-2}}
    \qquad \text{and} \qquad
    e(\RC) \ge c \cdot \frac{e(\cH)^{(r-1)s+1}}{v(\cH)^{(r-1)s}}.
  \]
\end{lemma}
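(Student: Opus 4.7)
The plan is to produce the two lower bounds by directly counting ordered rainbow stars and constellations whose centres (respectively, whose base vertices) are ``typical'' vertices of $\cH$. Fix $\alpha = 1/(2s)$ and let
\[
  H \coloneqq \left\{v \in V(\cH) : \deg_\cH(v) \ge \alpha \cdot \frac{e(\cH)}{v(\cH)}\right\}.
\]
Since $\sum_v \deg_\cH(v) = s \cdot e(\cH)$ and $\Delta_1(\cH) = O\!\big(e(\cH)/v(\cH)\big)$ by non-clusteredness, a one-line double counting gives $|H| = \Omega\big(v(\cH)\big)$ and, more importantly, at least $e(\cH)/2$ edges $A \in \cH$ satisfy $A \subseteq H$; indeed, every edge $A$ with $A \not\subseteq H$ contributes to $\sum_{v \notin H} \deg_\cH(v) \le v(\cH) \cdot \alpha \cdot e(\cH)/v(\cH) \le e(\cH)/2$.

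For $e(\RS)$, I count ordered $(r-1)$-tuples of edges of $\cH$ through a common centre $v \in H$. There are $\deg_\cH(v)^{r-1}$ such tuples, and the number that fail to be a star (some two entries share a second vertex) is at most $\binom{r-1}{2}(s-1) \cdot \Delta_2(\cH) \cdot \deg_\cH(v)^{r-2}$. Because $\deg_\cH(v) \ge \alpha \cdot e(\cH)/v(\cH)$ while $\Delta_2(\cH) \ll \pH \cdot e(\cH)/v(\cH)$ and $\pH = o(1)$ (Fact~\ref{fact:non-clustered-properties}(ii), using $s \ge 3$), the bad count is $o(\deg_\cH(v)^{r-1})$. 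Including the combinatorial factor $r \cdot (r-1)!$ for the choice of the special colour $i$ and the ordering of the edges of each star,
\[
  e(\RS) \ge \Omega\!\left(\sum_{v \in H} \deg_\cH(v)^{r-1}\right) \ge \Omega\!\left(|H| \cdot \Big(\alpha \cdot \tfrac{e(\cH)}{v(\cH)}\Big)^{r-1}\right) = \Omega\!\left(\frac{e(\cH)^{r-1}}{v(\cH)^{r-2}}\right).
\]

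For $e(\RC)$, I count ordered constellations whose base edge $A = \{v_1, \ldots, v_s\}$ lies in~$H$. The naive count is $\prod_i \deg_\cH(v_i)^{r-1}$, from which one subtracts three kinds of defect: (a) using an edge through some $v_i$ that also contains another $v_j \in A$ (at most $(s-1)\Delta_2(\cH)$ forbidden edges per $v_i$); (b) star-invalidity inside a single star, bounded exactly as in the star case; and (c) two stars centred at distinct $v_i, v_j$ sharing a vertex outside $A$, for which, for each fixed pair of slots, $\sum_{u \notin A} \deg_\cH(\{u,v_i\}) \deg_\cH(\{u,v_j\}) \le \Delta_2(\cH) \cdot (s-1) \cdot \deg_\cH(v_i)$ controls the count. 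Each defect factors as a multiple of $O(\Delta_2(\cH)) \cdot \prod_v d_v^{r-1} / d_{v_k}$ for some $k$; since every $d_{v_k} \ge \alpha e(\cH)/v(\cH)$ on $A \subseteq H$, the total defect at $A$ is at most $O(\Delta_2(\cH) v(\cH)/e(\cH)) \cdot \prod_v d_v^{r-1} = o(\prod_v d_v^{r-1})$. Summing over the $\ge e(\cH)/2$ edges $A \subseteq H$ and using $\prod_{v \in A} d_v^{r-1} \ge (\alpha e(\cH)/v(\cH))^{s(r-1)}$, and multiplying by the combinatorial factor $r \cdot s! \cdot ((r-1)!)^s$ that comes from the index $i$ and the orderings of stars and of edges within stars, I obtain
\[
  e(\RC) \ge \Omega\!\left(e(\cH) \cdot \Big(\tfrac{e(\cH)}{v(\cH)}\Big)^{s(r-1)}\right) = \Omega\!\left(\frac{e(\cH)^{s(r-1)+1}}{v(\cH)^{s(r-1)}}\right).
\]

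The only genuinely delicate point is the bookkeeping of defect (c) in the constellation count: one has to verify that, after fixing which pair of star-slots houses the overlap, the bound factors through a single copy of $\Delta_2(\cH)$ (and not $\Delta_2(\cH)^2$), so that it is summable against $\prod_v d_v^{r-1}$ with a quotient that tends to zero. This relies precisely on the estimate $\Delta_2(\cH) \cdot v(\cH)/e(\cH) \ll \pH = o(1)$ guaranteed by non-clusteredness together with $s \ge 3$; everything else is a routine application of the degree bounds combined with Jensen-style convexity on the restricted edge set $\{A \in \cH : A \subseteq H\}$.
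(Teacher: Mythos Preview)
Your proposal is correct and follows essentially the same approach as the paper: restrict to a high-degree portion of $\cH$, take the naive product count of edge tuples through the designated centres, and subtract degenerate configurations using the bound $\Delta_2(\cH) \ll \pH \cdot e(\cH)/v(\cH)$ together with $\pH = o(1)$. The paper obtains its high-degree portion by iteratively deleting vertices of degree below $e(\cH)/(2v(\cH))$ to get a subhypergraph $\cH'$ with large minimum degree, whereas you take the set $H$ of high-degree vertices directly and use that at least half the edges lie inside $H$; both devices serve the same purpose and the defect analyses are equivalent.
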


\begin{lemma}
  \label{lemma:RC-degrees}
  There are constants $\Gamma$ and $\Gamma'$ that depend only on $r$ and the sequence $\cH$ and such that
  \[
    \Delta_1(\RC) \le \Gamma' \cdot \left(\frac{e(\cH)}{v(\cH)}\right)^{(r-1)s+1} \le \Gamma \cdot \frac{e(\RC)}{v(\RC)}.
  \]
  Moreover, for every $t \ge 2$,
  \[
    \Delta_t(\RC) \ll \pH^{t-1} \cdot \frac{e(\RC)}{v(\RC)}.
  \]
\end{lemma}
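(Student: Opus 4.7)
The plan is to handle the three claimed inequalities in turn, with the first two being routine and the third being the technical heart. For $\Delta_1(\RC)$, I would fix a vertex $(v, j) \in V(\RC)$ and count the constellations in which $v$ plays the role of a non-centre vertex of the colour-$j$ edge of some star. Such a constellation is specified by (in order) the missing colour $i \in \br{r} \setminus \{j\}$, the index of the star containing $v$, an edge $A \in \cH$ through $v$, a centre $u \in A \setminus \{v\}$ of that star, the remaining $r-2$ edges of the star through $u$, a base edge $B \ni u$ together with an assignment of $B \setminus \{u\}$ to the other $s-1$ stars, and finally $r-1$ edges through each remaining centre. Every edge chosen after $A$ is forced to contain some previously determined vertex, so is counted by at most $\Delta_1(\cH) \le K_\cH \cdot e(\cH)/v(\cH)$; multiplying the resulting $(r-1)s + 1$ factors yields the first inequality. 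The second inequality follows immediately from $v(\RC) = r \cdot v(\cH)$ combined with the lower bound on $e(\RC)$ supplied by Lemma~\ref{lemma:RS-RC-edges}.

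For the third inequality, I would fix $t \ge 2$ vertices $(v_1, c_1), \dotsc, (v_t, c_t)$ of $\RC$ (whose $\cH$-coordinates are necessarily distinct) and partition the count by the ``pattern'' $\pi$ that assigns each $v_l$ to one of the $(r-1)s$ star edges of the constellation. Crucially, no $v_l$ lies in the base edge, since the base consists only of centres whereas the specified vertices must be non-centres of star edges. Writing $a_{l,k} = |\pi^{-1}(E_{l,k})|$, one has $a_{l,k} \le s-1$ and $\sum_{l,k} a_{l,k} = t$. For each of the $O(1)$ patterns, I would count the realising constellations by selecting the edges of $\cH$ in a carefully chosen order: first a star edge $E_{l_0, k_0}$ with maximum $a$-value, then its centre $u_{l_0}$, then a base edge through $u_{l_0}$, then the remaining centres as vertices of the base edge, and finally the other star edges, each through its already-fixed centre. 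Each chosen edge is forced to contain a specified set of vertices (specified $v_l$'s and/or an already-determined centre) and is therefore counted by the appropriate $\Delta_j(\cH)$. Using $\Delta_1(\cH) = O(e(\cH)/v(\cH))$, $\Delta_s(\cH) = \pH^{s-1} \cdot e(\cH)/v(\cH)$, and $\Delta_j(\cH) \ll \pH^{j-1} \cdot e(\cH)/v(\cH)$ for $2 \le j \le s-1$, the exponents telescope to $\pH^{t-1} \cdot (e(\cH)/v(\cH))^{(r-1)s+1}$ per pattern.

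The main obstacle will be promoting this $O$-bound to the strict $\ll$ that the lemma demands. Since only $\Delta_1(\cH)$ and $\Delta_s(\cH)$ saturate the non-clusteredness inequalities, I need to ensure that at least one factor $\Delta_j(\cH)$ with $2 \le j \le s-1$ appears in the product. A short case analysis on $a_{\max} \coloneqq \max_{l,k} a_{l,k}$ confirms that the choice of first edge always delivers one: if $a_{\max} \ge 2$, then since $a_{\max} \le s-1$ the first edge contributes $\Delta_{a_{\max}}(\cH)$ in the strict regime; and if $a_{\max} = 1$, then $t \ge 2$ forces at least two star edges with $a_{l,k} = 1$, and any such edge other than the first contributes $\Delta_2(\cH)$, where the centre accounts for one extra forced vertex beyond the lone specified one. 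Summing over the $O(1)$ patterns then gives $\Delta_t(\RC) \ll \pH^{t-1} \cdot (e(\cH)/v(\cH))^{(r-1)s+1}$, which via the second inequality is the desired bound in terms of $e(\RC)/v(\RC)$.
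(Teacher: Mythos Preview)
Your proposal is correct and follows essentially the same route as the paper's proof: build the constellation by first choosing a star edge through a maximal number of the specified vertices, then the base edge through its centre, then the remaining star edges through their now-fixed centres, and extract the strict $\ll$ via the same two-case analysis on $a_{\max}$. The paper phrases the setup slightly differently---it partitions the specified set $T$ by colour into $T_1,\dotsc,T_r$ and observes that $\deg_{\RC}(T)=0$ unless some $T_i$ is empty---whereas you encode this implicitly in the choice of pattern $\pi$; but the enumeration and the case split (your $a_{\max}\ge 2$ versus $a_{\max}=1$ is exactly the paper's case~(i) versus case~(ii)) coincide.
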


The next lemma translates the rainbow star-constellation property into the language of $\RS$ and $\RC$.

\begin{lemma}
  \label{lemma:RS-RC-property-edges}
  For every positive constant $\beta$, there exists a positive constant $\eps$ that depends only on $\beta$, $r$, and the sequence $\cH$ and such that the following holds for every $C \subseteq V(\cH) \times \br{r}$: If $e\big(\RS[C]\big) \ge \beta \cdot e(\RS)$, then $e\big(\RC[C]\big) \ge \eps \cdot e(\RC)$.
\end{lemma}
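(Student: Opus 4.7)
The plan is to reduce to the partial-colouring setting of assumption~\ref{item:ass-star-constellation} by sampling a random partial colouring out of $C$. For each $v \in V(\cH)$, let $L_v \coloneqq \{i \in \br{r} : (v,i) \in C\}$, and define a random partial $\br{r}$-colouring $\psi$ of $V(\cH)$ as follows: independently for each $v$ with $L_v \ne \emptyset$, choose $\psi(v)$ uniformly at random from $L_v$, and leave $\psi(v)$ undefined when $L_v = \emptyset$. Viewed as the subset $\{(v, \psi(v)) : \psi(v) \text{ is defined}\}$ of $V(\cH) \times \br{r}$, the colouring $\psi$ satisfies $\psi \subseteq C$, so every rainbow constellation of $\psi$ is contained in $C$, and consequently $e(\RC[\psi]) \le e(\RC[C])$.

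The key estimate is that $\psi$ preserves a constant proportion of the rainbow stars in $C$. Fix an edge $E = E_i\big((A_j)_{j \ne i}\big) \in \RS[C]$ centred at some vertex $v$. Because $\{A_j\}_{j \ne i}$ is a star centred at $v$, the $(r-1)(s-1)$ vertices in $\bigcup_{j \ne i}(A_j \setminus \{v\})$ are pairwise distinct. For each such vertex $u$, whose prescribed colour under $E$ is some $j$, we have $j \in L_u$ (since $E \subseteq C$), and hence the independent sampling gives $\Pr[\psi(u) = j] = 1/|L_u| \ge 1/r$. Therefore $E$ remains a rainbow star under $\psi$ with probability at least $r^{-(r-1)(s-1)}$, and linearity of expectation combined with Lemma~\ref{lemma:RS-RC-edges} yields
\[
\Ex\big[e(\RS[\psi])\big] \ge r^{-(r-1)(s-1)} \cdot e(\RS[C]) \ge r^{-(r-1)(s-1)} \beta \cdot e(\RS) \ge c_1 \cdot \frac{e(\cH)^{r-1}}{v(\cH)^{r-2}}
\]
for some positive $c_1$ depending only on $\beta$, $r$, and $s$.

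Fix a deterministic realisation of $\psi$ achieving at least this expectation. Applying assumption~\ref{item:ass-star-constellation} to $\psi$ supplies a positive constant $c_2$, depending only on $c_1$, $r$, $s$, and the sequence $\cH$, such that $\psi$ admits at least $c_2 \cdot e(\cH)^{(r-1)s+1}/v(\cH)^{(r-1)s}$ rainbow constellations, that is, $e(\RC[\psi]) \ge c_2 \cdot e(\cH)^{(r-1)s+1}/v(\cH)^{(r-1)s}$. Combined with $e(\RC[C]) \ge e(\RC[\psi])$ and the matching upper bound $e(\RC) = O\big(e(\cH)^{(r-1)s+1}/v(\cH)^{(r-1)s}\big)$, which follows from Lemma~\ref{lemma:RC-degrees} by the handshake identity $(r-1)(s-1)s \cdot e(\RC) = \sum_v \deg_{\RC}(v)$, this gives $e(\RC[C]) \ge \eps \cdot e(\RC)$ for some $\eps > 0$ depending only on $\beta$, $r$, $s$, and the sequence $\cH$. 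There is no substantive obstacle beyond setting up the sampling correctly: the essential point is that the distinctness of non-centre vertices within a star makes the events $\{\psi(u) = j_u\}$ independent, each of probability at least $1/r$, so that rainbow stars survive with a dimension-free constant probability.
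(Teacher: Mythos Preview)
Your proof is correct and follows essentially the same approach as the paper: sample a random partial colouring from $C$, use uniformity of $\RS$ to show the expected number of surviving rainbow stars is a constant fraction of $e(\RS[C])$, fix a realisation meeting the expectation, apply assumption~\ref{item:ass-star-constellation}, and convert back to a proportion of $e(\RC)$ via the degree bound in Lemma~\ref{lemma:RC-degrees}. The only cosmetic difference is that the paper phrases the last step as $v(\RC)\cdot\Delta_1(\RC)\ge e(\RC)$ rather than invoking the handshake identity explicitly.
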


Furthermore, it will be convenient to define the $(r-1)(s-1)$-uniform (multi)hypergraph $\MS$ of uncoloured $(r-1)$-stars whose vertex set is $V(\cH)$. The edges of $\MS$ are all sets of the form
\[
  (A_1 \cup \dotsb \cup A_{r-1}) \setminus \{v\},
\]
where $A_1, \dotsc, A_{r-1}$ form a star in $\cH$ whose centre vertex is $v$.  Observe that $\MS$ can be thought of as the image of $\RS$ via the projection $\pi_1 \colon V(\cH) \times \br{r} \to V(\cH)$ of its vertex set onto the first coordinate.  More precisely, since every star admits $r!$ colourings that make it rainbow, we have that $\pi_1(\RS) = r! \cdot \MS$ as multihypergraphs.  This allows us to bound degrees of subhypergraphs of $\RS$ induced by various $\br{r}$-colourings of $Z$ from above using respective degrees of $\MS[Z]$.  The advantage of such an approach is that the latter degrees depend only on $Z$ rather than on the particular colouring of $Z$.  This motivates the two final lemmas of this subsection.

\begin{lemma}
  \label{lemma:MS}
  There is a constant $\Gamma$ that depend only on $r$ and the sequence $\cH$ such that
  \[
    \Delta_1(\MS) \le \Gamma \cdot \frac{e(\MS)}{v(\MS)}
    \qquad \text{and} \qquad
    e(\MS) \le \Gamma \cdot \frac{e(\cH)^{r-1}}{v(\cH)^{r-2}}.
  \]
  Moreover, for every $t \ge 2$,
  \[
    \Delta_t(\MS) \ll \pH^{t-1} \cdot \frac{e(\MS)}{v(\MS)}.
  \]
\end{lemma}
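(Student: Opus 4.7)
The plan is to derive all three bounds by direct degree counting, using the non-clusteredness assumption~\ref{item:ass-non-clusteredness} together with the already-proved Lemma~\ref{lemma:RS-RC-edges} to pin down $e(\MS)$ up to constants. The cornerstone is the identity $\pi_1(\RS) = r! \cdot \MS$ already recorded in the paper and the fact that every edge of $\MS$ is specified by a centre $v$ together with an $(r-1)$-tuple of edges of $\cH$ through $v$ with pairwise intersection $\{v\}$. Upper-bounding by summing over the centre,
\[
  e(\MS) \;\le\; \sum_{v \in V(\cH)} \deg_\cH(v)^{r-1} \;\le\; v(\cH) \cdot \Delta_1(\cH)^{r-1} \;=\; O\!\left(\frac{e(\cH)^{r-1}}{v(\cH)^{r-2}}\right),
\]
while Lemma~\ref{lemma:RS-RC-edges} supplies the matching lower bound via $e(\MS) = e(\RS)/r!$. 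Consequently $e(\MS)/v(\MS) = \Theta\big((e(\cH)/v(\cH))^{r-1}\big)$, which is what converts the absolute bounds below into the stated relative ones.

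For $\Delta_1(\MS)$, I would fix $w \in V(\MS)$ and observe that any edge of $\MS$ containing $w$ is specified either by a star centred at $w$ (at most $\deg_\cH(w)^{r-1}$ options) or by choosing an index $j$ with $w \in A_j$, an edge $A_j \ni w$, a centre $v \in A_j \setminus \{w\}$, and an ordered $(r-2)$-tuple of further edges through $v$. Each factor is bounded by $\Delta_1(\cH) = O(e(\cH)/v(\cH))$ or $s-1$, yielding $\Delta_1(\MS) = O\big(\Delta_1(\cH)^{r-1}\big) = O\big(e(\MS)/v(\MS)\big)$.

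For $\Delta_t(\MS)$ with $t \ge 2$, I fix a $t$-set $T \subseteq V(\cH)$ and use the fact that $v \notin T$ forces each $w \in T$ into a unique $A_j$, so $T$ induces one of at most $(r-1)^t$ ordered partitions $T = T_1 \sqcup \cdots \sqcup T_{r-1}$; write $t_j = |T_j|$. For each partition the strategy is to select an index $j^*$ with $t_{j^*} \ge 1$, pick $A_{j^*} \supseteq T_{j^*}$ in at most $\Delta_{t_{j^*}}(\cH)$ ways, pick the centre $v \in A_{j^*} \setminus T_{j^*}$ in at most $s$ ways (which automatically enforces $t_{j^*} \le s-1$), and, for each $k \ne j^*$, pick $A_k \supseteq T_k \cup \{v\}$ in at most $\Delta_{t_k+1}(\cH)$ ways when $t_k \ge 1$ and at most $\Delta_1(\cH)$ ways when $t_k = 0$. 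Plugging in the non-clustered bounds, each factor is $O\big(\pH^{\bullet}\cdot e(\cH)/v(\cH)\big)$ with exponents summing to $(t_{j^*}-1)+\sum_{k\ne j^*} t_k = t-1$, giving a per-partition bound of $O\big(\pH^{t-1}(e(\cH)/v(\cH))^{r-1}\big)$.

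The delicate point---and the one that requires the careful choice of $j^*$---is upgrading this $O(\cdot)$ to $o(\cdot)$ to obtain the strict $\ll$ in the statement. I would handle this by a short case split: if some $t_m \ge 2$, set $j^* = m$ and use the strict non-clustered bound $\Delta_{t_m}(\cH) \ll \pH^{t_m - 1}\cdot e(\cH)/v(\cH)$, which is legitimate because $2 \le t_m \le s-1$; otherwise every $t_j \in \{0,1\}$ and, since $t \ge 2$, at least two of them equal $1$, so picking $j^*$ to be one of them leaves a distinct index $k$ with $t_k = 1$ on which $\Delta_{t_k+1}(\cH) = \Delta_2(\cH) \ll \pH\cdot e(\cH)/v(\cH)$ applies (using $s \ge 3$). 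Summing over the $(r-1)^t$ partitions then yields $\Delta_t(\MS) \ll \pH^{t-1}\cdot (e(\cH)/v(\cH))^{r-1} = \pH^{t-1}\cdot e(\MS)/v(\MS)$, as required.
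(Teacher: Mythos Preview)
Your proof is correct and follows essentially the same approach as the paper's. Both arguments bound $\Delta_t(\MS)$ for $t\ge 2$ by partitioning $T$ according to which ray of the star each element lies in, choosing one ray first to pin down the centre, then the remaining rays through the centre; the strict $\ll$ is obtained in both by the same two-case split (some part of size $\ge 2$ versus all parts $\le 1$ with at least two equal to~$1$, invoking $\Delta_2(\cH)\ll\pH\cdot e(\cH)/v(\cH)$ via $s\ge 3$). One small remark: in your $\Delta_1$ bound, the case ``star centred at $w$'' is vacuous, since the edge of $\MS$ is the support \emph{minus} the centre and so never contains~$w$; only your second case contributes, but including the redundant case does no harm to the upper bound.
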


Recall the definition of pseudo-variance (Definition~\ref{dfn:pseudo-variance}) given in Section~\ref{sec:janson-inequality}.  In our final lemma, we abuse the notation somewhat and write $\pVar(\MS[Z])$ to denote the pseudo-variance of the sequence of events $(\{A \subseteq Z\})_{A \in \MS}$.

\begin{lemma}
  \label{lemma:MS-pseudo-variance}
  For every positive constant $c$, there is a constant $\Gamma$ that depends only on $c$, $r$, and the sequence $\cH$ such that the following holds:  If $Z \sim V(\cH)_p$ for some $p \ge c \cdot \pH$, then
  \[
    \pVar\big(\MS[Z]\big) \le \Gamma \cdot \frac{\Ex[e(\MS[Z])]^2}{pv(\MS)}.
  \]
\end{lemma}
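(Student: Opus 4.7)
The plan is to expand the pseudo-variance by partitioning pairs of edges of $\MS$ according to their intersection size. Since every edge of $\MS$ has exactly $(r-1)(s-1)$ vertices, for two edges $A, A' \in \MS$ with $|A \cap A'| = t$ we have $\Pr(A \cup A' \subseteq Z) = p^{2(r-1)(s-1)-t}$, and the events $\{A \subseteq Z\}$ and $\{A' \subseteq Z\}$ are dependent precisely when $t \ge 1$. Writing $m := (r-1)(s-1)$ for brevity,
\[
  \pVar\big(\MS[Z]\big) \;=\; \sum_{t=1}^{m} \, p^{2m-t} \cdot \big|\{(A,A') \in \MS^2 : |A \cap A'| = t\}\big|.
\]
I would bound the number of pairs with $|A \cap A'| = t$ by $e(\MS) \cdot \binom{m}{t} \cdot \Delta_t(\MS)$: fix $A$, choose a $t$-subset of its vertices, and extend to $A'$ using the codegree bound.

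The target quantity is
\[
  T \;:=\; \frac{\Ex[e(\MS[Z])]^2}{p \cdot v(\MS)} \;=\; \frac{e(\MS)^2 \cdot p^{2m-1}}{v(\MS)},
\]
so I would simply show that each of the $m$ terms above is $O(T)$. For $t=1$, Lemma~\ref{lemma:MS} gives $\Delta_1(\MS) \le \Gamma \cdot e(\MS)/v(\MS)$ for some constant $\Gamma$ depending on $r$ and $\cH$, and the $t=1$ contribution is directly at most $\Gamma \cdot m \cdot T$. For $t \ge 2$, Lemma~\ref{lemma:MS} yields $\Delta_t(\MS) \le \gamma_t \cdot \pH^{t-1} \cdot e(\MS)/v(\MS)$ for some $\gamma_t = o(1)$, hence the $t$-th term is at most
\[
  \binom{m}{t} \gamma_t \cdot \frac{e(\MS)^2 \, p^{2m-t} \, \pH^{t-1}}{v(\MS)} \;=\; \binom{m}{t} \gamma_t \cdot \Big(\tfrac{\pH}{p}\Big)^{t-1} \cdot T.
\]
The assumption $p \ge c \cdot \pH$ makes $(\pH/p)^{t-1} \le c^{-(t-1)} \le c^{-(m-1)}$, so this contribution is also $O(T)$ with constants depending only on $c$, $r$, and $\cH$.

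Summing the $m$ terms gives the desired bound $\pVar(\MS[Z]) \le \Gamma \cdot T$. There is no real obstacle — the lemma is a direct bookkeeping consequence of the degree bounds in Lemma~\ref{lemma:MS}. The only point worth noting is conceptual: the factor $1/p$ on the right-hand side (rather than some higher power of $1/p$) reflects the fact that the $t = 1$ term dominates the pseudo-variance, which is exactly what the non-clusteredness assumption was designed to guarantee via Lemma~\ref{lemma:MS}.
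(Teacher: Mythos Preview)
Your proof is correct and follows essentially the same approach as the paper: both expand the pseudo-variance by stratifying over the intersection size $t$, bound the number of pairs via $e(\MS)\binom{m}{t}\Delta_t(\MS)$, invoke the degree bounds from Lemma~\ref{lemma:MS}, and control the resulting geometric factor $(\pH/p)^{t-1}$ using $p\ge c\cdot\pH$. The only cosmetic difference is that you treat $t=1$ and $t\ge 2$ separately, whereas the paper absorbs both into a single constant $\Gamma'$ satisfying $\Delta_t(\MS)\le \Gamma'\,\pH^{t-1}\,e(\MS)/v(\MS)$ for all $t\ge 1$.
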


\subsection{Proofs of Lemmas~\ref{lemma:con-ell-2}--\ref{lemma:MS-pseudo-variance}}
\label{sec:proofs-technical-lemmas}

In this subsection, we prove Lemmas~\ref{lemma:con-ell-2}--\ref{lemma:MS-pseudo-variance}.  All proofs are straightforward, albeit somewhat technical.  Since the proofs of Lemmas~\ref{lemma:RC-degrees} and~\ref{lemma:MS} are very similar, but the latter is simpler, we present them in reverse order.  Throughout this section, we assume that $r \ge 2$ and $s \ge 3$ are integers and that $\cH$ is a sequence of $s$-uniform hypergraphs that satisfies assumptions~\ref{item:ass-non-clusteredness} and~\ref{item:ass-star-constellation}.  In particular, there is a constant $K$ such that, for all $t \ge 1$,
\begin{equation}
  \label{eq:Delta-t-cH}
  \Delta_t(\cH) \le K \cdot \pH^{t-1} \cdot \frac{e(\cH)}{v(\cH)}.
\end{equation}

\begin{proof}[Proof of Lemma~\ref{lemma:con-ell-2}]
  Fix an edge $A \in \cH$ and let $\cX_A$ be the family of all constellations with base edge $A$.  Denoting by $\supp(\cC)$ the union of all $(r-1)s$ edges forming a constellation $\cC$, we have
  \[
    \begin{split}
      \Ex\left[\con(A,Z)^2\right] & = \sum_{\cC, \cC' \in \cX_A} p^{|(\supp(\cC) \cup \supp(\cC')) \setminus A|} \\
      & = \sum_{\cC \in \cX_A} p^{|\supp(\cC) \setminus A|} \cdot \sum_{\cC' \in \cX_A} p^{|\supp(\cC') \setminus \supp(\cC)|}.
    \end{split}
  \]
  For every $\cC \in \cX_A$ and $t \in \{0, \dotsc, (r-1)(s-1)s\}$, let $\cX_{\cC,t}$ denote the family of all $\cC' \in \cX_A$ satisfying $|\supp(\cC') \cap \supp(\cC)| = |A|+t$, so that
  \[
    \cX_A = \bigcup_{t=0}^{(r-1)(s-1)s} \cX_{\cC,t}
  \]
  and, consequently,
  \[
    \Ex\left[\con(A,Z)^2\right] = \sum_{\cC \in \cX_A} p^{(r-1)(s-1)s} \cdot \sum_{t = 0}^{(r-1)(s-1)s} |\cX_{\cC,t}| \cdot p^{(r-1)(s-1)s-t}.
  \]
  We may further partition each $\cX_{\cC,t}$ according to the intersection pattern of $\cC' \in \cX_{\cC,t}$ with $\supp(\cC)$. Namely, for every sequence $\bt = (t_{i,j} : i \in \br{r-1}, j \in \br{s})$ with $0 \le t_{i,j} \le s-1$, we let $\cX_{\cC, \bt}$ be the family of all $\cC' \in \cX_A$ such that the $i$th edge of the $j$th star in $\cC'$ (in some arbitrary ordering) intersects $\supp(\cC) \setminus A$ in $t_{i,j}$ vertices, so that
  \[
    \cX_{\cC,t} = \bigcup_{\bt : \sum \bt = t} \cX_{\cC, \bt}.
  \]
  Since each edge of every star comprising each $\cC' \in \cX_A$ intersects $A$ in one vertex, there is a constant $\Gamma'$ that depends only on $r$ and $s$ such that
  \[
    \begin{split}
      |\cX_{\cC,\bt}| & \le \Gamma' \cdot \prod_{i,j} \Delta_{t_{i,j}+1}(\cH) \le \Gamma' \cdot \pH^{\sum_{i,j}t_{i,}} \cdot \left(K \cdot \frac{e(\cH)}{v(\cH)}\right)^{(r-1)s} \\
      & = \Gamma' \cdot K^{(r-1)s} \cdot \pH^{\sum\bt-(r-1)(s-1)s}.
    \end{split}
  \]
  In particular, there is a constant $\Gamma''$ that depends only on $r$, $s$, and $K$ such that
  \begin{equation}
    \label{eq:conAZ-ell2}
    \begin{split}
      \Ex\left[\con(A,Z)^2\right] & \le \Gamma'' \cdot \sum_{\cC \in \cX_A} p^{(r-1)(s-1)s} \cdot \sum_{t = 0}^{(r-1)(s-1)s} \left(\frac{p}{\pH}\right)^{(r-1)(s-1)s-t} \\
      & \le  \Gamma'' \cdot |\cX_A| \cdot \left(\frac{p^2}{\pH}\right)^{(r-1)(s-1)s} \cdot \sum_{t=0}^{(r-1)(s-1)s} c^{-t}.
    \end{split}
  \end{equation}
  Since
  \[
    |\cX_A| \le \Delta_1(\cH)^{(r-1)s} \le \left(K \cdot \frac{e(\cH)}{v(\cH)}\right)^{(r-1)s} = K^{(r-1)s} \cdot \pH^{-(r-1)(s-1)s},
  \]
  summing~\eqref{eq:conAZ-ell2} over all $A \in \cH$ yields
  \[
    \Ex\left[\sum_{A \in \cH} \con(A,Z)^2 \right] \le \Gamma \cdot \left(\frac{p}{\pH}\right)^{2(r-1)(s-1)s}\cdot e(\cH),
  \]
  for some constant $\Gamma$ that depends only on $c$, $r$, $s$, and $K$, as claimed.
\end{proof}

\begin{proof}[Proof of Lemma~\ref{lemma:RS-RC-edges}]
  Let $\cH'$ be the hypergraph obtained from $\cH$ by iteratively removing vertices with degree smaller than $e(\cH) / (2 v(\cH))$. Observe that
  \[
    e(\cH') > e(\cH) - v(\cH) \cdot \frac{e(\cH)}{2v(\cH)} = \frac{e(\cH)}{2}
  \]
  and, consequently, that $\delta(\cH') \ge e(\cH) / (2v(\cH))$. In particular, there are at least
  \[
    e(\cH') \cdot \delta(\cH')^{r-2} \ge 2^{-r} \cdot \frac{e(\cH)^{r-1}}{v(\cH)^{r-2}}
  \]
  sequences $A_1, \dotsc, A_{r-1}$ of edges of $\cH'$ that satisfy $A_1 \cap \dotsb \cap A_{r-1} \neq \emptyset$. If $r=2$, then each such sequence corresponds to $2s$ edges of $\RS$ (there are two colours and $s$ different choices for the centre vertex that makes an edge into a star). Otherwise, if $r \ge 3$, then all but at most
  \[
    O(1) \cdot e(\cH) \cdot \Delta_1(\cH)^{r-3} \cdot \Delta_2(\cH) \ll \frac{e(\cH)^{r-1}}{v(\cH)^{r-2}}
  \]
  of those sequences are orderings of the edges of an $(r-1)$star (whose centre is the unique element of $A_1 \cap \dotsb \cap A_{r-1}$).  Moreover, as $A_1, \dotsc, A_{r-1}$ range over all such sequences and $i$ ranges over $\br{r}$, the sets $E_i(A_1, \dotsc, A_{r-1})$ are distinct edges of $\RS$.

  Similarly, there are at least
  \[
    e(\cH') \cdot \delta(\cH')^{(r-1)s} \ge 2^{-rs} \cdot \frac{e(\cH)^{(r-1)s+1}}{v(\cH)^{(r-1)s}}
  \]
  pairs comprising an edge $A = \{v_1, \dotsc, v_s\}$ of $\cH'$ and a set $\{\bA_1, \dotsc, \bA_s\}$ of sequences of $r-1$ edges of $\cH'$ such that, letting $\bA_j = (A_{j,1}, \dotsc, A_{j,r-1})$, we have $v_j \in A_{j,1} \cap \dotsb \cap A_{j,r-1}$. Moreover, for all but at most
  \[
    O(1) \cdot e(\cH) \cdot \Delta_1(\cH)^{s(r-1)-1} \cdot \Delta_2(\cH) \ll \frac{e(\cH)^{(r-1)s+1}}{v(\cH)^{(r-1)s}}
  \]
  of them, $\bA_1, \dotsc, \bA_s$ are orderings of the edges of $(r-1)$-stars that form a constellation with base edge $A$ and, as $\{\bA_1, \dotsc, \bA_s\}$ and $A$ range over all such pairs and $i$ ranges over~$\br{r}$, the sets $E_i(\bA_1) \cup \dotsb \cup E_i(\bA_s)$ are distinct edges of $\RC$.
\end{proof}

\begin{proof}[Proof of Lemma~\ref{lemma:MS}]
  The lower bound
  \[
    e(\MS) \ge \left(\frac{2^{-r}}{(r-1)!} - o(1)\right) \cdot \frac{e(\cH)^{r-1}}{v(\cH)^{r-2}},
  \]
  is proved analogously to the lower bound on $e(\RS)$ in Lemma~\ref{lemma:RS-RC-edges}. Since an $(r-1)$-star forms a connected hypergraph with $r-1$ edges, we have
  \[
    \Delta_1(\MS) \le \Gamma' \cdot \Delta_1(\cH)^{r-1} \le \Gamma' \cdot \left(K \cdot \frac{e(\cH)}{v(\cH)}\right)^{r-1} \le \Gamma \cdot \frac{e(\MS)}{v(\MS)}
  \]
  and, consequently,
  \[
    e(\MS) \le \Delta_1(\MS) \cdot v(\cH) \le \Gamma \cdot \frac{e(\cH)^{r-1}}{v(\cH)^{r-2}}
  \]
  where $\Gamma$ and $\Gamma'$ are constants that depend only on $r$ and $K$.

  Let $T \subseteq V(\cH)$ be an arbitrary set of size $t \ge 2$. For every sequence $\bt = (t_i)_{i =1}^{r-1}$ satisfying $0 \le t_i \le s-1$ for all $i$, we will bound from above the number of $(r-1)$-stars whose edges intersect $T$ according to $\bt$, that is, the $i$th edge of the star intersects $T$ in $t_i$ vertices (not counting the centre vertex).  Since $\sum t_i = t \ge 2$ and $s \ge 3$, we may assume (by symmetry) that either
  \begin{enumerate}[label=(\roman*)]
  \item
    \label{item:t1-large}
    $2 \le t_1 \le s-1$ or
  \item
    \label{item:t1-small}
    $t_1 = t_i = 1$ for some $i \ge 2$.
  \end{enumerate}
  We may enumerate all stars of the above form as follows:
  \begin{enumerate}[label=(\arabic*)]
  \item
    Choose a labeled partition of $T$ according to the intersection pattern $\bt$; there are at most $(r-1)^t$ such partitions. 
  \item
    Choose the first edge of the star and its centre vertex; there are at most $s \cdot \Delta_{t_1}(\cH)$ choices.
  \item
    Choose the remaining $r-2$ edges of the star; since the centre vertex is already fixed, there are at most $\Delta_{t_i+1}(\cH)$ choices for the $i$th edge.
  \end{enumerate}
  This gives
  \[
    \deg_{\MS}(T) \le O(1) \cdot \max_{\bt} \Big\{ \Delta_{t_1}(\cH) \cdot \prod_{i =2}^{r-1} \Delta_{t_i+1}(\cH) \Big\},
  \]
  where the maximum ranges over all sequences $\bt$ summing to $t$ and satisfying~\ref{item:t1-large} or~\ref{item:t1-small} above.  Since $\cH$ is non-clustered, if~\ref{item:t1-large} holds, then $\Delta_{t_1}(\cH) \ll \pH^{t_1-1} \cdot e(\cH) / v(\cH)$ and if~\ref{item:t1-small} holds, then $\Delta_{t_i+1}(\cH) \ll \pH^{t_i} \cdot e(\cH) / v(\cH)$, as $t_i + 1 = 2 \le s-1$ (we also recall that~\eqref{eq:Delta-t-cH} holds always).  Therefore, we may conclude that
  \[
    \deg_{\MS}(T) \ll O(1) \cdot \max_{\bt} \; \pH^{\sum t_i-1} \cdot \left(\frac{e(\cH)}{v(\cH)}\right)^{r-1} = O\left(\pH^{t-1} \cdot \frac{e(\MS)}{v(\MS)}\right),
  \]
  where the second inequality follows from the lower bound on $e(\MS)$ that we established at the beginning.  This completes the proof of the lemma.
\end{proof}

\begin{proof}[Proof of Lemma~\ref{lemma:RC-degrees}]
  Since a constellation together with its base edge forms a connected hypergraph with $(r-1)s+1$ edges and the number of different rainbow colourings of any given constellation can be bounded by a function of $r$ and $s$ only, there are constants $\Gamma'$ and $\Gamma$ that depend only on $r$, $s$, and $K$ such that
  \[
    \Delta_1(\RC) \le \Gamma' \cdot \Delta_1(\cH)^{(r-1)s+1} = \Gamma' \cdot \left(K \cdot \frac{e(\cH)}{v(\cH)}\right)^{(r-1)s+1} \le \Gamma \cdot \frac{e(\RC)}{v(\RC)},
  \]
  where the last inequality follows from Lemma~\ref{lemma:RS-RC-edges}.
  
  Let $T \subseteq V(\cH) \times \br{r}$ be an arbitrary set of size $t \ge 2$ and let $T_i = T \cap \big(V(\cH) \times \{i\}\big)$. If each of $T_1, \dotsc, T_r$ is nonempty, then $\deg_{\RC} (T) = 0$, so we may assume (by symmetry) that $T_r = \emptyset$. For every sequence
  \[
    \bt = \big(t_{i,j}: i \in \br{r-1}, j \in \br{s}\big)
  \]
  satisfying $0 \le t_{i,j} \le s-1$ for all $i$ and $j$, we will bound from above the number of rainbow constellations whose $r$-rainbow stars intersect the set $T$ according to $\bt$, that is, the edge coloured $i$ of the $j$th star intersects $T_i \subseteq T$ in $t_{i,j}$ vertices.  Since $\sum t_{i,j} = t \ge 2$ and $s \ge 3$, we may assume (by symmetry) that either
  \begin{enumerate}[label=(\roman*)]
  \item
    \label{item:t11-large}
    $2 \le t_{1,1} \le s-1$ or
  \item
    \label{item:t11-small}
    $t_{1,1} = t_{i,j} = 1$ for some $(i,j) \neq (1,1)$.
  \end{enumerate}
  We may enumerate all constellations of the form described above as follows:
  \begin{enumerate}[label=(\arabic*)]
  \item
    Choose a labeled partition of $T_1, \dotsc, T_{r-1}$ according to the intersection pattern $\bt$; there are at most $[(r-1)s]^t$ such partitions.
  \item
    Choose the edge coloured $1$ of the first star; there are at most $\Delta_{t_{1,1}}(\cH)$ such edges.
  \item
    Choose the base edge of the constellation; there are at most $(s-t_{1,1}) \cdot \Delta_1(\cH)$ choices.
  \item
    Choose all the $s(r-1)-1$ remaining edges of all the stars forming the constellation one-by-one; since the base edge is already fixed, there are at most $\Delta_{t_{i,j}+1}(\cH)$ choices for the edge coloured $i$ of the $j$th star.
  \end{enumerate}
  This gives
  \[
    \deg_{\RC}(T) \le O(1) \cdot \Delta_1(\cH) \cdot \max_{\bt} \Big\{ \Delta_{t_{1,1}}(\cH) \cdot \prod_{(i,j) \neq (1,1)} \Delta_{t_{i,j}+1}(\cH) \Big\},
  \]
  where the maximum ranges over all sequences $\bt$ summing to $t$ and satisfying either~\ref{item:t11-large} or~\ref{item:t11-small} above.  Since $\cH$ is non-clustered, if~\ref{item:t11-large} holds, then $\Delta_{t_{1,1}}(\cH) \ll \pH^{t_{1,1}-1} \cdot e(\cH) / v(\cH)$ and if~\ref{item:t11-small} holds, then $\Delta_{t_{i,j}+1}(\cH) \ll \pH^{t_{i,j}} \cdot e(\cH) / v(\cH)$, as $t_{i,j}+1 = 2 \le s-1$ (we also recall that~\eqref{eq:Delta-t-cH} holds always).   Therefore, we may conclude that
  \[
    \deg_{\RC}(T) \ll O(1) \cdot \max_{\bt} \; \pH^{\sum t_{i,j}-1} \cdot \left(\frac{e(\cH)}{v(\cH)}\right)^{(r-1)s+1} = O\left(\pH^{t-1} \cdot \frac{e(\RC)}{v(\RC)}\right),
  \]
  where the last inequality follows from Lemma~\ref{lemma:RS-RC-edges}.  This completes the proof of the lemma.
\end{proof}

\begin{proof}[{Proof of Lemma~\ref{lemma:RS-RC-property-edges}}]
  Suppose that $e\big(\RS[C]\big) \ge \beta \cdot e(\RS)$ and let $W = \pi_1(C)$, where $\pi_1 \colon V(\cH) \times \br{r} \to V(\cH)$ is the projection on the first coordinate.  Define a random colouring $\psi \colon W \to \br{r}$ as follows: For every $v \in W$, let $\psi(v)$ be the uniformly random colour $i \in \br{r}$ such that $(v,i) \in C$; in other words, $\psi(v)$ is the uniformly chosen random element of $\pi_2\big(C \cap (\{v\} \times \br{r})\big)$; here, $\pi_2$ is the projection of $V(\cH) \times \br{r}$ on the second coordinate.  Since $\RS$ is $(r-1)(s-1)$-uniform, we have
  \begin{equation}
    \label{eq:eRSC-random-colouring}
    \Ex\left[e\big(\RS[\psi]\big)\right] \ge r^{-(r-1)(s-1)} \cdot e\big(\RS[C]\big) \ge \beta r^{-(r-1)(s-1)} \cdot e(\RS).
  \end{equation}
  From now on, let $\psi \colon W \to \br{r}$ be an arbitrary colouring for which~\eqref{eq:eRSC-random-colouring} holds without the expectation.

  Since Lemma~\ref{lemma:RS-RC-edges} supplies a positive constant $c$ such that $e(\RS) \ge c \cdot e(\cH)^{r-1} / v(\cH)^{r-2}$, assumption~\ref{item:ass-star-constellation}, see Definition~\ref{dfn:rainbow-stars-constellations}, assures that
  \[
    e\big(\RC[\psi]\big) \ge \eps' \cdot \frac{e(\cH)^{(r-1)s+1}}{v(\cH)^{r-1)s}}
  \]
  for some positive constant $\eps'$ that depends only on $\beta$, $r$, and the sequence $\cH$. Finally, Lemma~\ref{lemma:RC-degrees} supplies a constant $\Gamma'$ that depends only on $r$, $s$, and $K$ such that $\Delta_1(\RC) \le \Gamma' \cdot e(\cH)^{(r-1)s+1} / v(\cH)^{(r-1)s+1}$. Consequently,
  \[
    e\big(\RC[C]\big) \ge e\big(\RC[\psi]\big) \ge \frac{\eps'}{\Gamma'} \cdot v(\cH) \cdot \Delta_1(\RC) = \frac{\eps'}{\Gamma' r} \cdot v(\RC) \cdot \Delta_1(\RC) \ge \frac{\eps'}{\Gamma' r} \cdot e(\RC),
  \]
  which concludes the proof of the lemma.
\end{proof}

\begin{proof}[Proof of Lemma~\ref{lemma:MS-pseudo-variance}]
  We have
  \[
    \begin{split}
      \pVar(\MS[Z]) & = \sum_{\substack{A, B \in \MS \\ A \cap B \neq \emptyset}} p^{|A \cup B|} \le \sum_{A \in \MS} p^{|A|} \sum_{\emptyset \neq T \subseteq A} \sum_{B \cap A = T} p^{|B \setminus A|} \\
      & \le \Ex[e(\MS[Z])] \cdot \sum_{t = 1}^{(r-1)(s-1)} \binom{(r-1)(s-1)}{t} \cdot \Delta_t(\MS) \cdot p^{(r-1)(s-1)-t}.
    \end{split}
  \]
  By Lemma~\ref{lemma:MS}, for some constant $\Gamma'$ that depends only on $r$ and the sequence $\cH$,
  \[
    \begin{split}
      \pVar(\MS[Z]) & \le \Gamma' \cdot \Ex[e(\MS[Z])] \cdot \sum_{t=1}^{(r-1)(s-1)} p^{(r-1)(s-1)-t} \cdot \pH^{t-1} \cdot \frac{e(\MS)}{v(\MS)} \\
      & = \frac{\Gamma' \cdot \Ex[e(\MS[Z])]^2}{p v(\MS)} \cdot \sum_{t=1}^{(r-1)(s-1)} \left(\frac{\pH}{p}\right)^{t-1},
    \end{split}
  \]
  which, by our assumption that $p \ge c \cdot \pH$ implies the claimed bound on $\pVar(\MS[Z])$.
\end{proof}

\subsection{Proof of Theorem~\ref{thm:sparse-rainbow-star-constellation}}
\label{sec:proof-sparse-rainbow-star-constellation}

We wish to show that, with probability close to one, every colouring of $Z \sim V(\cH)_p$ that admits only a small number of rainbow constellations will also admit only a small number of rainbow stars.  To do this, we will apply the container lemma to the hypergraph $\RC$ and conclude that every such colouring is contained in one of $\exp(o(pN))$ subsets of $V(\cH) \times \br{r}$, each of which induces $o(e(\RC))$ rainbow constellations.  By Lemma~\ref{lemma:RS-RC-property-edges}, each such container can only induce $o(e(\RS))$ rainbow stars.  Intuitively, it seems plausible that every colouring of $Z$ residing inside each container should also have a small number of rainbow stars, as we wanted.  In order to show this, however, we appear to need an upper bound on the upper tail of the number of rainbow stars in the intersection of $Z \times \br{r}$ with a given container $C$ that is strong enough to survive the union bound over all containers.  Unfortunately, the upper tail is most likely too heavy to permit such a naive union bound.  We will avert this problem by showing that the overall number of stars in $Z$ is concentrated (for our purposes, a simple second moment argument would do, which is the task of  Lemma~\ref{lemma:MS-pseudo-variance}) and then transform the question of bounding the upper tail of the number of rainbow stars in $C \cap (Z \times \br{r})$ to that of bounding the lower tail of the number of rainbow stars that are not contained in $C \cap (Z \times \br{r})$;  here, Janson's inequality provides an adequate, exponential bound.

\begin{proof}[Proof of Theorem~\ref{thm:sparse-rainbow-star-constellation}]
  For the sake of brevity, we write $V$ in place of $V(\cH)$.  Let $\Gamma \coloneqq \Gamma_{\ref{lemma:RC-degrees}}$, let $\beta \coloneqq \betas/(3\Gamma r!)$, let $\eps$ be the constant supplied by Lemma~\ref{lemma:RS-RC-property-edges} invoked with $\beta_{\ref{lemma:RS-RC-property-edges}} = \beta$,  let $c \coloneqq c_{\ref{lemma:RS-RC-edges}}$, and let $t$ and $\delta$ be the constants from the assertion of the container lemma (Theorem~\ref{thm:containers}) invoked with $k_{\ref{thm:containers}} = (r-1)(s-1)s$, $K_{\ref{thm:containers}} = \Gamma$, and $\eps_{\ref{thm:containers}} = \eps$.  Further, let $T = T(t, r, \beta, \Gamma)$ be sufficiently large so that
  \begin{equation}
    \label{eq:T}
    \left(\frac{2^teT}{t}\right)^{tr/T} \le \exp\left(\frac{\beta^2}{5\Gamma}\right)
  \end{equation}
  and let $\tau \coloneqq p/T$. Finally, let $\betac \coloneqq c\delta \cdot T^{-(r-1)(s-1)s}$ and note that
  \begin{equation}
    \label{eq:betac-delta}
    \frac{\betac}{c} \cdot p^{(r-1)(s-1)s} \le \delta \tau^{(r-1)(s-1)s}.
  \end{equation}

  Since Lemma~\ref{lemma:RC-degrees} implies that, for every $\ell \in \br{(r-1)(s-1)s}$,
  \[
    \Delta_\ell(\RC) \le \Gamma \cdot \tau^{\ell-1}\cdot \frac{e(\RC)}{v(\RC)},
  \]
  Theorem~\ref{thm:containers} supplies a function $f \colon \cP\big(V \times \br{r}\big)^t \to \cP\big(V \times \br{r}\big)$ such that:\footnote{Recall that we view partial $\br{r}$-colourings of $V$ as subsets of $V \times \br{r}$.}
  \begin{enumerate}[label=(\roman*)]
  \item
    \label{item:containers-RC-i}
    For every partial $\br{r}$-colouring $\psi$ with fewer than $\delta \tau^{(r-1)(s-1)s} \cdot e(\RC)$ rainbow constellations, there are $S_1, \dotsc, S_t \subseteq \psi$ with at most $r \tau v(\cH)$ elements each such that $\psi \subseteq f(S_1, \dotsc, S_t)$.
  \item
    \label{item:containers-RC-ii}
    For every $S_1, \dotsc, S_t \subseteq V \times \br{r}$, the set $f(S_1, \dotsc, S_t)$ induces fewer than $\eps e(\RC)$ edges in $\RC$ and thus, by Lemma~\ref{lemma:RS-RC-property-edges}, fewer than $\beta e(\RS)$ edges in $\RS$.
  \end{enumerate}
  
  Suppose now that $Z$ fails to satisfy the assertion of the theorem, that is, there is a partial $\br{r}$-colouring $\psi$ of $Z$ such that
  \[
    e\big(\RS[\psi]\big) \ge \betas \cdot \left(\frac{p}{\pH}\right)^{(r-1)(s-1)} \cdot v(\cH) = \betas \cdot p^{(r-1)(s-1)} \cdot \frac{e(\cH)^{r-1}}{v(\cH)^{r-2}}
  \]
  but, nevertheless,
  \[
    e\big(\RC[\psi]\big) < \betac \cdot \left(\frac{p}{\pH}\right)^{(r-1)(s-1)s} \cdot e(\cH) = \betac \cdot p^{(r-1)(s-1)s} \cdot \frac{e(\cH)^{(r-1)s+1}}{v(\cH)^{(r-1)s}}.
  \]
  Consequently, by Lemma~\ref{lemma:MS},
  \begin{equation}
    \label{eq:eRSpsi-lower}
    e(\RS[\psi]) \ge \frac{\betas}{\Gamma} \cdot p^{(r-1)(s-1)} \cdot e(\MS) = 3\beta \cdot r! \cdot \Ex\left[e\big(\MS[Z]\big)\right]
  \end{equation}
   and, by Lemma~\ref{lemma:RS-RC-edges} and~\eqref{eq:betac-delta},
  \begin{equation}
    \label{eq:eRCpsi-upper}
    e\big(\RC[\psi]\big) < \frac{\betac}{c} \cdot p^{(r-1)(s-1)s} \cdot e(\RC) \le \delta \tau^{(r-1)(s-1)s} \cdot e(\RC).
  \end{equation}
  Property~\ref{item:containers-RC-i} implies that there are sets $S_1, \dotsc, S_t \subseteq V \times \br{r}$ with at most $r\tau v(\cH)$ elements each such that $\pi_1$, the projection onto the first coordinate, maps $S_1 \cup \dotsb \cup S_t$ injectively to $Z$ and $\psi$ is contained in the set $f(S_1, \dotsc, S_t)$. In particular, by~\eqref{eq:eRSpsi-lower},
  \begin{equation}
    \label{eq:eRS-container-lower}
    e\left(\RS\big[f(S_1, \dotsc, S_t) \cap (Z \times \br{r})\big]\right) \ge e(\RS[\psi]) \ge 3\beta \cdot r! \cdot \Ex\left[e\big(\MS[Z]\big)\right].    
  \end{equation}
  On the other hand, property~\ref{item:containers-RC-ii} states that $f(S_1, \dotsc, S_t)$ induces fewer than $\beta e(\RS)$ edges in $\RS$.  We will now show that it is unlikely that this holds for any sequence $S_1, \dotsc, S_t$.

  Let $\cU$ be the event that
  \[
    e\big(\MS[Z]\big) \le (1 + \beta) \cdot \Ex\left[e\big(\MS[Z]\big)\right].
  \]
  It follows from Lemma~\ref{lemma:MS-pseudo-variance} and Markov's inequality that
  \[
    \Pr(\cU^c) \le \frac{4}{\beta^2} \cdot \frac{\Var\big(e(\MS[Z])\big)}{\Ex\big[e(\MS[Z])\big]^2} \le \frac{4\Gamma}{\beta^2 \cdot p v(\cH)} = o(1),
  \]
  as $pv(\cH) \to \infty$, see Fact~\ref{fact:non-clustered-properties}.

  Given a $C \subseteq V \times \br{r}$, define
  \[
    \RS^C \coloneqq \RS \setminus \RS[C]
  \]
  and let $\MS^C$ be the multiset projection of $\RS^C$ onto $V$, so that $e(\MS^C) = e(\RS^C)$. In particular,
  \begin{equation}
    \label{eq:RS-MS-edges}
    \begin{split}
      e\left(\RS\big[C \cap (Z \times \br{r})\big]\right) & = e\left(\RS\big[Z \times \br{r}\big]\right) - e\left(\RS^C\big[Z \times \br{r}\big]\right) \\
      & = r! \cdot e\big(\MS[Z]\big) - e\big(\MS^C[Z]\big),
    \end{split}
  \end{equation}
  where the final equality holds because every $(r-1)$-star admits exactly $r!$ rainbow colourings (and therefore every edge of $\MS[Z]$ corresponds to $r!$ edges of $\RS\big[Z \times \br{r}\big]$).

  We conclude this discussion with the following observation: If $\cU$ holds, then inequality~\eqref{eq:eRS-container-lower} and identity~\eqref{eq:RS-MS-edges} with $C = f(S_1, \dotsc, S_t)$ imply that
  \begin{equation}
    \label{eq:MS-container-bad-event}
    e\big(\MS^{f(S_1, \dotsc, S_t)}[Z]\big) \le \left(1 - 2\beta \right) \cdot r! \cdot \Ex\left[e\big(\MS[Z]\big)\right].
  \end{equation}

  Let $\Seq$ denote the collection of all sequences $(S_1, \dotsc, S_t)$ of $t$ subsets of $V \times \br{r}$ with at most $r \tau v(\cH)$ elements each such that $\pi_1$ restricted to $S_1 \cup \dotsb \cup S_t$ is injective. Given a sequence $\cS = (S_1, \dotsc, S_t) \in \Seq$, let $\cY_{\cS}$ denote the event that $\pi_1(S_1 \cup \dotsb \cup S_t) \subseteq Z$ and let $\cL_{\cS}$ denote the event that~\eqref{eq:MS-container-bad-event} holds; note that $\cY_S$ is increasing (in $Z$) whereas $\cL_{\cS}$ is decreasing. Finally, let $\cF$ denote the event that $Z$ fails the assertion of the theorem. The above discussion and Harris's inequality imply that
  \begin{equation}
    \label{eq:PrcXT-upper}
    \begin{split}
      \Pr(\cF) & \le \Pr(\cU^c) + \sum_{\cS \in \Seq} \Pr(\cL_{\cS} \cap \cY_{\cS}) \le \Pr(\cU^c) + \sum_{\cS \in \Seq} \Pr(\cL_{\cS}) \cdot \Pr(\cY_{\cS}) \\
      & \le \max_{\cS \in \Seq} \Pr(\cL_\cS) \cdot \sum_{\cS \in \Seq} \Pr(\cY_\cS) + o(1).
    \end{split}
  \end{equation}

  \begin{claim}
    \label{claim:Janson-RSC}
    For every $\cS \in \Seq$,
    \[
      \Pr(\cL_\cS) \le \exp\left(- \frac{\beta^2}{2\Gamma} \cdot p v(\cH)\right).
    \]
  \end{claim}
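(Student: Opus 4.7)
The plan is to interpret $\cL_\cS$ as a lower-tail event for the number of edges of the sub-multihypergraph $\MS^{C}$ of $\MS$ (where $C \coloneqq f(S_1, \dotsc, S_t)$) that fall inside $Z$, and then to apply Janson's inequality to that lower tail.

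First, I would exploit property~(ii) of the container function, which guarantees $e(\RS[C]) < \beta e(\RS)$. Combining this with the identity $e(\RS) = r! \cdot e(\MS)$ (each uncoloured $(r-1)$-star admits exactly $r!$ rainbow colourings, which is the same combinatorial fact underlying~\eqref{eq:RS-MS-edges}) gives the deterministic bound
\[
  e(\MS^{C}) \;=\; e(\RS^{C}) \;=\; e(\RS) - e(\RS[C]) \;>\; (1-\beta)\cdot r! \cdot e(\MS).
\]
Since $\MS^{C}$ is $(r-1)(s-1)$-uniform, taking expectations yields
\[
  \Ex\bigl[e(\MS^{C}[Z])\bigr] \;=\; p^{(r-1)(s-1)}\cdot e(\MS^{C}) \;\ge\; (1-\beta)\cdot r!\cdot \Ex\bigl[e(\MS[Z])\bigr],
\]
so the event $\cL_\cS$ forces $e(\MS^{C}[Z])$ to lie at least $\beta\cdot r!\cdot \Ex[e(\MS[Z])]$ below its expectation.

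Next, I would apply Theorem~\ref{thm:Janson} to the family of events $\{B \subseteq Z\}$ indexed by $B \in \MS^{C}$ (counted with multiplicities). Since $\MS^{C}$ is a sub-multihypergraph of $\MS$, dropping events can only remove positive terms from the sum in Definition~\ref{dfn:pseudo-variance}, so the relevant pseudo-variance is at most $\pVar(\MS[Z])$. Lemma~\ref{lemma:MS-pseudo-variance} bounds the latter by a constant multiple of $\Ex[e(\MS[Z])]^2 / (p v(\cH))$, so Janson's inequality gives
\[
  \Pr(\cL_\cS) \;\le\; \exp\!\left(-\frac{\bigl(\beta\cdot r!\cdot \Ex[e(\MS[Z])]\bigr)^{2}}{2 \pVar(\MS^{C}[Z])}\right) \;\le\; \exp\!\left(-\frac{\beta^{2}(r!)^{2}\, p\, v(\cH)}{2\Gamma'}\right)
\]
for some $\Gamma'$ depending only on $r$, $s$, and $\cH$. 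After absorbing $\Gamma'$ and the harmless factor $(r!)^{2}\ge 1$ into the catch-all constant $\Gamma$ (as is already done above when estimating $\Pr(\cU^{c})$), this matches the claimed bound.

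There is no real obstacle here: the monotonicity of pseudo-variance when restricting to a sub-multiset of events is immediate, and the rest is bookkeeping. The conceptual point is just that the deterministic container bound $e(\RS[C]) < \beta e(\RS)$ translates, via the $r!$-to-$1$ projection $\RS \to \MS$, into a strong lower bound on $\Ex[e(\MS^{C}[Z])]$, after which Janson converts the required deviation into an exponentially small probability.
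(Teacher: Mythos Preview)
Your approach is essentially identical to the paper's and the final bound is correct, but one step is misstated. The multihypergraph $\MS^{C}$ is \emph{not} a sub-multihypergraph of $\MS$: since each edge of $\MS$ gives rise to $r!$ edges of $\RS$, its projection back can appear in $\MS^{C}$ with multiplicity up to $r!$, so one only has $\MS^{C}\subseteq r!\cdot\MS$ and hence $\pVar(\MS^{C}[Z])\le (r!)^{2}\,\pVar(\MS[Z])$ (this is exactly what the paper uses). That extra $(r!)^{2}$ in the pseudo-variance cancels the $(r!)^{2}$ in your numerator, giving the claimed $\exp\bigl(-\beta^{2}pv(\cH)/(2\Gamma)\bigr)$; so the bound comes out right, but via cancellation rather than by ``absorbing a harmless factor''.
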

  \begin{proof}[Proof of Claim~\ref{claim:Janson-RSC}]
    Fix an arbitrary sequence $\cS = (S_1, \dotsc, S_t) \in \Seq$ and recall that
    \[
      \begin{split}
        e\big(\MS^{f(S_1, \dotsc, S_t)}\big) & = e\big(\RS^{f(S_1, \dotsc, S_t)}\big) = e(\RS) - e\left(\RS\big[f(S_1, \dotsc, S_t)\big]\right) \\
        & \ge \left(1-\beta\right) \cdot e(\RS) = (1 - \beta) \cdot r! \cdot e(\MS)
      \end{split}
    \]
    or, equivalently,
    \begin{equation}
      \label{eq:MS-container-bad-event-Ex}
      \Ex\left[e\big(\MS^{f(S_1, \dotsc, S_t)}[Z]\big)\right] \ge (1-\beta) \cdot r! \cdot \Ex\left[e\big(\MS[Z]\big)\right].
    \end{equation}
    Let $X \coloneqq e\big(\MS^{f(S_1, \dotsc, S_t)}[Z]\big)$ and let $\mu \coloneqq r! \cdot \Ex\left[e\big(\MS[Z]\big)\right]$, so that~\eqref{eq:MS-container-bad-event} and~\eqref{eq:MS-container-bad-event-Ex} can be rewritten as $X \le (1-2\beta) \mu$ and $\Ex[X] \ge (1-\beta) \mu$, respectively.  It follows from Janson's inequality (Theorem~\ref{thm:Janson}) that
    \[
      \Pr(\cL_\cS) = \Pr\left(X \le (1-2\beta)\mu\right) \le \Pr\left(X \le \Ex[X] - \beta \mu\right) \le \exp\left(-\frac{\beta^2 \mu^2}{2 \pVar\big(\MS^{f(S_1, \dotsc, S_t)}[Z]\big)}\right),
    \]
    where, similarly as in Lemma~\ref{lemma:MS-pseudo-variance}, we write $\pVar(\MS^{f(S_1, \dotsc, S_t)}[Z])$ to denote the pseudo-variance of the sequence of events $(\{A \subseteq Z\})_{A \in \MS^{f(S_1, \dotsc, S_t)}}$.  Since $\MS^{f(S_1, \dotsc, S_t)} \subseteq r! \cdot \MS$, we have (using an analogous notational convention)
    \[
      \pVar(\MS^{f(S_1, \dotsc, S_t)}[Z]) \le \pVar(r! \cdot \MS[Z]) = (r!)^2 \cdot \pVar(\MS[Z])
    \]
    Further, by Lemma~\ref{lemma:MS-pseudo-variance},
    \[
      \pVar(\MS[Z]) \le \Gamma \cdot \frac{\Ex[e(\MS[Z])]^2}{p v(\MS)} = \Gamma \cdot \frac{(\mu/r!)^2}{p v(\cH)}.
    \]
    Substituting this estimate back into the upper bound on $\Pr(\cL_\cS)$ gives the assertion of the claim.
  \end{proof}
  
  Finally, we derive an upper bound on the sum in the right-hand side of~\eqref{eq:PrcXT-upper}. To this end, for every integer $m$, we let
  \[
    \Seq_m \coloneqq \big\{(S_1, \dotsc, S_t) \in \Seq : |S_1 \cup \dotsb \cup S_t| = |\pi_1(S_1 \cup \dotsb \cup S_t)|= m\big\}
  \]
  and note that
  \[
    \Seq = \bigcup_{m=0}^{tr\tau v(\cH)} \Seq_m.
  \]
  It is not hard to see that, for every $m$,
  \[
    |\Seq_m| \le \binom{v(\cH)}{m} \cdot \left(2^t r\right)^m \le \left(\frac{2^terv(\cH)}{m}\right)^m
  \]
  and $\Pr(\cY_\cS) = p^m$ for every $\cS \in \Seq_m$. Consequently, since $\tau \le p$ and, for every positive real $a$, the function $x \mapsto (ea/x)^x$ is increasing on the interval $[0,a]$,
  \[
    \begin{split}
      \sum_{\cS \in \Seq} \Pr(\cY_\cS) & = \sum_{m=0}^{tr\tau v(\cH)} \left(\frac{2^terpv(\cH)}{m}\right)^m \le v(\cH) \cdot \left(\frac{2^tep}{t\tau}\right)^{tr\tau v(\cH)} \\
      & = v(\cH) \cdot \left(\frac{2^teT}{t}\right)^{(tr/T) \cdot pv(\cH)} \by{\eqref{eq:T}}{\le} \exp\left(\frac{\beta^2}{4\Gamma} \cdot pv(\cH)\right).
    \end{split}
  \]
  
  We may finally conclude that 
  \[
    \Pr(\cF) \le o(1) + \exp \left( \frac{\beta^2}{4\Gamma} \cdot pv(\cH) - \frac{\beta^2}{2\Gamma} \cdot pv(\cH)\right) = o(1),
  \]
  where we again used the assumption that $pv(\cH) \to \infty$, see Fact~\ref{fact:non-clustered-properties}.
\end{proof}

\newpage

\section{Applications}
\label{sec:applications}

In this section, we will use our general Theorem~\ref{thm:main} to prove Theorems~\ref{thm:main-Ramsey}, \ref{thm:main-vdW}, and~\ref{thm:main-Schur}.  In the following three subsections, we will verify that Theorem~\ref{thm:main} can be applied to hypergraphs that naturally arise in the context of Ramsey questions for: graphs (Section~\ref{sec:graphs}), arithmetic progressions (Section~\ref{sec:arithm-progr}), and Schur triples (Section~\ref{sec:schurs-theorem}).  In each case, we will verify the list of assumptions of the theorem, which will swiftly award us with a sharp threshold result for the corresponding Ramsey problem.  As a reminder, the assumptions on the hypergraph $\cH$ are:
\begin{enumerate}[label=(A\arabic*)]
\item is symmetric,
\item it is non-clustered, 
\item non-colourability has a threshold at $\pH$,
\item it satisfies $2$-choosability for typical bounded-sized subsets, and
\item it satisfies the rainbow star-constellation property.
\end{enumerate}

The first two assumptions are more technical in nature and will mostly set the framework of the application.  For example, in the arithmetic setting, \ref{item:ass-symmetry} will force us to work in $\ZZ_N$ instead of $\br{N}$ whereas, for graphs, \ref{item:ass-non-clusteredness} is akin to requiring the graph to be strictly $2$-balanced.  Assumption~\ref{item:ass-weak-threshold} was the subject of previous works.  Having said that, in the arithmetic setting of Theorems~\ref{thm:main-vdW} and~\ref{thm:main-Schur}, the $0$-statements implicit in~\ref{item:ass-weak-threshold} are marginally stronger than what was explicitly established by previous works, due to the fact that we are working in modular arithmetic.  Even though these earlier works can be adapted to yield~\ref{item:ass-weak-threshold}, we will not dwell on it and instead establish stronger forms of these $0$-statements that also imply~\ref{item:ass-choosability}, Theorems~\ref{thm:list-vdW} and~\ref{thm:list-Schur}.  These follow from the general Theorem~\ref{thm:choosability-linear-hypergraphs}, which will be proved in Section~\ref{sec:choosability-almost-linear}.  (We also recall that the $1$-statements can be derived using Proposition~\ref{prop:robust-non-col-1-statement} since the hypergraphs in all of our applications are robustly non-colourable.)  As for~\ref{item:ass-star-constellation}, even though it is rather straightforward to verify in the context of Theorem~\ref{thm:main-vdW}, checking it in the remaining two cases is far from easy.  (We recall here that this assumption is not satisfied in all cases of interest, see Appendix~\ref{apx:graph_application}.)  Summarising, the bulk of the work in this section will be spent in verifying the last two assumptions.

\subsection{Graphs}
\label{sec:graphs}

In this section, we will prove Theorem~\ref{thm:main-Ramsey}, which asserts that the property $G_{n,p} \to (H)_r$ has a sharp threshold for certain pairs of $H$ and $r$.  Given a strictly $2$-balanced graph $H$, let $\cH_H$ be the hypergraph of copies of $H$ in $K_n$ whose vertices are the edges of $K_n$ and whose hyperedges are (the edge sets of) all copies of $H$.  It is straightforward to check that $s = e_H$, $v(\cH_H) = \Theta(n^2)$, $e(\cH_H) = \Theta(n^{v_H})$, and $p_{\cH_H} = \Theta(n^{-1/m_2(H)})$.  Moreover, it is routine to verify that $\cH_H$ is non-clustered (as $H$ is strictly $2$-balanced) and that it is symmetric (due to the symmetries of $K_n$).  It was proved by R\"odl and Ruci\'nski~\cite{RodRuc95} that, for each $r \ge 2$, the threshold for non-$r$-colourability of $(\cH_H)_p$ is located at $\pH$.  This, assumptions \ref{item:ass-symmetry}--~\ref{item:ass-weak-threshold} are met.  We now turn to verifying the last two assumptions. 

\subsubsection{Rainbow star-constellation property for graphs}

A rainbow star in the hypergraph $\cH_H$ of copies of a graph $H$ in $K_n$ is comprised of $r-1$ monochromatic copies of $H$ minus some edge (not necessarily the same edge in different copies), each coloured with a different colour, that are glued on that missing edge, called the centre of the star.  A rainbow constellation is a union of edge-disjoint rainbow stars whose centres form a copy of $H$.  We will tacitly assume that all stars and constellations are generic in the sense that the copies of $H$ minus an edge that form them do not share more vertices than necessary: every rainbow star will have $v_S \coloneqq (v_H - 2) \cdot (r-1) + 2$ vertices and every rainbow constellation will have $v_C \coloneqq e_H \cdot (v_H - 2) \cdot (r-1) + v_H$ vertices.

To prove the rainbow star-constellation property in $r$ colours we will need to show that any collection of $\Theta(e(\cH_H)^{r-1} / v(\cH_H)^{r-2}) = \Theta(n^{v_S})$ many $(r-1)$-stars, induces $\Theta(e(\cH_H)^{s(r-1)+1} / v(\cH_H)^{s(r-1)}) = \Theta(n^{v_C})$ many $(r-1)$-constellations. Put differently, if we have a collection achieving the full count of rainbow stars in $K_n$, then it induces the full count of rainbow constellations.

Let $F$ and $G$ be edge-coloured graphs.  A homomorphism from $F$ to $G$ is a function $\varphi \colon V(F) \to V(G)$ such that, for every $uv \in F$, the pair $\{\varphi(u), \varphi(v)\}$ is an edge of $G$ with the same colour as $uv$. We wish to prove the following characterisation for the rainbow star-constellation property.  

\begin{prop}
  \label{prop:graph_rainbow_sc}
  The hypergraph $\cH_H$ of copies of a given graph $H$ in $K_n$ has the rainbow star-constellation property if and only if every rainbow star $S$ of $H$ admits a rainbow constellation $C$ of $H$ with a homomorphism $C \to S$.
\end{prop}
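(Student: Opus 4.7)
The plan is to establish both directions of the equivalence in turn.

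\textbf{Forward direction ($\Rightarrow$, necessity).}  I prove the contrapositive: if there exists a rainbow star template $S$ such that no rainbow constellation template $C$ admits a homomorphism $C \to S$, then the rainbow star-constellation property fails.  For each $n$ divisible by $v_S$, I construct the blow-up coloring of $K_n$: partition $V(K_n)$ into $v_S$ classes of size $n/v_S$ indexed by $V(S)$; for each coloured edge $xy\in E(S)$ of colour $c$, colour every $K_n$-edge between class $x$ and class $y$ with colour $c$; leave every other $K_n$-edge uncoloured.  This coloring has two key properties: (i)~every rainbow structure in $K_n$ projects, via the block map, to a colour-preserving homomorphism into the edge-coloured graph $S$; and (ii)~the identity $S \to S$ lifts to $\Theta(n^{v_S})$ embeddings.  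Hence the blow-up induces $\Omega(n^{v_S})$ rainbow stars but, by hypothesis, zero rainbow constellations, which contradicts the rainbow star-constellation property.

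\textbf{Backward direction ($\Leftarrow$, sufficiency).}  Suppose every rainbow star template $S$ admits a rainbow constellation template $C_S$ with a homomorphism $\varphi_S \colon C_S \to S$.  Given a coloring $\chi$ of $K_n$ with $\Omega(n^{v_S})$ rainbow stars, pigeonhole over the finitely many templates yields a template $S$ realised by $\Omega(n^{v_S})$ copies under $\chi$; set $C = C_S$ and $\varphi = \varphi_S$.  My plan is to extract $\Omega(n^{v_C})$ rainbow constellations of template $C$ by using $\varphi$ to transfer the abundance of $S$-copies.  For each $x \in V(S)$, let $U_x \subseteq V(K_n)$ denote the set of vertices occurring in position $x$ of $\Omega(n^{v_S-1})$ copies of $S$; averaging gives $|U_x| = \Omega(n)$.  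To assemble a copy of $C$, assign to each $u \in V(C)$ a $K_n$-vertex drawn from $U_{\varphi(u)}$, and verify that the resulting coloured configuration is a rainbow constellation---the colour constraints on edges of $C$ translate, via $\varphi$, to constraints on edges of $S$ that are certified by membership in the slots $U_x$.

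The main obstacle is making this extension quantitative.  Independent selections from the $U_{\varphi(u)}$'s do not immediately produce a valid rainbow constellation: compatibility of a coloured edge $uu' \in E(C)$ needs the chosen vertices $v_u \in U_{\varphi(u)}$ and $v_{u'} \in U_{\varphi(u')}$ to come from \emph{correlated} copies of $S$ rather than unrelated ones.  Overcoming this requires a careful double-counting that relates the number of homomorphisms $\mathrm{hom}(C,\chi)$ to $\mathrm{hom}(S,\chi)$ via the blow-up structure encoded by $\varphi$, establishing a lower bound of $\Omega(n^{v_C-v_S})$ valid extensions per $S$-copy.  An auxiliary step uses the non-clusteredness of $\cH_H$ (Section~\ref{sec:non-clusterdness}) to ensure that the bulk of the resulting configurations are \emph{generic}---i.e., have pairwise-disjoint star supports and $v_C$ distinct vertices---so that the degenerate contributions fall off by a factor of $n^{-1}$ and the count $\Omega(n^{v_C})$ survives.
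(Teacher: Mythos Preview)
Your forward direction is correct and matches the paper's argument exactly: blow up the exceptional star $S$ and observe that every rainbow constellation in the blow-up would project to a homomorphism $C\to S$.

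The backward direction, however, has a genuine gap. You correctly diagnose the obstacle---independent selections from the sets $U_{\varphi(u)}$ need not produce correctly-coloured edges---but your proposed remedy is not an argument. The phrase ``careful double-counting \ldots\ establishing $\Omega(n^{v_C-v_S})$ valid extensions per $S$-copy'' is a hope, not a proof, and in fact the intuition is misleading: there is no reason an individual copy of $S$ should extend to many copies of $C$. Your $U_x$ construction discards the correlation information entirely: knowing that $v_u$ sits in many $S$-copies at position $\varphi(u)$ and that $v_{u'}$ sits in many $S$-copies at position $\varphi(u')$ says nothing about the colour of the edge $v_uv_{u'}$, since the two families of $S$-copies may be unrelated.

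The paper closes this gap with a single clean inequality, the blow-up counting lemma (Lemma~\ref{lemma:Hom-blowup}):
\[
  \hom\bigl(S^{(k)},G\bigr)\;\ge\;\hom(S,G)^{\,k\cdot v_S},
\]
proved by blowing up the vertices of $S$ one at a time and applying a convexity argument (the conditional probabilities of extending at the duplicated vertex are i.i.d., so the $k$th-power bound follows). Since the hypothesis $C\to S$ is equivalent to $C\subseteq S^{(v_C)}$, monotonicity of $\hom$ then yields
\[
  \hom(C,G)\;\ge\;\hom\bigl(S^{(v_C)},G\bigr)\;\ge\;\hom(S,G)^{v_C\cdot v_S}\;=\;\Omega(1),
\]
which is exactly the required $\Omega(n^{v_C})$ constellations. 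This is the missing idea. Finally, your appeal to non-clusteredness for genericity is unnecessary: non-injective homomorphisms from a $v_C$-vertex graph into $K_n$ number at most $O(n^{v_C-1})$, so the generic count survives automatically.
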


In order to prove this statement, we will first gather some definitions and tools.  Given edge-coloured graphs $F$ and $G$, let $\Hom(F,G)$ denote the family of all homomorphisms from $F$ to $G$ and let $\hom(F, G)$ denote the density of $\Hom(F,G)$ in $V(G)^{V(F)}$, that is,
\[
  \hom(F, G) \coloneqq \frac{|\Hom(F,G)|}{v_G^{v_F}}.
\]
In other words, $\hom(F, G)$ is the probability that a random function $\varphi \colon V(F) \to V(G)$ belongs to $\Hom(F, G)$.

\begin{obs}
  \label{obs:hom-monotone}
  Let $F$ and $G$ be graphs. If $F' \subseteq F$, then $\hom(F',G) \ge \hom(F,G)$.
\end{obs}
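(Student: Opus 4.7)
The plan is to establish the inequality by a direct probabilistic coupling: view $\hom(F,G)$ as the probability that a uniformly random function $\varphi \colon V(F) \to V(G)$ preserves every coloured edge of $F$, and compare this to the analogous event for $F'$.

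Concretely, I would sample $\varphi \colon V(F) \to V(G)$ uniformly at random and set $\varphi' \coloneqq \varphi|_{V(F')}$. Since the restriction of a uniform function to a subset of its domain is uniform, $\varphi'$ is distributed uniformly on $V(G)^{V(F')}$. Because $F' \subseteq F$, every (coloured) edge of $F'$ is also a (coloured) edge of $F$, so the event $\{\varphi \in \Hom(F,G)\}$ is contained in the event $\{\varphi' \in \Hom(F',G)\}$. Taking probabilities and applying the definition of $\hom$ on both sides yields
\[
  \hom(F,G) \;=\; \Pr(\varphi \in \Hom(F,G)) \;\le\; \Pr(\varphi' \in \Hom(F',G)) \;=\; \hom(F',G),
\]
as desired.

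Alternatively, one can argue by a counting bijection: the restriction map $\Hom(F,G) \to \Hom(F',G)$, $\varphi \mapsto \varphi|_{V(F')}$, is well defined (for the same reason as above), and every element of $\Hom(F',G)$ has at most $v_G^{v_F - v_{F'}}$ preimages (corresponding to arbitrary extensions to $V(F) \setminus V(F')$). Dividing the resulting inequality $|\Hom(F,G)| \le |\Hom(F',G)| \cdot v_G^{v_F - v_{F'}}$ by $v_G^{v_F}$ gives the claim.

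There is no real obstacle here: the statement is a routine monotonicity fact about homomorphism densities and the only point requiring any care is bookkeeping when $V(F') \subsetneq V(F)$, which is handled uniformly by the uniform-restriction argument above.
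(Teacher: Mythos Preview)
Your proposal is correct and matches the paper's own proof: the paper observes that restriction sends $\Hom(F,G)$ into $\Hom(F',G)$, giving $|\Hom(F,G)| \le |\Hom(F',G)| \cdot v_G^{v_F - v_{F'}}$, which is exactly your counting argument (and your probabilistic phrasing is just a renormalised version of the same inequality).
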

\begin{proof}
  Indeed, if $\varphi \in \Hom(F,G)$, then $\varphi|_{V(F')} \in \Hom(F',G)$.  Therefore, $|\Hom(F,G) | \le |\Hom(F',G)| \cdot {v_G}^{v_F - v_{F'}}$ and the result follows.
\end{proof}

For a graph $F$ and a positive integer $k$, the $k$-blowup of $F$, denoted by $F^{(k)}$, is the graph obtained from $F$ by replacing each vertex $a$ of $F$ by an independent set $V_a$ of size $k$ and every edge $ab$ of $F$ by a complete bipartite graph between $V_a$ and $V_b$.

\begin{obs}
  \label{obs:hom-blowup}
  $\Hom(F, G) \neq \emptyset$ if and only if $F \subseteq G^{(v_F)}$.
\end{obs}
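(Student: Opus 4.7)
The plan is to prove both directions by explicitly passing between homomorphisms $F \to G$ and embeddings $F \hookrightarrow G^{(v_F)}$, treating $G^{(v_F)}$ as having vertex set $V(G) \times \br{v_F}$, with $V_a = \{a\} \times \br{v_F}$ for each $a \in V(G)$ and edges between $(a,i)$ and $(b,j)$ exactly when $ab \in E(G)$.

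For the forward direction, fix $\varphi \in \Hom(F, G)$ and an arbitrary enumeration $V(F) = \{u_1, \dotsc, u_{v_F}\}$. I will define $\psi \colon V(F) \to V(G^{(v_F)})$ by $\psi(u_i) = (\varphi(u_i), i)$. Distinct vertices of $F$ receive distinct second coordinates, so $\psi$ is injective. If $u_iu_j \in E(F)$, then $\varphi(u_i)\varphi(u_j) \in E(G)$ (and in particular $\varphi(u_i) \neq \varphi(u_j)$, since $G$ is loopless), which by the definition of the blowup places an edge between $\psi(u_i)$ and $\psi(u_j)$. Hence $\psi$ embeds $F$ into $G^{(v_F)}$.

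For the reverse direction, suppose $\psi \colon V(F) \hookrightarrow V(G^{(v_F)})$ realises $F$ as a subgraph of $G^{(v_F)}$, and let $\pi \colon V(G^{(v_F)}) \to V(G)$ denote the projection onto the first coordinate (equivalently, $\pi$ sends every vertex of $V_a$ to $a$). Set $\varphi \coloneqq \pi \circ \psi$. For every edge $uv$ of $F$, the pair $\psi(u)\psi(v)$ is an edge of $G^{(v_F)}$, which by definition of the blowup forces $\pi(\psi(u))\pi(\psi(v)) \in E(G)$. Therefore $\varphi \in \Hom(F, G)$, completing the equivalence.

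I do not anticipate any real obstacle here: the blowup is engineered precisely so that the first-coordinate projection is a homomorphism and so that enough ``room'' is available ($v_F$ copies of each vertex of $G$) to lift any homomorphism $F \to G$ to an injection. The only subtle point worth flagging is that since $G$ has no loops, a homomorphism automatically sends distinct endpoints of each edge of $F$ to distinct vertices of $G$, which is what makes the lift $\psi(u_i) = (\varphi(u_i), i)$ land on an actual edge of the blowup rather than inside an independent set $V_a$.
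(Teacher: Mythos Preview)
Your proof is correct, and the paper itself offers no proof of this observation (it is stated without argument). One minor remark: in the paper's context $F$ and $G$ are edge-coloured graphs, homomorphisms are required to be colour-preserving, and the blowup $G^{(k)}$ implicitly inherits the colour of $ab$ on every edge between $V_a$ and $V_b$. Your argument goes through verbatim once you note that both the embedding $\psi$ (forward direction) and the projection $\pi$ (reverse direction) preserve edge colours by construction, but it would be worth saying so explicitly.
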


The following lemma should be folklore.

\begin{lemma}
  \label{lemma:Hom-blowup}
  Suppose that $F$ and $G$ are graphs and let $k$ be a positive integer. Then
  \[
    \hom(F^{(k)}, G) \ge \hom(F,G)^{k \cdot v_F}.
  \]
\end{lemma}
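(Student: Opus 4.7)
The plan is to prove the inequality by iterated Cauchy--Schwarz, duplicating vertices of $F$ one at a time. The central input is the following doubling identity: for any graph $H$ with a distinguished vertex $a$, write $H \oplus a'$ for $H$ with an appended twin $a'$ of $a$ (a new vertex with the same neighbourhood as $a$), and $H - a$ for $H$ with $a$ removed. Partitioning $\Hom(H, G)$ according to the restriction to $V(H) \setminus \{a\}$ gives $|\Hom(H, G)| = \sum_\varphi N(\varphi)$ and $|\Hom(H \oplus a', G)| = \sum_\varphi N(\varphi)^2$, where $\varphi$ ranges over $\Hom(H - a, G)$ and $N(\varphi) := \bigl|\bigcap_{b \in N_H(a)} N_G(\varphi(b))\bigr|$. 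The Cauchy--Schwarz inequality $\sum_\varphi N(\varphi)^2 \cdot \sum_\varphi 1 \ge \bigl(\sum_\varphi N(\varphi)\bigr)^2$ then delivers
\[
  |\Hom(H \oplus a', G)| \cdot |\Hom(H - a, G)| \ge |\Hom(H, G)|^2,
\]
which after dividing by the appropriate powers of $v_G$ becomes the density inequality $\hom(H \oplus a', G) \cdot \hom(H - a, G) \ge \hom(H, G)^2$. The same proof with the power-mean inequality $\sum_\varphi N(\varphi)^m \cdot (\sum_\varphi 1)^{m-1} \ge (\sum_\varphi N(\varphi))^m$ yields the $m$-fold version $\hom(H^{(m)}_a, G) \cdot \hom(H - a, G)^{m-1} \ge \hom(H, G)^m$, where $H^{(m)}_a$ denotes $H$ with $a$ replaced by $m$ twin copies.

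Viewing $F^{(k)}$ as obtained from $F$ by blowing up each of its $v_F$ vertices to $k$ twin copies in succession, apply the $m$-fold inequality (with $m = k$) at each vertex in some ordering $a_1, \dotsc, a_{v_F}$. At each step, use the trivial bound $\hom(\cdot, G) \le 1$ on the auxiliary denominator factor $\hom(\cdot - a_i, G)^{k - 1}$ and propagate the resulting lower bound into the next step. Organising the bookkeeping so that the accumulated exponent of $\hom(F, G)$ is precisely $k \cdot v_F$ will yield the claimed inequality.

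The delicate part is arranging the iteration so that the exponent grows only linearly in $k \cdot v_F$: a naive approach that blows up each vertex fully before moving on to the next one multiplies exponents at every step and therefore produces the super-linear $k^{v_F}$ rather than the desired $k \cdot v_F$. Overcoming this subtlety will require either a more balanced ``round-robin'' iteration in which all vertices are blown up a little at a time, or a multi-dimensional refinement of the power-mean inequality that absorbs the factors $\hom(\cdot - a_i, G)$ in such a way that the exponent growth is additive rather than multiplicative. Once the correct accumulation pattern is identified, the conclusion follows immediately.
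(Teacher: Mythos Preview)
Your Cauchy--Schwarz setup and the ``naive'' iteration---blowing up each vertex of $F$ fully to $k$ copies before moving on---is exactly what the paper does: it shows $\hom(F_i, G) \ge \hom(F_{i-1}, G)^k$ at each of the $v_F$ steps, which chains to $\hom(F^{(k)}, G) \ge \hom(F, G)^{k^{v_F}}$.

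The gap in your proposal is the final paragraph. You correctly observe that the naive iteration gives exponent $k^{v_F}$ and then set out to repair it to $k \cdot v_F$ via a round-robin scheme or a refined inequality. But the target is unreachable: the lemma as stated, with exponent $k \cdot v_F$, is \emph{false}. Take $F = K_2$, $k = 3$, and let $G$ be a typical $G(n, 1/2)$; then $\hom(K_2, G) \to 1/2$ while $\hom(F^{(3)}, G) = \hom(K_{3,3}, G) \to 2^{-9}$, and the claimed inequality would read $2^{-9} \ge 2^{-6}$. The exponent $k^{v_F}$ is the correct one, and the paper's own proof actually establishes only this (its line ``It suffices to show\dots'' is a slip, since chaining the displayed inequality $v_F$ times yields exponent $k^{v_F}$, not $k \cdot v_F$). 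For the sole application, in the proof of Proposition~\ref{prop:graph_rainbow_sc}, one only needs that $\hom(S, G) = \Omega(1)$ forces $\hom(S^{(v_C)}, G) = \Omega(1)$, and any constant exponent suffices for that. Your naive iteration is therefore already a complete proof of the corrected lemma; the proposed refinements are chasing a false statement.
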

\begin{proof}
  Let $v = v_F$ and let $u_1, \dotsc, u_v$ be an arbitrary ordering of the vertices of $F$.  For $i \in \{0, \dotsc, v\}$ let $F_i$ denote the graph obtained from $F$ by blowing up vertices $u_1, \dotsc, u_i$ by a factor of $k$, so that $F_0 = F$ and $F_v = F^{(k)}$. It suffices to show that, for every $i \in \br{v}$,
  \[
    \hom(F_i, G) \ge \hom(F_{i-1}, G)^k.
  \]
  Let $\varphi \colon V(F_i) \to V(G)$ be a random function, let $u_{i,1}, \dotsc, u_{i,k}$ be the $k$ copies of $u_i$ in $F_i$, and let $V_i' = V(F_i) \setminus \{u_{i,1}, \dotsc, u_{i,k}\} = V(F_{i-1}) \setminus \{u_i\}$. We have
  \begin{multline*}
    \hom(F_i, G) = \Pr\big(\varphi \in \Hom(F_i, G)\big) = 
    \Pr\big(\varphi|_{V_i'} \in \Hom(F_i[V_i'], G ) \big) \\
    \cdot \Pr\big(\varphi \in \Hom(F_i, G) \mid \varphi|_{V_i'} \in \Hom(F_i[V_i'], G ) \big)
  \end{multline*}
  Since $F_i[V_i'] = F_{i-1}[V_i']$, we have
  \[
    \Pr\big(\varphi|_{V_i'} \in \Hom(F_i[V_i'], G ) \big) = \Pr\big(\varphi|_{V_i'} \in \Hom(F_{i-1}[V_i'], G])\big) =  \hom(F_{i-1}[V_i'],G).
  \]
  Crucially, since $\varphi(u_{i,1}), \dotsc, \varphi(u_{i,k})$ are independent, uniformly random elements of $V(G)$ and $\{u_{i,1}, \dotsc, u_{i,k}\}$ is an independent set in $F_i$,
  \begin{multline*}
    \Pr\big(\varphi \in \Hom(F_i, G) \mid \varphi|_{V_i'} \in \Hom(F_i[V_i'], G ) \big) \\
    = \prod_{j=1}^k\Pr\big(\varphi|_{V_i' \cup \{u_{i,j}\}} \in \Hom(F_i[V_i' \cup \{u_{i,j}\}], G ) \mid \varphi|_{V_i'} \in \Hom(F_i[V_i'], G )\big).
  \end{multline*}
  Finally, since $F_i[V_i' \cup \{u_{i,j}\}] \cong F_{i-1}$, we conclude that, letting $\psi \colon V(F_{i-1}) \to V(G)$ be a uniformly random map,
  \[
    \begin{split}
      \hom(F_i, G) & = \hom(F_{i-1}[V_i'], G) \cdot \Pr\big(\psi \in \Hom(F_{i-1}, G) \mid \psi|_{V_i'} \in \Hom(F_{i-1}[V_i'], G)\big)^k \\
      & \ge \hom(F_{i-1}[V_i'], G)^k \cdot \Pr\big(\psi \in \Hom(F_{i-1}, G) \mid \psi|_{V_i'} \in \Hom(F_{i-1}[V_i'], G)\big)^k \\
      & = \hom(F_{i-1},G)^k,
    \end{split}
  \]
  as claimed.
\end{proof}

\begin{proof}[Proof of Proposition~\ref{prop:graph_rainbow_sc}]
  First, suppose that, for some rainbow star $S$, there is no constellation $C$ such that $C \to S$.  The blowup $S^{(\lfloor n/v_S \rfloor)}$ is an edge-coloured subgraph of $K_n$ with $\Omega(n^{v_S})$ rainbow stars but no rainbow constellations, see Observation~\ref{obs:hom-blowup}.

  For the other direction, suppose that $G$ is an edge-coloured subgraph of $K_n$ that has $\Omega(n^{v_S})$ rainbow stars.  Since there are only $O(1)$ isomorphism types of stars of $H$, there must be some rainbow star $S$ of $H$ such that $\hom(S,G)$ is bounded from below by a positive constant.  Let $C$ be a rainbow constellation satisfying $C \to S$, that is, $C \subseteq S^{(v_C)}$, see Observation~\ref{obs:hom-blowup}. By Observation~\ref{obs:hom-monotone} and Lemma~\ref{lemma:Hom-blowup},
  \[
    \hom(C,G) \ge \hom(S^{(v_C)},G) \ge \hom(S,G)^{v_C \cdot v_S} = \Omega(1).
  \]
  In particular, $G$ contains $\Omega(n^{v_C})$ rainbow constellations.
\end{proof}

We will now derive several sufficient conditions for the rainbow star-constellation property that are easier to verify than the abstract criterion provided by Proposition~\ref{prop:graph_rainbow_sc}.

\begin{cor}
  \label{cor:H-bipartite-rainbow-sc}
  If $H$ is bipartite, then $\cH_H$ has the rainbow star-constellation property.
\end{cor}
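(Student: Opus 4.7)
The plan is to invoke Proposition~\ref{prop:graph_rainbow_sc}, which reduces the claim to exhibiting, for every rainbow star $S$ of $H$, a rainbow constellation $C$ of $H$ together with a colour-preserving homomorphism $C\to S$. Fix such an $S$ centred at the edge $e=\{a,b\}$ with constituent copies $H_j$ (one per colour $j$ in the colour pattern). Since $H$ is bipartite, fix a bipartition $V(H)=L\cup R$ and a distinguished edge $\{a_0,b_0\}\in E(H)$ with $a_0\in L$, $b_0\in R$, and arrange that each $H_j$ is embedded in $S$ so that $a_0\mapsto a$ and $b_0\mapsto b$.

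I will build $C$ using the bipartition to orient every copy consistently. Take a generic copy $H^{*}$ of $H$ with bipartition $L^{*}\cup R^{*}$ inherited from $H$, so that every edge $e_k=\{u_k,v_k\}$ of $H^{*}$ satisfies $u_k\in L^{*}$, $v_k\in R^{*}$. For each such $e_k$ and each colour $j$ in the pattern, attach along $e_k$ a vertex-fresh copy $H_{k,j}$ of $H$ whose embedding sends $a_0\mapsto u_k$ and $b_0\mapsto v_k$, and colour every edge of $H_{k,j}$ other than $e_k$ with colour $j$. By construction $C$ is a rainbow constellation, with base edge $H^{*}$ and stars $\{H_{k,j}\}_j$ at each $e_k$. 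Define $\varphi\colon V(C)\to V(S)$ by sending $L^{*}\to\{a\}$, $R^{*}\to\{b\}$, and identifying each remaining vertex of $H_{k,j}$ with its counterpart in $H_j$ via the common identification of the abstract $H$ that both embeddings use. Every $e_k$ then maps to the (uncoloured) $e$, while every edge of $H_{k,j}\setminus e_k$ (coloured $j$) maps to the correspondingly coloured edge of $H_j\setminus e$, so $\varphi$ is a colour-preserving homomorphism $C\to S$, as required.

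The subtle point is that the identification $H_{k,j}\to H_j$ must realise the \emph{ordered} pair $(u_k,v_k)\mapsto(a,b)$, not merely the unordered edge: for a general graph $H$ this cannot be arranged uniformly across all $k$, since the two endpoints of the centre edge may lie in different orbits of $\Aut(H)$. Bipartiteness of $H$ resolves this precisely, because using the (essentially unique) bipartition to orient every $e_k$ with its $L$-endpoint first makes the choice of a fixed oriented reference edge of $H$ globally consistent across all $k$ and $j$. The same obstruction is what forces the nonbipartite case of Theorem~\ref{thm:main-Ramsey} to rely on the more delicate collapsibility hypothesis rather than this direct bipartite collapse.
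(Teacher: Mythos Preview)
Your approach is essentially the paper's, but there is a gap in the setup. You fix a single distinguished edge $\{a_0,b_0\}$ of $H$ and then claim you may ``arrange that each $H_j$ is embedded in $S$ so that $a_0\mapsto a$ and $b_0\mapsto b$.'' This is not generally possible: $S$ is \emph{given}, and for each colour $j$ the copy $H_j\subseteq S$ has the centre edge $\{a,b\}$ playing the role of \emph{some} edge of $H$, but not necessarily the same edge across different $j$, let alone your fixed $\{a_0,b_0\}$. For instance, if $H=P_4$ and you take $\{a_0,b_0\}$ to be a pendant edge, nothing prevents one of the $H_j$'s from having $\{a,b\}$ correspond to the middle edge of $P_4$; no isomorphism $H\to H_j$ then sends $a_0\to a$. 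Consequently, the ``common identification of the abstract $H$'' you invoke for $\varphi$ need not exist.

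The paper sidesteps this entirely: rather than building each star of $C$ from a fixed edge $\{a_0,b_0\}$ of $H$, it places a copy of $S$ \emph{itself} at every edge of the bipartite base, with the $U$-endpoint sent to $a$ and the $V$-endpoint to $b$. The map $C\to S$ then simply identifies each copy of $S$ with $S$, and bipartiteness of the base ensures these identifications agree on the shared base vertices. Your argument is easily repaired by the same device: replace the $r-1$ copies $H_{k,j}$ attached at $e_k$ by a single copy of $S$ attached at $e_k$ (with $u_k\mapsto a$, $v_k\mapsto b$), and drop $\{a_0,b_0\}$ altogether. Your final paragraph, on the role of bipartiteness in making the collapse $L^{*}\to a$, $R^{*}\to b$ globally consistent, is correct and is exactly the point.
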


\begin{proof}
  We wish to show that every rainbow star $S$ of a bipartite graph admits a rainbow constellation $C$ such that $C \to S$.  Given a star $S$, we will build the required constellation $C$ in the following way.  Begin with a copy of $H$ with partite sets $U$ and $V$.  For all $u \in U$ and $v \in V$ that are adjacent in $H$, place a copy of $S$ centred at $uv$.  It is not hard to check that the function that maps every vertex in each copy of $S$ in $C$ to its corresponding vertex of $S$ is a homomorphism from $C$ to $S$.
\end{proof}

Next, observe that the rainbow star-constellation property for $r$ colours is monotone decreasing in $r$.  However, as it turns out, whenever it holds with $r = 3$, it also holds for all $r$ strictly greater than three.  We will show this by describing another equivalent property that does not mention rainbow structures and instead deals solely with the symmetries of $H$.

\begin{dfn}
  A graph $H$ is \emph{collapsible} if, for every edge $e \in H$ and every vertex $a \in e$, there is an edge $f \in H$ and a homomorphism $H\setminus f \to H \setminus e$ mapping both endpoints of $f$ to $a$.
\end{dfn}

\begin{cor}
  \label{cor:H-collapsible-rainbow-sc}
  Suppose that $r > 2$ and that $H$ is not bipartite. Then $\cH_H$ has the rainbow star-constellation property if and only if $H$ is collapsible.
\end{cor}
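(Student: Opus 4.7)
The plan is to apply Proposition~\ref{prop:graph_rainbow_sc} and reduce both directions to a homomorphism question about a single rainbow star. The central observation I will exploit throughout is that any coloured homomorphism $\varphi\colon C\to S$ from a rainbow constellation $C$ to a rainbow star $S$ with poles $\alpha,\beta$ must send every base vertex of $C$ into $\{\alpha,\beta\}$: a base vertex of $C$ lies on petals of every colour, while the only vertices of $S$ common to petals of more than one colour are the two poles. Consequently $\varphi$ restricted to the base of $C$ is a $2$-colouring of~$V(H)$, and since $H$ is non-bipartite some base edge $e^{\star}=\{a^{\star},b^{\star}\}$ is mapped monochromatically, to either $\alpha$ or~$\beta$.

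For the \emph{if} direction, suppose $H$ is collapsible and fix a rainbow star $S$ with poles $\alpha,\beta$ and petals $P_j=H\setminus f_j$ for $j\in\br{r-1}$; denote by $u_j$ the endpoint of $f_j$ identified with pole~$\alpha$ inside~$P_j$. I would build the rainbow constellation $C$ and the homomorphism $\varphi$ as follows: take a base copy of $H$ in $K_n$, declare $\varphi(v)=\alpha$ for every base vertex~$v$, and for each colour $j$ apply the collapsibility hypothesis to the pair $(f_j,u_j)$ to obtain an edge $f'_j\in H$ together with a homomorphism $\psi_j\colon H\setminus f'_j\to H\setminus f_j$ sending both endpoints of $f'_j$ to~$u_j$. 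For each base edge $e'$, attach a generic copy of $H\setminus f'_j$ at $e'$ as the colour-$j$ petal and extend $\varphi$ on this petal by~$\psi_j$. A routine check shows that the resulting $\varphi$ respects both edges and colours, so Proposition~\ref{prop:graph_rainbow_sc} yields the rainbow star--constellation property.

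For the \emph{only if} direction, fix an edge $e\in H$ and an endpoint $a$ of~$e$; I must produce an edge $f\in H$ and a homomorphism $H\setminus f\to H\setminus e$ sending both endpoints of $f$ to~$a$. The key idea is to construct a rainbow star $S$ with two distinguished petals of shape $H\setminus e$ in \emph{opposite orientations}: the colour-$1$ petal has the endpoint $a$ of $e$ identified with pole~$\alpha$, whereas the colour-$2$ petal has $a$ identified with pole~$\beta$ (the remaining $r-3$ petals, if any, are chosen arbitrarily). By the assumed rainbow star--constellation property and Proposition~\ref{prop:graph_rainbow_sc}, there is a rainbow constellation $C$ with a homomorphism $\varphi\colon C\to S$, and by the first-paragraph observation some base edge $e^{\star}=\{a^{\star},b^{\star}\}$ has $\varphi(a^{\star})=\varphi(b^{\star})\in\{\alpha,\beta\}$. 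If this common image is~$\alpha$, the restriction of $\varphi$ to the colour-$1$ petal at $e^{\star}$ is a homomorphism $H\setminus f\to H\setminus e$ (where $f$ is the missing edge of that petal) sending both endpoints of $f$ to~$a$, because the colour-$1$ orientation pulls $\alpha$ back precisely to the endpoint $a$ of~$e$; if it is~$\beta$, the colour-$2$ petal at $e^{\star}$ yields the same conclusion by the dual convention.

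The main obstacle I anticipate is the bookkeeping of petal orientations: a naive \emph{only if} argument using a rainbow star whose petals are all copies of $H\setminus e$ in one fixed orientation would only certify collapsibility at whichever endpoint of $e$ the unknown constellation happens to monochromatise, not necessarily the prescribed vertex~$a$. Running two orientations in parallel requires $r-1\ge 2$, which is exactly why the corollary demands $r>2$, while non-bipartiteness of $H$ is used solely to guarantee the monochromatic base edge that bridges the abstract homomorphism data to the concrete collapsibility condition.
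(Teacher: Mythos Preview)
Your proposal is correct and follows essentially the same route as the paper's proof: in both directions you invoke Proposition~\ref{prop:graph_rainbow_sc}, use that base vertices of a constellation must be mapped to the two poles of the target star (since they are incident to all $r-1>1$ colours), and then exploit non-bipartiteness to find a monochromatically mapped base edge. Your \emph{if} construction---mapping all base vertices to the pole $\alpha$ and building each petal via the homomorphism supplied by collapsibility---is identical to the paper's, and your \emph{only if} direction uses the same ``two opposite orientations'' trick, the only cosmetic difference being that the paper places colour~$1$ in one orientation and \emph{all} remaining colours in the other, whereas you use colours~$1$ and~$2$ and leave the rest arbitrary.
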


\begin{proof}
  Suppose first that $\cH_H$ has the rainbow star-constellation property for $r>2$.  Let $e=\{a,b\}$ be an edge of $H$.  We will construct a rainbow star $S$ centred at some $uv$ using copies of $H \setminus e$ as follows:  In the first colour, use a copy of $H \setminus e$ so that $u$ plays the role of $a$ and $v$ plays the role of $b$.  In every other colour (the number of colours is $r-1 \ge 2$), we switch the roles -- $u$ plays the role of $b$ and $v$ plays the role of $a$.  By Proposition~\ref{prop:graph_rainbow_sc}, we are guaranteed a rainbow constellation $C$ and a homomorphism $\varphi$ from $C$ to $S$.  Let $W$ be the set of vertices spanned by the centres of stars comprising $C$.  We claim that $\varphi(W) \subseteq \{u,v\}$.  Indeed, every vertex in $W$ is incident to edges of all $r-1 > 1$ colours and every vertex of $S$ other then $u$ and $v$ is incident to edges of solely one colour.  Since $W$ is the vertex set of a copy of $H$ whose every edge is the centre of a star in $C$, the assumption that $H$ is not bipartite implies that $\varphi$ maps both endpoints of at least one such centre to the same vertex (either $u$ or $v$).  Let $S'$ be the star in $C$ whose centere has this property.  The restriction of $\varphi$ to $S'$ (specifically, the first two colours of $S'$) describes two homomorphisms from copies of $H$ minus an edge, one of them mapping both endpoints of the missing edge to $a$, and the other mapping both of them to $b$.

  Suppose now that $H$ is collapsible.  Given a rainbow star $S$, centred at some $uv$, there are edges $a_1b_1, \dotsc, a_{r-1}b_{r-1} \in H$ such that $S$ comprises copies of $H \setminus a_1b_1, \dotsc, H \setminus a_{r-1}b_{r-1}$ glued together so that $a_1, \dotsc, a_{r-1}$ are all mapped to $u$ and $b_1, \dotsc, b_{r-1}$ are all mapped to $v$.  For each $i \in \br{r-1}$, we may find an $f_i \in H$ and a homomorphism $\varphi_i$ from $H \setminus f_i$ to $H \setminus a_ib_i$ that maps both endpoints of $f_i$ to $a_i$. Let $S'$ be a rainbow star comprised of copies of $H \setminus f_1, \dotsc, H \setminus f_{r-1}$ glued together along $f_1, \dotsc, f_{r-1}$.  It is not hard to check that the function $\varphi'$ extending all $\varphi_1, \dotsc, \varphi_{r-1}$ is a homomorphism from $S'$ to $S$ that maps both endpoints of the centre of $S'$ to $u$.  Finally, let $C'$ be a rainbow constellation whose each star is a copy of $S'$.  The function mapping each vertex in each copy of $S'$ according to $\varphi'$ is a homomorphism from $C'$ to $S$ (which maps all vertices spanned by the base of $C'$ to $u$).
\end{proof}

Last, in the case where $r=2$ the notion of rainbow stars and constellations degenerates -- a rainbow star is just a copy of $H$ minus an edge.  As a result, we have fewer restrictions than in the case $r > 2$ and even a weaker version of the collapsibility will be sufficient to imply the star-constellation property.

\begin{dfn}
  \label{dfn:semi-collapsible}
  A graph $H$ is \emph{semi-collapsible} if, for every edge $e \in H$, there is an edge $f \in H$ and a homomorphism $H\setminus f \to H \setminus e$ mapping both endpoints of $f$ to the same vertex.
\end{dfn}

\begin{cor}
  If $r=2$ and $H$ is semi-collapsible, then $\cH_H$ has the star-constellation property.
\end{cor}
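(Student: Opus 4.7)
The plan is to invoke Proposition~\ref{prop:graph_rainbow_sc}: it suffices to exhibit, for every rainbow star $S$ of $H$, a rainbow constellation $C$ together with a colour-preserving homomorphism $C \to S$. Since $r = 2$, the star $S$ is a single copy of $H$ with one designated centre edge $e = \{a,b\}$ and every non-centre edge coloured $1$. Applying semi-collapsibility of $H$ to the edge $e$, I obtain an edge $f = \{x, y\} \in H$, a vertex $c \in V(H)$, and a homomorphism $\varphi_0 \colon H \setminus f \to H \setminus e$ satisfying $\varphi_0(x) = \varphi_0(y) = c$.

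I will then imitate the construction used in the proof of Corollary~\ref{cor:H-collapsible-rainbow-sc}. Start from a base copy $H_0$ of $H$, and for every edge $e' = \{u', v'\}$ of $H_0$ attach a fresh copy of $H$ in which the pair $(u', v')$ plays the role of $(x, y)$, so that $e'$ is the centre of the star on $e'$. Colour every non-centre edge of every attached copy with $1$; this makes each star $1$-rainbow, and the $e_H$ centres, being the edges of $H_0$, form a copy of $H$, so $C$ is a genuine rainbow constellation (pairwise disjoint supports being automatic once the $v_H - 2$ new vertices of every attached copy are chosen fresh). Define $\varphi \colon V(C) \to V(S) = V(H)$ by sending each base vertex of $H_0$ to $c$ and each non-base vertex of the star on $e'$ to $\varphi_0(z)$, where $z \in V(H) \setminus \{x, y\}$ is its preimage under the canonical identification of the attached copy with $H$ (the identification that sends $e' \mapsto f$). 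Consistency of $\varphi$ at base vertices is automatic because $\varphi_0(x) = \varphi_0(y) = c$; colour preservation holds because every coloured edge of $C$ lies in one of the attached copies and corresponds, under the canonical identification, to an edge of $H \setminus f$, which $\varphi_0$ carries to an edge of $H \setminus e$ of colour $1$.

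There is no real technical obstacle here, but one conceptual point is worth flagging: the base edges of $C$---the centres of its stars---are not part of $C$ when viewed as an edge-coloured graph, so $\varphi$ is allowed to collapse the entire base of $C$ to the single vertex $c$, turning those edges into loops. The same licence is used in the proof of Corollary~\ref{cor:H-collapsible-rainbow-sc}, where the whole base of the constructed constellation is collapsed to a single vertex $u$. The reason the proof for $r \ge 3$ requires full collapsibility is that the auxiliary star used there is assembled from $r - 1$ maps $\varphi_1, \dotsc, \varphi_{r-1}$ whose common value on the centre vertices must coincide with a prescribed endpoint of the centre edge of $S$; when $r = 2$ there is only one such map and no compatibility constraint, so the weaker hypothesis of semi-collapsibility already suffices.
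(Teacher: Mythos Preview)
Your proof is correct and follows essentially the same approach as the paper's: both invoke Proposition~\ref{prop:graph_rainbow_sc} and build the required constellation by attaching a copy of $H\setminus f$ at every edge of a base copy of $H$, then map each attached copy to $S=H\setminus e$ via the semi-collapsibility homomorphism $\varphi_0$, which collapses the entire base to the single vertex $c$. Your write-up is simply more explicit about the consistency check at base vertices and about why the centre edges are not part of the coloured constellation.
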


\begin{proof}
  Suppose that $H$ is semi-collapsible and let $H \setminus e$, where $e \in H$, be an arbitrary star of $H$.  We let $C$ be a constellation constructed from copies of $H \setminus f$, for some $f \in H$ such that there exists a homomorphism $\varphi$ from $H \setminus f$ to $H \setminus e$ that maps both endpoints of $f$ to the same vertex.  It is not hard to check that the function that maps each vertex in each copy of $H \setminus f$ in $C$ according to $\varphi$ is a homomorphism from $C$ to $H \setminus e$.
\end{proof}

One can use these corollaries to establish the rainbow star-constellation property for certain families of graphs. We will give two examples.  Recall that a graph is called nearly-bipartite if it can be made bipartite by removing one edge; e.g.\ a cycle.  Together with trees, strictly $2$-balanced nearly-bipartite graphs make the largest family of graphs for which sharpness was previously established (albeit, only when $r = 2$). We will show that nearly-bipartite graphs are collapsible and so their respective hypergraphs $\cH_H$ all have the rainbow star-constellation property for every $r$. Secondly, we will note that cliques are also collapsible, thus giving another application to a natural family on the other end of the spectrum. We should also note that some graphs do not have the rainbow star-constellation property. One such example is the Petersen graph (see Appendix~\ref{apx:graph_application}).

\begin{cor}
  If $H$ is nearly-bipartite and with minimum degree at least two, then $\cH_H$ has the rainbow star-constellation property.
\end{cor}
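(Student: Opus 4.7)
The plan is to reduce to the three preceding corollaries. If $H$ is bipartite, Corollary~\ref{cor:H-bipartite-rainbow-sc} finishes the job immediately. Otherwise, combining Corollary~\ref{cor:H-collapsible-rainbow-sc} with the semi-collapsibility corollary stated just before it, and using that collapsibility trivially implies semi-collapsibility, it suffices to show that every nearly-bipartite, non-bipartite graph $H$ with $\delta(H) \ge 2$ is collapsible.

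To do so, I would fix an edge $e_0 = \{x, y\}$ whose removal makes $H$ bipartite, and let $(X, Y)$ be the resulting bipartition. Since $H$ is non-bipartite, $x$ and $y$ must lie on the same side, which we may take to be $X$; consequently every edge of $H$ other than $e_0$ crosses the bipartition. The idea is then to verify collapsibility uniformly across all pairs $(e, a)$ by always taking $f := e_0$ and constructing the single `folding' homomorphism $\varphi \colon H \setminus e_0 \to H \setminus e$ defined by $\varphi(X) = \{a\}$ and $\varphi(Y) = \{a'\}$, where $a'$ is a suitably chosen neighbour of $a$. Since every edge of $H \setminus e_0$ crosses the bipartition, each one is mapped by $\varphi$ onto the pair $\{a, a'\}$, so it is enough to arrange $\{a, a'\} \in H \setminus e$; this is where the hypothesis $\delta(H) \ge 2$ enters, allowing us to pick $a' \in N_H(a)$ distinct from the other endpoint of $e$. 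Finally $\varphi(x) = \varphi(y) = a$ since $x, y \in X$, fulfilling the collapsibility requirement.

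The only slightly subtle point, and the one I would flag as the main (though minor) obstacle, is that the same uniform construction has to cover the case $a \in Y$, where one is asked to map the `wrong' side of the bipartition (the one containing $x$ and $y$) onto a vertex lying in $Y$. It is tempting to believe that this case requires a different choice of $f$, but nothing actually goes wrong: the image $\{a, a'\}$ of a crossing edge is again a crossing edge of $H$, since $a \in Y$ forces $a' \in N_H(a) \subseteq X$, and hence it belongs to $H \setminus e$ as soon as it differs from $e$. The rest of the verification is routine and amounts to checking that the required neighbour $a'$ exists in each of the configurations $e = e_0$, bipartite $e$ with $a \in X$, and bipartite $e$ with $a \in Y$, which is immediate from $\delta(H) \ge 2$ together with the fact that $e_0$ is the unique within-part edge of $H$.
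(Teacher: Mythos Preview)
Your proof is correct and essentially identical to the paper's: reduce to collapsibility, take $f$ to be the edge whose removal makes $H$ bipartite, and fold the side of the bipartition containing both endpoints of $f$ onto $a$ while sending the other side to a neighbour $a'$ of $a$ with $a' \neq e \setminus \{a\}$ (which exists since $\delta(H) \ge 2$). Your case distinction on whether $a \in X$ or $a \in Y$ is unnecessary---the paper does not distinguish these cases, since the target of the homomorphism need not respect any bipartition---but otherwise the arguments coincide; if anything, you are slightly more careful in explicitly invoking the semi-collapsibility corollary for $r=2$, where Corollary~\ref{cor:H-collapsible-rainbow-sc} does not directly apply.
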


\begin{proof}
  By Corollary~\ref{cor:H-bipartite-rainbow-sc}, we may assume that $H$ is not bipartite.  By Corollary~\ref{cor:H-collapsible-rainbow-sc}, it suffices to show that $H$ is collapsible.  Let $e$ be an edge of $H$ and let $u$ be an endpoint of $e$.  Since $\delta(H) \ge 2$, there is an edge $e' \neq e$ that also contains $u$.  Now take an edge $f$ such that $H \setminus f$ is bipartite.  Consider some bipartition of $H \setminus f$ and let $U$ be the colour class that contains both endpoints of $f$ (we assumed that $H$ is not bipartite).  We map $H \setminus f$ to $H \setminus e$ by sending all the vertices in $U$ to $u$ and by sending the vertices in $V(H) \setminus U$ to the second endpoint of $e'$.
\end{proof}

\begin{cor}
 If $H$ is a clique, then $\cH_H$ has the rainbow star-constellation property.
\end{cor}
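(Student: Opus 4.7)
The plan is to reduce this corollary to the machinery already developed by verifying that every clique is collapsible and then invoking the preceding corollaries. Since $K_2$ is bipartite, Corollary~\ref{cor:H-bipartite-rainbow-sc} handles the trivial case $H = K_2$ directly, and since for any $n$ at most $2$ the associated hypergraph is degenerate, it suffices to treat $H = K_n$ with $n \ge 3$.

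For such $K_n$, I would check collapsibility by an explicit homomorphism. Fix an edge $e = \{a,b\}$ of $K_n$ and an endpoint, say $a$; take $f \coloneqq e$ and define $\varphi \colon V(K_n) \to V(K_n)$ by setting $\varphi(a) = \varphi(b) = a$ and letting $\varphi$ act as the identity on $V(K_n) \setminus \{a,b\}$. To see that $\varphi$ is a homomorphism from $K_n \setminus f$ to $K_n \setminus e$, consider any edge $\{u,v\}$ of $K_n \setminus f$ and do a short case analysis: if both $u, v \in V(K_n) \setminus \{a,b\}$, the edge is fixed by $\varphi$; if exactly one of $u, v$ lies in $\{a,b\}$, then $\varphi(\{u,v\}) = \{a, w\}$ with $w \notin \{a,b\}$, which is an edge of $K_n \setminus e$; and the remaining case $\{u,v\} = \{a,b\} = f$ is excluded since $f \notin K_n \setminus f$. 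Both endpoints of $f$ are sent to $a$ by construction, so $K_n$ is collapsible.

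With collapsibility in hand, Corollary~\ref{cor:H-collapsible-rainbow-sc} yields the rainbow star-constellation property for all $r > 2$ (using that $K_n$ is not bipartite for $n \ge 3$), while the case $r = 2$ follows from the preceding corollary since any collapsible graph is automatically semi-collapsible. Because the construction of $\varphi$ is entirely explicit, I do not foresee any real obstacle; the only point deserving a second look is the edge case $n = 3$, where $V(K_n) \setminus \{a,b\}$ is a singleton, but the case analysis above goes through unchanged.
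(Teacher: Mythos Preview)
Your proposal is correct and takes essentially the same approach as the paper: both verify collapsibility of $K_n$ via the explicit map that sends both endpoints of the chosen edge $e$ to one of them and fixes every other vertex (with $f = e$), and then invoke Corollary~\ref{cor:H-collapsible-rainbow-sc}. You are slightly more careful than the paper in separately addressing the case $r = 2$ via semi-collapsibility and the degenerate case $H = K_2$, whereas the paper's one-line proof glosses over these, but the core argument is identical.
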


\begin{proof}
  This follows from Corollary~\ref{cor:H-collapsible-rainbow-sc} and the fact that a clique is collapsible.  Indeed, given any edge $e$ of the clique and an endpoint $u$ of $e$, the mapping of $K_n \setminus e$ to itself that maps both endpoints of $e$ to $u$ and fixes every other vertex is a homomorphism.
\end{proof}

\subsubsection{List colouring graphs}

We wish to show that, when $p \le n^{-1/m_2(H)}$, a.a.s.\ every constant-sized subgraph of $G_{n,p}$ is \emph{$2$-choosable with respect to $H$}, that is, its edges can be coloured from arbitrary lists of size two without introducing a monochromatic copy of $H$.  Since a.a.s.\ any constant-sized subgraph $G \subseteq G_{n,p}$ satisfies $m(G)\le m_2(H)$, it suffices to prove the following proposition.

\begin{prop}
  \label{prop:graph_choosability}
  Let $H$ be a graph that is not a forest.  If a graph $G$ satisfies $m(G) \le m_2(H)$, then it is $2$-choosable with respect to $H$.
\end{prop}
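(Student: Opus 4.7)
The plan is to proceed by a minimum counterexample argument, after first reducing to the case where $H$ is strictly $2$-balanced. Let $H^\star \subseteq H$ be an inclusion-minimal nonempty subgraph with $m_2(H^\star) = m_2(H)$: then $H^\star$ is strictly $2$-balanced, every monochromatic copy of $H$ contains one of $H^\star$, and the hypothesis $m(G) \le m_2(H) = m_2(H^\star)$ is preserved. Since $H$ is not a forest, neither is $H^\star$, so one may assume $H$ itself is strictly $2$-balanced with $e_H \ge v_H$.

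Suppose for contradiction that a counterexample $G$ exists with $m(G) \le m_2(H)$ and a bad list assignment $L \colon E(G) \to \binom{C}{2}$, chosen to minimise $e(G)$. By minimality every edge $e$ is essential: $G - e$ is $2$-choosable with respect to $H$, so the restriction of $L$ to $G - e$ admits a valid colouring $\chi_e$, yet $\chi_e$ extends to neither choice in $L(e)$. Hence for each colour $c \in L(e)$ there is a copy $H_{e,c}$ of $H$ through $e$ all of whose remaining $e_H - 1$ edges are coloured $c$ under $\chi_e$; in particular every edge of $H_{e,c}$ has $c$ in its list. This yields the key structural property: for every $e \in E(G)$ and every $c \in L(e)$ there is a copy of $H$ through $e$ lying entirely in the ``$c$-subgraph'' $G_c \coloneqq \{ f \in E(G) : c \in L(f)\}$.

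The final step is to combine this structural property with the density hypothesis to produce a subgraph $F \subseteq G$ of density strictly exceeding $m_2(H)$, contradicting $m(G) \le m_2(H)$. Fixing a colour $c$ appearing in some list, every edge of $G_c$ lies in a copy of $H$ inside $G_c$. Starting from a single copy $H_0 \subseteq G_c$, one iteratively adjoins new copies of $H$ through edges that are not yet sufficiently covered; each new copy meets the current subgraph in a proper subgraph $F_0 \subsetneq H$, and strict $2$-balancedness, i.e.\ $m_2(F_0) < m_2(H)$ for every proper $F_0 \subsetneq H$ with $v_{F_0} \ge 3$, provides a strict per-step ``density excess''. The main obstacle is carrying out this accounting precisely: overlaps between new copies and earlier ones must be controlled so that genuinely new edges are added, and the per-step gain, although strictly positive, is small and must be integrated over many iterations to push the growing subgraph's density above $m_2(H)$. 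The non-forest assumption $e_H \ge v_H$ is essential, as already a single copy of $H$ satisfies $e_H/v_H \le m_2(H)$ only because of this inequality, leaving no slack on the initial configuration.
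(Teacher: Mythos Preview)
Your reduction to a strictly $2$-balanced $H$ and the structural property you extract from a minimal counterexample are both correct and match the paper's setup. The problem is the density step at the end: it does not go through.

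When you adjoin a new copy of $H$ to your growing subgraph and the overlap is a \emph{single edge} (two vertices, one edge), the number of new vertices is $v_H-2$ and the number of new edges is $e_H-1$, so the incremental edge-to-vertex ratio is exactly $(e_H-1)/(v_H-2)=m_2(H)$. Strict $2$-balancedness gives a strict gain only when the overlap has at least three vertices; nothing in your structural property forces this. Iterating from a single copy of $H$ with edge-overlaps, the density of the growing subgraph tends to $m_2(H)$ from below and never exceeds it. The case $H=K_3$ already exhibits this: $K_5$ satisfies $m(K_5)=2=m_2(K_3)$, and with all lists equal to $\{\text{red},\text{blue}\}$ every edge of $G_c=K_5$ lies in a triangle of $G_c$, yet your argument produces no subgraph of density strictly above $2$. (The paper in fact has to handle $K_5$ by an ad hoc case analysis precisely because no density contradiction is available.)

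The paper's proof is quite different. After the same reduction, it writes $m_2(H)=k+\eps$ with $k\in\mathbb{Z}$, $\eps\in[0,1)$, and uses a discharging argument on $G$ to locate a small ``sparse spot'' $S$ (a low-degree vertex, or a low-degree edge, or a low-degree path of length two). It then shows, using the Helpful Lemma $\bar e_H(W)>m_2(H)\,|W|$ for $1\le|W|\le v_H-3$, that any $H$-free list colouring of $G\setminus S$ extends over $S$. This local extension argument requires $v_H\ge 5$; the three remaining strictly $2$-balanced graphs $K_3$, $K_4$, $C_4$ are dispatched by separate hands-on arguments. So the gap in your plan is not a matter of bookkeeping---a genuinely different mechanism is needed.
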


Before we prove this, let us state a helpful lemma.  Given a graph $H$ and a set $W \subseteq V(H)$, it will be convenient to denote by $\bar{e}_H(W)$ the number of edges incident with a vertex of $W$, i.e., $\bar{e}_H(W) \coloneqq e_H - e_{H \setminus W}$.

\begin{lemma}[Helpful Lemma]
  Suppose that $H$ is strictly $2$-balanced and suppose that $W \subseteq V(H)$ satisfies $1 \le |W| \le v_H-3$. Then
  \[
    m_2(H)\cdot |W| < \bar{e}_H(W).
  \]
\end{lemma}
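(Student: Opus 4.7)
The plan is to reduce the target inequality to the strict $2$-balancedness of $H$ via a mediant identity applied to $F \coloneqq H \setminus W$. As a preparatory step, I would first record that, since $H$ is strictly $2$-balanced, the maximum in the definition of $m_2(H)$ is attained only at $F = H$, so $m_2(H) = (e_H - 1)/(v_H - 2)$; in particular, we may assume $v_H \ge 4$, since otherwise the hypothesis on $W$ is vacuous. Writing $v_F = v_H - |W|$ and $e_F = e_H - \bar e_H(W)$ and clearing denominators transforms the desired inequality into
\[
  (e_H - 1)(v_H - v_F) < (v_H - 2)(e_H - e_F).
\]

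The key observation is the mediant identity
\[
  (e_F - 1) + (e_H - e_F) = e_H - 1 \qquad \text{and} \qquad (v_F - 2) + (v_H - v_F) = v_H - 2,
\]
combined with $v_F - 2 \ge 1$ (since $|W| \le v_H - 3$) and $v_H - v_F = |W| \ge 1$. By the classical mediant inequality (the equivalence of $a/b < (a+c)/(b+d) < c/d$ with $a/b < c/d$ when $b, d > 0$), the displayed inequality above is equivalent to
\[
  \frac{e_F - 1}{v_F - 2} < \frac{e_H - 1}{v_H - 2}.
\]
If $F$ contains at least one edge, this is precisely $m_2(F) < m_2(H)$, which is what strict $2$-balancedness yields for the proper, nonempty subgraph $F$. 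If $F$ has no edges, the left-hand side is negative whereas the right-hand side equals $m_2(H) \ge 1/2$, so the inequality is immediate.

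I do not expect a substantial obstacle here. The only subtlety is the degenerate case $e_F = 0$ (i.e., $W$ is a vertex cover of $H$), in which strict $2$-balancedness does not apply to $F$ directly; however, the mediant reformulation is unaffected and the inequality is settled by a sign check. In essence, the entire argument reduces to the observation that $(e_H - 1)/(v_H - 2)$ is literally the mediant of the edge densities $(e_F - 1)/(v_F - 2)$ and $(e_H - e_F)/(v_H - v_F)$ of $F$ and its complement in $H$.
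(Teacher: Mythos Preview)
Your proposal is correct and takes essentially the same approach as the paper: apply strict $2$-balancedness to the subgraph $F = H \setminus W$ (which has at least three vertices since $|W| \le v_H - 3$) to obtain $(e_F - 1)/(v_F - 2) < (e_H - 1)/(v_H - 2)$, then clear denominators. Your mediant framing is just a repackaging of this cross-multiplication, and you are slightly more careful than the paper in separating out the degenerate case $e_F = 0$, which the paper's proof leaves implicit.
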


\begin{proof}
  Since $H-W$ is a proper subgraph of $H$ with at least three vertices and $H$ is strictly $2$-balanced,
  \[
    \frac{e_H - 1 - \bar{e}_H(W)}{v_H-2-|W|} = \frac{e_{H-W}-1}{v_{H-W}-2} < m_2(H) = \frac{e_H-1}{v_H-2},
  \]
  which means that $(e_H-1)\cdot |W|<(v_H-2) \cdot \bar{e}_H(W)$ and the result follows. 
\end{proof}

We now turn to proving the proposition. We will have separate arguments for three small graphs---specifically $K_3$, $K_4$, and $C_4$---and then a general argument for every other graph.  We will begin with the general argument and subsequently supply the remaining cases.

\begin{proof}[Proof of Proposition~\ref{prop:graph_choosability} (Part I: The general argument)]
  Without loss of generality, we may assume that $H$ is strictly $2$-balanced.  Indeed, otherwise we replace $H$ with one of its minimal subgraphs $H'$ satisfying $m_2(H') = m_2(H) > 1$.  Since the only strictly $2$-balanced graphs with fewer than five vertices (and a cycle) are $K_3$, $K_4$, and $C_4$, which will require a separate argument, we may further assume that $H$ has at least five vertices.

  Suppose that the statement is false and let $G$ be a minimal counterexample. Write $m_2(H) = k+\eps$, where $k$ is an integer and $\eps \in [0,1)$.  Since $H$ is not a forest, we must have $k \ge 1$.

  The idea of the proof is to use the upper bound on the density of $G$ to locate a sparse subgraph $S \subseteq G$; this will be achieved using a discharging argument.  By the minimality of $G$, we are able to colour $G \setminus S$ without monochromatic copies of $H$ and because $S$ is sufficiently sparse, we will be able to extend each such colouring to all of $G$.

  \begin{claim}
    \label{claim:discharging}
    One of the following holds:
    \begin{enumerate}[label=(\arabic*)]
    \item
      \label{item:discharging-1}
      $G$ has a vertex of degree at most $2k$,
    \item
      \label{item:discharging-2}
      $G$ has a vertex of degree $2k+1$ with a neighbour of degree at most $2k+2$ and $\eps \ge 1/2$, or
    \item
      \label{item:discharging-3}
      $G$ has a vertex of degree $2k+3$ with two neighbours of degree $2k+1$ and $\eps \ge 7/8$.
    \end{enumerate}
\end{claim}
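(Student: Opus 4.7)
The plan is to prove the claim by a discharging argument, assuming towards a contradiction that none of~(1)--(3) holds. The easy case is $\eps < 1/2$: the negation of~(1) gives $\deg(v) \ge 2k+1$ for every vertex $v$, and therefore $2e(G) \ge (2k+1)v(G) > 2(k+\eps)v(G)$, contradicting $m(G) \le m_2(H) = k+\eps$. We may therefore assume $\eps \ge 1/2$. The negation of~(2) then says that every vertex of degree exactly $2k+1$ has all of its neighbours of degree at least $2k+3$, and, if in addition $\eps \ge 7/8$, the negation of~(3) ensures that every vertex of degree $2k+3$ has at most one neighbour of degree $2k+1$.

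I would then assign to each vertex $v$ the initial charge $\mu(v) := \deg(v) - 2(k+\eps)$, so that $\sum_v \mu(v) = 2\bigl(e(G) - (k+\eps)v(G)\bigr) \le 0$. The discharging rule I would use is: every vertex of degree at least $2k+3$ sends the fixed amount $\alpha := (2\eps-1)/(2k+1)$ along each edge to a neighbour of degree exactly $2k+1$. Under this rule, a vertex of degree $2k+1$ receives exactly $(2k+1)\alpha = 2\eps - 1$ (since all of its $2k+1$ neighbours have degree at least $2k+3$) and ends with charge $0$, while a vertex of degree $2k+2$ is untouched and keeps its strictly positive charge $2-2\eps$.

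The main obstacle is controlling the charges of vertices of degree $2k+3$, whose new charge equals $3 - 2\eps - t\alpha$, where $t$ is the number of neighbours of degree $2k+1$. When $\eps < 7/8$, I would use only the trivial bound $t \le 2k+3$; the required inequality $(2k+3)(2\eps-1) < (2k+1)(3-2\eps)$ simplifies to $\eps < 1 - 1/(4k+4)$, which is guaranteed by $\eps < 7/8$ for every $k \ge 1$. When $\eps \ge 7/8$, this is precisely the regime where the naive bound fails, and here the negation of~(3) is essential: it yields $t \le 1$, so the new charge is at least $3 - 2\eps - \alpha$, which a direct check shows to be strictly positive for all $\eps < 1$. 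For a vertex of degree $d \ge 2k+4$, I would bound the charge sent by $d\alpha$ and verify the inequality $d(1-\alpha) > 2k + 2\eps$, which is straightforward for $d \ge 2k+4$ and $\eps < 1$.

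Combining the above, every vertex has non-negative charge after discharging, with strict positivity whenever its degree differs from $2k+1$. Since the total charge is $\le 0$, every vertex of $G$ must have degree exactly $2k+1$, so $G$ is $(2k+1)$-regular. But then any vertex of degree $2k+1$ has a neighbour of degree $2k+1 \le 2k+2$; combined with $\eps \ge 1/2$, this is precisely the situation described by~(2), contradicting our standing assumption. Hence at least one of~(1)--(3) must hold.
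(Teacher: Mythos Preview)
Your proof is correct and follows essentially the same discharging scheme as the paper: identical initial charges $\deg(v)-2(k+\eps)$ and the same transfer of $(2\eps-1)/(2k+1)$ along edges from high-degree vertices to those of degree $2k+1$. The only organizational difference is that you argue by contradiction (assuming all three alternatives fail and deducing that $G$ is $(2k+1)$-regular), whereas the paper argues directly, locating a vertex of degree $2k+3$ with non-positive final charge and reading off $\eps\ge (4k+3)/(4k+4)\ge 7/8$ together with many neighbours of degree $2k+1$; your case split at $\eps=7/8$ is the contrapositive of that computation.
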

\begin{proof}
  Since $\delta(G) \le 2m(G) \le 2m_2(H) = 2k+2\eps < 2k+2$, we have $\delta(G) \le 2k+1$ and, if $\eps < 1/2$, then $\delta(G) \le 2k$.  We may thus assume that $\delta(G) = 2k+1$ and $\eps \ge 1/2$, since otherwise~\ref{item:discharging-1} holds.  We may further assume that all neighbours of every vertex of degree $2k+1$ have degrees at least $2k+3$, since otherwise~\ref{item:discharging-2} holds.

  Assign to each $v \in V(G)$ a charge of $\deg(v) - 2(k+\eps)$. Note that the average charge is at most $2m(G)-2(k+\eps) \le 0$.  We define the following discharging rule: every vertex of degree $2k+1$ takes a charge of $\frac{2\eps-1}{2k+1}$ from each of its neighbours.  By our assumption, no vertex of degree $2k+1$ or $2k+2$ sends charge to any of its neighbours.  In particular, the final charge of a vertex of degree $2k+1$ is
  \[
    2k+1-2(k+\eps)+(2k+1) \cdot \frac{2\eps-1}{2k+1} = 0
  \]
  and the final charge of a vertex of degree $2k+2$ is $2k+2 - 2(k+\eps) > 0$.

  Since the total charge remains unchanged, the final charge of some vertex of degree at least $2k+3$ must be non-positive.  Let $v$ be one such vertex.  Suppose that $\deg(v) = 2k+t$, where $t \ge 3$, and that $v$ has $x$ neighbours with degree $2k+1$.  Since the final charge of $v$ is
  \[
    2k+t - 2(k+\eps) - x \cdot \frac{2\eps - 1}{2k+1} \le 0,
  \]
  we have
  \[
    (t-2)(2k+1) < \frac{t - 2\eps}{2\eps - 1} \cdot (2k+1) \le x \le 2k+t,
  \]
  which implies that $t < 3 + 1/k \le 4$.  Therefore, $t=3$ and $x > 2k+1 \ge 3$, which means that some vertex of degree $2k+3$ has more than three neighbours of degree $2k+1$.  Moreover, we also have $\frac{3-2\eps}{2\eps-1} \cdot (2k+1) \le 2k+3$, which implies that $\eps \ge \frac{4k+3}{4k+4} \ge \frac{7}{8}$.
\end{proof}

We split the argument into three cases, depending on which item in Claim~\ref{claim:discharging} holds.

  \smallskip
  \noindent
  \textit{Case 1. Item~\ref{item:discharging-1} in Claim~\ref{claim:discharging} holds.}
  Since the Helpful Lemma implies that $\deg(a) = \bar{e}_H(\{a\}) > m_2(H) = k+\eps$ for every vertex $a \in V(H)$, we have $\delta(H) \ge k+1$.  Let $v$ be a vertex of smallest degree in $G$ and let $S$ comprise all the edges incident with $v$.  Any colouring of $G \setminus S$ may be extended to $G$ in the following way:  Because all lists have two colours, we may choose colours for the edges of $S$ so that every colour is selected at most $\lceil |S|/2 \rceil = \lceil \delta(G)/2 \rceil = k$ times.  Since $\delta(H) > k$, this means that $v$ cannot belong to a monochromatic copy of $H$.

  \smallskip
  \noindent
  \textit{Case 2. Item~\ref{item:discharging-2} in Claim~\ref{claim:discharging} holds.}
  Let $uv$ be an arbitrary edge of $G$ satisfying $\deg(u) = 2k+1$ and $\deg(v)\le 2k+2$.  We claim that any colouring of $G \setminus uv$ can be extended to $G$.  Suppose that it cannot.  This means that $uv$ completes a monochromatic copy of $H$ in both its colour options.  But this means that, for some $ab \in H$,
  \[
    4k+1 \ge \deg(u) + \deg(v) - 2 = \bar{e}_G(\{u,v\})-1 \ge 2 \cdot \big(\bar{e}_H(\{a,b\}) - 1\big).
  \]
  However, the Helpful Lemma implies that $\bar{e}_H(\{a,b\})> 2m_2(H) = 2k+2\eps  \ge 2k+1$, a contradiction.

  \smallskip
  \noindent
  \textit{Case 3. Item~\ref{item:discharging-3} in Claim~\ref{claim:discharging} holds.}
  Let $u$, $v_1$, and $v_2$ be distinct vertices of $G$ satisfying $uv_1, uv_2 \in G$, $\deg(u) = 2k+3$, and $\deg(v_1) = \deg(v_2) = 2k+1$.  We may assume that $v_1v_2 \notin G$, since otherwise~\ref{item:discharging-2} in Claim~\ref{claim:discharging} holds.  We claim that any proper colouring of $G' \coloneqq G \setminus \{uv_1, uv_2\}$ can be extended to $G$.  To this end, let $G_1 \coloneqq G' \cup \{uv_1\}$ and $G_2 \coloneqq G' \cup \{uv_2\}$.  Since $\deg_{G_i}(u) = 2k+2$ and $\deg_{G_i}(v_i) = 2k+1$ for both $i \in \{1, 2\}$, the argument presented in Case~2 shows that any colouring of $G'$ can be (separately) extended to both $G_1$ and $G_2$.  If some extensions of the colouring of $G'$ to $G_1$ and $G_2$ assign different colours to $uv_1$ and $uv_2$, then their common extension is an $H$-free colouring of $G$.  Therefore, we may assume that there is a colour $j$ such that, for both $i \in \{1, 2\}$, every extension of the colouring of $G'$ to $G_i$ assigns this colour $j$ to $uv_i$.  This means that $uv_i$ completes a copy of $H$ in $G_i$ whose all remaining edges are assigned a colour other than $j$ (the second colour from the list of $uv_i$) in the colouring of $G'$.  In particular, there are vertices $a_1$, $a_2$, and $b_2$ of $H$ such that $a_2b_2 \in H$ and, letting $m$ denote the number of edges of $G'$ incident to $\{u, v_1, v_2\}$ that are not coloured $j$, 
  \[
    m \ge \bar{e}_H(\{a_1\}) - 1 + \bar{e}_H(\{a_2,b_2\}) - 1
  \]
  As before, the Helpful Lemma implies that $\bar{e}_H(\{a_1\}) > k$ and $\bar{e}_H(\{a_2, b_2\}) > 2k+2\eps > 2k+1$, which means that $m \ge 3k+1$.  It follows that the number $m'$ of edges of $G'$ incident to $\{u, v_1, v_2\}$ that are coloured $j$ satisfies
  \[
    m' = \bar{e}_{G'}(\{u,v_1,v_2\}) -m = \deg(u) + \deg(v_1) + \deg(v_2) - 4 - m \le 3k.
  \]
  Assign the colour $j$ to both $uv_1$ and $uv_2$.  If this is not a proper colouring of $G$, then there must be a copy of $H$ in colour $j$ that contains both $uv_1$ and $uv_2$.  This means that there are vertices $a$, $b$, and $c$ of $H$ such that $ab, bc \in H$, $ac \notin H$, and
  \[
    \bar{e}_H(\{a, b, c\}) - 2 \le m' \le 3k.
  \]
  However, since $\eps \ge 1 - 1/8$, we must have $v_H \ge 8+2$ and the Helpful Lemma implies that $\bar{e}_H(\{a,b,c\}) > 3m_2(H) = 3k+3\eps > 3k+2$, a contradiction.
\end{proof}

\begin{proof}[Proof of Proposition~\ref{prop:graph_choosability} (Part II: the small graphs)]

All that remains is to prove the statement for $C_4$, $K_4$, and $K_3$. Assume that $G$ is a minimal counterexample.

\begin{itemize}
\item[($C_4$)]
  Since $m(G)\le m_2(C_4) = 3/2$, then either $\delta(G) < 3$ or $G$ is $3$-regular. However, since $\delta(C_4)=2$, the former leads to a contradiction: we would take a vertex $v$ of $G$ of degree at most two, find a $C_4$-free colouring for $G \setminus \{v\}$, and extend this colouring to $G$ by choosing different colours for the two edges incident with $v$.  The same argument works in the case where $\deg(v)=3$ and the lists of colours for the three edges incident to $v$ are not identical (we can still choose a different colour for each of these edges).  Thus, we may assume that $G$ is $3$-regular and all the colour lists are identical (the minimality of $G$ implies that it is connected) -- they all contain the colours red and blue.

  Let $v$ be an arbitrary vertex of $G$ and let $u_1$, $u_2$, and $u_3$ be its neighbours.  Fix a colouring of $G - v$.  Since the colouring cannot be extended to $G$, every pair of vertices among $\{u_1, u_2, u_3\}$ is connected in $G  - v$ by both a red and a blue path of length two.  (Indeed, if $u_1$ and $u_2$ were not connected by a red path, say, then colouring $vu_1$ and $vu_2$ red and $vu_3$ blue would yield a $C_4$-free colouring.)  However, since $G$ is $3$-regular, each $u_i$ is incident to at most one red and at most one blue edge of $G'$.  This means that there are vertices $v_r, v_b \neq v$ such that $v_r$ is connected to all $u_i$ in red and $v_b$ is connected to all $u_i$ in blue.  Since $G$ is $3$-regular, $\{v,v_r,v_b,u_1,u_2,u_3\}$ is a connected component of $G$; by minimality, $G = G[\{v, v_r, v_b, u_1, u_2, u_2\}] \cong K_{3,3}$.  However, $K_{3,3}$ has many $2$-edge-colourings without a monochromatic $C_4$ (e.g., $K_{3,3}$ can be decomposed into $C_6$ and $3K_2$).

\item[($K_4$)]
  Since $m(G) \le m_2(K_4)=5/2$,  then either $\delta(G) \le 4$ or $G$ is $5$-regular.  Since $\delta(K_4) = 3$, if $G$ has a vertex $v$ of degree at most four or a vertex of degree five whose incident edges have nonidentical colour lists, we may extend any $K_4$-free colouring of $G - v$ to $G$ by colouring edges incident to $v$ in such a way that every colour is used at most twice.  Thus, we may assume that $G$ is $5$-regular and all the colour lists contain the colours red and blue.

  Let $v$ be an arbitrary vertex and fix a colouring of $G - v$.  Since $G - v$ has no red $K_4$'s, there must be a $3$-element subset $T \subseteq N(v)$ that does not induce a red triangle.  Colouring the three edges connecting $v$ to $T$ red and the remaining two edges incident to $v$ blue yields a $K_4$-free colouring of $G$.

\item[($K_3$)]
  Since $m(G) \le m_2(K_3) = 2$, then either $\delta(G) \le 3$ or $G$ is $4$-regular.  Suppose that, for some $v \in V(G)$, there was an orientation of the edges of $G[N(v)]$ in which every vertex had out-degree at most one.  We could then extend every $K_3$-free colouring of $G - v$ to $G$ as follows: For every $u \in N(v)$, the edge $uv$ gets a colour that is different from the colour of the out-edge from $u$.  (Since each triangle involving $v$ contains an edge of $G[N(v)]$, this colouring is $K_3$-free.)  As every graph with at most four edges has such an orientation, we may assume that $e(N(v)) \ge 5$ for every $v \in V(G)$; in particular, $\delta(G) \ge 4$, so $G$ is $4$-regular.

  We claim that $G = K_5$.  If $e(N(v)) > 5$ for some $v \in v(G)$, then $G = K_5$, since $G$ is $4$-regular and connected.  We may thus further assume that $e(N(v)) = 5$ for every $v$.  Pick some $v$ and denote $N(v) = \{u_1, u_2, u_3, u_4\}$ so that $u_1u_3 \notin G[N(v)]$. Since $G$ is $4$-regular, there must be a $w \in V(G) \setminus (\{v\} \cup N(v))$ such that $N(u_1) = \{v, w, u_2, u_4\}$. Moreover, $w$ is not adjacent to either of $v$,  $u_2$, and $u_4$, as they all have $4$ neighbours in $\{v\} \cup N(v)$, and thus $e(N(u_1)) \le 3$, a contradiction.

  Finally, we show how that $K_5$ is $2$-choosable with respect to $K_3$.  If some colour, say red, contains a $5$-cycle, then we may colour this $5$-cycle red and the complementary $5$-cycle not red.  If some colour class, say red, contains an edge, say $e$, not in a triangle, then we may colour $K_5 \setminus e$ without monochromatic triangles (this is possible as $K_5$ is minimally non-$2$-choosable) and colour $e$ red.  If none of the above is true, then each colour induces one of the following graphs: $K_3$, $K_4$, $K_4^-$, $K_5 \setminus K_3$, or two triangles sharing a vertex.  If some colour, say red, induces $K_5 \setminus K_3$, then we colour $K_{2,3}$ with red, the remaining edge of $K_5 \setminus K_3$ with not red and the edges of the $K_3$ in the complement with two different colours other than red.  If one of the colours, say red, induces $K_4$ or $K_4^-$, then colour a $C_4$ with red and its diagonal with a colour other than red.  Each of the remaining, uncoloured four edges can close at most one monochromatic triangle,  as red is not available anywhere outside of the $K_4$ we have already coloured; thus we may colour them one-by-one.  This leaves the case where every colour class is either $K_3$ or two triangles sharing a vertex.  But this is impossible, since $3$ does not divide $2e(G) = 20$. \qedhere
\end{itemize}
\end{proof}

\subsection{Arithmetic progressions}
\label{sec:arithm-progr}

In this section, we prove Theorem~\ref{thm:main-vdW}, which asserts that $\vdW(k,r)$ has a sharp threshold in $(\ZZ_N)_p$.  Even though the most natural setting for van der Waerden's theorem is the interval $\br{N}$, the corresponding hypergraph lacks the required symmetry.  As a result, we have to work in $\ZZ_N$ instead, where transitivity is guaranteed by translations.  (We note that this was also the case in~\cite{FriHanPerSch16}, which proved sharpness of the threshold for van der Waerden's theorem in two colours.)  We will consider the $k$-uniform hypergraph $\cH_{\kAP}$ of proper $k$-term arithmetic progressions in $\ZZ_N$.  It is easily verified that $\cH_{\kAP}$ has $\Theta(N^2)$ edges, and thus $p_{\cH_{\kAP}} = \Theta(N^{-1/(k-1)})$, and that it is non-clustered and symmetric.  The threshold for van der Waerden's theorem in $\br{N}_p$ is known to lie at $N^{-1/(k-1)}$; this was first proved by Graham, R\"odl, and Ruci\'nski in~\cite{GraRodRuc96}, for $3$-APs, and later extended by R\"odl and Ruci\'nski~\cite{RodRuc95,RodRuc97} to general $k$-APs.  The $1$-statement implicit in~\ref{item:ass-weak-threshold} in our setting is an immediate consequence of this, as every $k$-AP in $\br{N}$ is also a $k$-AP in $\ZZ_N$; independently, it can also be recovered using Proposition~\ref{prop:robust-non-col-1-statement} --  robust non-colourability of $\cH_{\kAP}$ follows from van der Waerden's theorem and Varnavides's averaging argument.  The $0$-statement in~\ref{item:ass-weak-threshold} requires extra consideration since $\Z_N$ contains more progressions than $\br{N}$.  The arguments of~\cite{RodRuc95, RodRuc97} can be echoed here, but we instead prove the stronger Theorem~\ref{thm:list-vdW}, which asserts that, when $p < cN^{-1/(k-1)}$ for a sufficiently small constant $c$, then  $(\ZZ_N)_p$ is a.a.s.\ list-$k$-van der Waerden.

This will leave us with verifying the last two assumptions of Theorem~\ref{thm:main}.  The choosability assumption~\ref{item:ass-choosability} is a simple corollary of Theorem~\ref{thm:list-vdW}, see Corollary~\ref{cor:0statement_constant} in Section~\ref{sec:choosability-almost-linear}.  Finally, the rainbow star-constellation property follows readily from Szemer\'edi's theorem and another application of Varnavides's averaging argument.  Indeed, suppose that a partial colouring of $\ZZ_N$ contains $\Omega(N^r)$ rainbow stars.  Then, for some $i \in \br{r}$,  the set $A_i$ of all elements of $\ZZ_N$ that are the centres of $\Omega(N^{r-1})$ many $i$-rainbow stars has $\Omega(N)$ elements.  Therefore, $A_i$ contains $\Omega(N^2)$ many $k$-term APs and each such $k$-AP is the base of $\Omega(N^{(r-1)k})$ rainbow constellations.

\subsection{Schur's theorem}
\label{sec:schurs-theorem}

In this section, we prove Theorem~\ref{thm:main-Schur}, which asserts that the property of being $r$-Schur has a sharp threshold in $(\ZZ_N)_p$.  As in the case of van der Waerden's theorem, we have to work in $\ZZ_N$, as the interval $\br{N}$ lacks the symmetries required by Theorem~\ref{thm:main}.  Define the Schur hypergraph $\cH$ on $\ZZ_N$ whose edges are \emph{Schur triples}, i.e., triples of distinct $x,y,z \in \ZZ_N$ such that $x+y=z$.  This hypergraph has $\Theta(N^2)$ edges, and thus $\pH = \Theta(N^{-1/2})$, and it is easily seen to be non-clustered.  However, it is not symmetric.  Indeed, every automorphism of $\cH$ is of the form $x \mapsto c \cdot x$ for some invertible $c \in \ZZ_N$.  Since $0$ is always mapped to itself, the automorphism group is non-transitive.  However, when $N$ is a prime number, this is the only obstruction.  In other words, the hypergraph $\cH' \coloneqq \cH - \{0\}$ is symmetric.  This is very fortunate because a.a.s.\ $0$ is not in $(\ZZ_N)_p$ when $p = o(1)$, and as a result $\cH_p$ acts very much like $\cH'_p$.  Put succinctly, we have the following observation: 

\begin{obs}
Given $p = o(1)$ and a property $\cP$, the probability of $\cH_p \in \cP$ tends to $0$ if and only if the probability that $\cH'_p \in \cP$ tends to $0$.
\end{obs}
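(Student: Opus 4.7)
The plan is to couple the two random hypergraphs in the obvious way and then show that they differ only on the rare event that $0$ is sampled. Concretely, let $\{X_v\}_{v \in \ZZ_N}$ be independent Bernoulli($p$) random variables, set $V_p \coloneqq \{v : X_v = 1\}$ and $V'_p \coloneqq V_p \setminus \{0\}$, and note that $V'_p$ has the same distribution as $(\ZZ_N \setminus \{0\})_p$.

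On the event $\{X_0 = 0\}$, we have $V_p = V'_p$, and moreover every edge of $\cH_p = \cH[V_p]$ avoids $0$, so $\cH_p = \cH'[V'_p] = \cH'_p$ as hypergraphs. Consequently,
\[
  \Pr(\cH_p \in \cP) = (1-p) \Pr(\cH'_p \in \cP) + p \cdot \Pr(\cH_p \in \cP \mid X_0 = 1),
\]
which, combined with the trivial bound $\Pr(\cH_p \in \cP \mid X_0 = 1) \le 1$, yields
\[
  \bigl|\Pr(\cH_p \in \cP) - \Pr(\cH'_p \in \cP)\bigr| \le 2p = o(1).
\]
Since this error is $o(1)$, the two probabilities tend to $0$ simultaneously, proving the equivalence. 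No substantial obstacle is anticipated; the entire argument rests on the fact that $\Pr(0 \in V_p) = p = o(1)$ makes the distinction between including or excluding the vertex $0$ asymptotically negligible.
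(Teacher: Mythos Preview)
Your proof is correct and follows exactly the idea the paper indicates just before the observation: since $0 \notin (\ZZ_N)_p$ with probability $1-p = 1-o(1)$, the two random hypergraphs coincide on an event of probability $1-o(1)$, so their probabilities of lying in $\cP$ differ by $o(1)$. The paper does not spell out a formal proof, so your coupling argument is precisely what is intended.
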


We aim to use this observation to prove a sharp threshold for $\cH$ by proving it first for the symmetric $\cH'$.  To do so, we will prove that $\cH$, on top of being non-clustered, satisfies assumptions~\ref{item:ass-weak-threshold}--\ref{item:ass-star-constellation}.  These properties will then transfer to $\cH'$ and, together with symmetry, we will be able to argue that it has a sharp threshold.

Regarding~\ref{item:ass-weak-threshold}, Graham, R\"odl, and Ruci\'nski~\cite{GraRodRuc96} located the threshold for Schur's theorem in $\br{N}_p$ at $N^{-1/2}$.  Similarly as in the context of van der Waerden's theorem, since $\ZZ_N$ has more Schur triples than $\br{N}$, this result implies the $1$-statement in~\ref{item:ass-weak-threshold}, but the $0$-statement needs extra work.  Instead of adjusting the arguments of~\cite{GraRodRuc96}, we prove the stronger Theorem~\ref{thm:list-Schur}, which asserts that $N^{-1/2}$ is a threshold for the `stronger' property of being list-Schur.  This solution has the advantage that the choosability assumption~\ref{item:ass-choosability} is a simple corollary of this stronger theorem, see Corollary~\ref{cor:0statement_constant} in Section~\ref{sec:choosability-almost-linear}.  This leaves us with establishing the rainbow star-constellation property for the Schur hypergraph, which we do in the remainder of this section.

\subsubsection{Rainbow star-constellation property for Schur triples}

In this short subsection, we verify that the Schur hypergraph has the rainbow star-constellation property.  We write $X$ instead of $\ZZ_N$, noting that our arguments remain valid if we replace it with an arbitrary Abelian group of order $N$.

It will be convenient to define structures that offer a slight relaxation of the notions of stars and constellations in that their elements may not be distinct:  Given a sequence $Y$ of $t$ subsets of $X$ define a \emph{$Y$-prestar} to be a pair $x, y$ of sequences of $t$ elements of $X$ together with an element $a \in X$, such that $x_i, y_i \in Y_i$ and $x_i$, $y_i$, and $z$ form a sum for every $i \in \br{t}$.  The element $a$ is called the \emph{centre} of the prestar and each pair $x_i, y_i$ is called a \emph{ray}.  Similarly, define a \emph{$Y$-preconstellation} to be a triplet of $Y$-prestars whose centres form a sum.  Since fixing any two coordinates of a $Y$-preconstellation leaves at most $O(N^{3t})$ options for completing it, there are at most $O(N^{3t+1})$ many $Y$-preconstellations that have a repeating coordinate.  In particular, the following statement (with $Y$ being the sequence of some $r-1$ colour classes) implies the rainbow star-constellation property.

\begin{prop}
  \label{prop:rainbow-sc-Schur}
  For every $\beta > 0$, there exists a $\gamma > 0$ such that the following holds.  For any sequence $Y$ of $t$ subsets of $X$, if there are $\beta N^{t+1}$ many $Y$-prestars, then there are $\gamma N^{3t+2}$ many $Y$-preconstellations.
\end{prop}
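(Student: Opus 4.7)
The plan is to translate the statement into an additive-combinatorial counting inequality for a single weight function. Define $f \colon X \to \mathbb{R}_{\ge 0}$ by letting $f(a)$ be the number of $Y$-prestars centred at $a$; writing $r_i(a)$ for the number of pairs $(x,y) \in Y_i^2$ such that $x$, $y$, $a$ form a sum, one has $f(a) = \prod_{i=1}^{t} r_i(a)$ and the trivial bound $r_i(a) \le 3N$. The hypothesis then reads $S \coloneqq \sum_a f(a) \ge \beta N^{t+1}$, while the number of $Y$-preconstellations equals, up to an absolute constant factor counting the three ways three distinct elements can form a sum, $\sum_{a+b=c} f(a) f(b) f(c)$. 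The task thus reduces to showing that this weighted Schur count of $f$ is at least $\gamma N^{3t+2}$ for some $\gamma = \gamma(\beta, t) > 0$.

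My first move is a Plancherel expansion in the dual group $\widehat{X}$:
\[
\sum_{a+b=c} f(a) f(b) f(c) = \frac{1}{N}\sum_{\chi \in \widehat X} \hat{f}(\chi)\, \hat{f}(-\chi)^2.
\]
The contribution of the trivial character is $S^3/N \ge \beta^3 N^{3t+2}$, which already has the correct order of magnitude, so it suffices to show that the sum over non-trivial $\chi$ is dominated by this main term, that is, $\sum_{\chi \ne 0} |\hat{f}(\chi)|^3 \ll S^3$.

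I expect this Fourier estimate to be the main obstacle, because for highly structured inputs (e.g.\ when every $Y_i$ is an arithmetic progression) individual coefficients of $\hat{f}$ at non-trivial characters can be comparable in size to $S = \hat{f}(0)$ itself. To overcome this I plan to exploit the product structure $f = \prod_i r_i$, which on the Fourier side becomes the iterated convolution $\hat{f} = \widehat{r_1} \ast \cdots \ast \widehat{r_t}$ in $\widehat{X}$. Each $r_i$ is itself a sum of self-convolutions $\mathbf{1}_{Y_i} \ast \mathbf{1}_{\pm Y_i}$, so $\widehat{r_i}$ admits explicit $\ell^1$ and $\ell^\infty$ control via Parseval: $\|\widehat{r_i}\|_1 = N |Y_i|$ and $\|\widehat{r_i}\|_\infty \le |Y_i|^2$. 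Iterating Young's convolution inequality should then bound $\max_{\chi \ne 0} |\hat{f}(\chi)|$ by a factor of order $N^{-1}$ times $\hat{f}(0) = S$, which combined with the Parseval identity $\sum_\chi |\hat{f}(\chi)|^2 = N \|f\|_2^2 \le N \cdot (\max f) \cdot S$ should close the estimate.

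Should the pure Fourier bound fall short for very small $\beta$, the backup is a structural dichotomy. Pigeonholing yields a set $A \subseteq X$ of density $\Omega_\beta(1)$ on which every $r_i(a) = \Omega_\beta(N)$, so that $f(a) = \Omega_\beta(N^t)$ uniformly on $A$. One then invokes a Balog--Szemer\'edi--Gowers / Pl\"unnecke--Ruzsa dichotomy: either $A$ has bounded doubling, in which case Freiman's theorem embeds $A$ inside a generalised arithmetic progression where Schur triples of centres can be enumerated explicitly; or the large additive energy of $A$ witnesses a structured subset carrying $\Omega_\beta(N^2)$ Schur triples of centres, each of which contributes $\Omega_\beta(N^{3t})$ to the preconstellation count. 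Either way we recover $\gamma = \gamma(\beta, t) > 0$, completing the proof.
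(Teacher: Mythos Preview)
Both plans have genuine gaps.

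\textbf{Plan A.} The claim that Young's inequality gives $\max_{\chi\neq 0}|\hat f(\chi)|=O(S/N)$ is false: Young bounds $\|\hat f\|_\infty$, which cannot distinguish the trivial character from the others, and the bound you actually get is of order $S$, not $S/N$. Concretely, take every $Y_i$ equal to a subgroup $H$ of index $2$. Then each $r_i$ is a constant multiple of $\mathbf 1_H$, so $f$ is a constant multiple of $\mathbf 1_H$, and for the nontrivial $\chi\in H^\perp$ one has $|\hat f(\chi)|=\hat f(0)=S$. More generally, for an arbitrary nonnegative $f$ the weighted Schur count $\sum_{a+b=c}f(a)f(b)f(c)$ can vanish entirely (take $f=\mathbf 1_A$ for a sum-free $A$), so any argument that lower-bounds it must use the specific structure $f=\prod_i r_i$ in an essential way; your Fourier estimates do not.

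\textbf{Plan B.} Even if you could pigeonhole down to a set $A$ of density $\Omega_\beta(1)$ on which $f$ is uniformly large, there is no reason for $A$ to contain \emph{any} Schur triples: the middle third $\{a:N/3<a<2N/3\}\subseteq\mathbb Z_N$ is sum-free and has density $1/3$. Freiman's theorem and BSG give small doubling and generalised progressions, but a genuine arithmetic progression can itself be sum-free, so neither tool manufactures Schur triples. The ``large additive energy $\Rightarrow$ many Schur triples'' step is simply not a theorem.

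\textbf{What the paper does instead.} The proof is elementary and does not go through the centre function $f$ at all. After fixing a sign pattern, a prestar is encoded by a pair $(x,a)\in X^t\times X$, with the second coordinate $y$ determined by $y_i=a\mp x_i$. The key identity is: if $(x,a)$ and $(x,b)$ are both (popular) prestars \emph{with the same $x$}, then the induced $y$ and $z$ satisfy $y_i-z_i=\pm(a-b)$ for every $i$, so $(y,z,a-b)$ is itself a prestar --- and $b+(a-b)=a$, so the three centres $a$, $b$, $a-b$ automatically form a sum. Two applications of Cauchy--Schwarz (first over $x$, then over $(a,b)$) then show that there are $\Omega_\beta(N^{3t+2})$ such coincidences. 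This sidesteps entirely the question of whether the set of centres is rich in Schur triples; the Schur triple of centres is manufactured, not found.
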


\begin{proof}
  We may always order the elements of the $i^{\text{th}}$ ray of any prestar as $x_i,y_i$ so that one of $x_i \pm y_i$ equals the centre of the prestar.  Define the \emph{sign pattern} of the prestar to be the sequence of signs that $y_i$ appeared with for each $i \in \br{t}$.  If there are $\beta N^{t+1}$ many $Y$-prestars, the pigeonhole principle dictates that there are at least $2^{-t} \beta N^{t+1}$ many $Y$-prestars sharing a specific sign pattern.  Call these prestars the \emph{popular} $Y$-prestars.  Note that every popular $Y$-prestar $x, y$ with centre $a$ is uniquely determined by $x$ and $a$ since knowing the sign pattern allows us to compute $y$.

  Let $f(x, a)$ be the indicator of whether $x \in X^t$ and $a \in X$ determine a popular $Y$-prestar and note that $\sum_{x,a} f(x,a) \ge 2^{-t} \beta N^{t+1}$.  Next, for an $x \in X^t$ and $a,b \in X$, define $f_1(x,a,b) \coloneqq f(x,a) \cdot f(x,b)$.  Using Jensen's inequality we learn that 
  \[
    \frac{1}{N^t} \sum_{x,a,b} f_1(x,a,b) = \frac{1}{N^t}\sum_{x} \left( \sum_{a} f(x,a) \right)^2 
    \ge \left(\frac{1}{N^t} \sum_{x,a} f(x,a) \right)^2 \ge \left(2^{-t}\beta N \right)^2,
  \]
  which implies that $\sum_{x,a,b} f_1(x,a,b) \ge 2^{-2t}\beta^2 N^{t+2}$.

  Now, for $x, x', x'' \in X^t$ and $a, b \in X$, write
  \[
    f_2(x,x',x'',a,b) \coloneqq f_1(x,a,b) \cdot f_1(x',a,b) \cdot f_1(x'',a,b).
  \]
  Using Jensen's inequality again, we get the bound
  \[
    \begin{split}
      \frac{1}{N^2} \sum_{x,x',x'',a,b} f_2(x,x',x'',a,b) & = \frac{1}{N^2} \sum_{a,b} \left(\sum_x f_1(x,a,b) \right)^3 \\
      & \ge \left(\frac{1}{N^2} \sum_{a,b,x} f_1(x,a,b) \right)^3 \ge \left(2^{-2t} \beta^2 N^t \right)^3,
    \end{split}
  \]
  which implies that $\sum_{x,x',x'',a,b} f_2(x,x',x'',a,b) \ge 2^{-6t} \beta^6 N^{3t+2}$.
  
  Now, suppose that $f_2(x,x',x'',a,b) = 1$ for some $x, x', x'' \in X^t$ and $a, b \in X$.  Then $ f(x',a) = f(x'',b) = f(x, a) = f(x,b) = 1$. By definition, this ensures the existence of the following four popular $Y$-prestars: $x, y$ and $x', y'$ centred at $a$ and $x, z$ and $x'', z''$ centred at $b$ (the sequences $y, y', z, z'' \in X^t$ are uniquely determined).  In particular, for every $i \in \br{t}$, either $x_i + y_i =a$ and $x_i + z_i = b$ or $x_i - y_i = a$ and $x_i - z_i = b$; this means that either $y_i - z_i = a-b$ or $z_i - y_i = a -b$. Therefore, $y,z$ is a $Y$-prestar centered at $a-b$ and, consequently, $(x',y',a), (x'',z'',b), (y,z, a-b)$ is a $Y$-preconstellation.

  Note that the function mapping $x,x',x'',a,b$ as above to the $Y$-preconstellation comprising $(x',y',a)$, $(x'',z'',b)$, and $(y,z, a-b)$ is injective.  Indeed, one can reconstruct $x$ given $y$ and $a$, as we know the sign-pattern of the popular prestar $x,y$ centered at $a$.  As a result, we learn that there are at least $2^{-6t}\beta^6 N^{3t+2}$ many $Y$-preconstellations. 
\end{proof}

\subsection{Choosability of almost-linear hypergraphs}
\label{sec:choosability-almost-linear}

In this section, we complete the derivations of Theorems~\ref{thm:main-vdW} and~\ref{thm:main-Schur} by proving Theorem~\ref{thm:choosability-linear-hypergraphs}, which immediately implies Theorems~\ref{thm:list-vdW} and~\ref{thm:list-Schur}.

\choosability*

Even though this is not the exact statement we need for verifying assumption~\ref{item:ass-choosability} in the context of van der Waerden's and Schur's theorem, we will be able to obtain the latter as a straightforward corollary.

\begin{cor}
  \label{cor:0statement_constant}
  Given $s \ge 3$ and a non-clustered $s$-uniform hypergraph $\cH$ with $\Delta_2(\cH) = O(1)$, a sequence $p = \Theta(\pH)$, and any constant $K$, the probability that $\cH_p$ contains a non-$2$-choosable set of size $K$ tends to zero.
\end{cor}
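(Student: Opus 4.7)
The plan is to derive the corollary from Theorem~\ref{thm:choosability-linear-hypergraphs} together with Lemma~\ref{lemma:local-coarseness}, using that $2$-choosability is preserved under restriction of the vertex set. First I would observe the following monotonicity: if $\cH[V']$ is $2$-choosable and $W \subseteq V'$, then $\cH[W]$ is also $2$-choosable. Indeed, given any list assignment $L_W$ on $W$, extend it arbitrarily to a list assignment $L$ on $V'$, invoke $2$-choosability of $\cH[V']$ to obtain a proper colouring $f$ of $\cH[V']$ from $L$, and restrict $f$ to $W$; any monochromatic edge of $\cH[W]$ would already be a monochromatic edge of $\cH[V']$, contradicting properness of $f$. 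In particular, the event that $\cH_p$ contains a non-$2$-choosable subset of size at most $K$ is contained in the event that $\cH_p$ itself is not $2$-choosable.

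Setting $\cF \coloneqq \{W \subseteq V(\cH) : |W| \le K \text{ and } \cH[W] \text{ is not $2$-choosable}\}$ and $\mu(p) \coloneqq \Pr(\exists W \in \cF : W \subseteq V_p)$, the quantity $\mu(p)$ is exactly the probability the corollary seeks to estimate. Since every member of $\cF$ has at most $K$ elements, Lemma~\ref{lemma:local-coarseness} immediately yields $\mu(p) \le c^{-K} \mu(cp)$ for every $c \in (0, 1)$.

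Let $c_0 > 0$ be the constant from Theorem~\ref{thm:choosability-linear-hypergraphs} and define $p^\star \coloneqq c_0 \cdot v(\cH)^{-1/(s-1)}$. The theorem gives $\Pr(\cH_{p^\star} \text{ is $2$-choosable}) \to 1$, which together with the monotonicity observation yields $\mu(p^\star) \to 0$. Double counting with $\Delta_2(\cH) = O(1)$ produces $e(\cH) = O(v(\cH)^2)$, whence $\pH = \Omega(v(\cH)^{-1/(s-1)})$; in the regime of the intended applications---arithmetic progressions and Schur triples---one in fact has $e(\cH) = \Theta(v(\cH)^2)$ and therefore $\pH = \Theta(v(\cH)^{-1/(s-1)})$, so $p = \Theta(p^\star)$. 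When $p \le p^\star$ the claim follows directly; otherwise, choosing $c \coloneqq p^\star / p \in (0, 1)$, which is bounded below by a positive constant, the local coarseness lemma yields $\mu(p) \le c^{-K} \mu(p^\star) = O(1) \cdot o(1) \to 0$.

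The only delicate point is the comparison between the two density scales $\pH$ and $v(\cH)^{-1/(s-1)}$: they agree up to multiplicative constants precisely when $e(\cH) = \Theta(v(\cH)^2)$, which is exactly the case in the applications the corollary is invoked for. In every situation that the rest of the paper relies upon, the gap between $p = \Theta(\pH)$ and the bound supplied by Theorem~\ref{thm:choosability-linear-hypergraphs} is only a multiplicative constant, which is precisely what Lemma~\ref{lemma:local-coarseness} is purpose-built to absorb.
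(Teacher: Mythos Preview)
Your proof is correct and follows essentially the same approach as the paper: invoke Theorem~\ref{thm:choosability-linear-hypergraphs} at the scale $c\cdot v(\cH)^{-1/(s-1)}$, then use Lemma~\ref{lemma:local-coarseness} to move to any $p=\Theta(\pH)$. You are in fact more explicit than the paper on two points the paper leaves implicit: the monotonicity of $2$-choosability under restriction (needed to pass from ``$\cH[V_p]$ is $2$-choosable'' to ``$\mu(p)=o(1)$''), and the comparison of the two density scales $\pH$ and $v(\cH)^{-1/(s-1)}$, which indeed coincide up to constants precisely when $e(\cH)=\Theta(v(\cH)^2)$, as in all the applications.
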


\begin{proof}
  Let $\cF$ be the family of all non-$2$-choosable sets with at most $K$ elements and let $\mu(p) \coloneqq \Pr(\exists B \in \cF \colon B \subseteq V(\cH)_p)$.  Theorem~\ref{thm:choosability-linear-hypergraphs} tells us that there is some $p = \Theta(\pH)$ for which $\mu(p) = o(1)$.  By monotonicity, $\mu(p') = o(1)$ also for any $p' < p$.  Now, for any constant $C > 1$ we can use Lemma~\ref{lemma:local-coarseness} to bound $\mu(p) \ge C^{-K} \mu(Cp)$, so $\mu(Cp) = o(1)$ as well.
\end{proof}

We will describe a process for revealing connected subsets of the vertices of an $s$-uniform hypergraph $\cH$ by layers.  Let $N \coloneqq v(\cH)$ and identify the vertices of $\cH$ with the set $\br{N}$.  This labeling induces a total ordering of the vertices and also of the edges of $\cH$, via the lexicographic ordering. Given a subset $S$ of the vertices of $\cH$ which induces a connected subhypergraph of $\cH$ we define the following procedure.

\medskip
\begin{algorithm}[H]
  Let $v_1$ be the smallest vertex in $S$ and write $S' = \{v_1\}$.
  
  \For{$i = 1, 2, \dots$}{
    \tcp{Degenerate steps}
    \While{ there exists an edge $e \in \cH[S]$ such that $2 \le |e \cap S'| \le s-1$} { Pick the smallest such edge $e$ and let $S' \gets S' \cup e$.}
    
    \tcp{Finish the layer}
    Write $S_i = S'$.
    
    \eIf{$S_i = S$}{
      Terminate.
    }{
      \tcp{Start a new layer}
      Take the smallest vertex $v_i \in S_i$ which has an edge $e \in \cH[S]$ that intersects $S_i$ exactly in $v_i$.
      
      Pick the smallest such edge $e_i$ and let $S' \gets S_i \cup e_i$.
    }
  }
\end{algorithm}
\medskip

Observe that connectivity of $S$ ensures that the process will indeed terminate. We say that a vertex in $S$ is \emph{degenerate} if it was added in a degenerate step. Let $d(S)$ denote the number $d$ such that $S = S_d$.

We will apply this procedure to sets $S$ that are minimally non-$2$-choosable.  (Note that minimality implies that these sets are connected.)  The proof of Theorem~\ref{thm:choosability-linear-hypergraphs} has two steps.  First, using a deterministic argument, we will show that each minimally non-$2$-choosable set must either have at least $s-1$ degenerate elements or contain a structure which we will call a clot.  Second, we will see that, when $\cH$ satisfies the assumptions of the theorem, connected sets containing either $s-1$ degenerate elements or a clot are too rare to appear in $V(\cH)_p$.

\begin{dfn}
  A set $A$ of $2s-3$ vertices in an $s$-uniform hypergraph is a \emph{nucleus} if, for every $(s-1)$-element subset $A' \subseteq A$, there are two distinct vertices $v_1(A'), v_2(A') \notin A$ such that $A' \cup \{v_i(A')\}$ is an edge for both $i \in \{1,2\}$.  A \emph{clot} around a nucleus $A$ is the union of $A$ together with all the vertices $v_i(A')$.
\end{dfn}

\begin{lemma}
  \label{lemma:non-2-choosable-obstructions}
  If $s \ge 3$, then every minimally non-$2$-choosable set of vertices of an $s$-uniform hypergraph contains either at least $s-1$ degenerate elements or a clot.
\end{lemma}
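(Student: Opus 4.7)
The strategy is to run the layering algorithm on a minimally non-$2$-choosable set $S$ and show that, if $S$ contains fewer than $s-1$ degenerate vertices, the minimality hypothesis forces $2s-3$ vertices of $S$ into a sufficiently dense configuration to form a nucleus. First I would observe that $S$ is connected: otherwise each component, being a proper subset of $S$, admits a proper list coloring by minimality, and combining these produces a proper list coloring of $S$, contradicting non-$2$-choosability. Hence the layering algorithm is well-defined on $S$ and produces a chain $S_1 \subsetneq S_2 \subsetneq \cdots \subsetneq S_d = S$.

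Next I would extract a double-witness at every vertex from minimality. For any $v \in S$ there is a proper list coloring $c_v$ of $\cH[S \setminus \{v\}]$ that cannot be extended to $v$, so for each of the two colors $i \in L(v)$ some edge $e^v_i \ni v$ has all of $e^v_i \setminus \{v\}$ colored $i$ under $c_v$; since $L(v)$ contains two distinct colors, the edges $e^v_1, e^v_2$ are distinct. In particular every vertex of $S$ has degree at least $2$ in $\cH[S]$, which gives a linear lower bound $|E(\cH[S])| \ge 2|S|/s$ on the total number of edges.

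Now suppose $S$ contains at most $s-2$ degenerate vertices and no clot. Writing $\delta$ for the number of degenerate vertices, the layering reveals $d-1$ opening edges and at most $\delta \le s-2$ degenerate edges, whereas the vertex count identity $|S| = 1 + (s-1)(d-1) + \delta$ combined with the lower bound from the previous step forces many edges of $\cH[S]$ to be \emph{missed} by the algorithm. A missed edge is entirely contained in some $S_i$ at the moment of its insertion and fails to be picked up in a degenerate step only if it meets $S'$ in $\ge s$ vertices when first eligible, so it must overlap the opening edge of its layer in a highly constrained way. Pigeonholing the missed edges against the small set of layer openings should pin down $2s-3$ vertices $A$ inside the last few layers such that every $(s-1)$-subset of $A$ is covered by at least two distinct edges of $\cH[S]$; the outside endpoints of these edges play the roles of $v_1(A')$ and $v_2(A')$ and produce the sought-after clot.

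The main obstacle will lie in the bookkeeping of this last step: ensuring that for every $(s-1)$-subset $A' \subseteq A$ the two witness edges are genuinely distinct and their outside vertices $v_1(A'), v_2(A')$ are both distinct from each other and lie outside $A$. The assumption $s \ge 3$ enters critically here, since it guarantees $|A| = 2s-3 \ge 3$ and $|A'| = s-1 \ge 2$, so the witness-edge pairs supplied by the minimality argument can overlap $A$ nontrivially without collapsing into a single edge. I expect to handle this by isolating the last one or two layers and using the scarcity of degenerate vertices to rule out unwanted coincidences between the layer-opening edges and the minimality witnesses $e^v_1, e^v_2$.
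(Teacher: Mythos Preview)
Your proposal has a genuine gap in the final step, and the overall route is more roundabout than necessary. The counting/pigeonhole argument you sketch (``pigeonholing the missed edges against the small set of layer openings should pin down $2s-3$ vertices $A$ \dots'') is not close to establishing the nucleus property: a nucleus requires that \emph{every} $(s-1)$-subset of $A$ extends to two distinct edges with apex outside $A$, which is $\binom{2s-3}{s-1}$ separate conditions, and your edge count $|E(\cH[S])|\ge 2|S|/s$ gives no control over \emph{where} the extra edges sit or how they cover these subsets. Your per-vertex witnesses $e^v_1,e^v_2$ are likewise unanchored to any particular set of $2s-3$ vertices.

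The idea you are missing is to apply minimality not vertex-by-vertex but once, to $S_{d-1}$. Since $S_{d-1}\subsetneq S$, it is $2$-choosable, so fix any list assignment witnessing that $S$ is not $2$-choosable and properly colour $\cH[S_{d-1}]$ from those lists. Now set $A\coloneqq S\setminus S_{d-1}$ and observe two things. First, $|A|\le 2s-3$: the opening edge $e_{d-1}$ contributes $s-1$ new vertices and there are at most $s-2$ further degenerate ones. Second, any edge of $\cH[S]$ not contained in $S_{d-1}$ meets $S_{d-1}$ in at most one vertex (otherwise it would have triggered a degenerate step before $S_{d-1}$ was finalised), hence meets $A$ in at least $s-1$ vertices. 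Consequently, extending the colouring to $A$ fails only if some $(s-1)$-subset of $A$ is forced monochromatic; since each vertex has two list colours, this forces $|A|=2s-3$, all lists on $A$ identical (say $\{\text{red},\text{blue}\}$), and for every $(s-1)$-subset $A'\subseteq A$ an edge $A'\cup\{v_{\mathrm{red}}\}$ with $v_{\mathrm{red}}\in S_{d-1}$ coloured red and another $A'\cup\{v_{\mathrm{blue}}\}$ with $v_{\mathrm{blue}}$ coloured blue. That is exactly the clot structure.
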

\begin{proof}
  Let $S$ be a minimally non-$2$-choosable set of vertices of an $s$-uniform hypergraph $\cH$.  We run the process of revealing $S$ described above and let $d = d(S)$.  Since $S_1 = \{v_1\}$ is $2$-choosable, $d$ must be greater than $1$.  We may assume that $S$ contains at most $s-2$ degenerate elements, as otherwise there is nothing left to prove.  We will show that $A \coloneqq S \setminus S_{d-1}$ is the nucleus of a clot.

  Note that $A$ contains at most $2s-3$ elements: the $s-1$ elements of $e_{d-1} \setminus S_{d-1}$ plus at most $s-2$ additional degenerate elements included in subsequent degenerate steps.  Further, note that every edge of $S$ which is not contained in $S_{d-1}$ must contain at least $s-1$ elements from $A$, since otherwise it would intersect $S_{d-1}$ in at least two elements and would have therefore been absorbed into $S_{d-1}$ in a degenerate step.  For that reason, every colouring of $S_{d-1}$ may be extended to $S$ unless we are forced to colour some $s-1$ of $A$ with the same colour.  However, this may only happen if $|A| = 2s-3$ and all the colour lists of the elements of $A$ are identical, say they comprise the colours red and blue.  Moreover, every set of $s-1$ elements in $A$ must be contained in at least two edges: one with an element of $S_{d-1}$ already coloured red, and a second with an element of $S_{d-1}$ already coloured blue.  Therefore, $A$ must be the nucleus of a clot.
\end{proof}

\begin{lemma}
  \label{lemma:no-clots}
  Suppose that $s \ge 3$ and $\cH$ is a sequence of $s$-uniform hypergraphs with $\Delta_2(\cH) = O(1)$.  If $p = O\big(v(\cH)^{-1/(s-1)}\big)$, then a.a.s.\ $\cH_p$ does not contain a clot.
\end{lemma}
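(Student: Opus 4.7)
I plan to bound $\mathbb{E}[\text{number of clots in } \cH_p]$ and then invoke Markov's inequality.  Write $n = v(\cH)$.  Since $s \ge 3$, the hypothesis $\Delta_2(\cH) = O(1)$ implies $\Delta_{s-1}(\cH) \le \Delta_2(\cH) = O(1)$ and $e(\cH) = O(n^2)$, the latter because
\[
  \tbinom{s}{2}\cdot e(\cH) \;=\; \sum_{\{u,w\}}\deg_\cH(\{u,w\}) \;\le\; \tbinom{n}{2}\cdot\Delta_2(\cH).
\]
A clot has at least $2s-1$ and at most a constant number (depending only on $s$) of vertices, so it suffices to bound the contribution from clots of each fixed size separately and then sum.

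The enumeration proceeds by selecting, within each clot, a \emph{dense core}: a pair of vertices $\{u,w\}$ together with two distinct $\cH$-edges through $\{u,w\}$.  Such cores exist abundantly in any clot: for example, any two vertices lying inside an $(s-1)$-subset $A'_0\subseteq A$ of the nucleus appear in both extension edges $A'_0\cup\{v_1\}$ and $A'_0\cup\{v_2\}$, and in the tricky $s=3$ configurations where some extensions are ``fresh'' a pair of fresh extensions still lies in two required edges of the clot through different nucleus vertices.  By $\Delta_2$-counting, the number of dense cores in $\cH$ is at most
\[
  \sum_{\{u,w\}}\tbinom{\deg_\cH(\{u,w\})}{2} \;\le\; \tbinom{n}{2}\cdot\tbinom{\Delta_2(\cH)}{2} \;=\; O(n^2).
\]

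After fixing a dense core (supplying up to $2s-2$ placed vertices), I grow the clot by adding the remaining $O(1)$ vertices one at a time.  Each added vertex either completes some previously placed $(s-1)$-subset to a required edge of the clot---in which case $\Delta_{s-1}(\cH)=O(1)$ bounds the number of choices, contributing a factor of $O(p)$---or else must be placed jointly with another new vertex via a required edge containing $s-2$ placed vertices; in the latter case the clot's structural redundancy (every pair of nucleus vertices lies in many required edges) forces the pair of new vertices to belong to a further $\cH$-edge, so $\Delta_2$-counting again yields $O(1)$ joint choices and a factor of $O(p^2)$.  Putting the bounds together,
\[
  \mathbb{E}[\text{number of clots in } \cH_p] \;\le\; O(n^2)\cdot p^{2s-1}\cdot \sum_{k\ge 0}O(p)^k \;=\; O\bigl(n^2 p^{2s-1}\bigr) \;=\; O\bigl(n^{-1/(s-1)}\bigr) \;=\; o(1),
\]
and Markov's inequality completes the proof.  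The main technical obstacle is verifying that a suitable dense core always exists and that the growth can be carried out with $O(1)$ choices per step; I anticipate a short case analysis on the ``extension-sharing pattern'' of the clot---which $(s-1)$-subsets of the nucleus share which extensions---with the most delicate case being $s=3$ and ``all-fresh'' extensions, where the dense-core pair must be chosen from the fresh extensions themselves rather than from within the nucleus.
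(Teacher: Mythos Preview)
Your approach is essentially the paper's for $s>3$: pick two nucleus vertices, use $\Delta_2(\cH)=O(1)$ to recover the rest of the nucleus (since for $s>3$ any third nucleus vertex sits with your chosen pair inside some $(s-1)$-subset, hence in a clot edge), then use $\Delta_{s-1}(\cH)=O(1)$ for the extensions. That part is fine.

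The gap is precisely the $s=3$ case you flag as ``delicate,'' and your proposed fix does not work. Consider an all-fresh $9$-vertex clot: nucleus $A=\{a_1,a_2,a_3\}$, with the six extension vertices pairwise distinct. The only pairs of clot vertices that lie in two clot edges are the three nucleus pairs $\{a_i,a_j\}$; no pair of extensions, fresh or otherwise, lies in two required edges (a fresh extension belongs to exactly one clot edge). So your dense core must be, say, $\{a_1,a_2\}$ together with $\{a_1,a_2,u_1\}$ and $\{a_1,a_2,u_2\}$, placing four vertices. To place $a_3$ you are forced into your ``joint'' rule via an edge like $\{a_1,a_3,w\}$ containing only $s-2=1$ placed vertex. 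Your claim that the new pair $\{a_3,w\}$ must lie in a further $\cH$-edge of the clot is false here: $w$ sits in exactly one clot edge. Hence you cannot bound the joint choice by $\Delta_2$; the only available bound is $\Delta_1$, which is $\Theta(n)$ in examples such as $3$-APs in $\ZZ_N$. Indeed, in that example there are $\Theta(n^3)$ nine-vertex clots, so no enumeration producing an $O(n^2)$ count can succeed for these.

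The paper resolves $s=3$ by a size split rather than by a uniform dense-core argument: clots with at least seven vertices are counted crudely by $O(n^3)$ (choose the nucleus, then $O(1)$ extensions), giving $O(n^3p^7)=o(1)$; clots with at most six vertices necessarily have some extension shared by two nucleus pairs (pigeonhole on six extension slots into at most three vertices), and \emph{that} shared extension together with one nucleus vertex gives a genuine dense core from which the rest follows in $O(1)$ choices, yielding $O(n^2p^5)=o(1)$. You can salvage your write-up by inserting exactly this case distinction for $s=3$.
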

\begin{proof}
  As before, we write $N \coloneqq v(\cH)$ and identify $V(\cH)$ with $\br{N}$.  We give two different arguments, depending on whether or not $s > 3$.

  We first show that, when $s > 3$, there are only $O(N^2)$ clots.  This will be sufficient as every clot contains at least $2s-1$ vertices (the nucleus and at least two additional vertices) and thus the expected number of clots in $\cH_p$ is $O(N^2p^{2s-1}) = O(p) = o(1)$.  There are at most $O(N^2)$ ways to choose the two smallest elements of the nucleus $A$ of a clot.  Since $s-1 > 2$, every other element of $A$ belongs to an edge containing these two elements and there are only $\Delta_2(\cH) = O(1)$ such edges.  Similarly, every element of the clot that is not in $A$ belongs to one of the at most $\binom{2s-3}{s-1} \cdot \Delta_{s-1}(\cH) = O(1)$ edges that intersect $A$ in $s-1$ elements.

  The second case is where $s=3$, implying that every nucleus has three vertices and every clot has at least five vertices.  In particular, there are at most $N^3$ ways to choose the nucleus $A$ of a clot and every element of the clot that is not in $A$ belongs to one of the at most $3 \cdot \Delta_2(\cH) = O(1)$ edges that intersect $A$ in two vertices.  Consequently, the expected number of clots with at least seven vertices is $O(N^3 p^7) = O(p) = o(1)$.  If a clot has fewer than seven vertices, then, by the pigeonhole principle, there must be some vertex $v$ not in its nucleus that forms edges with two different pairs of vertices from the nucleus.  This implies that there are only $O(N^2)$ such clots:  We may pick one such $v$ and an element of the nucleus with at most $N^2$ options.  The remaining vertices of the clot can be added one-by-one in such a way that the added element forms an edge with two previously added vertices.  This means that the expected number of such clots is $O(N^2 p^5) = O(p) = o(1)$.
\end{proof}

\begin{lemma}
  \label{lemma:few-degenerate-elements}
  Suppose that $s \ge 3$ and $\cH$ is a sequence of $s$-uniform hypergraphs with $\Delta_2(\cH) = O(1)$.  If $p \le cv(\cH)^{-1/(s-1)}$ for a sufficiently small positive constant $c$, then a.a.s.\ any connected $S \subseteq V(\cH)_p$ contains at most $s-2$ degenerate elements.
\end{lemma}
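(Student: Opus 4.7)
The plan is to execute a first-moment bound: show that the expected number of connected sets $S \subseteq V(\cH)_p$ that contain at least $s-1$ degenerate elements is $o(1)$, and conclude by Markov.  The first step is a truncation.  If any bad $S$ exists, then halting the process on $\cH[S]$ the instant its revealed set first acquires $s-1$ degenerate vertices produces a connected subset $W \subseteq S$ whose parameters $(d, m, t)$---new-layer edges $d-1$, degenerate vertices $m$, and degenerate edges $t$---satisfy $m \in [s-1,\, 2s-4]$, $t \in [2, m]$, and $|W| = n = 1 + (d-1)(s-1) + m$.  It suffices to show that a.a.s.\ no such $W$ lies in $V(\cH)_p$, and for this I would union-bound over the $O_s(1)$ admissible triples $(d, m, t)$.

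For fixed parameters, every truncated bad $W$ supplies a witness $(v_1, T, F)$ consisting of its smallest vertex $v_1$, the collection $T$ of $d-1$ new-layer edges (forming a Borel-type rooted tree at $v_1$), and the ordered list $F = (f_1, \dots, f_t)$ of degenerate edges, where each $f_i$ shares at least two vertices with the union of previously revealed edges.  A standard tree-counting argument (ordered encodings divided by the $(d-1)!$ orderings of tree edges, combined with Stirling) gives at most $(C_s \Delta_1(\cH))^{d-1}$ rooted tree-like sub-hypergraphs with $d-1$ edges containing $v_1$.  Each degenerate edge is pinned down by picking two of its vertices in the current revealed set and then the edge itself, contributing $\binom{n}{2} \Delta_2(\cH) = O(n^2)$ options thanks to $\Delta_2(\cH) = O(1)$; the same hypothesis also forces $\Delta_1(\cH) \le (v(\cH)-1)\Delta_2(\cH)/(s-1) = O(v(\cH))$.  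Together, the expected number of witnesses with parameters $(d, m, t)$ inside $V(\cH)_p$ is at most
\begin{align*}
v(\cH) \cdot (C_s \Delta_1(\cH))^{d-1} \cdot O(n^{2t}) \cdot p^n
\;\le\; O_s(1)^d \cdot n^{2t} \cdot c^n \cdot v(\cH)^{(s-2-m)/(s-1)},
\end{align*}
where the last step uses $p \le c\, v(\cH)^{-1/(s-1)}$ and the identity $d - n/(s-1) = (s-2-m)/(s-1)$.  Since $m \ge s-1$, the exponent of $v(\cH)$ is at most $-1/(s-1)$.

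Summing over $d \ge 2$ with $(m, t)$ fixed produces a geometric-type series $\sum_{d} O_s(1)^d (d(s-1))^{2t} c^{(d-1)(s-1)}$, which converges to a constant once $c$ is chosen small enough that $O_s(1) \cdot c^{s-1} < 1$.  Factoring out $c^{m+1}$ and summing over the $O_s(1)$ admissible pairs $(m, t)$ yields a total expected witness count of $O_s(c^s) \cdot v(\cH)^{-1/(s-1)} = o(1)$, which Markov converts into the desired a.a.s.\ conclusion.  The main obstacle is obtaining the sharp tree-count bound $(C_s \Delta_1(\cH))^{d-1}$: a naive ordered encoding yields only the weaker $(n \Delta_1(\cH))^{d-1}$, whose superexponential $n^{d-1}$ factor defeats the summation, so the Borel-style division by $(d-1)!$ is essential.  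A minor subtlety is that the $i$-th new-layer edge may attach at a degenerate vertex introduced by an earlier degenerate step, in which case the new-layer edges need not form a single connected tree in isolation; a routine case split on the number of such ``external'' attachments, each of which consumes one of the $O_s(1)$ available degenerate vertices and yields only an $O(1)$ multiplicative factor, handles this without affecting the leading-order estimate.
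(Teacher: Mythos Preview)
Your overall strategy---truncate the exploration the moment $s-1$ degenerate vertices appear, then run a first-moment bound over the resulting bounded-parameter witnesses---matches the paper's, and your computation of the exponent $(s-2-m)/(s-1)\le -1/(s-1)$ is correct.  However, your enumeration of the new-layer edges has two genuine gaps.

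First, dividing the ordered-encoding count by $(d-1)!$ is not justified: a path-shaped hypertree admits only \emph{one} ordering in which every edge attaches to an already-revealed vertex, so an unordered tree is not in general counted $(d-1)!$ times.  The bound $(C_s\Delta_1)^{d-1}$ you quote does hold, but via a Catalan/DFS enumeration of plane rooted trees (at most $4^{d-1}$ shapes, then $\le s\Delta_1$ choices per down-step), not by division and Stirling.

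Second, and more seriously, the external-attachment step is not routine.  Your claim that each external attachment ``consumes one of the $O_s(1)$ available degenerate vertices'' is false: in the process $v_i$ is the \emph{smallest} vertex of $S_i$ with an outgoing edge, and a single degenerate vertex $w$ can play this role for many consecutive $i$ (indeed for $\Theta(d)$ of them, e.g.\ if $w$ happens to be the least-labelled element of $W$).  So the number of external attachments is unbounded, and the new-layer edges form a forest rooted at $\{v_1\}\cup\{\text{degenerate vertices}\}$ rather than a single tree.  Even granting a forest enumeration, there is a circularity: the root set depends on which degenerate vertices exist, which depends on the degenerate edges, whose specification (``two vertices of the current $S'$'') presupposes the new-layer edges already placed.

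The paper sidesteps both issues with one device.  It records the attachment points $v_1,\dotsc,v_{d-1}$ as a multiset $M$ on the \emph{arrival positions} $\br{k}$, at a cost of only $\binom{k+d-2}{d-1}\le 2^{2k}$.  One then rebuilds the process step by step: at each new-layer step $i$, the vertex $v_i$ is \emph{determined} by $M$ and $S_i$ (it is the least-labelled vertex of $S_i$ whose multiplicity in $M$ is not yet exhausted---any smaller vertex of $S_i$ has no outgoing edge from $S_i$, hence none from any later $S_j$, hence zero remaining multiplicity), and the edge $e_i$ is then specified by naming one further vertex and invoking $\Delta_2(\cH)=O(1)$, at cost $N\cdot\Delta_2(\cH)$ per edge rather than $\Delta_1(\cH)$.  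This folds all attachment-point choices into a single $2^{O(k)}$ factor and eliminates the tree/forest bookkeeping entirely.
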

\begin{proof}
  As before, we write $N \coloneqq v(\cH)$ and identify $V(\cH)$ with $\br{N}$.  If a connected set $S$ has at least $s-1$ degenerate elements, then it must contain a connected subset with at least $s-1$ degenerate elements for which our procedure executed at most $s-1$ degenerate steps.  Indeed, one obtains such subset by simply halting the procedure after $s-1$ degenerate elements are revealed, while noting that every degenerate step must introduce at least one new degenerate element.  We may thus restrict our attention to connected sets $S$ with this additional property.  We further claim that $d(S) \le \frac{|S|-1}{s-1}$ for every $S$ with at least $s-1$ degenerate elements.  To see this, note that the number of degenerate elements in $S$ is $|S| - 1 - (d(S)-1)(s-1)$, as precisely $(d(S)-1)(s-1)$ vertices are added in non-degenerate steps.  Summarising, it is enough to show that the expected number of connected sets $S$ with $d(S) \le \frac{|S|-1}{s-1}$ and at most $s-1$ degenerate steps that appear in $V(\cH)_p$ tends to zero.

  Let $X_k$ be the number of such sets that have exactly $k$ elements.  We can bound $X_k$ using the following logic:  First, we fix integers $d \le \frac{k-1}{s-1}$ and $d' \le s-1$ and bound the number of $k$-element sets $S$ with $d(S) = d$ for which the procedure runs $d'$ degenerate steps.  Given $d$ and $d'$, we decide, for each of the $d+d'-1$ steps of the procedure revealing $S$ whether it is degenerate or not; there are at most $2^{d+d'-1} \le 2^k$ options.  Second, we pick $v_1 \in \br{N}$.  Third, for every degenerate step, we choose some two vertices of $S'$ that witnessed $|e \cap S'| \ge 2$ and choose the edge $e$; there are at most $k^2 \cdot \Delta_2(\cH)$ options.  Fourth, we choose the number of times each element of $S$ plays the role of $v_i$ to start a new layer.  We may represent this as a multiset $M$ of $\br{k}$ with $d-1$ elements, where $j \in \br{k}$ corresponds to the $j$th vertex in the order of arrival to $S$;  thus, there are at most $\binom{k + d-1 -1}{d-1} \le 2^{2k}$ options.  Crucially, note that, for each $i \in \br{d-1}$, the identity of the vertex $v_i$ is determined by $M$ and $S_i$.   Finally, we pick, for every $i \in \br{d-1}$, the edge $e_i$ that intersects $S_i$ in $v_i$; there are at most $N \cdot \Delta_2(\cH)$ options.  Summarising,
  \[
    X_k \le \sum_{d \le \frac{k-1}{s-1}} \sum_{d' \le s-1}  2^k \cdot N \cdot \big(k^2 \cdot \Delta_2(\cH)\big)^{d'} \cdot 2^{2k} \cdot \big(N \cdot \Delta_2(\cH)\big)^{d-1} \le 2^{Ck} \cdot N^{\frac{k-1}{s-1}},
  \]
  where $C$ is a constant that depends only on $s$ and the constant implicit in the upper bound $\Delta_2(\cH) = O(1)$.

  Suppose now that $p \le 2^{-C} N^{-1/(s-1)}$ and let $\cB$ denote the event that $\br{N}_p$ contains a connected subset with at least $s-1$ degenerate elements.  We have
  \[
    \Pr(\cB) \le \sum_{k \ge 1} X_k \cdot p^k \le \sum_{k \ge 1} 2^{Ck} \cdot N^{\frac{k-1}{s-1}} \cdot 2^{-Ck} N^{-\frac{k}{s-1}} = N^{-\frac{1}{s-1}} = o(1),
  \]
  as claimed.
\end{proof}

\bibliographystyle{amsplain}
\bibliography{sharp-th}
  
\newpage
\appendix
\section{Weak threshold -- the 1-statement}
\label{apx:1-statement}

\begin{prop}
  \label{prop:apx-1state}
  Suppose that $\cH$ is weakly non-clustered and robustly non-$r$-colourable. Then there are constants $C, \alpha$ such that, for any $p > C \cdot \pH$, we have
  \[
    \Pr \left( \cH_p \text{ is $r$-colourable} \right) \le \exp(- \alpha \cdot p \cdot v(\cH)).
  \] 
\end{prop}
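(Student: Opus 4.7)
The approach is a standard container-based argument: we will pack all potential $r$-colourings of $\cH_p$ into a small family of container covers of $V(\cH)$, then show each such cover must leave a constant fraction of $V(\cH)$ uncovered, and finish with a union bound.

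First, I apply the Hypergraph Container Lemma (Theorem~\ref{thm:containers}) to $\cH$ itself.  Let $K_0$ be the constant from non-clusteredness, i.e.\ $\Delta_\ell(\cH) \le K_0 \cdot \pH^{\ell-1} \cdot e(\cH)/v(\cH)$ for every $\ell \in \br{s}$, and let $\eps>0$ be a small constant to be fixed later.  Setting $\tau \coloneqq \pH$ (so that the hypothesis of Theorem~\ref{thm:containers} is met with parameter $K_0$) and invoking the lemma with $k=s$ and this $\eps$, I obtain a family $\cC$ of subsets of $V(\cH)$ such that: (i) every independent set of $\cH$ is contained in some $C \in \cC$ (since for an independent set $e(\cH[I])=0 \le \delta\tau^s e(\cH)$); (ii) $e(\cH[C]) < \eps \cdot e(\cH)$ for every $C \in \cC$; and (iii) $|\cC| \le \binom{v(\cH)}{\le \tau v(\cH)}^{t} \le \exp(c_1 \pH v(\cH))$ for some constant $c_1$.

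Next, if $\cH_p$ is $r$-colourable with colour classes $V_p^{(1)}, \dotsc, V_p^{(r)}$, then each $V_p^{(i)}$ is an independent set of $\cH$ (any edge of $\cH$ inside $V_p^{(i)} \subseteq V_p$ would be an edge of $\cH_p$).  Hence there exist $C_1, \dotsc, C_r \in \cC$ with $V_p \subseteq F \coloneqq C_1 \cup \dotsb \cup C_r$, so the event in question is contained in the union over $r$-tuples $(C_1,\dotsc,C_r) \in \cC^r$ of the events $\{V_p \subseteq F\}$.

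The main step (and the only nontrivial point) is to show that, for every such $r$-tuple, $|V(\cH) \setminus F| \ge \alpha \cdot v(\cH)$ for some positive constant $\alpha$ that depends only on $\eps$, $K_0$, and the robustness constant $\delta_\star$ from robust non-$r$-colourability.  Suppose not, so $|V(\cH) \setminus F| < \alpha \cdot v(\cH)$.  Define $c \colon V(\cH) \to \br{r}$ by $c(v) \coloneqq \min\{j : v \in C_j\}$ when $v \in F$ and $c(v) \coloneqq 1$ otherwise.  Any monochromatic edge of $c$ either lies inside some $C_j$ or meets $V(\cH) \setminus F$, so the total number of monochromatic edges is at most
\[
\sum_{j=1}^{r} e(\cH[C_j]) + |V(\cH) \setminus F| \cdot \Delta_1(\cH) \le \bigl(r \eps + K_0 \alpha\bigr) \cdot e(\cH).
\]
Choosing $\eps$ and $\alpha$ small enough (specifically, $r\eps + K_0 \alpha < \delta_\star$) contradicts robust non-$r$-colourability.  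This is the heart of the argument; the rest is accounting.

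Finally, for any fixed $F$ with $|V(\cH) \setminus F| \ge \alpha v(\cH)$, we have $\Pr(V_p \subseteq F) = (1-p)^{|V(\cH) \setminus F|} \le \exp(-\alpha p v(\cH))$.  A union bound over at most $|\cC|^r \le \exp(r c_1 \pH v(\cH))$ tuples gives
\[
\Pr(\cH_p \text{ is $r$-colourable}) \le \exp\bigl(r c_1 \pH v(\cH) - \alpha p v(\cH)\bigr) \le \exp\bigl(-\tfrac{\alpha}{2} p v(\cH)\bigr),
\]
provided $p \ge C \pH$ with $C \coloneqq 2 r c_1/\alpha$.  The only delicate point is the order in which constants are selected: first fix $\eps, \alpha$ in terms of $\delta_\star$ and $K_0$ so as to produce the contradiction in the main step, then invoke the container lemma (which determines $c_1$), and only at the end set $C$.
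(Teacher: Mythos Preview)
Your overall strategy matches the paper's: apply containers to each colour class, use robust non-$r$-colourability to show any $r$-tuple of containers misses a constant fraction of $V(\cH)$, then union bound.  The main step (bounding $|V(\cH)\setminus F|$ from below via the auxiliary colouring $c$) is carried out exactly as in the paper.

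However, your accounting in step~(iii) has a real gap.  With $\tau=\pH$ and $n=v(\cH)$, the bound
\[
|\cC| \le \binom{n}{\le \pH n}^t \le \exp\bigl(c_1\,\pH n\bigr)
\]
is false: $\binom{n}{\le \pH n} \le (e/\pH)^{\pH n} = \exp\bigl(\pH n \log(e/\pH)\bigr)$, so the container count is only $\exp\bigl(O(\pH n \log(1/\pH))\bigr)$.  Plugging this into your final union bound forces $p \ge C\,\pH \log(1/\pH)$ rather than $p \ge C\,\pH$, which is too weak.

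The paper avoids this log loss by \emph{not} union-bounding over containers.  Instead it sums over the fingerprint tuples $(T_1^1,\dotsc,T_t^r)$ themselves and uses the crucial fact that $T_j^i \subseteq I_i \subseteq V_p$: the event that a specific fingerprint tuple of total size $k$ occurs already costs a factor $p^k$, and this exactly compensates the $\binom{n}{k}$ in the enumeration.  After this, the remaining sum is of the form $\sum_k \bigl(2^{rt}e\,np/k\bigr)^k e^{-\alpha p n}$, which is controlled once $p/\pH$ is a large constant.  In short, the missing idea is to exploit that the fingerprints lie inside $V_p$; once you do that, your argument goes through with $C$ a genuine constant.
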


In particular, if $v(\cH) \cdot \pH$ tends to infinity, then the proposition tells us that the probability of $\cH_p$ being $r$-colourable tends to zero. Before turning to the proof, we pause to make a couple of remarks concerning this point.
 
\begin{remark}
  First, we claim that when $\cH$ satisfies the stronger requirement of being mildly non-clustered, then $v(\cH) \cdot \pH$ does tend to infinity. Indeed, if there is an index $i \in \br{2,s-1}$ such that $\Delta_i(\cH) \ll \pH^{i-1} \Delta_1(\cH)$, then, bounding $\Delta_i(\cH)$ from below by the average degree of an $i$-element subset of $V(\cH)$, we can write
  \[
    \frac{e(\cH)}{v(\cH)^i} \le \Delta_i(\cH) \ll \pH^{i-1} v(\cH)^{i-1} \cdot \frac{e(\cH)}{v(\cH)^i}.
  \]
  Therefore, $\pH \cdot v(\cH) \gg 1$ as requested.
\end{remark}

\begin{remark}
  Next, it is possible for a sequence of hypergraphs $\cH$ and probabilities $p$ fulfilling the requirements of Proposition~\ref{prop:apx-1state} that the probability that $\cH_p$ is $r$-colourable to be bounded away from zero. Indeed, let $\cH$ be the complete $s$-uniform hypergraph on $n$ vertices. Since $e(\cH) = \Theta(n^s)$, we have that $\pH = \Theta(n^{-1})$, and therefore that $v(\cH) \cdot \pH = \Theta(1)$.

  We verify that $\cH$ satisfies the assumptions of the proposition. First, for every $i \in \br{s}$, we have $\Delta_i(\cH) = \Theta(n^{s-i}) = \Theta(p^{i-1} \Delta_1)$, so $\cH$ is weakly non-clustered. Second, by the pigeonhole principle, every $r$-colouring of the vertices of $\cH$ must have at least $n/r$ elements sharing the same colour.  These vertices alone induce $\Theta(n^s)$ monochromatic edges.

  However, for every $C > 0$ and $p = C \cdot \pH$, the probability that $\cH_p$ is in fact empty is $(1-p)^{v(\cH)} \ge \exp(- \Theta(v(\cH) \cdot p) ) = \exp( -\Theta(1))$. This of course implies that the probability that $\cH_p$ is $r$-colourable is bounded away from zero.
\end{remark}

\begin{proof}[{Proof of Proposition~\ref{prop:apx-1state}}]
  Let $\delta$ be the constant for which $\cH$ is robustly non-$r$-colourable. Since $\cH$ is weakly non-clustered, we may apply the Container Lemma to it.  Write $V = V(\cH)$. Taking any $\eps < \delta/r$, there are constants $c_1 = c_1(\eps)$, $t = t(\eps)$, and a function $f \colon \cP(V)^t \rightarrow \cP(V)$ such that the following hold:
\begin{enumerate}
\item
  For every independent set $I$ there are $T_1, \dots, T_t \subseteq I$ with at most $c_1 \cdot \pH \cdot v(\cH)$ elements, such that $I \subseteq f(T_1, \dots, T_t)$.
\item
  The set $f(T_1, \dots, T_t)$ induces at most $\eps e(\cH)$ edges in $\cH$.  
\end{enumerate}

Suppose now that $\cH_p$ is $r$-colourable. This of course means that $V_p$ is a union of $r$ independent sets $I_1, \dots, I_r$. Following the Container Lemma, this implies that, for each $i \in \br{r}$, there are sets $T^i_1, \dotsc, T^i_t \subseteq I_i$, each containing at most $c_1 \cdot \pH \cdot v(\cH)$ vertices, such that $I_i \subseteq C_i \coloneqq f(T^i_1, \dots, T^i_t)$. Of course, that would mean that all subsets $T^i_j$ are contained in $V(\cH_p)$ and that $V(\cH_p)$ is covered by the containers $C_1, \dots, C_r$.  Using the fact that the containers induce few edges, together with the robust non-colourability, we will be able to prove the following claim.

\begin{claim}
  There is a constant $\alpha$ such that the number of vertices in $V \setminus (C_1 \cup \dots \cup C_r)$ is at least $\alpha v(\cH)$.
\end{claim}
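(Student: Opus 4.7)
\medskip

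\noindent\textbf{Proof plan.} The plan is to argue by contradiction: if the complement $W \coloneqq V \setminus (C_1 \cup \dots \cup C_r)$ has fewer than $\alpha v(\cH)$ vertices for a suitable constant $\alpha > 0$, then we can construct an $r$-colouring of $V(\cH)$ whose number of monochromatic edges contradicts the robust non-$r$-colourability of $\cH$.

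The colouring $\chi \colon V \to \br{r}$ is defined as follows. For each $v \in C_1 \cup \dots \cup C_r$, let $\chi(v) = i$ for the least index $i$ with $v \in C_i$. For each $v \in W$, assign $\chi(v)$ arbitrarily. Every monochromatic edge of $\chi$ is then of one of two types: either (i) it is fully contained in some container $C_i$, or (ii) it contains at least one vertex of $W$. The edges of type (i) are bounded, using the second property of the container function, by
\[
  \sum_{i=1}^r e\big(\cH[C_i]\big) \le r \eps \cdot e(\cH).
\]
The edges of type (ii) are bounded, using the non-clusteredness of $\cH$ (which in particular gives $\Delta_1(\cH) \le K_\cH \cdot e(\cH)/v(\cH)$ for some constant $K_\cH$), by
\[
  |W| \cdot \Delta_1(\cH) \le \alpha v(\cH) \cdot K_\cH \cdot \frac{e(\cH)}{v(\cH)} = \alpha K_\cH \cdot e(\cH).
\]

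Since $\eps < \delta/r$, we have $r \eps < \delta$; choosing $\alpha \coloneqq (\delta - r\eps)/(2K_\cH)$ yields a total count of monochromatic edges in $\chi$ of at most $(r\eps + \alpha K_\cH) e(\cH) < \delta \cdot e(\cH)$. This contradicts the assumption that $\cH$ is robustly non-$r$-colourable with constant $\delta$, and hence establishes the claim with this explicit value of $\alpha$. I do not anticipate any genuine obstacle here: the argument is a direct deterministic consequence of combining the two properties delivered by the container lemma with the supersaturation statement provided by robust non-colourability, and the choice $\eps < \delta/r$ was made precisely to leave room for the error term $\alpha K_\cH$ coming from the uncovered vertices.
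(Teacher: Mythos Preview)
Your proposal is correct and follows essentially the same approach as the paper: define an $r$-colouring of $V$ by assigning each covered vertex the index of a container it lies in (and colouring $W$ arbitrarily), then combine the container bound $e(\cH[C_i]) \le \eps e(\cH)$ with the degree bound $\Delta_1(\cH) \le K_\cH \cdot e(\cH)/v(\cH)$ to contradict robust non-$r$-colourability. The only cosmetic difference is that the paper phrases the same computation as a direct lower bound on $|W|$ rather than as a contradiction.
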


\begin{proof}[Proof of the claim]
  Write $W \coloneqq V \setminus (C_1 \cup \dots \cup C_r)$ and consider the following $r$-colouring of the vertices of $\cH$. Colour each vertex according to the index $i$ of its container $C_i$ (if it is contained in more then one container, choose one arbitrarily), and colour the vertices of $W$ by the colour $1$. Since $\cH$ is robustly non-$r$-colourable, we know that there are at least $\delta e(\cH)$ monochromatic edges. Since every container induces at most $\eps e(\cH)$ edges, there must be at least $(\delta - r \eps) \cdot e(\cH)$ monochromatic edges in the colour $1$ that have at least one vertex in $W$.  On the other hand, there are at most $\Delta_1(\cH) \cdot |W|$ such edges.  By our assumption on $\cH$, there is a constant $K$ such that
  \[
    (\delta - r \eps)\cdot e(\cH) \le \Delta_1(\cH) \cdot |W| \le K \cdot \frac{e(\cH)}{v(\cH)} \cdot |W|.
  \]
  Remembering that $\eps < \delta/r$, there must be a constant $\alpha > 0$ such that $|W| > \alpha v(\cH)$, as promised.
\end{proof}

Following the previous discussions, and letting $c_2 = r t c_1$, we can bound
\[
  \begin{split}
    \Pr & \left( \cH_p \text{ is $r$-colourable} \right)  \\
    & \le \sum_{|T^i_j| \le c_1 \cdot \pH \cdot v(\cH) } \Pr \left( \bigcup_{j \in \br{t}, i \in \br{r}} T^i_j \subseteq V_p \subseteq C_1 \cup \dots \cup C_r \right) \\
 & = \sum_{|T^i_j| \le c_1 \cdot \pH \cdot v(\cH)} \Pr\left( \bigcup T^i_j \subseteq V_p \right) \cdot \Pr \left( V_p \cap \left( V \setminus ( C_1 \cup \dots \cup C_r \right)) = \emptyset \right) \\
 & \le \sum_{k \le r t c_1 \cdot \pH \cdot v(\cH)} \binom{v(\cH)}{k} \cdot 2^{r t k} \cdot p^{k} \cdot (1-p)^{\alpha v(\cH)} \\ 
 & \le c_2 \cdot \pH \cdot v(\cH) \cdot \max_{k \le c_2 \cdot \pH \cdot v(\cH)} \left( \frac{2 ^{rt} e v(\cH) p}{k} \right)^k \cdot \exp \left(- \alpha v(\cH) \cdot p \right).
\end{split}
\]

The function $k \mapsto \left( \frac{e M}{s} \right) ^ k$ is increasing until it reaches its maximum at $k = M$. Therefore, supposing that $C$ is large enough so that $c_2 < 2^{rt} \cdot C$, the maximum is achieved at $s = c_2 \cdot \pH \cdot v(\cH)$, allowing us to bound \[
\begin{split}
\max_{k \le c_2 \cdot \pH \cdot v(\cH)} \left( \frac{2 ^{rt} e \cdot v(\cH) \cdot p}{k} \right)^k & \le \left( \frac{2^{rt} e \cdot  v(\cH) \cdot p}{c_2 \cdot v(\cH) \cdot \pH} \right)^{c_2 \cdot  v(\cH) \cdot \pH} \\
& = \exp \left( c_2 \log \left(\frac{2^{rt} e}{c_2} \cdot \frac{p}{\pH} \right) \cdot \frac{\pH}{p} \cdot v(\cH) \cdot p \right).
\end{split}
\]
Recalling that $C \le \frac{p}{\pH}$ and $x^{-1} \log x \rightarrow 0$ as $x$ tends to infinity, taking $C$ sufficiently large, we have $c_2 \log \left(\frac{2^{rt} e}{c_2} \cdot \frac{p}{\pH} \right) \cdot \frac{\pH}{p} < \frac{1}{2} \alpha$ and, therefore,
\[
\Pr\left( \cH_p \text{ is } r \text{-colourable} \right) \le c_2 \cdot \pH \cdot v(\cH) \cdot \exp  \left( - \frac{1}{2} \alpha \cdot p \cdot v(\cH) \right) \le \exp\left(-\alpha' \cdot p \cdot v(\cH)\right). \qedhere
\]

\end{proof}

\section{A container lemma for sparse sets}
\label{sec:omitted-proofs}

\containers*

\begin{thm}[{\cite[Corollary~3.6]{SaxTho15}}]
  \label{thm:containers-Saxton-Thomason}
  For every positive integer $k$ and positive real $\eps$, there exists an integer $s$ such that the following holds.
  Suppose that a nonempty $k$-uniform (multi)hypergraph $\cG$ and $\tau \in (0,1/2)$ satisfy
  \[
    \delta(\cG, \tau) \coloneqq 2^{\binom{k}{2}-1} \sum_{j=2}^k 2^{-\binom{j-1}{2}} \delta_j(\cG, \tau) \le \frac{\eps}{12k!},
  \]
  where
  \[
    \delta_j(\cG, \tau) \coloneqq \frac{\tau^{1-j}}{k e(\cG)} \cdot \sum_{v \in V} \max\{\deg T : v \in T \subseteq V \text{ and } |T| = j\}.
  \]
  Then there exists a function $C \colon \cP(V)^s \to \cP(V)$ such that, letting
  \[
    \cT \coloneqq \big\{(T_1, \dotsc, T_s) \in \cP(V)^s : |T_i| \le s\tau|V| \text{ for all } i \in \br{s} \big\},
  \]
  we have:
  \begin{enumerate}[{label=(\alph*)}]
  \item
    \label{item:containers-ST-1}
    For every set $I \subseteq V$ satisfying $e(\cG[I]) \le 24\eps k! k \tau^k e(\cG)$, there exists $T = (T_1, \dotsc, T_s) \in \cT \cap \cP(I)^s$ with $I \subseteq C(T)$.
  \item
    \label{item:containers-ST-2}
    For every $T \in \cT$, the set $C(T)$ induces at most $\eps e(\cG)$ edges in $\cG$.
  \end{enumerate}
\end{thm}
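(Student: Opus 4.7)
The plan is to deduce Theorem~\ref{thm:containers} from Theorem~\ref{thm:containers-Saxton-Thomason} by reparametrization. The key calculation converts the codegree hypothesis $\Delta_\ell(\cG)\le K\tau^{\ell-1}e(\cG)/v(\cG)$ into an estimate for the Saxton--Thomason quantities $\delta_j(\cG,\tau')$: using $\sum_{v}\max\{\deg T:v\in T,\,|T|=j\}\le v(\cG)\Delta_j(\cG)$, one gets, for every $j\in\{2,\dots,k\}$ and every $\tau'>0$,
\[
\delta_j(\cG,\tau') \le \frac{(\tau')^{1-j}}{k\,e(\cG)}\cdot v(\cG)\cdot K\tau^{j-1}\frac{e(\cG)}{v(\cG)} = \frac{K}{k}\left(\frac{\tau}{\tau'}\right)^{j-1}.
\]
Choosing a constant $C=C(K,k,\eps)$ large enough that $\delta(\cG,C\tau)\le\eps/(24k!)$ (possible since the sum defining it decays geometrically in $C$, with the $j=2$ term dominating) sets us up to apply Theorem~\ref{thm:containers-Saxton-Thomason} with parameters $\eps_{ST}=\eps/2$ and $\tau_{ST}=C\tau$, provided $C\tau<1/2$.

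In that main case, Theorem~\ref{thm:containers-Saxton-Thomason} produces an integer $s$ and a function $C_{ST}\colon\cP(V)^s\to\cP(V)$ whose output sets $T_i$ have size at most $s\tau_{ST}v(V)=sC\tau v(V)$, larger than the target $\tau v(V)$ by a constant factor $sC$. This excess is absorbed by subdivision: set $q\coloneqq\lceil sC\rceil$, $t\coloneqq sq$, and declare
\[
f(S_1,\dots,S_t)\coloneqq C_{ST}(T_1,\dots,T_s),\qquad T_i\coloneqq S_{(i-1)q+1}\cup\dots\cup S_{iq}.
\]
Given $I$ satisfying the hypothesis of Theorem~\ref{thm:containers}, partition each $T_i$ produced by Saxton--Thomason into $q$ subsets of size at most $\tau v(V)$; these yield the required $S_j\subseteq I$ with $I\subseteq f(S_1,\dots,S_t)$. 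Property~(ii) of Theorem~\ref{thm:containers} follows directly from Saxton--Thomason (we asked for $\eps_{ST}=\eps/2<\eps$); for property~(i), the hypothesis $e(\cG[I])\le\delta\tau^ke(\cG)$ must imply Saxton--Thomason's hypothesis $e(\cG[I])\le 24(\eps/2)k!k(C\tau)^ke(\cG)$, and taking $\delta\coloneqq 12\eps k!k C^k$ suffices.

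The complementary regime $\tau\ge 1/(2C)$, in which Saxton--Thomason does not directly apply, is handled by the trivial construction $f(S_1,\dots,S_t)\coloneqq S_1\cup\dots\cup S_t$: here $\tau$ is bounded below by a positive constant depending only on $K,k,\eps$, so any $I$ admits a partition into $\lceil 1/\tau\rceil\le\lceil 2C\rceil$ subsets of size at most $\tau v(V)$ (padding with $\emptyset$ to reach $t$ indices, and assuming $\tau\le 1$, which we may, as the conclusion is otherwise vacuous). Then $f(S_1,\dots,S_t)=I$ induces at most $\delta\tau^ke(\cG)\le\delta e(\cG)$ edges, which is below $\eps e(\cG)$ once we additionally require $\delta<\eps$. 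Taking $t$ and $\delta$ to be the compatible extrema of the two cases yields uniform choices depending only on $k$, $\eps$, and $K$, as required.

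The whole derivation is constant-chasing, and there is no real obstacle beyond keeping careful track of how the parameters rescale. The only non-obvious step is the translation between the $\Delta_\ell$-type and $\delta_j$-type codegree hypotheses, and even that reduces to the one-line computation displayed above.
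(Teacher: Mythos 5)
Your main case is essentially the paper's own derivation (given in Appendix~\ref{sec:omitted-proofs}): scale $\tau$ up by a constant $C$ so that the codegree hypothesis of Theorem~\ref{thm:containers} implies $\delta(\cG,C\tau)\le\eps/(12k!)$, apply Theorem~\ref{thm:containers-Saxton-Thomason} with this rescaled $\tau$, and absorb the inflated size bound on the $T_i$ by subdividing each one into constantly many smaller pieces. The computation $\delta_j(\cG,C\tau)\le\frac{K}{k}C^{1-j}$ and the downstream choices of $C$, $t$, and $\delta$ all match the paper's $L$, $t=2L^2s$, $\delta=24\eps k!k L^k$ up to cosmetic constants, and this part of the argument is fine.

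The fallback case for $\tau\ge 1/(2C)$, however, is genuinely broken. You set $f(S_1,\dots,S_t)\coloneqq S_1\cup\dots\cup S_t$ and verify property~(ii) of Theorem~\ref{thm:containers} only for tuples that arise as partitions of a legal $I$. But property~(ii) is a universal statement over $\cP(V)^t$: taking $S_1=\dots=S_t=V$, your $f$ returns $V$, which induces $e(\cG)$ edges — not fewer than $\eps\,e(\cG)$. This is a real gap, not a rounding nuisance. The fix is simple — for instance, redefine $f(S_1,\dots,S_t)$ to be $\emptyset$ whenever $S_1\cup\dots\cup S_t$ induces at least $\eps e(\cG)$ edges, and $S_1\cup\dots\cup S_t$ otherwise, and then only use it on the $S_i$ coming from $I$ — but as written your construction does not satisfy property~(ii). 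It is worth noting that the paper's derivation does not address this regime at all: it invokes Theorem~\ref{thm:containers-Saxton-Thomason} with $\tau_{\ref{thm:containers-Saxton-Thomason}}=L\tau$ without verifying $L\tau<1/2$, relying implicitly on $\tau$ being small in all intended applications. Your instinct to handle both regimes explicitly is sound; the construction just needs to be repaired before the case split closes the argument.
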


\begin{proof}[Derivation of Theorem~\ref{thm:containers} from Theorem~\ref{thm:containers-Saxton-Thomason}]
  Let $\cG$ be a nonempty $k$-uniform hypergraph with vertex set $V$ and let $\eps$ and $K$ be positive reals. We set $s \coloneqq s_{\ref{thm:containers-Saxton-Thomason}}(k, \eps)$ and let
  \[
    L \coloneqq \left\lceil \frac{12(k-1)!2^{\binom{k}{2}}K}{\eps} \right\rceil,
    \qquad
    t \coloneqq 2L^2s,
    \qquad
    \text{and}
    \qquad
    \delta \coloneqq 24 \eps k! k L^k
  \]
  Suppose that the maximum degrees of $\cG$ satisfy the assumptions of the theorem for some $\tau$.  Note that, for every $j \in \{2, \dotsc, k\}$,
  \[
    \delta_j(\cG, L \tau) = \frac{(t \tau)^{1-j}}{ke(\cG)} \cdot v(\cG) \Delta_j(\cG) \le \frac{KL^{1-j}}{k}
  \]
  and thus, as $L \ge 2$,
  \[
    \delta(\cG, L\tau) \le 2^{\binom{k}{2}-1} \cdot \sum_{j=2}^{k}\frac{Kt^{1-j}}{k} \le \frac{2^{\binom{k}{2}}K}{kL} \le \frac{\eps}{12k!}.
  \]
  Consequently, Theorem~\ref{thm:containers-Saxton-Thomason}, invoked with $\tau_{\ref{thm:containers-Saxton-Thomason}} = L\tau$, implies that there exist a function $C \colon \cP(V)^s \to \cP(V)$ that satisfies~\ref{item:containers-ST-1} and~\ref{item:containers-ST-2}.  We define $f \colon \cP(V)^t \to \cP(V)$ by letting
  \[
    f(S_1, \dotsc, S_t) \coloneqq C\big(S_1 \cup \dotsb \cup S_{t/s}, \dotsc, S_{(s-1)t/s+1} \cup \dotsb \cup S_t\big).
  \]
  In particular, if $I \subseteq V$ satisfies
  \[
    e(\cG[I]) \le \delta \tau^k e(\cG) \le 24\eps k!k (L \tau)^k e(\cG),
  \]
  then there are $T_1, \dotsc, T_s \subseteq I$, with $|T_i| \le s L\tau |V|$ for each $i$, such that $I \subseteq C(T_1, \dotsc, T_s)$.  This gives the assertion of the theorem, as we may partition each $T_i$ into $t/s$ sets $S_{t(i-1)/s+1}, \dotsc, S_{t(i+1)/s}$, each of size at most $\lceil s/t \cdot sL\tau |V| \rceil \le \tau |V|$.
\end{proof}

\section{A graph without the star-constellation property}
\label{apx:graph_application}

In this appendix we provide an example of a graph whose corresponding hypergraph does not have the rainbow star-constellation property. We need not look further than the usual suspect, the Petersen graph.

\begin{figure}[htb]
\centering
\begin{tikzpicture}

\foreach \i in {1,2}  {
	\node (v1\i) at (0.95*\i, 0.31*\i) {};
	\node (v2\i) at (0.587*\i, -0.81*\i) {};
	\node (v3\i) at (-0.587*\i, -0.81*\i) {};
	\node (v4\i) at (-0.95*\i, 0.31*\i) {};
	\node (v5\i) at (0, 1*\i) {};

	\foreach \t in {1,2,3,4,5} {
			\fill (v\t\i) circle (0.1);
		}
}

\draw (v12) -- (v22);
\draw (v32) -- (v42) -- (v52) -- (v12);
\draw (v11) -- (v31) -- (v51) -- (v21) -- (v41) -- (v11);

\foreach \t in {1,2,3,4,5} {
	\draw (v\t1) -- (v\t2);
}

\fill (v12) circle (0.1) node [right] {$b$};
\fill (v21) circle (0.1) node [above right] {$a$};
\fill (v32) circle (0.1) node [ left] {$u$};

\fill (v42) circle (0.1) node [left] {$b'$};
\fill (v31) circle (0.1) node [above left] {$a'$};
\fill (v22) circle (0.1) node [right] {$u'$};

\end{tikzpicture}
\caption{A Petersen star for $r=2$}
\label{fig:petersen_star}
\end{figure}
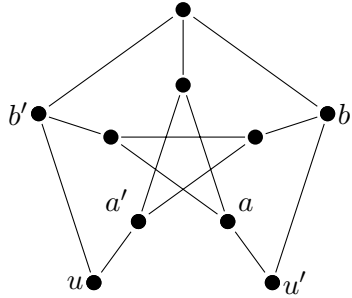

\begin{claim}
The hypergraph corresponding to the Petersen graph does not have the rainbow star-constellation property.
\end{claim}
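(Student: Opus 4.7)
By Proposition~\ref{prop:graph_rainbow_sc}, verifying the claim reduces to exhibiting a rainbow star $S$ of the Petersen graph $H$ for which no rainbow constellation admits a homomorphism into $S$. Because $H$ is not bipartite (it has $5$-cycles), Corollary~\ref{cor:H-collapsible-rainbow-sc} identifies this, for any $r \ge 3$, with the failure of $H$ being \emph{collapsible}. The plan is therefore to establish that the Petersen graph is not collapsible.

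Fix an edge $e = uu'$ of $H$ and the endpoint $a = u$, and assume for contradiction that there exist an edge $f = pq$ and a graph homomorphism $\varphi \colon H \setminus f \to H \setminus e$ with $\varphi(p) = \varphi(q) = u$. Identifying $p$ and $q$ into a single vertex $w$ in $H \setminus f$ produces a fold graph $G$, and $\varphi$ factors through $G$ as a homomorphism $\bar\varphi \colon G \to H \setminus e$ with $\bar\varphi(w) = u$. The girth-$5$ property of $H$ forces $N_H(p) \setminus \{q\}$ and $N_H(q) \setminus \{p\}$ to be disjoint, so $w$ has exactly four distinct neighbours in $G$; meanwhile $u$ retains only the two neighbours $n_1, n_2 \in N_H(u) \setminus \{u'\}$ in $H \setminus e$. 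By pigeonhole, $\bar\varphi$ must identify at least two neighbours of $w$.

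Edge-transitivity of $H$ together with the count $(5 \cdot 12)/15 = 4$ shows that $f$ lies on exactly four $5$-cycles of $H$, and folding turns each of these into a $4$-cycle through $w$ in $G$. Since $H \setminus e$ has girth at least $5$, every such $4$-cycle must degenerate under $\bar\varphi$: if the cycle is $w\,x_1\,x_2\,x_3\,w$, the only options are $\bar\varphi(x_2) = u$ or $\bar\varphi(x_1) = \bar\varphi(x_3)$, as any other repetition would demand a self-loop in $H \setminus e$. Combining the four resulting constraints with the pigeonhole collision pins $\bar\varphi$ rigidly on the first two layers around $w$: every first-layer vertex lands in $\{n_1, n_2\}$, and every second-layer vertex either equals $u$ or lies in $N_{H\setminus e}(n_1) \cup N_{H\setminus e}(n_2)$.

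The main obstacle is the finite case analysis that follows. Using the stabiliser $\mathrm{Stab}(e,a) \le \mathrm{Aut}(H)$, the choices of $f$ split into a handful of orbits, and in each orbit the degeneration pattern forces some pair of second-layer vertices that is adjacent in $G$ to map inside a common neighbourhood $N_{H\setminus e}(v)$ of a single vertex $v \in \{u, n_1, n_2\}$. Girth $5$ makes such a neighbourhood an independent set of size at most $3$, so no edge in its image can be realised, contradicting that $\bar\varphi$ is a homomorphism. Completing this bookkeeping across the four $4$-cycles and the orbits of $f$ is the most technical part, but every case follows the same template of forced collapse into an independent neighbourhood.
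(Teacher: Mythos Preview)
Your route differs from the paper's and gives a weaker conclusion. You argue via Corollary~\ref{cor:H-collapsible-rainbow-sc}, aiming to show that the Petersen graph $H$ is not collapsible; this yields failure of the property only for $r \ge 3$. The paper instead attacks $r = 2$ directly, and since the property is monotone decreasing in $r$, failure at $r = 2$ implies failure for every $r$. For $r = 2$ a star is simply $S = H \setminus e$ (unique up to isomorphism, by edge-transitivity), and the paper's key lemma is that \emph{every homomorphism $S \to S$ is an isomorphism}. The proof is short: because $S$ is triangle-free, each $5$-cycle must map injectively to a $5$-cycle, so no two vertices lying on a common $5$-cycle can be identified; only five vertex pairs of $S$ fail to lie on a common $5$-cycle, and each is excluded by a degree count on their combined neighbourhood. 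Given this rigidity, any homomorphism $C \to S$ must send the two centre vertices of each constituent star to the two centre vertices of $S$, which amounts to a $2$-colouring of the base copy of $H$---impossible since $H$ is not bipartite.

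Your outline is not wrong in principle, but the deferred case analysis is essentially the entire argument, and the assertion that ``every case follows the same template of forced collapse into an independent neighbourhood'' is stated rather than verified. To finish along your lines you would have to enumerate the orbits of $f$ under $\mathrm{Stab}(e,a)$ (arc-transitivity of $H$ does let you fix $(e,a)$ once and for all) and run the $4$-cycle degeneration bookkeeping in each orbit; this is doable but tedious, and you have not shown that the constraints actually close up in every orbit. The paper's endomorphism-rigidity argument bypasses the case split entirely and simultaneously covers $r = 2$.
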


\begin{proof}
It is enough to show this for $r=2$. Note that since the Petersen graph is edge-transitive there is only one star $S$ up to isomoporphism and therefore only one constellation $C$. to show that there is no homomorphism $C \to S$ we will first claim that every homomorphism $S \to S$ must be an isomorphism. To see why this helps us, observe that every homomorphism $S \to S$ must then map the center vertices to themselves, as they are the only vertices of degree $2$. Therefore, any homomorphism $C \to S$, which induces a homomorphism from each of its stars to $S$, must send all the center vertices of $C$ to the two center vertices of $S$. Viewing this is a $2$-colouring of the center vertices of $C$, we may use the fact that the Petersen graph is not $2$-colourable to find a star in $C$ where both of the center vertices were mapped to the same vertex. However, this would mean that the homomorphism from that star was not an isomorphism, in contradiction to the previous claim.

Suppose $\varphi \colon S \to S$ is a homomorphism. To prove that it is also an isomorphism we show it is injective. Since the Petersen graph has no cycles of length $3$, we learn that every $5$-cycle must be mapped to a $5$-cycle. As a corollary, we learn that whenever $x,y \in S$ are in a $5$-cycle, $\varphi(x) \neq \varphi(y)$. So we only need to worry about pairs of elements that are not in a $5$-cycle. However, one can verify that there are only five pairs of vertices not in a $5$-cycle. Using the labels from Figure~\ref{fig:petersen_star}, these are the center vertices $\{u,u'\}$, the pairs $\{u, a\}$ and $\{u,b\}$, and by symmetry the pairs $\{u',a'\}$ and $\{u',b'\}$. Of course if $\varphi$ were to map two vertices $x,y$ to the same vertex $z$, then it would have to send all neighbours of $x$ and $y$ to the neighbourhood of $z$. In all the above cases the pair $x,y$ has at least $4$ neighbours of which every pair is in a $5$-cycle, so they cannot be mapped to the same vertex, meaning that $z$ would have to be of degree $\ge 4$, however all vertices in $S$ have degree at most $3$.
\end{proof}
\end{document}